\documentclass[11pt] {amsart}
\usepackage[margin=1in]{geometry}
\date{October 10, 2014}          
\usepackage{hyperref}
\usepackage{latexsym,amsmath,amsfonts,amscd,amssymb}
\usepackage{lscape}
\usepackage{array}
%
%

\DeclareFontFamily{OT1}{rsfs}{}
\DeclareFontShape{OT1}{rsfs}{n}{it}{<->rsfs10}{}
\DeclareMathAlphabet{\curly}{OT1}{rsfs}{n}{it}

\theoremstyle{plain}  
\newtheorem{theorem}{Theorem}[section]

\newtheorem*{theorem*}{Theorem}

\newtheorem{corollary}[theorem]{Corollary}
\newtheorem{lemma}[theorem]{Lemma}
\newtheorem{proposition}[theorem]{Proposition}

\theoremstyle{definition}
\newtheorem{definition}[theorem]{Definition}

\theoremstyle{remark}
\newtheorem{example}[theorem]{Example}

\newtheorem*{notation*}{Notation}
\newtheorem{remark}[theorem]{Remark}

\newtheorem*{claim*}{Claim}
\numberwithin{equation}{section}
\newcommand{\suchthat}{\;\;|\;\;}
\newcommand{\abs}[1]{\lvert#1\rvert}
\renewcommand{\leq}{\leqslant}
\renewcommand{\le}{\leqslant}
\renewcommand{\geq}{\geqslant}
\renewcommand{\ge}{\geqslant}

\newcommand{\into}{\hookrightarrow}

\newcommand{\R}{\mathbb{R}}

\newcommand{\Z}{\mathbb{Z}}
\newcommand{\C}{\mathbb{C}}

\newcommand{\HH}{\mathbb{H}}
\newcommand{\PP}{\mathbb{P}}

\newcommand{\cM}{\mathcal{M}}
\newcommand{\cO}{\mathcal{O}}
\newcommand{\cR}{\mathcal{R}}

\newcommand{\dbar}{\bar{\partial}}

\newcommand{\lra}{\longrightarrow}
\newcommand{\xra}{\xrightarrow}
\newcommand{\PU}{\mathrm{PU}}

\newcommand{\SU}{\mathrm{SU}}
\newcommand{\U}{\mathrm{U}}

\newcommand{\GL}{\mathrm{GL}}
\newcommand{\SL}{\mathrm{SL}}

\newcommand{\SO}{\mathrm{SO}}
\newcommand{\Sp}{\mathrm{Sp}}
\newcommand{\Spin}{\mathrm{Spin}}

\DeclareMathOperator{\Jac}{Jac}
\DeclareMathOperator{\ad}{ad}
\DeclareMathOperator{\Ad}{Ad}

\DeclareMathOperator{\rk}{rk}
\DeclareMathOperator{\rank}{rank}
\DeclareMathOperator{\im}{im}

\DeclareMathOperator{\Hom}{Hom}
\DeclareMathOperator{\End}{End}

\DeclareMathOperator{\Mat}{Mat}
\DeclareMathOperator{\Sym}{Sym}

\DeclareMathOperator{\Vol}{Vol}

\hyphenation{Higgs}
\newcommand{\noi}{\noindent}
\newcommand{\st}{\;|\;}

\newcommand{\aut}{\operatorname{aut}}
\newcommand{\Aut}{\operatorname{Aut}}


\newcommand{\norm}[1]{\lVert#1\rVert}

\newcommand{\liem}{\mathfrak{m}}

\newcommand{\liemc}{\mathfrak{m}^{\mathbb{C}}}
\newcommand{\lieh}{\mathfrak{h}}

\newcommand{\lieg}{\mathfrak{g}}
\newcommand{\liegc}{\mathfrak{g}^{\mathbb{C}}}



\newcommand{\CC}{\mathbb{C}}

\newcommand{\RR}{\mathbb{R}}

\newcommand{\ZZ}{\mathbb{Z}}
\newcommand{\lie}{\mathfrak}

\newcommand{\mlie}{\mathfrak{m}}

\newcommand{\mclie}{\mlie^{\CC}}

\newcommand{\VVV}{{\curly V}}

\newcommand{\Tr}{\operatorname{Tr}}

\newcommand{\HC}{H^{\CC}}

\newcommand{\liez}{\mathfrak{z}}

\renewcommand{\phi}{\varphi}  


\begin{document}


\title[Higgs bundles for $\SO^\ast(2n)$]
{Higgs bundles for the non-compact dual of the special orthogonal group}

\author[S. B. Bradlow]{Steven B. Bradlow}
\address{Department of Mathematics \\
University of Illinois \\
Urbana \\
IL 61801 \\
USA }
\email{bradlow@math.uiuc.edu}

\author[O. Garc{\'\i}a-Prada]{Oscar Garc{\'\i}a-Prada}
\address{Instituto de Ciencias Matem\'aticas \\
CSIC-UAM-UC3M-UCM \\ Calle Nicol\'as Cabrera, 13--15 \\28049 Madrid \\ Spain}
\email{oscar.garcia-prada@icmat.es}

\author[P. B. Gothen]{Peter B. Gothen}
\address{Centro de
  Matem\'atica da Universidade do Porto \\
Faculdade de Ci\^encias da Universidade do Porto \\
Rua do Campo Alegre, s/n \\ 4169-007 Porto \\ Portugal }
\email{pbgothen@fc.up.pt}

\thanks{
  Members of the Research Group VBAC (Vector Bundles on Algebraic
  Curves) and the ESF Network ITGP (Interactions of Low-Dimensional
  Topology and Geometry with Mathematical Physics).
Second author is partially supported by  the Spanish Ministerio de 
Ciencia e Innovaci\'on (MICINN) under grants  MTM2007-67623 and
MTM2010-17717.
  Third author partially supported by FCT (Portugal) with EU
  (FEDER/COMPETE) and Portuguese funds under projects
  PTDC/MAT/099275/2008, PTDC/MAT/098770/2008, PTDC/MAT-GEO/0675/2012
  and PEst-C/MAT/UI0144/2013. 
  The authors also acknowledge support from U.S. National Science
  Foundation grants DMS 1107452, 1107263, 1107367 "RNMS: GEometric
  structures And Representation varieties" (the GEAR Network) }

\begin{abstract} Higgs bundles over a closed orientable surface can be defined for any real reductive Lie group $G$. In this paper we examine the case $G=\SO^*(2n)$. We describe a rigidity phenomenon encountered in the case of maximal Toledo invariant. Using this and  Morse theory in the mo\-du\-li space of Higgs bundles, we show that the moduli space is connected in this maximal Toledo case. The Morse theory also allows us to show connectedness when the Toledo invariant is zero. The correspondence between Higgs bundles and surface group representations thus allows us to count the connected components with zero and  maximal Toledo invariant in the moduli space of representations of the fundamental group of the surface in $\SO^*(2n)$. 
\end{abstract}

\maketitle

\section{Introduction}

Higgs bundles over a Riemann surface are intrinsically holomorphic objects.  Their moduli spaces can nevertheless be identified with representation varieties for the fundamental group of the surface even if the target group for the representations, or equivalently the group defining the Higgs bundles,  is a real
reductive Lie group.  If the group, say $G$, is of Hermitian type, i.e. if the homogeneous space $G/H$
(where $H$ is a maximal compact subgroup) is a Hermitian symmetric space, then the 
associated $G$-Higgs bundles have especially rich structure. The real
connected semisimple classical groups with this property are $\SU(p,q),
\Sp(2n,\R)$, $\SO(2,n)$,  and $\SO^*(2n)$. In this paper we examine in detail the case of  
$G=\SO^*(2n)$. In particular, we give proofs of the results that were announced in \cite{bradlow-garcia-prada-gothen:2005}.  

The theory of $G$-Higgs bundles with $G$ a real Lie group goes back to Hitchin's seminal papers \cite{hitchin:1987,hitchin:1992} in which split real forms were considered.  Since then, the $G$-Higgs bundles for many other real forms have been examined.  Among the real groups of Hermitian type, $\Sp(2n,\R)$ is special because it is also a split real form and therefore a
particular case of the situation studied by Hitchin.  Higgs bundles for the groups
$\Sp(2n,\R)$ and $\SU(p,q)$ have been studied (in chronological order) in
\cite{gothen:2001, bradlow-garcia-prada-gothen:2003, garcia-prada-mundet:2004,
  bradlow-garcia-prada-gothen:2005, 
  bradlow-garcia-prada-gothen:2012, gothen-oliveira:2012, GGM, wentworth-wilkin:2013} and also, most recently and from a different point of view in \cite{hitchin:2013, schaposnik:2013}.  In the paper
\cite{bradlow-garcia-prada-gothen:2005} we announced results on
$\SO(2,n)$ and $\SO^*(2n)$; the recent preprint \cite{hitchin-schaposnik:2013} addresses some aspects of the $\SO^*(2n)$ case from a different point of view.  Higgs bundles for all the groups of Hermitian symmetric type, including the two exceptional cases found among the real forms of $E_6$ and $E_7$, have also been studied in \cite{rubio:thesis}, where the first steps towards a unified treatment were taken.  The group $\SO(2,n)$ falls into the more
general (but not in general Hermitian symmetric) case of
$\SO(p,q)$-Higgs bundles, which were studied in \cite{aparicio,
  garcia-prada-aparicio}.  On the other side of the correspondence
between Higgs bundles and surface group representations, the groups of
Hermitian type have been extensively studied, notably recently in \cite{BIW-annals,guichard-wienhard:2010,ben-simon-burger-hartnick-iozzi:2013}.

In some ways the work described in this paper is one more in a series of case-by-case analyses of $G$-Higgs bundles for different $G$.  Adding to its interest, however, is the fact that the analysis of $\SO^*(2n)$-Higgs bundles unavoidably involves other reductive groups.  Any discussion of $\SO^*(2n)$-Higgs bundles is thus a showcase for several types of $G$-Higgs bundles.

The most direct way that other groups enter the picture is through the
structure of  polystable $\SO^*(2n)$-Higgs bundles.  In general (see Theorem
\ref{prop:SO-star-poly-stability-refined}) such Higgs bundles decompose as a
sum of  $G$-Higgs bundles where $G$ can be one of a number of different
groups, including $\SO^*(2m)$ for $m<n$, but also $\U^*(m), \U(p,q)$, and
$\U(m)$ for suitable values of $m,p,q$.  At the level of Lie theory, these are
the groups which appear as factors in Levi subgroups of $\SO(2n,\C)$
intersected with $\SO^*(2n)$.   Note that this list of groups includes both
compact and non-compact real forms. In the latter case the corresponding
symmetric space may be Hermitian or not.

The group $\U^*(m)$ appears in a second way that depends on a key
feature of $G$-Higgs bundles for non-compact real forms of Hermitian
type.  In these cases a discrete invariant known as the Toledo
invariant can be defined.  The invariant has several interpretations
(see
\cite{milnor,dupont,domic-toledo:1987,clerc-orsted:2003,bradlow-garcia-prada-gothen:2005,
  BIW-annals, hitchin-schaposnik:2013, schaposnik:2013}) but all lead
to a bound that generalizes the Milnor inequality on the Euler class
of flat $\SL(2,\R)$-bundles. The $G$-Higgs bundles with maximal Toledo
invariant all have special properties but these fall into two
categories, depending on whether the Hermitian symmetric space is of
tube type or not.  In the tube cases, a correspondence emerges between
polystable $G$-Higgs bundles with maximal Toledo invariant and objects
called $K^2$-twisted $G'$-Higgs bundles, where $G'$ is a new reductive
group.  We call this $G'$ $K^2$-twisted $G'$-Higgs bundle the Cayley
partner to the original $G$-Higgs bundle.  In the non-tube cases, the
maximal $G$-Higgs bundles do not have Cayley partners but decompose
into two parts, one of which has a Cayley partner and the other of
which corresponds to a compact group.  This imposes constraints which
we refer to as `rigidity' on the moduli spaces.  For $G=\SO^*(2n)$ we
see both types of phenomena, depending on whether $n$ is even or odd.
In the odd case, the group is not of tube type and we see rigidity
(see Section \ref{subs:rigidity}).  For $n=2m$, the group is of tube
type and the Cayley partner to $\SO^*(2n)$ is the group $\U^*(n)$.

There is one more group that enters the discussion, namely $\Sp(2n,\R)$.  While the nature of the relation between $\SO^*(2n)$-Higgs bundles and $\Sp(2n,\R)$-Higgs bundles is more subtle than in the case of the groups which appear in Levi subgroups,  the comparison between the two cases is instructive and unavoidable.  In both cases the maximal compact subgroups are isomorphic to $\U(n)$, and the complexified isotropy representations are
\begin{equation}
\begin{cases} \Lambda^2(\C^n)\oplus\Lambda^2((\C^n)^*)\ \phantom{(\C^n)^*)} \mathrm{for}\ \SO^*(2n)\\ 
\Sym^2(\C^n)\oplus \Sym^2((\C^n)^*)\  \mathrm{for}\ \Sp(2n,\R)
\end{cases}
\end{equation}

These structural similarities between  $\SO^*(2n)$ and $\Sp(2n,\R)$ carry over
to the theory of Higgs bundles. 
In both cases a $G$-Higgs bundle over a Riemann surface is defined by 
triple $(V,\beta,\gamma)$ where $V$ is a rank $n$ holomorphic bundle, and $\beta$ and $\gamma$ are homomorphisms
$$
\beta: V^*\lra V \otimes K \;\;\; \mbox{and}\;\;\; \gamma: V\lra V^*
\otimes K.
$$
The difference between the cases $G=\SO^*(2n)$ and $G=\Sp(2n,\R)$ is that
in the former case the maps $\beta$ and $\gamma$ are skew-symmetric, while in
the latter case the maps are symmetric.  However in both cases, the quadruple $(V,V^*,\beta,\gamma)$
defines a $\SU(n,n)$-Higgs bundle.\footnotemark\footnotetext{ This
  corresponds to the fact that both $\SO^*(2n)$ and $G=\Sp(2n,\R)$ embed as
  subgroups in $\SU(n,n)$} (see Section \ref{A:U(p,q)}, where
$\U(n,n)$-Higgs bundles $(V,W,\beta,\gamma)$ are defined. One has here the
extra condition $\det W={(\det V)}^{-1}$ since the group is $\SU(n,n)$).
Indeed both types of Higgs bundles appear in the moduli space of  
$\SU(n,n)$-Higgs bundles as fixed points of involutions, namely
$$(V,W,\beta,\gamma)\mapsto (W^*, V^*,\pm\beta^t,\pm\gamma^t).$$

The similarities between the two cases mean that many of the details
worked out in \cite{GGM} for $\Sp(2n,\R)$-Higgs bundles require only
minor modification in order to be applied to $\SO^*(2n)$-Higgs
bundles.  However, we believe that the
presentation in this paper naturally benefits from a more systematic
understanding of the theory.
Our main results show  that the
outcomes in the two cases are significantly different in at least two
respects. First, the parity of $n$ plays a role if $G=\SO^*(2n)$ (but
not if $G=\Sp(2n,\R)$), and second the moduli space of Higgs bundles
with maximal $\deg(V)$ has just one connected component if
$G=\SO^*(2n)$ but has several connected components distinguished by
`hidden' topological invariants revealed through the Cayley
correspondence in the case $G=\Sp(2n,\R)$. 

We now describe the contents of the paper in a bit more detail.  Let
$X$ be a Riemann surface of genus $g\ge 2$.  After some general
definitions in Section \ref{G-Higgs}, in Section~\ref{sect:SO*Higgs}
we describe the main features of the groups $\SO^*(2n)$ and
$\SO^*(2n)$-Higgs bundles.  We give structure results for stable and
polystable objects.  As in the case $G=\Sp(2n,\R)$, the moduli space
of polystable $\SO^*(2n)$-Higgs bundles, denoted by
$\mathcal{M}(\SO^*(2n))$, is not connected. The Toledo invariant,
which in the case of $\SO^*(2n)$-Higgs bundles corresponds to the the
degree of the bundle $V$, separates the moduli space into components
$\cM_d$ (where $d=\deg V$).  In Section \ref{sec:spn-higgs-moduli} we
establish the bounds on this invariant, namely
\begin{equation}
0\le |d|\le \lfloor\frac{n}{2}\rfloor(2g-2)\ .
\end{equation}
In Section \ref{sect: max} we study the case $d=\lfloor\frac{n}{2}\rfloor(2g-2)$ (the case $d=-\lfloor\frac{n}{2}\rfloor(2g-2)$ is analogous).  The special feature in this maximal situation is that the component 
$$\gamma :V\longrightarrow V^*\otimes K$$
of the Higgs field has maximal rank. Since $\gamma$ is skew-symmetric, this means that it defines a symplectic structure on either $V\otimes K^{-1/2}$ (if $n$ is even) or on a rank $n-1$ quotient of this (if $n$ is odd).  This leads to the Cayley correspondence we describe in Section \ref{subs:cayley} and to the rigidity result in Section \ref{subs:rigidity}.

The moduli spaces of Higgs bundles come equipped with a natural
function that can be used in a Morse-theoretic way to detect
topological properties.  First described by Hitchin
\cite{hitchin:1987}, this function measures the $L^2$-norm of the
Higgs field.  For each $d$, the function provides a proper map from
$\mathcal{M}_d$ to $\R$ and thus attains its minimum on each connected
component.  In Section \ref{sect:morse} we examine the minima and show
that they are precisely the polystable Higgs bundles in which
$\beta=0$ or $\gamma=0$ (depending on the sign of $d$). This reduces
the problem of the connectivity of the components to one of the
connectivity of the locus of minima. Unfortunately for most values of
$d$ this is itself a difficult problem.  The only exceptions are the
cases where $d=0$ or where $|d|$ has its maximum value. In Section
\ref{sect:morse} we also examine these exceptional cases and show the
following.

\begin{theorem} For $d=0$ or $|d|$ maximal, the components
$\mathcal{M}_d(\SO^*(2n))$  of the moduli space of polystable 
$\SO^*(2n)$-Higgs bundles are connected.
\end{theorem}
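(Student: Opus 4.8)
The plan is to reduce connectedness of each $\mathcal{M}_d(\SO^*(2n))$ to connectedness of its subvariety of local minima of the Hitchin function, and then to analyze that minimum locus directly in the two tractable cases. Write $f$ for the proper function on $\mathcal{M}_d$ given by the $L^2$-norm of the Higgs field. Since $f$ is proper and bounded below, its restriction to any connected component $C$ attains a minimum at some point, and such a point is automatically a local minimum of $f$ on all of $\mathcal{M}_d$. Hence every connected component meets the subvariety $\mathcal{N}_d$ of local minima. Consequently, if $\mathcal{N}_d$ is connected it lies in a single component $C_0$, forcing every component to coincide with $C_0$; that is, connectedness of $\mathcal{N}_d$ implies connectedness of $\mathcal{M}_d$. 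By the characterization of minima established in Section~\ref{sect:morse}, I may take $\mathcal{N}_d$ to consist of the polystable Higgs bundles with $\beta=0$ or $\gamma=0$. It therefore suffices to prove that this minimum locus is connected for $d=0$ and for $\lvert d\rvert$ maximal.

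For $d=0$ I would argue that the minimum locus reduces to a moduli space of holomorphic vector bundles. At a minimum with $d=0$ the Higgs field vanishes, so the object is just a polystable $H^{\CC}=\GL(n,\CC)$-bundle, i.e.\ a polystable holomorphic bundle $V$ of rank $n$ and degree $0$. The moduli space of such bundles over $X$ is irreducible, hence connected, by the classical theory; this gives connectedness of $\mathcal{N}_0$ and therefore of $\mathcal{M}_0(\SO^*(2n))$.

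For $\lvert d\rvert$ maximal the plan is to pass through the Cayley correspondence of Section~\ref{subs:cayley}. Taking $d>0$ (the case $d<0$ being analogous), a minimum has $\beta=0$, while the maximality of the Toledo invariant forces $\gamma\colon V\to V^*\otimes K$ to have maximal rank. A minimal Higgs bundle is thus a pair $(V,\gamma)$ with $\gamma$ skew-symmetric of maximal rank, and by the Cayley correspondence such pairs are in bijection with $K^2$-twisted Higgs bundles for the Cayley partner group. When $n=2m$ this partner is $\U^*(n)$, and I would show that the moduli space of $K^2$-twisted $\U^*(n)$-Higgs bundles is connected; since $\U^*(n)$ is not of Hermitian type, the only topological invariant is the underlying degree, and connectedness should follow by the same Morse-theoretic reduction applied to this partner space together with irreducibility of the corresponding bundle moduli. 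When $n$ is odd the group is not of tube type, so I would instead invoke the rigidity statement of Section~\ref{subs:rigidity}: the maximal Higgs bundles split off a factor corresponding to a compact group, and the remaining part again admits a Cayley partner, so that the minimum locus is fibred over a connected Cayley-partner moduli space with the compact factor contributing a connected (indeed rigid) piece.

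The main obstacle is the maximal case, and specifically the two points where the parity of $n$ enters: proving that the Cayley-partner moduli space is itself connected, and correctly accounting for the rigidity constraints in the odd (non-tube) case so that the compact factor does not disconnect the minimum locus. The $d=0$ case is essentially formal once the minima are identified, but the maximal case genuinely requires the structural input of Sections~\ref{subs:cayley} and~\ref{subs:rigidity} and a separate connectedness argument for the partner group.
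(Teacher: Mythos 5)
Your general Morse-theoretic framework, your $d=0$ argument, and your treatment of the odd-$n$ maximal case (rigidity plus the even case plus connectedness of the Jacobian) all coincide with the paper's proof. The genuine gap is in the maximal case for $n=2m$. You assert that the minima --- pairs $(V,0,\gamma)$ with $\gamma$ of maximal rank --- are ``in bijection with $K^2$-twisted Higgs bundles for the Cayley partner group.'' This is not correct: the Cayley correspondence sends $\beta$ to the partner Higgs field $\psi$, so the $\beta=0$ locus corresponds only to the partner objects with $\psi=0$, i.e.\ to polystable $\Sp(n,\C)$-bundles $(W,\Omega)$, not to the whole partner moduli space. Having made this conflation, you then propose to prove connectedness of the \emph{entire} space $\cM_{K^2}(\U^*(n))$ by running ``the same Morse-theoretic reduction'' on it. That step is not a formality: it requires a characterization of the local minima of the Hitchin function on the moduli space of $K^2$-twisted $\U^*(n)$-Higgs pairs (the analogue, for a twisted partner group, of the whole of Section~\ref{sec:minima-hitchin-functional}), and no such result is established here --- the paper only invokes the corresponding statement for $K$-twisted $\U^*(n)$-Higgs bundles from \cite{garcia-prada-oliveira:2010}, and for a different purpose. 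As written, your even-$n$ argument is therefore circular at its key point.

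The repair is exactly the paper's argument, and it is shorter than your detour. By Remark~\ref{rem:cayley-zero}, which follows from Theorem~\ref{equivalence-stability}, the isomorphism of Theorem~\ref{thm:cayley} restricts to an isomorphism between the $\beta=0$ locus of $\cM_{\max}(\SO^*(2n))$ and the moduli space of polystable $\Sp(n,\C)$-bundles; the latter is connected by Ramanathan \cite[Proposition~4.2]{ramanathan:1975} because $\Sp(n,\C)$ is connected and simply connected, so there is a single topological class (not ``the underlying degree,'' as you put it --- a symplectic bundle has degree zero automatically). In other words, correcting your bijection claim immediately yields the paper's proof, with no second Morse-theoretic reduction needed. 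Two smaller points: in the $d=0$ case you should invoke Proposition~\ref{prop:SO-star-poly-stability-refined} to justify that a polystable $\SO^*(2n)$-Higgs bundle with vanishing Higgs field is precisely a polystable degree-zero vector bundle, and note that the reduction from connectedness of the minimum locus to connectedness of $\cM_d$, which you argue correctly, is also how the paper proceeds via Theorem~\ref{thm:SO*-minima}.
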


In Section \ref{sect: surfacegroups} we invoke the non-abelian Hodge theory
correspondence between the moduli space of $\SO^*(2n)$-Higgs bundles over $X$ 
 and the moduli space of representations
of the fundamental group of $X$ in $\SO^*(2n)$ to count the number of connected
components of the latter in the zero and maximal Toledo invariant cases, and
to give a rigidity result for maximal representations when $n$ is odd.  

In Section \ref{sect:lowrank} we examine some special features of $\SO^*(2n)$-Higgs bundles and their moduli spaces in the low rank cases, i.e. for $n=1,2,3$.  These features are mostly reflections of special low rank isomorphisms between Lie groups, but they yield interesting relations between Higgs bundle moduli spaces.  

Finally, in the Appendix we summarize salient features of $G$-Higgs bundles
for the groups other than $\SO^*(2n)$ 
which come up in the discussion of the case $G=\SO^*(2n)$.

We conclude this introduction by pointing out that a number of works which appeared after the first version of this paper was posted on the arXiv exploit an interesting complementary approach to $G$-Higgs bundles via
the Hitchin fibration. These include \cite{hitchin:2013,hitchin-schaposnik:2013,peon:2013,schaposnik:2013}.  The comparison between the two approaches is instructive and deserves further investigation. 

\subsubsection*{Acknowledgments}

The authors thank Olivier Biquard, Ignasi Mundet and Roberto Rubio for
useful discussions.  The authors also thank the following institutions for their
hospitality during various stages of this research: Centre for Quantum
Geometry of Moduli Spaces (Aarhus University), The Institute for
Mathematical Sciences (National University of Singapore), Centro de
Investigaci{\'o}n en Matem{\'a}ticas (Guanajuato) and the Centre de
Recerca Matem{\`a}tica (Barcelona), University of Illinois at Urbana-Champaign, Centro de Matem\'atica da Universidade do Porto, and the Instituto de Ciencias Matem\'aticas, Madrid.

\section{$G$-Higgs bundles}\label{G-Higgs}

The original notion of a $G$-Higgs bundle when $G$ is a real reductive Lie
group can be traced back to \cite{hitchin:1987,hitchin:1992}. For the
convenience of the reader we summarize the basic definitions and
constructions.  These have appeared at various levels of explicitness
in several places including \cite{bradlow-garcia-prada-gothen:2005,GGM,gothen:1995}.

\subsection{Moduli space of $G$-Higgs bundles}\label{moduli-ghiggs}

Let $G$ be a \textbf{real reductive Lie group}. By this we
mean\footnotemark\footnotetext{Our definition follows  Knapp
\cite[p.~384]{knapp:1996}, except that we do not impose the condition that for every $g\in G$ the automorphism $\Ad(g)$ of $\lieg^{\C}$ is inner, i.e. $\Ad(g)=\Ad(x)$ for some $x$ in the identity component of the adjoint form of $G$.  In fact this condition, which plays a role only if non-connected groups must be considered, is automatically satisfied by the groups which appear in  this paper.}
that we are given the data
$(G,H,\theta,B)$, where $H \subset G$ is a maximal compact subgroup, $\theta\colon \lieg
\to \lieg$ is a Cartan involution and $B$ is a
non-degenerate bilinear form on $\lieg$, which is $\Ad(G)$-invariant
and $\theta$-invariant.  The data $(G,H,\theta,B)$ has to satisfy in
addition that
\begin{enumerate}
\item
the Lie algebra $\lieg$ of $G$ is reductive,
\item
$\theta$ gives  a decomposition  (the Cartan decomposition)
\begin{displaymath}
\lie{g} = \lieh \oplus \liem
\end{displaymath}
into its $\pm1$-eigenspaces, where  $\lieh$ is the Lie
algebras of $H$,
\item
$\lieh$ and $\liem$ are orthogonal under $B$, and $B$ is positive definite
on $\liem$ and negative definite on $\lieh$,
\item
multiplication as a map from $H\times \exp\liem$ into $G$ is an onto
diffeomorphism.
\end{enumerate}
We will refer sometimes to the data $(G,H,\theta,B)$ as the 
{\bf Cartan data}.

The group $H$ acts linearly on $\mlie$ through the adjoint
representation of $G$.  Complexifying, we get the \textbf{isotropy
  representation}
\begin{math}
  \iota\colon \HC \to \GL(\mclie).    
\end{math}

\begin{definition}
\label{def:g-higgs}
A \textbf{$G$-Higgs bundle} on $X$ is a pair
$(E,\varphi)$, where $E$ is a holomorphic $\HC$-principal bundle
over $X$ and $\varphi$ is a holomorphic section of $E(\mclie)\otimes
K$,  where $E(\mclie)= E \times_{\HC}\mclie$ is the
$\mclie$-bundle associated to $E$ via the isotropy representation and $K$ is the canonical bundle of $X$.  The section $\varphi$ is called the {\bf Higgs field}.  Two $G$-Higgs bundles $(E,\varphi)$ and
$(E',\varphi')$ are \textbf{isomorphic} if there is an isomorphism
$f\colon E \xra{\simeq} E'$ such that $\varphi = f^*\varphi'$ where $f^*$ is the obvious induced map.
\end{definition}

More generally, replacing $K$ by an arbitrary line bundle on $X$ in
the preceding definition, we obtain the notion of a
\textbf{$L$-twisted $G$-Higgs pair} on $X$.

Just as for vector bundles, there are notions of stability,
semistability and polystability for $G$-Higgs bundles (and more
generally for $L$-twisted Higgs pairs).  In this paper we consider
only the particular cases we need (cf.\ Section~\ref{sec:spnr-higgs})
and refer the reader to \cite{garcia-prada-gothen-mundet:2009a} for
the general definitions.

Henceforth, we shall assume that $G$ is connected. Then the
topological classification of $H^\CC$-bundles $E$ on $X$ is given by a
characteristic class 
$$c(E)\in \pi_1(H^\CC)=\pi_1(H)=\pi_1(G)\ .$$

\begin{definition}\label{Defn: M_d}
For a
fixed $d \in \pi_1(G)$, the \textbf{moduli space of polystable
$G$-Higgs bundles} $\mathcal{M}_d(G)$ is the set of isomorphism
classes of polystable $G$-Higgs bundles $(E,\varphi)$ such that
$c(E)=d$.  
\end{definition}

The moduli space $\cM_d(G)$ has the structure of a complex analytic
variety.  This can be seen by the standard slice method (see, e.g.,
Kobayashi \cite{kobayashi:1987}). Moreover, it is a consequence of the
general constructions of Schmitt \cite{schmitt:2005,schmitt:2008} that
for all the groups which appear in this paper the moduli space
$\cM_d(G)$ is actually algebraic.

\subsection{The Hitchin equation}\label{sec:hitchin-equations}

In general, i.e.\ for any real reductive group $G$, the Hitchin
equations for a $G$-Higgs bundle, say $(E,\varphi)$, can be regarded
as conditions for a reduction of the structure group of $E$. Recall
that $E$ is a principal holomorphic $H^{\C}$-bundle, where $H^{\C}$ is
the complexification of $H$ (a maximal compact subgroup of $G$).  A
reduction of structure group to $H$ defines a principal $H$-bundle,
$E_H$, such that $E=E_H\times_HH^{\C}$.  Then, together with the
holomorphic structure on $E$, the reduction to $E_H$ defines a unique
connection (the Chern connection) on $E$.  We denote the curvature of
this connection by $F_h$.  Assume now that $G$ is a real form of its
complexification $G^{\C}$, and let $\tau:\liegc\longrightarrow\liegc$
denote the involution which defines the compact real form of $G^{\C}$.
The relation between $\tau$, the involution which defines the real
form $G$, and the Cartan involution on $\lieg$, ensures that the
combination $[\mathfrak{m}^{\C},\tau(\mathfrak{m}^{\C})]$ takes values
in $\lieh$.  Using the reduction $E(\liegc)=E_H\times_H\liegc$ we can
extend $\tau$ to a bundle map $\tau_h: E(\liegc)\longrightarrow
E(\liegc)$. Combined with conjugation on the canonical bundle $K$ this
defines a bundle map (also denoted by $\tau_h$) on $E(\liegc)\otimes
K$.  Applying this map to the Higgs field $\varphi$ allows us to form
a $\lieh$-valued (1,1)-form $[\varphi,\tau(\varphi)]$.

\begin{definition}\label{GHitchin}
If $G$ is semisimple the {\bf $G$-Hitchin equation} for a reduction of structure group
to $H$ of  a $G$-Higgs bundle $(E,\varphi)$ is 
\begin{equation}\label{eqn:GHitch}
F_h-[\varphi,\tau_h(\varphi)]= 0
\end{equation}
\noi where $F_h$ and $\tau_h$ are as above.
\end{definition}

The following result can be found in \cite{garcia-prada-gothen-mundet:2009a}.

\begin{theorem}[Theorem 3.21 in \cite{garcia-prada-gothen-mundet:2009a}]
  \label{HKcorrespond}
  Let $(E,\varphi)$ be a $G$-Higgs bundle.  The bundle $E$ admits a
  reduction of structure group from $H^{\C}$ to $H$ satisfying the
  Hitchin equation for a $G$-Higgs bundle if and only if $(E,\varphi)$
  is polystable.
\end{theorem}

\subsection{Deformation theory of $G$-Higgs bundles}
\label{sec:deformation-theory}

In this section we recall some standard facts about the deformation
theory of $G$-Higgs bundles (see \cite{GGM} and
\cite{garcia-prada-gothen-mundet:2009a} for more detail). We also take
care of issues that arise considering general reductive groups, rather
than just semisimple ones. In particular we introduce a reduced
deformation complex which is relevant in analyzing smoothness of the
moduli space.

\begin{definition}\label{def:def-complex}
Let $(E,\varphi)$ be a $G$-Higgs bundle. 
Let $d\iota\colon \lieh^{\CC} \to \End(\liem^{\CC})$ be the derivative
at the identity of the complexified isotropy representation $\iota =
\Ad_{|H^{\CC}}\colon H^{\CC} \to \Aut(\liem^{\CC})$.
The \textbf{deformation complex}
of $(E,\varphi)$ is the following complex of sheaves:
\begin{equation}\label{eq:def-complex}
C^{\bullet}(E,\varphi)\colon E(\lieh^\CC) \xrightarrow{d\iota(\varphi)}
E(\liem^\CC)\otimes K.
\end{equation}
\end{definition}

\begin{proposition}
\label{prop:deform}
The space of infinitesimal deformations of a $G$-Higgs bundle
$(E,\varphi)$ is naturally isomorphic to the hypercohomology group
$\HH^1(C^{\bullet}(E,\varphi))$. The Lie algebra of $\Aut(E,\varphi)$,
denoted by $\aut(E,\varphi)$, can be identified with
$\HH^0(C^{\bullet}(E,\varphi))$.
\end{proposition}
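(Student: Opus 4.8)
The plan is to compute $\HH^\bullet(C^\bullet(E,\varphi))$ by means of an explicit acyclic resolution and then to match the resulting cocycle/coboundary description term by term with the first-order deformation data of the pair $(E,\varphi)$. I would use the Dolbeault resolution of the two-term complex, so that $\HH^k(C^\bullet(E,\varphi))$ is the $k$-th cohomology of the total complex of the double complex
\[
\Omega^{0,\bullet}(E(\lieh^\CC)) \xrightarrow{d\iota(\varphi)} \Omega^{0,\bullet}(E(\liem^\CC)\otimes K),
\]
whose degree-zero piece is $\Omega^{0,0}(E(\lieh^\CC))$, whose degree-one piece is $\Omega^{0,1}(E(\lieh^\CC)) \oplus \Omega^{0,0}(E(\liem^\CC)\otimes K)$, and whose degree-two piece is $\Omega^{0,1}(E(\liem^\CC)\otimes K)$. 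The total differential combines the Dolbeault operators $\bar\partial_E$ of $E(\lieh^\CC)$ and of $E(\liem^\CC)\otimes K$ with the bundle map $d\iota(\varphi)$, with the usual sign conventions for a double complex.

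Next I would describe a first-order deformation explicitly. Fixing the underlying $C^\infty$ structure, a deformation of the holomorphic $H^\CC$-bundle $E$ is given by $\bar\partial_E \mapsto \bar\partial_E + \epsilon a$ with $a \in \Omega^{0,1}(E(\lieh^\CC))$, and a deformation of the Higgs field by $\varphi \mapsto \varphi + \epsilon\dot\varphi$ with $\dot\varphi \in \Omega^{0,0}(E(\liem^\CC)\otimes K)$. The requirement that $\varphi+\epsilon\dot\varphi$ remain holomorphic for the deformed operator, to first order in $\epsilon$, reads $\bar\partial_E\dot\varphi + d\iota(a)\varphi = 0$, which is exactly the degree-one cocycle condition in the total complex; hence degree-one cocycles $(a,\dot\varphi)$ are precisely the infinitesimal deformations of the pair. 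I would then identify the coboundaries with infinitesimal isomorphisms: an infinitesimal complex gauge transformation $s \in \Omega^{0,0}(E(\lieh^\CC))$ acts by $\bar\partial_E \mapsto \bar\partial_E + \epsilon\,\bar\partial_E s$ and $\varphi \mapsto \varphi + \epsilon\, d\iota(s)\varphi$, i.e.\ it sends $(a,\dot\varphi)$ to $(a + \bar\partial_E s,\ \dot\varphi + d\iota(s)\varphi)$, which is precisely the image of $s$ under the degree-zero differential. Quotienting deformations by isomorphism thus corresponds to quotienting cocycles by coboundaries, giving the natural isomorphism with $\HH^1(C^\bullet(E,\varphi))$.

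For the second statement I would read off $\HH^0$ from the same complex: it is the space of $s\in\Omega^{0,0}(E(\lieh^\CC))$ with $\bar\partial_E s = 0$ and $d\iota(s)\varphi = 0$, equivalently the kernel of $d\iota(\varphi)$ acting on $H^0(E(\lieh^\CC))$, which is exactly the term at the left of the long exact sequence \eqref{eq:hyper-les}. The condition $\bar\partial_E s=0$ says $s$ is an infinitesimal automorphism of the holomorphic bundle $E$, and $d\iota(s)\varphi=0$ says it annihilates the Higgs field; together they characterise $\aut(E,\varphi)=\operatorname{Lie}\Aut(E,\varphi)$.

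The point I would treat as the crux — and the only genuinely delicate step — is justifying rigorously the passage from these \emph{ad hoc} first-order formulae to the statement about the deformation functor, namely that the gauge action above really realises isomorphism of deformations over the dual numbers $\operatorname{Spec}\CC[\epsilon]/(\epsilon^2)$ and that no further first-order relations intervene. If one prefers a \v{C}ech model, the same identification must be checked there: deformations of the transition data $g_{\alpha\beta}$ give a \v{C}ech $1$-cochain valued in $E(\lieh^\CC)$, deformations of the local Higgs fields a $0$-cochain valued in $E(\liem^\CC)\otimes K$, and the hypercocycle and hypercoboundary conditions reproduce the description above. Reconciling the signs and the precise form of $d\iota(\varphi)$ between the Dolbeault and \v{C}ech pictures is the main bookkeeping obstacle, but it is routine; the conceptual content is entirely contained in the matching carried out above.
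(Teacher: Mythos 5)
Your proof is correct and follows the standard Dolbeault/gauge-theoretic identification: first-order deformations of $(\bar\partial_E,\varphi)$ as degree-one cocycles of the total complex, infinitesimal gauge transformations as coboundaries, and $\HH^0$ as the holomorphic sections of $E(\lieh^\CC)$ annihilating $\varphi$. This is essentially the same argument found in \cite{GGM} and \cite{garcia-prada-gothen-mundet:2009a}, which the paper cites in lieu of a proof of Proposition~\ref{prop:deform}; the one genuinely delicate point (that the complex gauge action realizes isomorphism of deformations over the dual numbers) is exactly the one you flag, and it is treated there in the same way.
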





Next we introduce two concepts which are important for understanding
smoothness of the moduli space (cf.\ Proposition~\ref{prop:smoothpoint} below).

\begin{definition}
\label{def:simple}
 A $G$-Higgs bundle $(E,\varphi)$ is called {\bf simple} if 
 \begin{math}
 \mathrm{Aut}(E,\varphi)=Z(H^{\C})\cap \ker(\iota)
\end{math}
where $Z(H^{\C})$ denotes the center.
A $G$-Higgs bundle $(E,\varphi)$ is said to be
\textbf{infinitesimally simple} if the infinitesimal automorphism
space $\aut(E,\varphi)$ is isomorphic to
$ H^0(X,E(\ker d\iota \cap Z(\lieh^{\C}))$ where  $Z(\lieh^{\C})$ denotes the Lie algebra of $Z(H^{\C})$.
\end{definition}

Thus a $G$-Higgs bundle is (infinitesimally) simple if its
(infinitesimal) automorphism group is
as small as possible.

\begin{remark} It is clear that a simple $G$-Higgs bundle is
  infinitesimally simple.  If $G$ is complex then $\iota$ is the
  adjoint representation and $(E,\varphi)$ is simple
  (resp.\ infinitesimally simple) if $\Aut(E,\varphi)=Z(G)$
  (resp.\ $\aut(E,\varphi)=Z(\lieh^{\C})$).
\end{remark}

\begin{example}
  View a $\GL(n,\C)$-Higgs bundle a Higgs vector bundle $(E,\Phi)$
  with $\Phi\in H^0(X,\End(E)\otimes K)$. Then $(E,\Phi)$ is simple if
  its automorphism group is $\Aut(E,\Phi)=\C^*$ and infinitesimally
  simple if its infinitesimal automorphism space $\End(E,\Phi) =
  \C$. In this case the two notions coincide, but this is not the case
  for all groups. Indeed, this phenomenon already occurs for principal
  bundles without a Higgs field: as an example, let $L$ be a line
  bundle of degree zero such that $L^2 \neq \mathcal{O}$. Then the
  $\SO(2,\C)$-bundle $(V,Q)=(L \oplus L^{-1},\left(
  \begin{smallmatrix}
    0 & 1 \\
    1 & 0
  \end{smallmatrix}\right))$
  has $\Aut(V,Q) = \{\pm 1\}$ and $\aut(V,Q) = 0$ so it is
  infinitesimally simple but not simple.
\end{example}

In order to study smoothness of the moduli space in the general case
of reductive groups (i.e.\ for non-semisimple $G$), we introduce a reduced
deformation complex.

\begin{lemma}[{\cite[p.~388]{knapp:1996}}]
  Let $\liez$ be the center of $\lieg$ and $\liez^{\C}$ be the center
  of $\lieg^{\C}$.  There are decompositions
  \begin{math}
    \liez = (\lieh \cap \liez) \oplus (\liem \cap \liez)
  \end{math}
  and 
\begin{math}
    \liez^{\C} = (\lieh^{\C} \cap \liez^{\C}) \oplus (\liem^{\C} \cap \liez^{\C}).
  \end{math}
\end{lemma}

In view of this Lemma, we can decompose as $H$-modules
\begin{displaymath}
  \lieh = (\lieh \cap \liez) \oplus \lieh_0, \qquad
  \liem = (\liem \cap \liez) \oplus \liem_0,
\end{displaymath}
where we have defined
\begin{displaymath}
  \lieh_0 = \lieh / (\lieh \cap \liez), \qquad
  \liem_0 = \liem / (\liem \cap \liez).
\end{displaymath}
Analogously we define $\lieh_0^{\C}$ and $\liem_0^{\C}$ and we have
similar decompositions of $\lieh^{\C}$ and $\liem^{\C}$.
Note also that
\begin{displaymath}
  [\liem^{\C},\lieh^{\C} \cap \liez^{\C}] = 0, \qquad
  [\liem^{\C}_0,\lieh^{\C}_0] \subset \liem^{\C}_0.
\end{displaymath}
We can thus define the following reduced complex.

\begin{definition}\label{def:red-def-complex}
  Let $(E,\varphi)$ be a $G$-Higgs bundle.  The {\bf reduced
    deformation complex} of $(E,\varphi)$ is the following complex of
  sheaves:
\begin{equation}\label{eq:red-def-complex}
C^{\bullet}_0(E,\varphi)\colon E(\lieh_0^\CC) \xrightarrow{\ad(\varphi)}
E(\liem_0^\CC)\otimes K.
\end{equation}
\end{definition}

\begin{remark} If $G$ is semisimple the reduced deformation complex
  (\ref{eq:red-def-complex}) coincides with the non-reduced complex
  (\ref{eq:def-complex}).  If $G$ is a complex reductive group, then
  the reduced complex $C^{\bullet}_0(E,\varphi)$ can be identified
  with the (non-reduced) deformation complex for the $PG$-Higgs bundle
  associated to $(E,\varphi)$, where $PG=G/Z(G)$.
\end{remark}

Let $(E,\varphi)$ be a $G$-Higgs bundle and assume that $G$ is a real form of a complex reductive group $G_{\C}$. Let
\begin{displaymath}
  \tilde{E} = E \times_{H^{\C}}G^{\C}
\end{displaymath}
be the principal $G^{\C}$-bundle associated by extension of structure
group. Note that
\begin{displaymath}
  \tilde{E}(\lieg^{\C}) = E(\lieg^{\C}) = E(\lieh^{\C}) \oplus E(\liem^{\C}).
\end{displaymath}
Hence we can let $\tilde\varphi$ be the image of $\varphi$ under the inclusion
\begin{displaymath}
  H^0(X,E(\liem^{\C})\otimes K) \into H^0(X,\tilde{E}(\lieg^{\C})\otimes K).
\end{displaymath}

\begin{definition} 
The $G^{\C}$-Higgs bundle $(\tilde{E},\tilde{\varphi})$  is called the {\bf $G^{\C}$-Higgs bundle associated to the $G$-Higgs bundle} $(E,\varphi)$.
\end{definition}

\begin{proposition}\label{prop:smoothpoint} Let $(E,\varphi)$ be a $G$-Higgs bundle.

\begin{enumerate}
\item If $(E, \varphi)$ is stable and $\varphi\ne 0$ then it is
  infinitesimally simple.
\item If $(E,\varphi)$ is stable and simple and
  $\HH^2(C^{\bullet}_0(E,\varphi))=0$ then $(E,\varphi)$ represents a
  smooth point in the moduli space.
\item If $G$ is complex and $(E,\varphi)$ is stable and simple then
  $(E,\varphi)$ represents a smooth point in the moduli space.
\item Let $(\tilde E,\tilde\varphi)$ be the $G^{\C}$-Higgs bundle associated to $(E,\varphi)$.  If $(E,\varphi)$ is stable then $(\tilde
  E,\tilde\varphi)$ is polystable. If $(E,\varphi)$ is stable, simple
  and stable as a $G^\CC$-Higgs bundle then it represents a smooth
  point in the moduli space.
\end{enumerate}
\end{proposition}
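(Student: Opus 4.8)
The plan is to reduce all four assertions to the (hyper)cohomology of the deformation complex $C^{\bullet}(E,\varphi)$ and its reduced version $C^{\bullet}_0(E,\varphi)$, using Proposition~\ref{prop:deform}, the long exact sequence~\eqref{eq:hyper-les}, and the standard slice description of $\cM_d(G)$ from \cite{garcia-prada-gothen-mundet:2009a}. I will use throughout that near $(E,\varphi)$ the moduli space is modelled on the quotient by $\Aut(E,\varphi)$ of the zero locus of a Kuranishi obstruction map $\HH^1(C^{\bullet}_0(E,\varphi))\to\HH^2(C^{\bullet}_0(E,\varphi))$; consequently, if $\Aut(E,\varphi)$ acts trivially on the local model and $\HH^2(C^{\bullet}_0(E,\varphi))=0$, the point is smooth. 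For part (1) the starting point is the identification $\aut(E,\varphi)\cong\HH^0(C^{\bullet}(E,\varphi))$ of Proposition~\ref{prop:deform}, which by \eqref{eq:hyper-les} equals $\ker\bigl(d\iota(\varphi)\colon H^0(E(\liehc))\to H^0(E(\liemc)\otimes K)\bigr)$; thus an infinitesimal automorphism is a holomorphic section $s\in H^0(E(\liehc))$ with $[s,\varphi]=0$.

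For (1) the inclusion $H^0(E(\ker d\iota\cap Z(\liehc)))\subseteq\aut(E,\varphi)$ is immediate, since $d\iota(s)=0$ forces $[s,\varphi]=0$, so the content is the reverse inclusion. I would take the fibrewise Jordan decomposition $s=s_{\mathrm{s}}+s_{\mathrm{n}}$ of such a section into its semisimple and nilpotent parts, argue as in \cite{garcia-prada-gothen-mundet:2009a} that each part is again a holomorphic section commuting with $\varphi$, and then invoke stability: the eigenbundle decomposition determined by $s_{\mathrm{s}}$ yields a $\varphi$-invariant reduction to a parabolic subgroup, and applying the stability inequality to this reduction and to its opposite forces $s_{\mathrm{s}}$ to be central, after which a similar degree estimate excludes any component of $s_{\mathrm{n}}$ outside $\ker d\iota$; the hypothesis $\varphi\ne 0$ enters precisely in ruling out the central directions lying outside $\ker d\iota$. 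This step---upgrading the pointwise Jordan decomposition to holomorphic sections and extracting the correct strict inequality from the stability condition---is where the genuine analytic work lies, and I expect it to be the main obstacle of the whole proposition.

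Parts (2) and (3) are then formal. For (2), simplicity (Definition~\ref{def:simple}) gives $\Aut(E,\varphi)=Z(H^{\C})\cap\ker\iota$; as this group is central in $H^{\C}$ and acts trivially on $\liemc$, it acts trivially on the local deformation model, so the slice reduces to the zero locus of the Kuranishi map, and the hypothesis $\HH^2(C^{\bullet}_0(E,\varphi))=0$ makes $(E,\varphi)$ a smooth point. For (3), when $G$ is complex the isotropy representation is the adjoint representation and, by the remark following Definition~\ref{def:red-def-complex}, $C^{\bullet}_0(E,\varphi)$ is the adjoint deformation complex of the associated $PG$-Higgs bundle, carrying the nondegenerate invariant pairing induced by $B$; Serre duality then gives $\HH^2(C^{\bullet}_0(E,\varphi))\cong\HH^0(C^{\bullet}_0(E,\varphi))^{*}$. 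Since $G$ is complex, simplicity forces $\aut(E,\varphi)$ to be the centre, so $\HH^0(C^{\bullet}_0(E,\varphi))=0$ and hence $\HH^2(C^{\bullet}_0(E,\varphi))=0$; part (2) then applies.

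For (4), the first assertion follows from Theorem~\ref{HKcorrespond}: a stable $(E,\varphi)$ carries a reduction of $E$ to $H$ solving Hitchin's equation, this reduction induces one of $\tilde E$ to a maximal compact subgroup of $G^{\C}$ solving the corresponding equation, and the Hitchin--Kobayashi correspondence for $G^{\C}$ then shows that $(\tilde E,\tilde\varphi)$ is polystable. For the smoothness assertion I would exploit that, since $\ad\tilde\varphi$ sends $E(\liehc)$ into $E(\liemc)\otimes K$ and $E(\liemc)$ into $E(\liehc)\otimes K$, the deformation complex of $(\tilde E,\tilde\varphi)$ as a $G^{\C}$-Higgs bundle splits as the direct sum of $C^{\bullet}(E,\varphi)$ and a complementary complex; using the decomposition $\liez^{\C}=(\liehc\cap\liez^{\C})\oplus(\liemc\cap\liez^{\C})$ this splitting descends to the reduced complexes, so that $C^{\bullet}_0(E,\varphi)$ is a direct summand of $C^{\bullet}_0(\tilde E,\tilde\varphi)$. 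Stability of $(\tilde E,\tilde\varphi)$ as a $G^{\C}$-Higgs bundle implies, as in (1) and (3), that $\HH^0(C^{\bullet}_0(\tilde E,\tilde\varphi))=0$, whence Serre duality gives $\HH^2(C^{\bullet}_0(\tilde E,\tilde\varphi))=0$; its direct summand $\HH^2(C^{\bullet}_0(E,\varphi))$ therefore vanishes as well. As $(E,\varphi)$ is stable and simple, part (2) now shows it is a smooth point. The one point requiring care is checking that the centre splits compatibly with the $\liehc$--$\liemc$ decomposition, so that the splitting genuinely persists for the reduced complexes.
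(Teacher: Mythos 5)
Your proposal is correct and follows essentially the same route as the paper: parts (2)--(4) rest on exactly the same ingredients (the Kuranishi model with obstruction landing in the \emph{reduced} $\HH^2(C^{\bullet}_0(E,\varphi))$, Serre duality of the self-dual reduced complex for complex groups, and the splitting of the $G^{\CC}$-deformation complex into $C^{\bullet}(E,\varphi)$ and its $K$-twisted dual), while for part (1) and the polystability claim in (4) --- which the paper handles by citation to \cite{garcia-prada-gothen-mundet:2009a} or leaves implicit --- your sketches match the standard arguments. The only cosmetic deviations are that in (4) you apply Serre duality to the full self-dual complex $C^{\bullet}_0(\tilde E,\tilde\varphi)$ and then project onto the summand $C^{\bullet}_0(E,\varphi)$, whereas the paper identifies the complementary summand as $C^{\bullet}_0(E,\varphi)^*\otimes K$ and dualizes it directly, and that your local model should have domain $\HH^1(C^{\bullet}(E,\varphi))$ rather than $\HH^1(C^{\bullet}_0(E,\varphi))$ --- neither point affects the argument, since only the vanishing of the obstruction space is used.
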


\begin{proof}
(1) See \cite[Proposition~3.11]{garcia-prada-gothen-mundet:2009a}.

(2) If $(E,\varphi)$ is simple, there are no singularities coming from
automorphisms of the pair. Therefore the obstruction to smoothness
lies in
 $\HH^2(C^{\bullet}(E,\varphi))$. Analyzing the Kuranishi
model (as done in Kobayashi \cite{kobayashi:1987} in the case of
vector bundles on higher dimensional manifolds, cf.\ also
Friedman--Morgan \cite[p.~301]{friedman-morgan:1994}), one sees that
the image of the Kuranishi map in fact lies in 
the hypercohomolgy of the reduced deformation complex, i.e., in
$\HH^2(C^{\bullet}_0(E,\varphi))=0$. The point is that the Kuranishi
map is given by the \emph{quadratic} part of the holomorphicity
condition
\begin{displaymath}
  0 = \dbar_{A+\dot{A}}(\varphi+\dot\varphi)
  = \dbar_A\varphi + \dbar_A\dot{\varphi} + [\dot{A},\varphi]
    + [\dot{A},\dot\varphi],
\end{displaymath}
which lies in $\Omega^{0,1}E(\liem_0^\CC)$.
This leads to the result. (An
alternative method of proof would be to go through the proof of
Theorem~3.1 of \cite{biswas-ramanan:1994} and see that the vanishing
of $\HH^2(C^{\bullet}_0(E,\varphi))=0$ is really what is required in
this case.)

(3) By stability we have the vanishing
$\HH^0(C^{\bullet}_0(E,\varphi))=0$ and Serre duality of complexes
implies $\HH^2(C^{\bullet}_0(E,\varphi))=0$. The result now follows by
(2). 

(4) Stability of $(\tilde{E},\tilde\varphi)$ implies that
it is infinitesimally simple, i.e.,
$\HH^0(C^{\bullet}(\tilde{E},\tilde\varphi)) = Z(\liegc)$, where 
\begin{displaymath}
  C^{\bullet}(\tilde{E},\tilde\varphi)\colon
  \tilde{E}(\lieg^{\C})
  \xra{\ad(\tilde\varphi)}\tilde{E}(\liegc)\otimes K.
\end{displaymath}
It follows that
\begin{math}
  \HH^0(C^{\bullet}_0(\tilde{E},\tilde\varphi)) = 0.
\end{math}
Moreover,
\begin{displaymath}
  C^{\bullet}_0(\tilde{E},\tilde\varphi) =
  C^{\bullet}_0(E,\varphi) \oplus C^{\bullet}_0(E,\varphi)^*\otimes K
\end{displaymath}
and hence, by Serre duality of complexes, we obtain the vanishing
\begin{math}
  \HH^2(C^{\bullet}_0(E,\varphi))=0.
\end{math}
Again the result is now a consequence of (2). 
\end{proof}

\section{$\SO^*(2n)$-Higgs bundles}\label{sect:SO*Higgs}

\subsection{Preliminaries: the group $\SO^*(2n)$}
\label{sec:prel-group-so2n}

In this section we collect together some basic facts about the group $\SO^*(2n)$ (see \cite{helgason} for more details).  We concentrate on the features that are needed to describe $\SO^*(2n)$-Higgs bundles and to understand their relation to $G$-Higgs bundles for related groups such as $\SL(2n,\C)$ and $\SU(n,n)$.  The group $\SO^*(2n)$ may be defined as the the set of matrices $g\in\SL(2n,\C)$ satisfying 
\begin{equation}\label{eqn:SO*defn}
g^tJ_n\bar{g}=J_n\ \mathrm{and} \ g^tg=I_{2n}\ , \mathrm{where}\ J_n=\begin{pmatrix}0&I_n\\-I_n&0\end{pmatrix}.
\end{equation}
It is thus a subgroup of $\SO(2n,\C)$ which leaves invariant a
skew-Hermitian form.  The group is connected, semisimple, and a
non-compact real form of $\SO(2n,\C)$.  The maximal compact subgroups
are isomorphic to $\U(n)$. The choice
\begin{math}
\Theta (g)=J_ngJ^{-1}_n
\end{math}
of Cartan involution on $\SO^*(2n)$ gives the  Cartan decomposition 
$$\mathfrak{so}^*(2n)=\mathfrak{u}(n)+\liem$$
with
\begin{equation}\label{cartan}
\begin{aligned}\mathfrak{u}(n)&=\left\{\begin{pmatrix}X_1&X_2\\ -X_2&X_1
\end{pmatrix} \ | \  X_1,X_2\in \Mat_{n,n}(\R), X_1^t+X_1=0,  X_2^t-X_2=0\right\},\\
\liem &= \left\{i\begin{pmatrix}Y_1&Y_2\\ Y_2&-Y_1\end{pmatrix} \ | \  Y_1,Y_2\in \Mat_{n,n}(\R),  Y_1^t+Y_1=0, Y_2^t+Y_2=0\right\}.
\end{aligned}
\end{equation}

\begin{remark}  It follows immediately from \eqref{cartan} that
\begin{equation*}
\mathfrak{u}(n)+i\liem =\left\{\begin{pmatrix}A&B\\ -B^t&D\end{pmatrix} \ |\ A,B,D\in  \Mat_{n,n}(\C),\ A+A^t=D+D^t=0\right\}  ,
\end{equation*}
 which can be identified with the Lie algebra of $\SO(2n)$.  This shows that the real form $\SO^*(2n)$ is the non-compact dual to the compact real form $\SO(2n)\subset\SO(2n,\C)$.
\end{remark}


The complexification of the Cartan decomposition is
\begin{equation}
\mathfrak{so}^*(2n)\otimes\C=\mathfrak{gl}(n,\C)+\liemc,
\end{equation}
\noi where
\begin{equation}
\begin{aligned}
\mathfrak{gl}(n,\C)&=\left\{\begin{pmatrix}\frac{Z-Z^t}{2}&-\frac{Z+Z^t}{2i}\\\frac{Z+Z^t}{2i}&\frac{Z-Z^t}{2}\end{pmatrix} \ |\ Z \in \Mat_{n,n}(\C)\right\},\\
\liemc &= \left\{\begin{pmatrix}Y_1&Y_2\\ Y_2&-Y_1\end{pmatrix} \ | \ Y_1,Y_2\in \Mat_{n,n}(\C),  Y_1^t+Y_1=0, Y_2^t+Y_2=0\right\}.
\end{aligned}
\end{equation}
%
\noi It follows that if
$T$ is the complex automorphism of $\C^{2n}$ defined by
\begin{equation}
\label{eq:def-T}
T=\begin{pmatrix}I&iI\\I&-iI\end{pmatrix},
\end{equation}
then
\begin{equation}\label{Ccartan}
\begin{aligned}
&T\mathfrak{gl}(n,\C)T^{-1}=\left\{\begin{pmatrix} Z&0\\0&-Z^t\end{pmatrix}
  \ | \ Z \in \Mat_{n,n}(\C)\right\},\\
&T\liemc T^{-1}= \left\{\begin{pmatrix}0&\beta\\ \gamma&0\end{pmatrix} \ |
  \  \beta,\gamma\in \Mat_{n,n}(\C),\  \beta^t+\beta=0,\ \gamma^t+\gamma=0\right\}.
\end{aligned}
\end{equation}
\noi This reflects the following fact. 

\begin{proposition}\label{prop:SUnn} With $T$ defined in \eqref{eq:def-T}, 
\begin{equation}\label{SU(n,n) embed}
T\SO^*(2n) T^{-1}\subset\SU(n,n), 
\end{equation}
\noi where $\SU(n,n)\subset\SL(2n,\C)$ is the subgroup defined by 
\begin{equation}
\SU(n,n)=\{ A\in\SL(2n,\C)\ |\  \bar{A}^tI_{n,n}A=I_{n,n},\ \mathrm{\det}(A)=1\}\ 
\end{equation}
\noi with $I_{n,n}=\begin{pmatrix}I_n&0\\0&-I_n\end{pmatrix}$.

\end{proposition}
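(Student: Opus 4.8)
The plan is to track how the two defining conditions of $\SO^*(2n)$ in \eqref{eqn:SO*defn} transform under conjugation by $T$ and to show that the skew-Hermitian condition turns into precisely the Hermitian condition cutting out $\SU(n,n)$. Write $A = TgT^{-1}$ for $g\in\SO^*(2n)$, so $g = T^{-1}AT$. Conjugation preserves the determinant, so $\det A = \det g = 1$ and the special linear condition holds for free; it then remains only to verify $\overline{A}^t I_{n,n} A = I_{n,n}$.

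The computational heart of the argument is two elementary identities for $T$. First I would record $T^{-1} = \tfrac12\begin{pmatrix} I_n & I_n \\ -iI_n & iI_n \end{pmatrix}$ and observe the key relation $\overline{T}^t = 2T^{-1}$, equivalently $\overline{T}^t T = 2I_{2n}$ (so that $T/\sqrt2$ is unitary and $(\overline{T}^t)^{-1} = \tfrac12 T$). Second I would compute directly that $T J_n T^{-1} = -i\,I_{n,n}$; this is exactly the reason for the choice of $T$, since it simultaneously normalizes the relevant forms, as is already visible in the block description \eqref{Ccartan}.

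With these in hand, I would start from the skew-Hermitian condition in the equivalent conjugated form $\overline{g}^t J_n g = J_n$ (obtained by conjugating $g^t J_n \overline{g} = J_n$ and using that $J_n$ is real), substitute $g = T^{-1}AT$, and then conjugate by $T$. Using $\overline{g}^t = \overline{T}^t\,\overline{A}^t\,(\overline{T}^t)^{-1}$ together with $\overline{T}^t = 2T^{-1}$ and $(\overline{T}^t)^{-1} = \tfrac12 T$, the identity $\overline{g}^t J_n g = J_n$ reduces to $\overline{A}^t\,\bigl(\tfrac12 T J_n T^{-1}\bigr)\,A = \tfrac12 T J_n T^{-1}$. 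By the second identity the middle matrix equals $\tfrac12 T J_n T^{-1} = -\tfrac{i}{2}\,I_{n,n}$, a nonzero scalar multiple of $I_{n,n}$, so this is exactly $\overline{A}^t I_{n,n} A = I_{n,n}$. Combined with $\det A = 1$, this shows $A = TgT^{-1}\in\SU(n,n)$ and proves the proposition.

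I expect the only genuine obstacle to be bookkeeping: keeping the interplay of transpose and complex conjugation straight, and in particular noticing that the orthogonality condition $g^t g = I_{2n}$ is \emph{not} needed for the inclusion itself. That condition instead cuts $\SO^*(2n)$ out as the further subgroup preserving the symmetric form with matrix $T^tT$, consistent with the block structure in \eqref{Ccartan}. A clean way to avoid sign and conjugation errors is to establish once and for all that $T/\sqrt2$ is unitary and that $T$ diagonalizes $J_n$, after which each step is a one-line matrix manipulation.
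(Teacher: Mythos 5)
Your proposal is correct and follows essentially the same route as the paper: both transfer the skew-Hermitian defining relation $\overline{g}^tJ_ng=J_n$ through conjugation by $T$ using a matrix identity for $T$ (the paper's $\overline{T}^tI_{n,n}T=2iJ_n$ is equivalent to your pair $\overline{T}^t=2T^{-1}$, $TJ_nT^{-1}=-iI_{n,n}$), and both observe $\det A=\det g=1$. Your added remark that the condition $g^tg=I_{2n}$ is not needed for the inclusion is accurate and consistent with the paper's argument, which likewise uses only the skew-Hermitian relation.
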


\begin{proof} Using $\bar{T}^tI_{n,n}T=2iJ$ it follows that if $g\in\SO^*(2n)$ then $A=TgT^{-1}$ satisfies $\bar{A}^tI_{n,n}A=I_{n,n}$. Also $\det(A)=\det(g)=1$.
\end{proof}

\begin{remark} If $g\in\SO^*(2n)$, i.e.\ $g$ satisfies \eqref{eqn:SO*defn}, and $A=TgT^{-1}$ then a simple calculation shows that $A^tI_{n,n}JA=I_{n,n}J$. Combined with Proposition \ref{prop:SUnn} we can thus identify $\SO^*(2n)\subset\SU(n,n)\subset\SL(2n,\C)$ as the subgroup defined by the relation $A^tI_{n,n}JA=I_{n,n}J$. This is the definition given in \cite{knapp:1996}.
\end{remark}

\subsection{$\SO^*(2n)$-Higgs bundles and stability}
\label{sec:spnr-higgs}

When $H^\CC$ is a classical group we prefer to work with the vector
bundle $V$ associated to the standard representation rather than the
$H^\CC$-principal bundle. We take this point of view for
$\SO^*(2n)$-Higgs bundles, for which $H^{\C}=\GL(n,\C)$ and $V$ is a
rank $n$ vector bundle. In view of (\ref{Ccartan}),
Definition~\ref{def:g-higgs} then becomes the following.

\begin{definition}\label{defn:SO*Higgs}  
  A {\bf $\SO^*(2n)$-Higgs bundle} over $X$ is a pair $(V,\varphi)$ in
  which $V$ is a rank $n$ holomorphic vector bundle over $X$, and the
  Higgs field $\varphi=(\beta,\gamma)$ has components $\beta\in
  H^0(X,\Lambda^2V\otimes K)$ and $\gamma\in H^0(X,\Lambda^2V^*\otimes
  K)$.  We will sometimes write $\varphi=\beta+\gamma$, where the sum
  is interpreted as being in $\End(V\oplus
  V^*)\otimes K$, viewing $\beta$ and $\gamma$ as skew-symmetric maps $\beta\colon V^*\to V\otimes K$ and $\gamma\colon V\to
  V^*\otimes L$. We will also sometimes use the notation
  $(V,\varphi)=(V,\beta,\gamma)$.
\end{definition}

In order to  state the
(semi,poly)stability condition for a
$\SO^*(2n)$-Higgs bundle we need to introduce some notation.

Let $V \to X$ be a holomorphic vector bundle. Then there is an
isomorphism $V \otimes V \simeq \Lambda^2 V \oplus S^2 V$.
Let $U$ and $W$ be subbundles of $V$. We define $U \otimes_{A} W$ to
be the sheaf theoretic kernel of the projection $V \otimes V \to S^2V$
restricted to $ U \otimes V$:
\begin{displaymath}
0 \to U \otimes_A W \to U \otimes W \to S^2 V.
\end{displaymath}
Since $U \otimes W$ is locally free and $X$ is a curve, $U \otimes_A
W$ can be viewed as a subbundle of $\Lambda^2 V$.
We define $U^{\perp} \subset V^*$ to be the kernel of the
restriction map $V^* \to U^*$, i.e.
\begin{displaymath}
0 \to U^{\perp} \to V^* \to U^*\to 0.
\end{displaymath}

\begin{definition}\label{defn:filtrations}
  Let $k$ be an integer satisfying $k\geq 1$. We define a
  \textbf{filtration of} $V$ \textbf{of length} $k-1$ to be any
  strictly increasing filtration by holomorphic subbundles
\begin{displaymath}
\VVV=(0 \subsetneq V_1\subsetneq V_2\subsetneq\dots\subsetneq
V_k=V).
\end{displaymath}

Let $\lambda=(\lambda_1<\lambda_2 <\dots <\lambda_k)$ be a strictly increasing
sequence of $k$ real numbers. Define the subbundle
\begin{equation}
\label{eq:NV-lambda-def}
N(\VVV,\lambda)=
\sum_{\lambda_i+\lambda_j\leq 0} K\otimes V_i\otimes_A V_j \oplus
\sum_{\lambda_i+\lambda_j\geq 0} K\otimes V_{i-1}^{\perp}\otimes_A
V_{j-1}^{\perp} \subset K\otimes (\Lambda^2V\oplus \Lambda^2V^*).
\end{equation}
Define also
\begin{equation}
\label{eq:dV-lambda-alpha}
d(\VVV,\lambda)=
\lambda_k\deg V_k
+\sum_{j=1}^{k-1}(\lambda_j-\lambda_{j+1})\deg V_j.
\end{equation}

We say that the pair $(\VVV,\lambda)$ is \textbf{trivial} if the length of
$\VVV$ is $0$ and $\lambda_1=0$.
We say that the pair $(\VVV,\lambda)$ is  $\varphi$-\textbf{invariant}
if  $\varphi=\beta+\gamma\in H^0(X,N(\VVV,\lambda))$.

\end{definition}

The general results of \cite{garcia-prada-gothen-mundet:2009a} allow
us to express stability, semistability and polystability for
$\SO^*(2n)$-Higgs bundles in terms of filtrations, as follows.

\begin{definition}
\label{prop:sp2n-alpha-stability}
The Higgs bundle $(V,\varphi)$ is \textbf{semistable} if for 
any integer $k\geq 1$, any filtration $\VVV$ of length $k-1$ of $V$ and
any strictly increasing sequence $\lambda$ of $k$ real numbers  such that
$(\VVV,\lambda)$ is $\varphi$-invariant  we have 
\begin{equation}
d(\VVV,\lambda)\geq 0. \label{eq:des}
\end{equation}

The Higgs bundle $(V,\varphi)$ is \textbf{stable} if under the same conditions
as above with the additional condition that $(\VVV,\lambda)$ be non-trivial
we have the strict  inequality 
\begin{equation}
d(\VVV,\lambda)> 0. 
\end{equation}


The Higgs bundle $(V,\varphi)$ is \textbf{polystable} if it is semistable and
for any integer $k\geq 1$, any filtration $\VVV$ of length $k-1$ of $V$ and
any
strictly increasing sequence $\lambda$ of $k$ real numbers such that
$(\VVV,\varphi)$ is $\varphi$-invariant  and  $d(\VVV,\lambda)=0$ there is
an isomorphism of holomorphic bundles
$$V\simeq V_1\oplus V_2/V_1\oplus\dots\oplus V_k/V_{k-1}$$
with respect to which
$$\beta\in H^0(X,\bigoplus_{\lambda_i+\lambda_j=0}K\otimes
V_i/V_{i-1}\otimes_A V_j/V_{j-1})$$ and
$$\gamma\in H^0(X,\bigoplus_{\lambda_i+\lambda_j=0}K\otimes
(V_i/V_{i-1})^*\otimes_A (V_j/V_{j-1})^*).$$
We follow the convention that a direct sum of vector bundles over an
empty indexing set is the zero vector bundle. 
\end{definition}

\begin{remark}\label{remark-parameter}
In general the notion of (semi,poly)stability depend on a real parameter
related to the fact that the center of the maximal compact subgroup of 
$\SO^*(2n)$ is isomorphic to $\U(1)$ (see \cite{GGM}). However, since
our main interest is in relation to representations of the fundamental
group, we have the value of this parameter to be zero. 
\end{remark}

Following the same arguments given in
\cite{garcia-prada-gothen-mundet:2009a} for the group $\Sp(2n,\R)$,
the stability conditions for $\SO^*(2n)$-Higgs bundles can be
simplified. Before we give the simplified conditions are given
in Proposition~\ref{prop:simplifiedss} we need some preliminaries.

\begin{definition}\label{prop:SO-star-semi-stability} 
Let $(V,\varphi)$ be a $\SO^*(2n)$-Higgs bundle with $\varphi=(\beta,\gamma)$.  A filtration of subbundles
\begin{displaymath}\label{eqn:filtr}
0 \subset V_1 \subset V_2 \subset V
\end{displaymath}
such that
\begin{equation} \label{invariance}
\beta\in H^0(X,K\otimes(\Lambda^2V_2 + V_1\otimes_{A} V)),
\quad \gamma\in H^0(X,K\otimes(\Lambda^2V_1^{\perp} + V_2^{\perp}\otimes_A V^*)),
\end{equation}
is called a {\bf $\varphi$-invariant two-step filtration}. 
\end{definition}

\begin{remark}
It is important to note that the summands in the bundles $\Lambda^2V_2
+ V_1\otimes_{A}V$ and $\Lambda^2V_1^{\perp} + V_2^{\perp}\otimes_A
V^*$ intersect non-trivially, so they do not form a direct sum.
\end{remark}

\begin{remark}
We allow equality between the terms of the filtration in order to avoid having to consider separately 
filtrations that are length one or zero.  For example the filtration $0\subset V_1\subset V$ is included as 
the two-step filtration in which $V_1=V_2$.
\end{remark}

\noi It is sometimes convenient to reformulate the $\varphi$-invariance
condition using the following lemma, which is easily proved.

\begin{lemma} \label{lemma:2-step}
Let $(V,\varphi)$ be a $\SO^*(2n)$-Higgs bundle with $\varphi=(\beta,\gamma)$.  A
two-step filtration 
\begin{math}
0 \subset V_1 \subset V_2 \subset V
\end{math}
is $\varphi$-invariant if and only if the following conditions are satisfied:
\begin{align*}
\beta (V_2^{\perp})&\subset V_1\otimes K,
&\gamma (V_2)&\subset V_1^{\perp}\otimes K,\\
\beta (V_1^{\perp})&\subset V_2\otimes K,
&\gamma (V_1)&\subset V_2^{\perp}\otimes K.
\end{align*}
\end{lemma}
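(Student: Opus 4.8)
The plan is to prove the equivalence fiberwise and to deduce the two $\gamma$-conditions from the two $\beta$-conditions by duality, so that everything reduces to a single linear-algebra computation. The invariance hypothesis of Definition~\ref{prop:SO-star-semi-stability} and the four displayed inclusions are all conditions asserting that the image of a bundle map lies in a given subbundle, and a global section of a vector bundle lies in a subbundle exactly when it does so at every point. Hence it suffices to establish the corresponding statement pointwise over $X$. Twisting uniformly by $K$, I regard $\beta$ as a skew-symmetric homomorphism $\beta\colon V^*\to V$ and $\gamma$ as a skew-symmetric homomorphism $\gamma\colon V\to V^*$ at a fixed point; I will also use that for a skew map $\beta$ one has $\ker\beta=(\operatorname{im}\beta)^\perp$, so that ``$\operatorname{im}\beta\subset U$'' and ``$\beta(U^\perp)=0$'' are equivalent.

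The core of the argument is the linear-algebra claim that, for subspaces $V_1\subset V_2\subset V$ and a skew form $\beta\in\Lambda^2 V$,
\[
\beta\in \Lambda^2 V_2 + V_1\otimes_A V
\quad\Longleftrightarrow\quad
\beta(V_2^\perp)\subset V_1 \ \text{ and }\ \beta(V_1^\perp)\subset V_2 .
\]
For the forward implication I would use the explicit descriptions of the two summands: an element of $\Lambda^2 V_2$, written $\sum u_i\wedge u_i'$ with $u_i,u_i'\in V_2$, annihilates $V_2^\perp$ and has image in $V_2$; an element of $V_1\otimes_A V$, written $\sum v_i\wedge w_i$ with $v_i\in V_1$, sends $\xi$ to $\sum(\langle\xi,v_i\rangle w_i-\langle\xi,w_i\rangle v_i)$, which for $\xi\in V_1^\perp$ reduces to $-\sum\langle\xi,w_i\rangle v_i\in V_1$. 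Evaluating a sum $\beta=\beta_0+\beta_1$ of such elements on $V_2^\perp\subset V_1^\perp$ and on $V_1^\perp$ then yields the two inclusions directly. Finally, the $\gamma$-part of the lemma follows by applying this claim verbatim to $V^*$ equipped with the flag $V_2^\perp\subset V_1^\perp\subset V^*$: since $(V_i^\perp)^\perp=V_i$, membership $\gamma\in\Lambda^2 V_1^\perp + V_2^\perp\otimes_A V^*$ becomes precisely $\gamma(V_1)\subset V_2^\perp$ and $\gamma(V_2)\subset V_1^\perp$.

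The main obstacle, though routine, is the reverse implication, where the image conditions must be shown to force membership in the sum $\Lambda^2 V_2 + V_1\otimes_A V$. Here I would choose a basis $e_1,\dots,e_n$ adapted to the flag, so that $V_1=\langle e_1,\dots,e_a\rangle$ and $V_2=\langle e_1,\dots,e_{a+b}\rangle$, with dual basis giving $V_2^\perp=\langle f_{a+b+1},\dots,f_n\rangle$ and $V_1^\perp=\langle f_{a+1},\dots,f_n\rangle$. Writing $\beta$ as a skew matrix in blocks indexed by $V_1$, $V_2/V_1$, and $V/V_2$, the inclusions $\beta(V_2^\perp)\subset V_1$ and $\beta(V_1^\perp)\subset V_2$ translate exactly into the vanishing of the three blocks (the two off-diagonal blocks coupling $V_2/V_1$ and $V/V_2$, and the diagonal block of $V/V_2$) that are simultaneously absent from $\Lambda^2 V_2$ and from $V_1\otimes_A V$. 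The surviving blocks then split in the evident way, with the block lying entirely in $V_2$ giving a $\Lambda^2 V_2$-summand and the block involving $V_1$ and $V/V_2$ giving a $V_1\otimes_A V$-summand. This bookkeeping closes the equivalence, and together with the duality step of the previous paragraph it establishes both pairs of conditions in the lemma.
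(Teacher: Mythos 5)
Your proof is correct. Note that the paper itself gives no argument for this lemma at all: it is stated as "easily proved" and closed with a \qed, so there is no "paper approach" to diverge from --- your writeup simply supplies the routine verification the authors omitted. The three steps (pointwise reduction, the block-matrix computation showing that the two image conditions for $\beta$ are exactly the vanishing of the $(V_2/V_1,V/V_2)$, $(V/V_2,V_2/V_1)$ and $(V/V_2,V/V_2)$ blocks, and the duality step applying the same claim to the flag $V_2^{\perp}\subset V_1^{\perp}\subset V^*$ to get the $\gamma$-conditions) are all sound; one could add the minor observation that, by skew-symmetry of $\beta$, the two $\beta$-inclusions are in fact equivalent to each other, but this does not affect the argument.
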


There is yet another useful interpretation of the $\varphi$-invariance of
a two step filtration  that will be used later. To explain this, 
let $\Omega_\gamma:V\times V\to K$
be the $K$-twisted skew-symmetric bilinear pairing defined by $\gamma$ as
$$
\Omega_{\gamma}(u, v):=(\gamma(v))(u), \;\;\mbox{for}\;\; u,v\in V,
$$ 
and denote, for a subbundle  $V'\subset V$, 
$$
V'^{\perp_\gamma}:=\{v\in V\;\;|\;\; \Omega_\gamma(u,v)=0\;\;\mbox{for every}\;\;
u\in V'\}.
$$

\noi The following lemma is immediate.

\begin{lemma}\label{K-pairing}
For any  filtration $0\subset V_1\subset V_2\subset V$,
we have that  
$\gamma(V_1)\subset K\otimes V_2^\perp$ is equivalent to 
$V_1\subset V_2^{\perp_\gamma}$. This  is   equivalent
to $V_2\subset V_1^{\perp_\gamma}$ which, in turn,  is equivalent
to $\gamma(V_2)\subset K\otimes V_1^\perp$.
Similar statements apply to $\beta$.
\end{lemma}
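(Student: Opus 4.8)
The plan is to establish the chain of equivalences among the four conditions
\[
\gamma(V_1)\subset K\otimes V_2^\perp,\qquad V_1\subset V_2^{\perp_\gamma},\qquad V_2\subset V_1^{\perp_\gamma},\qquad \gamma(V_2)\subset K\otimes V_1^\perp
\]
purely by unwinding the definitions of $\Omega_\gamma$, of the annihilator $V'^\perp$, and of $V'^{\perp_\gamma}$, using as the only structural input the skew-symmetry of $\gamma$.

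First I would translate the first containment into a pointwise vanishing statement for the pairing. By definition $V_2^\perp\subset V^*$ is the subsheaf of functionals annihilating $V_2$, so $K\otimes V_2^\perp$ consists exactly of those $K$-twisted functionals that vanish on $V_2$. Hence $\gamma(V_1)\subset K\otimes V_2^\perp$ holds if and only if $(\gamma(v))(u)=0$ for all local sections $v$ of $V_1$ and $u$ of $V_2$; since $\Omega_\gamma(u,v)=(\gamma(v))(u)$, this is the condition $\Omega_\gamma(u,v)=0$ for all $u\in V_2$, $v\in V_1$, which is precisely $V_1\subset V_2^{\perp_\gamma}$. This gives the first equivalence, and the last equivalence $V_2\subset V_1^{\perp_\gamma}\Leftrightarrow\gamma(V_2)\subset K\otimes V_1^\perp$ follows by the identical computation with the roles of $V_1$ and $V_2$ interchanged.

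For the middle equivalence I would invoke skew-symmetry: because $\gamma$ is skew-symmetric, $(\gamma(v))(u)=-(\gamma(u))(v)$, so $\Omega_\gamma(u,v)=-\Omega_\gamma(v,u)$. Thus the vanishing of $\Omega_\gamma(u,v)$ for all $u\in V_2$, $v\in V_1$ is equivalent to the vanishing of $\Omega_\gamma(u,v)$ for all $u\in V_1$, $v\in V_2$, i.e. to $V_2\subset V_1^{\perp_\gamma}$. Finally, the statement for $\beta$ is proved verbatim, applying the same argument to the skew-symmetric map $\beta\colon V^*\to V\otimes K$ and the induced $K$-twisted pairing on $V^*$. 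No step here poses a genuine obstacle; the only point that needs care is the bookkeeping of the $K$-twist, namely the identification of $K\otimes V_2^\perp$ inside $K\otimes V^*$ with the twisted functionals annihilating $V_2$, after which each equivalence is a direct transcription of the definitions.
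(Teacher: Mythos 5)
Your proof is correct, and it is exactly the argument the paper has in mind: the paper states this lemma without proof (``The following lemma is immediate''), and your unwinding of the definitions --- with the first and last equivalences being purely definitional and skew-symmetry of $\gamma$ invoked only for the middle one --- is the intended verification. Nothing further is needed.
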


\noi The following  simplified version of the stability conditions  
follows in the
same way as the analogous results for $\Sp(2n,\R)$-Higgs bundles 
(see \cite{garcia-prada-gothen-mundet:2009a}).

\begin{proposition}\label{prop:simplifiedss}
A $\SO^*(2n)$-Higgs bundle $(V,\varphi)$ with $\varphi=(\beta,\gamma)$ is semistable if and only 
for every $\varphi$-invariant two-step filtration 
$0 \subset V_1 \subset V_2 \subset V$ we have that
\begin{equation}
\label{eq:spn-simplified-sstab}
\deg(V) - \deg(V_1) - \deg(V_2) \geq 0.
\end{equation}

A $\SO^*(2n)$-Higgs bundle $(V,\varphi)$ with $\varphi=(\beta,\gamma)$ is stable if and
only if for every $\varphi$-invariant two-step filtration 
$0 \subset V_1 \subset V_2 \subset V$ except the 
filtration $0 =V_1 \subset V_2 = V$ we have that
\begin{equation}
\label{eq:spn-simplified-stab}
\deg(V) - \deg(V_1) - \deg(V_2) > 0.
\end{equation}

A $\SO^*(2n)$-Higgs bundle $(V,\varphi)$ with $\varphi=(\beta,\gamma)$ is polystable if is semistable
and  for any $\varphi$-invariant filtration 
$0 \subset V_1 \subset V_2 \subset V$,
distinct from the filtration $0=V_1\subset V_2=V$
such that
$$
  \deg(V) - \deg(V_1) - \deg(V_2) = 0,
$$
there exists an isomorphism of holomorphic vector bundles
$$
 V\simeq  V_1\oplus V_2/V_1\oplus V/V_2
$$
with respect to which we have:

\begin{itemize}
\item[(a)] $V_2\simeq V_1\oplus V_2/V_1$,

\item [(b)] $ \beta\in H^0(X,K\otimes( 
\Lambda^2(V_2/V_1)\oplus 
V_1\otimes_A(V/V_2))$,

\item [(c)] $ \gamma\in H^0(X,K\otimes(\Lambda^2(V_2/V_1)^*\oplus 
V_1^*\otimes_A (V/V_2)^*)$.
\end{itemize}
\end{proposition}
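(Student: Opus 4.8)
The plan is to derive Proposition~\ref{prop:simplifiedss} from the general criterion of Definition~\ref{prop:sp2n-alpha-stability}, following essentially verbatim the reduction carried out for $\Sp(2n,\R)$ in \cite{garcia-prada-gothen-mundet:2009a}. The starting point is a dictionary between the two-step filtrations of Definition~\ref{prop:SO-star-semi-stability} and the weighted filtrations $(\VVV,\lambda)$ of Definition~\ref{defn:filtrations}: to a two-step filtration $0\subset V_1\subset V_2\subset V$ I associate (after discarding any repeated terms) the filtration $\VVV$ with graded pieces $V_1,\ V_2/V_1,\ V/V_2$ together with the weight vector $\lambda=(-1,0,1)$. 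Substituting these weights into \eqref{eq:NV-lambda-def}, the pairs $(i,j)$ with $\lambda_i+\lambda_j\le 0$ contribute precisely $\Lambda^2V_2\oplus(V_1\otimes_A V)$ to the $\beta$-slot, while the pairs with $\lambda_i+\lambda_j\ge 0$ contribute $\Lambda^2V_1^{\perp}\oplus(V_2^{\perp}\otimes_A V^*)$ to the $\gamma$-slot; hence $(\VVV,\lambda)$ is $\varphi$-invariant in the sense of Definition~\ref{defn:filtrations} exactly when the conditions \eqref{invariance} hold. The same substitution in \eqref{eq:dV-lambda-alpha} gives $d(\VVV,\lambda)=\deg V-\deg V_1-\deg V_2$, and the trivial weighted filtration matches $0=V_1\subset V_2=V$. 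With this dictionary, one implication of each part of the proposition is immediate: restricting the general (semi/)stability condition to weighted filtrations with $\lambda=(-1,0,1)$ yields \eqref{eq:spn-simplified-sstab} and \eqref{eq:spn-simplified-stab}.

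The content of the proof is the converse: that the inequalities on two-step filtrations already force $d(\VVV,\lambda)\ge 0$ for every $\varphi$-invariant weighted filtration $(\VVV,\lambda)$ of arbitrary length. Here I would argue by convexity. With the underlying subbundles $\VVV$ fixed, the function $\lambda\mapsto d(\VVV,\lambda)$ is linear, while the admissible weight vectors---those for which $(\VVV,\lambda)$ remains $\varphi$-invariant---form a polyhedral cone cut out by constraints of the form $\lambda_i+\lambda_j\le 0$ (imposed wherever $\beta$ has a nonzero graded component) and $\lambda_i+\lambda_j\ge 0$ (wherever $\gamma$ does). The decisive structural input is that $\beta$ and $\gamma$ take values in the \emph{antisymmetric} bundles $\Lambda^2V\otimes K$ and $\Lambda^2V^*\otimes K$, so that these constraints are symmetric under $\lambda\mapsto-\lambda$; consequently it suffices to test the inequality on weight vectors taking only the three values $-1,0,1$, that is, on the extreme rays of the admissible cone, which are of this form. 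Each such vector partitions $\VVV$ into three blocks and therefore defines a genuine two-step $\varphi$-invariant filtration on which $d$ reduces to $\deg V-\deg V_1-\deg V_2$. Since a linear functional that is nonnegative on the generators of a cone is nonnegative throughout, and since by continuity in $\lambda$ one may assume rational, hence integral, weights, the two-step inequalities propagate to all weighted filtrations.

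For the polystability statement I would run the same reduction on the equality locus. General polystability (Definition~\ref{prop:sp2n-alpha-stability}) supplies, for each $\varphi$-invariant $(\VVV,\lambda)$ with $d(\VVV,\lambda)=0$, a holomorphic splitting $V\simeq\bigoplus_i V_i/V_{i-1}$ relative to which $\beta$ and $\gamma$ retain only their components in the pairs with $\lambda_i+\lambda_j=0$. Applying this to the weight vector $\lambda=(-1,0,1)$ attached to a two-step filtration with $\deg V-\deg V_1-\deg V_2=0$ produces the isomorphism $V\simeq V_1\oplus V_2/V_1\oplus V/V_2$ of (a); the bookkeeping $\lambda_i+\lambda_j=0\iff(i,j)\in\{(1,3),(2,2)\}$ then confines $\beta$ to $\Lambda^2(V_2/V_1)\oplus V_1\otimes_A(V/V_2)$ and $\gamma$ to $\Lambda^2(V_2/V_1)^*\oplus V_1^*\otimes_A(V/V_2)^*$, which are conditions (b) and (c).

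The main obstacle is the convexity/extremality step of the second paragraph: establishing that, for the skew-symmetric isotropy representation of $\SO^*(2n)$, the cone of admissible weights is generated by vectors supported on the three weights $-1,0,1$, so that the general criterion genuinely collapses onto two-step filtrations. Everything else is either the bookkeeping of the dictionary or a direct transcription of the $\Sp(2n,\R)$ argument, whose only substantive difference---the skew- rather than symmetry of $\beta$ and $\gamma$---enters these sign constraints only formally.
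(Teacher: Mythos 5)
The paper offers no proof of Proposition~\ref{prop:simplifiedss} beyond the remark that it ``follows in the same way as the analogous results for $\Sp(2n,\R)$-Higgs bundles'' in \cite{garcia-prada-gothen-mundet:2009a}, so the relevant comparison is with that argument. Your skeleton matches it: the dictionary between two-step filtrations with weights $(-1,0,1)$ and the weighted filtrations of Definition~\ref{defn:filtrations}, the easy direction by restriction, and a reduction of arbitrary admissible weights to three-valued ones; your computations of $N(\VVV,\lambda)$ and $d(\VVV,\lambda)$ are correct. The gap is precisely the step you yourself flag as ``the main obstacle,'' and the justification you sketch for it is wrong on three counts. First, the admissible cone is \emph{not} symmetric under $\lambda\mapsto-\lambda$: negation exchanges the constraints $\lambda_i+\lambda_j\le 0$ imposed by $\beta$ with the constraints $\lambda_i+\lambda_j\ge 0$ imposed by $\gamma$, and nothing forces the supports of $\beta$ and $\gamma$ to match. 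Second, skew-symmetry cannot be ``the decisive structural input'': in both the $\SO^*(2n)$ and the $\Sp(2n,\R)$ case the constraints are indexed by unordered pairs $\{i,j\}$ and have exactly the same form, so the reduction is insensitive to whether $\beta,\gamma$ are skew or symmetric. Third, the cone need not be pointed (for $\beta=\gamma=0$ it is the full monotone cone, which contains the line $\R\cdot(1,\dots,1)$), so nonnegativity of $d(\VVV,\cdot)$ on extreme rays does not by itself give nonnegativity on the cone. What actually closes the gap is the elementary truncation argument of \cite{garcia-prada-gothen-mundet:2009a,GGM}: for $c>0$ set $\lambda^{(c)}_i=-1$ if $\lambda_i\le -c$, $\lambda^{(c)}_i=+1$ if $\lambda_i\ge c$, and $\lambda^{(c)}_i=0$ otherwise. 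A direct check on the inequalities (no symmetry needed) shows each $\lambda^{(c)}$ satisfies every constraint that $\lambda$ does; $\lambda$ is a finite positive combination of the finitely many distinct nonzero $\lambda^{(c)}$; and each nonzero $\lambda^{(c)}$ is the weight vector of a $\varphi$-invariant two-step filtration distinct from $0=V_1\subset V_2=V$, on which $d$ equals $\deg V-\deg V_1-\deg V_2$. Linearity of $d(\VVV,\cdot)$ then propagates \eqref{eq:spn-simplified-sstab} and \eqref{eq:spn-simplified-stab} to all weighted filtrations.

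Your polystability paragraph has a separate problem: it argues in the wrong direction. The proposition asserts that semistability together with the splittings (a)--(c) for every two-step filtration computing $\deg V-\deg V_1-\deg V_2=0$ \emph{implies} polystability in the sense of Definition~\ref{prop:sp2n-alpha-stability}; you instead derive (a)--(c) from the general polystability condition, which is the easy converse. For the stated direction one must take an arbitrary $\varphi$-invariant $(\VVV,\lambda)$ with $d(\VVV,\lambda)=0$, use semistability and the truncation decomposition to conclude $d(\VVV,\lambda^{(c)})=0$ for every piece, invoke the hypothesis to split $V$ along each resulting two-step filtration, and then assemble these splittings into a single holomorphic splitting in which $\beta$ and $\gamma$ are confined to the weight-zero pairs, as Definition~\ref{prop:sp2n-alpha-stability} demands. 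That assembly step is where the content lies, and it is absent from your proposal.
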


\begin{remark} \label{remark: zero}If $\beta=\gamma=0$ then the semistability
  condition is  equivalent to the requirements that $\deg V =0$ and $V$ is semistable.
\end{remark}


\subsection{The $\SO^*(2n)$-Hitchin equations}\label{hitcheq}
Using the vector bundle picture, in which a $\SO^*(2n)$-Higgs bundle
is specified by data $(V,\beta,\gamma)$, we now make explicit the
Hitchin equations in this case.  A reduction of structure group to
$H=\U(n)$ corresponds to a choice of Hermitian metric $h$ on
the holomorphic bundle $V$.  The Hitchin equations now become
\begin{equation}\label{eqn:SO*Hitch}
F^h_V+\beta\beta^*+\gamma^*\gamma = 0.
\end{equation}
Here we denote the curvature for the Chern connection on $V$ by
$F_V^h$ and the adjoints are with respect to the hermitian metric $h$
(combined with complex conjugation $dz\mapsto d\bar{z}$ on the form
component).

We refer to equation \eqref{eqn:SO*Hitch} as the
\textbf{$\SO^*(2n)$-Hitchin equation}.  Theorem \ref{HKcorrespond}
thus becomes the following.

\begin{theorem}\label{HKforSO*}
Let $(V,\beta,\gamma)$ be a $\SO^*(2n)$-Higgs bundle.  The bundle $V$ admits a metric satisfying the $\SO^*(2n)$-Hitchin equation  \eqref{eqn:SO*Hitch}
if and only if $(V,\beta,\gamma)$ is polystable.
\end{theorem}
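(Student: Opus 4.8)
The plan is to obtain the statement as the specialization of the general Hitchin--Kobayashi correspondence, Theorem \ref{HKcorrespond}, to the case $G=\SO^*(2n)$, assembling the translations of the two ingredients---the Hitchin equation and the polystability condition---that have already been carried out in the preceding sections. First I would set up the dictionary between the abstract and the concrete pictures. For $\SO^*(2n)$ one has $H^{\C}=\GL(n,\C)$, so by Remark \ref{rem:compact} the holomorphic principal $H^{\C}$-bundle $E$ is replaced by the rank $n$ holomorphic vector bundle $V$ associated to the standard representation; the isotropy representation identifies $E(\liemc)$ with $\Lambda^2V\oplus\Lambda^2V^*$, so that the Higgs field becomes exactly the pair $\varphi=(\beta,\gamma)$ of Definition \ref{defn:SO*Higgs}.

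Next I would translate the reduction of structure group together with the Hitchin equation. A reduction of the structure group of $E$ from $H^{\C}=\GL(n,\C)$ to $H=\U(n)$ is precisely a choice of Hermitian metric $h$ on $V$, and the curvature of the associated Chern connection is $F_V^h$. Since $\SO^*(2n)$ is semisimple, the right-hand side of the general equation \eqref{eqn:GHitch} is zero, and there is no stability parameter to account for by Remark \ref{remark-parameter}. Using the embedding \eqref{Ccartan} of $\liemc$ into $\mathfrak{sl}(2n,\C)$ and the compact involution $\tau(A)=-\overline{A}^t$, the term $[\varphi,\tau_h(\varphi)]$ was computed in Section \ref{hitcheq}, and the general equation $F_h-[\varphi,\tau_h(\varphi)]=0$ reduces on $V$ to the $\SO^*(2n)$-Hitchin equation \eqref{eqn:SO*Hitch}; the two diagonal blocks on $V\oplus V^*$ are mutually adjoint, so the single equation on $V$ captures the full equation.

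Finally I would match the two notions of polystability. The polystability appearing in Theorem \ref{HKcorrespond} is the general one of \cite{garcia-prada-gothen-mundet:2009a}, whereas the whole point of Definition \ref{prop:sp2n-alpha-stability} and its simplification in Proposition \ref{prop:simplifiedss} is that, for $\SO^*(2n)$-Higgs bundles, this general condition is equivalent to the filtration condition we have adopted as our working definition of polystability. Combining this identification with the equation computed above, Theorem \ref{HKcorrespond} yields exactly the asserted equivalence.

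I expect the main (and essentially only) obstacle to be verifying the faithfulness of these two translations, rather than any analytic difficulty: all the hard analysis---that the existence of a solution is equivalent to polystability---is imported wholesale from Theorem \ref{HKcorrespond}. The points that genuinely need care are the curvature computation of $[\varphi,\tau_h(\varphi)]$ leading to the term $\beta\beta^*+\gamma^*\gamma$, and the equivalence of the abstract polystability notion with the filtration formulation of Proposition \ref{prop:simplifiedss}. Both have, however, already been established in the preceding sections, so the proof reduces to citing them and observing that the general correspondence specializes to the stated form.
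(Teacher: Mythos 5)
Your proposal is correct and follows exactly the paper's route: the paper presents Theorem \ref{HKforSO*} as the immediate specialization of the general correspondence of Theorem \ref{HKcorrespond}, with the curvature computation of Section \ref{hitcheq} supplying the reduction of the general equation \eqref{eqn:GHitch} to \eqref{eqn:SO*Hitch}, and the filtration-based polystability of Definition \ref{prop:sp2n-alpha-stability} being, by construction, the specialization of the general notion from \cite{garcia-prada-gothen-mundet:2009a}. The paper in fact gives no separate proof beyond the phrase ``Theorem \ref{HKcorrespond} thus becomes the following,'' so your write-up, which makes the two translations explicit, is if anything slightly more detailed than the original.
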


\subsection{The moduli spaces}\label{sec:moduli spaces}
The topological invariant attached to a $\SO^*(2n,\RR)$-Higgs bundle
$(V,\beta,\gamma)$ is an element in the fundamental group of $\U(n)$
(see Section \ref{moduli-ghiggs}). Since $\pi_1(U(n))\simeq \ZZ$, this
is an integer.  This integer coincides with the degree of $V$. Under
the correspondence between Higgs bundles and surface group
representations (see Section \ref{sect: surfacegroups}), this integer
corresponds to the Toledo invariant of a representation.\footnotemark\footnotetext{It is interesting to note that this invariant has recently been interpreted in terms of fixed point data on the spectral curve associated to the Higgs bundles - see \cite{hitchin-schaposnik:2013}. This also sheds new light on the bounds described in Proposition \ref{non-emptiness-higgs} } Following
Definition \ref{Defn: M_d} we let $\cM_d(\SO^*(2n))$ denote the {\bf
  moduli space of polystable $\SO^*(2n)$-Higgs bundles
  $(V,\beta,\gamma)$ with $\deg(V) = d$}. For brevity we shall
sometimes write simply $\cM_d$ for this moduli space.  We have the
following result (cf.\ \cite[Theorem~3.4 and Proposition~3.19]{garcia-prada-gothen-mundet:2009a}).



\begin{proposition}\label{prop:modspace} Assume $n\geq 2$. 
  The moduli space $\cM_d$ of $\SO^*(2n)$-Higgs bundles over a compact Riemann 
  surface $X$ of genus $g\ge 2$ is a complex algebraic variety of
  expected $n(2n-1)(g-1)$ (where $g$ is the genus of $X$).  The
  dimension is exactly $n(2n-1)(g-1)$ if the stable locus is nonempty.
\end{proposition}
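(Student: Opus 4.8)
The plan is to read the dimension directly off the deformation complex \eqref{eq:spnr-def-complex}, whose first hypercohomology $\HH^1(C^\bullet(V,\varphi))$ is, by Proposition~\ref{prop:deform}, the space of infinitesimal deformations. First I would compute the Euler characteristic $\chi(C^\bullet)=\dim\HH^0-\dim\HH^1+\dim\HH^2$, which by the long exact sequence \eqref{eq:hyper-les} equals the alternating sum of the Euler characteristics of the two terms of the complex,
\begin{equation*}
\chi(C^\bullet)=\chi(\End V)-\chi(\Lambda^2 V\otimes K)-\chi(\Lambda^2 V^*\otimes K).
\end{equation*}
This integer is a topological invariant, independent of $\varphi$ and of the chosen point of $\cM_d$, and it is the candidate for the dimension.

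Second I would run Riemann--Roch, $\chi(E)=\deg E-\rk(E)(g-1)$, on each term. Since $\deg\End V=0$ and $\rk\End V=n^2$, one gets $\chi(\End V)=-n^2(g-1)$. For the Higgs-field terms, using $\deg\Lambda^2 V=(n-1)d$, $\rk\Lambda^2 V=\tfrac{n(n-1)}{2}$ and $\deg K=2g-2$, the two contributions come out as $(n-1)d+\tfrac{n(n-1)}{2}(g-1)$ and $-(n-1)d+\tfrac{n(n-1)}{2}(g-1)$. The $d$-dependence cancels (as it must, the dimension being the same on every component), and their sum is $n(n-1)(g-1)$. Hence $\chi(C^\bullet)=-n^2(g-1)-n(n-1)(g-1)=-n(2n-1)(g-1)$, so the target value is $-\chi=n(2n-1)(g-1)$.

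Third I would pin the dimension to $-\chi$ at stable points and control the correction terms $\HH^0,\HH^2$ by Serre duality. Since $\SO^*(2n)$ is semisimple, the reduced complex agrees with $C^\bullet$, and the complex is self-dual up to a twist by $K$ via the invariant pairing $B$ on $\lieh^\C$ and $\liem^\C$; Serre duality of complexes then gives $\HH^2(C^\bullet)\cong\HH^0(C^\bullet)^*$. For a stable $(V,\varphi)$ with $\varphi\ne0$, Proposition~\ref{prop:smoothpoint}(1) yields infinitesimal simplicity; since $H^\C=\GL(n,\C)$ acts on $\liem^\C=\Lambda^2\C^n\oplus\Lambda^2(\C^n)^*$ with the central scalars acting by the nonzero weights $\pm2$, one has $\ker d\iota\cap Z(\lieh^\C)=0$, so $\HH^0=0$ and therefore $\HH^2=0$. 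By Proposition~\ref{prop:smoothpoint}(2) such a point is then a smooth point of $\cM_d$, of dimension $\dim\HH^1=-\chi=n(2n-1)(g-1)$. Thus, when the stable locus is nonempty, it is a smooth quasi-projective variety of exactly this dimension.

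The main obstacle is the uniform upper bound, i.e.\ ruling out that some stratum is larger. At a smooth stable point everything is forced by $\HH^0=\HH^2=0$, but at a strictly polystable point $\HH^0$ (hence $\HH^2$) is nonzero, and the naive count $\dim\HH^1-\dim\HH^0=n(2n-1)(g-1)+\dim\HH^0$ would exceed the claimed bound. What rescues the estimate is that such points are reducible: by the structure of polystable $\SO^*(2n)$-Higgs bundles (Theorem~\ref{prop:SO-star-poly-stability-refined}) they split as sums of lower-rank $G'$-Higgs bundles, so each such stratum lies in the image of a product of smaller moduli spaces and a dimension count shows it has dimension strictly below $n(2n-1)(g-1)$. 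I would carry out this drop stratum by stratum, together with the harmless edge cases $\varphi=0$ and non-simple stable bundles, so that the stable locus, when present, is the unique top-dimensional part; this gives the bound ``at most'' in general and equality precisely when the stable locus is nonempty.
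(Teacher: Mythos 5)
Your strategy coincides with the paper's own: the paper's entire proof is the observation that the long exact sequence \eqref{eq:hyper-les} for the deformation complex \eqref{eq:spnr-def-complex} together with Riemann--Roch computes the expected dimension, combined with the algebraicity of Theorem~\ref{alg-moduli}; and your Euler characteristic computation $-\chi(C^\bullet)=n^2(g-1)+n(n-1)(g-1)=n(2n-1)(g-1)$ is correct. The genuine problem is your third step. The deformation complex of a Higgs bundle for a \emph{real} form is not self-dual up to a $K$-twist: its two terms are the different bundles $\End(V)$ and $(\Lambda^2V\oplus\Lambda^2V^*)\otimes K$, and Serre duality identifies $\HH^2(C^\bullet(V,\varphi))$ with the dual of $\HH^0$ of the \emph{transposed} complex $E(\liem^\C)\xrightarrow{\ad(\varphi)}E(\lieh^\C)\otimes K$, not with $\aut(V,\varphi)^*=\HH^0(C^\bullet)^*$. (Self-duality is what happens for complex $G$ and is what part (3) of Proposition~\ref{prop:smoothpoint} uses; for real $G$ the paper instead uses part (4), via $C^\bullet_0(\tilde E,\tilde\varphi)=C^\bullet_0(E,\varphi)\oplus C^\bullet_0(E,\varphi)^*\otimes K$, which requires stability of the \emph{associated $\SO(2n,\C)$-Higgs bundle}.) So infinitesimal simplicity does not kill $\HH^2$, and the failure is concrete: if $(V,\beta,\gamma)$ is stable and simple but carries a skew-symmetric isomorphism $f\colon V\to V^*$ with $\beta f=f^{-1}\gamma$ --- the $\U^*(n)$-type points of Proposition~\ref{thm:stability-equivalence-sp2nC}, which occur when $d=0$ and $n$ is even --- then $\Psi=\left(\begin{smallmatrix}0&f^{-1}\\ f&0\end{smallmatrix}\right)$ is a holomorphic section of $\Lambda^2V\oplus\Lambda^2V^*$ with $[\varphi,\Psi]=0$ (skew-symmetry of $\beta,\gamma,f$ turns $\beta f=f^{-1}\gamma$ into $f\beta=\gamma f^{-1}$), so the transposed complex has nonzero $\HH^0$ and hence $\HH^2(C^\bullet)\neq0$ even though $\HH^0(C^\bullet)=0$.

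Consequently your conclusion that every stable point with $\varphi\neq0$ is smooth of dimension $-\chi$ --- in particular that the stable locus is smooth --- is false; at the points above $\dim\HH^1=-\chi+\dim\HH^2>-\chi$, so your argument there establishes neither the upper bound nor the lower bound (the lower bound can still be rescued by the Kuranishi cut-down $\dim\geq\dim\HH^1-\dim\HH^2$, but you did not invoke it). What is true, and is what the paper proves (Proposition~\ref{cor:smooth-points}, resting on Theorem~\ref{thm:stability-equivalence}(3b) and Proposition~\ref{prop:smoothpoint}(4)), is that stable, simple points admitting \emph{no} such $f$ are smooth of dimension $n(2n-1)(g-1)$; this hypothesis is automatic when $d\neq0$ or $n$ is odd, and the exceptional case $d=0$, $n$ even needs separate treatment. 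Your final stratum-by-stratum estimate at strictly polystable points is only a sketch, but I would not count that against you, since the paper's own proof is no more detailed on that point; the missing idea is the one above.
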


The reason for excluding $n=1$ in the preceding proposition is that
$\SO^*(1) \simeq \SO(2) $ which is not semisimple. In this case the
dimension of the moduli space is $g$ (cf.\ Section~\ref{sec:case-n=1}).

One has the following easily proven duality result.

\begin{proposition}\label{prop:duality-iso}
The map $(V,\beta,\gamma)\mapsto (V^*,\gamma,\beta)$ gives an
isomorphism $\cM_d\simeq \cM_{-d}$.
\end{proposition}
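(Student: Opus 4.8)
The plan is to exhibit the assignment $(V,\beta,\gamma)\mapsto(V^*,\gamma,\beta)$ as an algebraic involution intertwining the two moduli spaces, where ``involution'' is understood up to the canonical identification $V^{**}\cong V$.

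First I would check that the target datum really is an $\SO^*(2n)$-Higgs bundle and that it lives in degree $-d$. Writing $(V',\beta',\gamma')=(V^*,\gamma,\beta)$, one has $\Lambda^2V'\otimes K=\Lambda^2V^*\otimes K$ and $\Lambda^2(V')^*\otimes K=\Lambda^2V\otimes K$, so indeed $\beta'=\gamma\in H^0(X,\Lambda^2V'\otimes K)$ and $\gamma'=\beta\in H^0(X,\Lambda^2(V')^*\otimes K)$, as required by Definition \ref{defn:SO*Higgs}. Since $\deg V'=\deg V^*=-d$, the assignment carries the objects underlying $\cM_d$ to objects underlying $\cM_{-d}$.

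The substance is to show that semistability, stability and polystability are preserved, using the simplified criterion of Proposition \ref{prop:simplifiedss}. Given a two-step filtration $0\subset V_1\subset V_2\subset V$, I would associate to it the dual filtration $0\subset V_2^\perp\subset V_1^\perp\subset V^*$ of $V'=V^*$, and conversely; since $(V_2^\perp)^\perp=V_2$ and $(V_1^\perp)^\perp=V_1$ inside $(V^*)^*=V$, this is a bijection on two-step filtrations. Applying Lemma \ref{lemma:2-step} to $(V',\varphi')$ and using $\beta'=\gamma$, $\gamma'=\beta$, the four invariance conditions for the dual filtration read $\gamma(V_1)\subset V_2^\perp\otimes K$, $\gamma(V_2)\subset V_1^\perp\otimes K$, $\beta(V_1^\perp)\subset V_2\otimes K$ and $\beta(V_2^\perp)\subset V_1\otimes K$, i.e. exactly the invariance conditions for the original filtration; hence the bijection matches $\varphi$-invariant filtrations with $\varphi'$-invariant ones. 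The key computation is that it preserves the stability quantity: from the exact sequences $0\to V_i^\perp\to V^*\to V_i^*\to 0$ one gets $\deg V_i^\perp=-\deg V+\deg V_i$, whence
\[
\deg V'-\deg V_2^\perp-\deg V_1^\perp=\deg V-\deg V_1-\deg V_2 .
\]
Thus inequality \eqref{eq:spn-simplified-sstab} holds for a filtration of $(V,\beta,\gamma)$ precisely when it holds for the dual filtration of $(V^*,\gamma,\beta)$, and semistability transfers. The stable case follows because the distinguished filtration $0=V_1\subset V_2=V$ is dual to $0=V^\perp\subset 0^\perp=V^*$, the distinguished filtration for $V'$, so the excluded cases match up. For polystability, the splitting $V\simeq V_1\oplus V_2/V_1\oplus V/V_2$ required in Proposition \ref{prop:simplifiedss} dualizes to $V^*\simeq(V/V_2)^*\oplus(V_2/V_1)^*\oplus V_1^*$, and conditions (b), (c) on $(\beta,\gamma)$ go over to (c), (b) on $(\gamma,\beta)$; I expect this to be routine bookkeeping once the filtration correspondence is in place.

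Finally I would promote this to an isomorphism of moduli spaces. Dualizing a bundle and swapping the two Higgs-field components are algebraic operations that work in families, so they define a morphism $\cM_d\to\cM_{-d}$; an isomorphism $f\colon V\to\tilde V$ intertwining the Higgs fields induces $(f^{-1})^*\colon\tilde V^*\to V^*$ intertwining the dual data, so the morphism is well defined on isomorphism classes. Applying the construction again returns $(V^{**},\beta,\gamma)$, canonically isomorphic to $(V,\beta,\gamma)$; hence the morphism $\cM_{-d}\to\cM_d$ built the same way is a two-sided inverse, and the map is an isomorphism. The only genuinely delicate point is the matching of the polystability data above; everything else reduces to the degree identity together with the double-perp bijection.
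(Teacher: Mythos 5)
Your proof is correct; the paper itself offers no argument for this proposition (it is stated as an ``easy to prove duality result''), and your verification --- matching a two-step filtration $0\subset V_1\subset V_2\subset V$ with its annihilator-dual filtration $0\subset V_2^\perp\subset V_1^\perp\subset V^*$, checking via Lemma \ref{lemma:2-step} that $\varphi$-invariance transfers, establishing the degree identity $\deg V'-\deg V_2^\perp-\deg V_1^\perp=\deg V-\deg V_1-\deg V_2$, and passing to moduli via families --- is exactly the standard argument the authors intend. The only point you leave implicit is condition (a) of Proposition \ref{prop:simplifiedss} under dualization (that $V_1^\perp\simeq V_2^\perp\oplus(V_2/V_1)^*$ follows from the given splitting), but this is indeed the routine bookkeeping you describe.
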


\subsection{Structure of stable $\SO^*(2n)$-Higgs bundles}\label{sec:SO*stable}
The kernel of the isotropy representation
$$\iota\colon \GL(n,\C) \to \Aut(\Lambda^2(\C^n) \oplus \Lambda^2
(\C^n)^*)$$ for $\SO^*(2n)$ is formed by the central subgroup $\{\pm I
\} \subset \GL(n,\C)$. Moreover the infinitesimal isotropy
representation has injective differential: $\ker(d\iota) =0$.  Thus
Definition~\ref{def:simple} specializes to the following.

\begin{definition}
 A $\SO^*(2n)$-Higgs bundle $(V,\beta,\gamma)$ is \textbf{simple}  if $\Aut(V,\beta,\gamma) = \{\pm I \}$ and it is 
 \textbf{infinitesimally simple} if $\aut(V,\beta,\gamma)=0$.
 \end{definition}

Contrary to the cases of vector bundles and $\U(p,q)$-Higgs bundles, stability of an
$\SO^*(2n)$-Higgs bundle does not imply that it is simple. However,
we have the following.

\begin{theorem}
\label{thm:stable-not-simple-spnr-higgs}
Let $(V,\varphi)$ be a stable $\SO^*(2n)$-Higgs bundle. If
$(V,\varphi)$ is not simple, then one of the following alternatives occurs:

\begin{enumerate}
\item The bundle $V$ is a stable vector bundle of degree zero and $\varphi=0$.  In this case $\Aut(V,\varphi)\simeq\C^*$.
\item  There is a nontrivial decomposition, unique up to reordering,
  $$(V,\varphi) = \Bigl(\bigoplus_{i=1}^{k} V_i,\sum_{i=1}^{k}
  \varphi_i\Bigr)$$
  with $\varphi_i = \beta_i + \gamma_i \in H^0(X,K \otimes (\Lambda^2V_i \oplus
  \Lambda^2V_i^*))$, such that each $(V_i,\varphi_i)$ is a stable and simple
  $\SO^*(n_i)$-Higgs bundle. Furthermore, each $\varphi_i \neq 0$
  and $(V_i,\varphi_i) \not\simeq (V_j,\varphi_j)$ for $i \neq j$.
  The automorphism group of $(V,\varphi)$ is
  \begin{displaymath}
    \Aut(V,\varphi) \simeq \Aut(V_1,\varphi_1) \times \dots \times
    \Aut(V_k,\varphi_k) \simeq (\ZZ/2)^k.
  \end{displaymath}
  \label{item:phi-neq-0}
  
  \end{enumerate}
\end{theorem}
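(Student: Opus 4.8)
The plan is to dispose of the case $\varphi=0$ directly and then, for $\varphi\neq 0$, to extract a finite automorphism group whose elements are forced to be involutions, from which the decomposition falls out.

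First I would treat the case $\varphi = 0$. Here every filtration is $\varphi$-invariant, so Remark~\ref{remark: zero} together with the simplified criterion of Proposition~\ref{prop:simplifiedss} shows that $(V,0)$ is stable if and only if $V$ is a stable vector bundle with $\deg V = 0$: testing the genuine length-one filtrations $0 \subset V' \subset V$ yields $\deg V' < 0$ for every proper subbundle, while semistability pins down $\deg V = 0$. In that case $\Aut(V,0) = \Aut(V) = \C^*$, which strictly contains $\{\pm I\}$, so $(V,0)$ is not simple and we land in alternative (1).

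So assume $\varphi \neq 0$. Since $\ker d\iota = 0$ for $\SO^*(2n)$, Proposition~\ref{prop:smoothpoint}(1) gives $\aut(V,\varphi) = 0$; as $\Aut(V,\varphi)$ is an affine algebraic group with trivial Lie algebra, it is finite. Hence any $g \in \Aut(V,\varphi)$ has finite order and is diagonalizable, producing a holomorphic eigenbundle decomposition $V = \bigoplus_\lambda V_\lambda$ (eigenvalues are constant on the connected $X$). Writing the automorphism condition as $g\beta g^t = \beta$ and $g^t\gamma g = \gamma$, one checks that $\gamma\colon V \to V^*\otimes K$ and $\beta\colon V^*\to V\otimes K$ are $g$-equivariant, so $\gamma(V_\lambda)\subset (V_{\lambda^{-1}})^*\otimes K$ and $\beta((V_\mu)^*)\subset V_{\mu^{-1}}\otimes K$; in particular $\beta$ and $\gamma$ pair only eigenspaces with reciprocal eigenvalues. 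The crux, and the step I expect to be the main obstacle, is to show that every eigenvalue of $g$ is $\pm 1$. Suppose $\lambda\neq\pm1$ were an eigenvalue (so $\lambda^{-1}\neq\lambda$). Using the equivariance one verifies via Lemma~\ref{lemma:2-step} that the two-step filtration $0\subset V_\lambda\subset\bigoplus_{\mu\neq\lambda^{-1}}V_\mu\subset V$ is $\varphi$-invariant and distinct from $0=V_1\subset V_2=V$, so strict stability forces $\deg V - \deg V_\lambda - \deg(\bigoplus_{\mu\neq\lambda^{-1}}V_\mu) = \deg V_{\lambda^{-1}} - \deg V_\lambda > 0$. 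Running the same construction with $\lambda$ replaced by $\lambda^{-1}$ gives $\deg V_\lambda - \deg V_{\lambda^{-1}} > 0$, a contradiction. Hence $g^2 = I$ for all $g\in\Aut(V,\varphi)$, so $\Gamma:=\Aut(V,\varphi)$ is an elementary abelian $2$-group, necessarily of the form $(\Z/2)^k$.

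Finally I would assemble the decomposition. Simultaneously diagonalizing the commuting involutions in $\Gamma$ yields $V = \bigoplus_{\chi} V_\chi$ over the characters $\chi\colon\Gamma\to\{\pm1\}$ with $V_\chi\neq 0$; since $\beta,\gamma$ pair reciprocal (here equal) eigenvalues, they are block-diagonal and $(V,\varphi) = \bigoplus_\chi (V_\chi,\varphi_\chi)$ as $\SO^*(2n_\chi)$-Higgs bundles with $n_\chi=\rk V_\chi$, the decomposition being nontrivial because some $g\neq\pm I$ exists. Extending a destabilizing filtration of one summand by the complementary summand shows each summand inherits strict stability; a pair of complementary $\varphi$-invariant two-step filtrations forces $\deg V_\chi$ to be simultaneously negative and positive, which rules out any summand with $\varphi_\chi = 0$, so each $\varphi_\chi\neq 0$. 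Because $\Gamma$ acts on $V_\chi$ by the scalar $\chi$, any automorphism of $(V_\chi,\varphi_\chi)$ extends by the identity to an element of $\Gamma$ and is therefore scalar; with $\varphi_\chi\neq 0$ this forces $\Aut(V_\chi,\varphi_\chi)=\{\pm I\}$, so each piece is simple. No two pieces are isomorphic, since an isomorphism would yield an off-diagonal element of $\Gamma$, contradicting that the abelian group $\Gamma$ preserves each eigenbundle $V_\chi$. Consequently $\Aut(V,\varphi)=\prod_\chi\{\pm I\}\cong(\Z/2)^k$, with $k$ the number of summands, and the decomposition is unique up to reordering by Krull--Schmidt (equivalently, the summands are the canonical eigenbundles of $\Gamma$). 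This establishes alternative (2).
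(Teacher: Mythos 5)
Your proposal is correct and follows essentially the same route as the paper, whose own proof is simply a citation of the $\Sp(2n,\R)$ case (Theorem 3.17 of \cite{GGM}): finiteness of $\Aut(V,\varphi)$ from infinitesimal simplicity, the eigenbundle decomposition of an automorphism with $\beta,\gamma$ pairing reciprocal eigenvalues, the two opposing stability inequalities forcing all eigenvalues to be $\pm 1$, and then stability, simplicity, pairwise non-isomorphism and $\Aut(V,\varphi)\simeq(\ZZ/2)^k$ for the summands. The only difference is packaging: you first show the whole group is an elementary abelian $2$-group and diagonalize it simultaneously, whereas \cite{GGM} splits off one involution at a time and inducts on rank --- a cosmetic variation rather than a different method.
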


\begin{proof} The proof is precisely the same as for the corresponding result for $\Sp(2n,\R)$-Higgs bundles (Theorem 3.17 in \cite{GGM}).  \end{proof}

In view of Theorem \ref{thm:stable-not-simple-spnr-higgs} we can shift our attention to $\SO^*(2n)$-Higgs bundles which are stable and simple.  Unlike in the case of $G$-Higgs bundles for complex reductive $G$, the combination of stability and simplicity is not necessarily sufficient to guarantee smoothness in the moduli space.  Our analysis involves the relation between $\SO^*(2n)$-Higgs bundles and $G$-Higgs bundles 
for various other\footnotemark\footnotetext{See Appendix~\ref{appendix:G-Higgs} for a summary of results for the relevant groups}   groups $G$.  We begin by noting that a $\SO^*(2n)$-Higgs bundle can be viewed as a Higgs bundle for the
larger complex groups $\SO(2n,\CC)$ and $\SL(2n,\CC)$.

\begin{theorem}\label{thm:stability-equivalence}
Let $(V,\varphi)$ be a $\SO^*(2n)$-Higgs bundle with $\varphi=(\beta,\gamma)$. Let $(E,\Phi)$ be the $\SL(2n,\C)$-Higgs bundle  given by
\begin{displaymath}
      E = V \oplus V^*,\quad
            \Phi =
    \begin{pmatrix}
      0 & \beta \\
      \gamma & 0
    \end{pmatrix}
\end{displaymath}

\noi and let $((E,Q),\Phi)$ be the $\SO(2n,\C)$-Higgs bundle  given by
$E$ and $\Phi$ as above and with $Q$ defined by
\begin{displaymath}
    Q\bigl((v,\xi),(w,\zeta)\bigr) = \xi(w) + \zeta(v),\quad
    \text{for $v,w\in V$ and $\xi,\zeta\in V^*$.}
\end{displaymath}
\noi Then
%
%
\begin{enumerate}
\item The following are equivalent:
\begin{enumerate}
\item $(E,\Phi)$ is semistable (resp.\ polystable).
\item $((E,Q),\Phi)$ is semistable (resp.\ polystable). 
\item $ (V,\varphi)$is  semistable (resp.\ polystable).
\end{enumerate}

  \item If $(E,\Phi)$ is stable then $((E,Q),\Phi)$ is stable.
  \item If
    $((E,Q),\Phi)$is stable then $(V,\varphi)$ is stable.

\item  If $(V,\varphi)$ is stable and simple then
\begin{enumerate}
\item $(E,\Phi)$ is stable unless there is an isomorphism
$f:V\xrightarrow{\simeq} V^*$ such that $\beta f=f^{-1}\gamma$;
\item $((E,Q),\Phi)$ is stable unless there is an isomorphism
$f:V\xrightarrow{\simeq} V^*$ which is skew-symmetric  and with $\beta f=f^{-1}\gamma$.
\end{enumerate}
\end{enumerate}
\end{theorem}

\begin{proof}
  The equivalences in (1) can be proved in exactly the same way as
  done for $\Sp(2n,\R)$-Higgs bundles in \cite{GGM} (see Theorems 3.26
  and 3.27). Although the equivalence analogous to the equivalence
  between (a) and (b) is not explicitly stated in \cite{GGM} in the case of
  semistability, it is implicit in the proof of the equivalence
  analogous to the equivalence between (a) and (c).  

  The implication
  in (2) follows directly from the stability conditions.  

  For the implication in (3) note that a $\varphi$-invariant
  two-step filtration $0 \subset V_1 \subset V_2 \subset V$ gives rise
  to an isotropic subbundle $V_1 \oplus V_2^{\perp}$ of $(E,Q)$ which, by
  Lemma~\ref{lemma:2-step}, is $\Phi$-invariant. These are exactly the
  subbundles which enter the stability condition for $\SO(2,\C)$-Higgs
  bundles (see Proposition~\ref{thm:sp(2n,C)-stability}). Note that
  $V_1 \oplus V_2^{\perp} \subset E$ is non-zero and proper if and
  only if the filtration $0 \subset V_1 \subset V_2 \subset V$ is
  distinct from the filtration $0 =V_1 \subset V_2 = V$. Moreover,
  \begin{displaymath}
    \deg(V_1 \oplus V_2^{\perp}) = \deg(V_1)+\deg(V_2)-\deg(V),
  \end{displaymath}
  so the stability conditions coincide.

  The statements in (4) can be proved in the same way as the analogous
  result for $\Sp(2n,\R)$-Higgs bundles (see Theorem 3.27 in
  \cite{GGM}).
\end{proof}

\begin{remark}
  If $\deg V\neq 0$, then it follows from (3) of
  Theorem~\ref{thm:stability-equivalence} that $(E,\Phi)$ (and hence
  $((E,Q),\Phi)$) is stable if
  $(V,\varphi)$ is stable and simple. Similarly, if the rank $n$ is
  odd, then $((E,Q),\Phi)$ is stable if $(V,\varphi)$ is stable and
  simple.
  On the other hand, in the situation described in Theorem~3.19(2),
  the $\SO(2n,\mathbb{C})$-Higgs bundle $((E,Q),\Phi)$ is not stable,
  because $V_i \oplus V_i^*\subset E=V \oplus V^*$ is an isotropic
  $\phi$-invariant subbundle of degree $0$).
\end{remark}

\begin{proposition}\label{cor:smooth-points}
Let $(V,\varphi)$ be a $\SO^*(2n)$-Higgs bundle which is stable
and simple and assume that there is no skewsymmetric isomorphism
$f\colon V \xrightarrow{\simeq} V^*$ intertwining $\beta$ and
$\gamma$ (i.e.\ such that $\gamma=(f\otimes 1_K)\circ \beta\circ f$).  
Then $(V,\varphi)$ represents a smooth point of the
moduli space of polystable $\SO^*(2n)$-Higgs bundles.  
In particular, if $d=\deg V$ is not zero or $n$ is odd, then all stable and simple 
   $\SO^*(2n)$-Higgs bundles represent smooth points of the
moduli space $\cM_d$.
\end{proposition}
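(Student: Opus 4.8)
The plan is to deduce this directly from the deformation-theoretic criterion of Proposition~\ref{prop:smoothpoint}(4), fed by the stability comparison of Theorem~\ref{thm:stability-equivalence}. The key observation is that, since $\SO^*(2n)$ is a real form of $\SO(2n,\C)$, the $G^\C$-Higgs bundle associated to the $\SO^*(2n)$-Higgs bundle $(V,\varphi)$ (in the sense of the definition preceding Proposition~\ref{prop:smoothpoint}, i.e.\ by extension of structure group from $\GL(n,\C)$ to $\SO(2n,\C)$) is precisely the $\SO(2n,\C)$-Higgs bundle $((E,Q),\Phi)$ of Theorem~\ref{thm:stability-equivalence}. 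Thus ``stable as a $G^\C$-Higgs bundle'' means exactly that $((E,Q),\Phi)$ is a stable $\SO(2n,\C)$-Higgs bundle, and the whole argument reduces to verifying that hypothesis.

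First I would translate the assumption. By hypothesis there is no skew-symmetric isomorphism $f\colon V \xrightarrow{\simeq} V^*$ with $\beta f = f^{-1}\gamma$; by Theorem~\ref{thm:stability-equivalence}(3b), the absence of such an $f$ is precisely the condition guaranteeing that the associated $\SO(2n,\C)$-Higgs bundle $((E,Q),\Phi)$ is stable. Hence $(V,\varphi)$ is stable \emph{as a $\SO(2n,\C)$-Higgs bundle}. Since $(V,\varphi)$ is also stable and simple as an $\SO^*(2n)$-Higgs bundle by hypothesis, all three hypotheses of Proposition~\ref{prop:smoothpoint}(4) are met, and I would conclude that $(V,\varphi)$ represents a smooth point of $\cM_d(\SO^*(2n))$.

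For the final assertion I would check that the no-skew-isomorphism hypothesis holds automatically in each of the two stated cases, so that the conclusion then applies to \emph{all} stable and simple Higgs bundles. If $d=\deg V\neq 0$, there is no isomorphism $V\simeq V^*$ whatsoever, since such an isomorphism would force $\deg V=\deg V^*=-\deg V$; a fortiori there is no skew-symmetric one. If $n$ is odd, a skew-symmetric isomorphism $V\xrightarrow{\simeq}V^*$ would amount to a nondegenerate skew-symmetric pairing on the fibres, which cannot exist in odd rank. In either case the hypothesis of the first part is vacuously satisfied, giving smoothness at every stable simple point of $\cM_d$; this is exactly the content of the Remark following Theorem~\ref{thm:stability-equivalence}.

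The argument is essentially a matter of correctly matching the earlier results, so I do not expect a serious analytic obstacle. The one point that genuinely requires care is the identification flagged in the first paragraph: verifying that the $G^\C$-Higgs bundle associated to $(V,\varphi)$ under extension of structure group coincides with $((E,Q),\Phi)$, and in particular that the relevant notion in Proposition~\ref{prop:smoothpoint}(4) is stability in the $\SO(2n,\C)$ sense rather than stability of the underlying $\SL(2n,\C)$-Higgs bundle $(E,\Phi)$. Getting this matching right is what makes Theorem~\ref{thm:stability-equivalence}(3b) the applicable statement.
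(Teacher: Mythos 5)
Your proposal is correct and follows essentially the same route as the paper: invoke Theorem~\ref{thm:stability-equivalence}(3b) to get stability of the associated $\SO(2n,\C)$-Higgs bundle $((E,Q),\Phi)$ and then apply Proposition~\ref{prop:smoothpoint}(4). Your additional verifications (that extension of structure group yields exactly $((E,Q),\Phi)$, and the degree/odd-rank arguments for the final assertion) are exactly the content the paper leaves to the remark following Theorem~\ref{thm:stability-equivalence}.
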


\begin{proof}   By 
(3b) of Theorem \ref{thm:stability-equivalence} the $\SO(2n,\C)$-Higgs bundle 
corresponding to $(V,\varphi)$ is stable and hence by (4) in Proposition 
\ref{prop:smoothpoint} it represents a smooth point in $\cM_d$.
\end{proof}

It remains to analyze the case in which $(V,\varphi)$ is stable
and simple but admits a skewsymmetric isomorphism
$f\colon V \xrightarrow{\simeq} V^*$ intertwining $\beta$ and
$\gamma$.  By (3b) of Theorem \ref{thm:stability-equivalence}  this is equivalent to the associated $\SO(2n,\C)$-Higgs bundle being non-stable. Furthermore
$d=\deg V=0$ and $n$ is even.  

\begin{proposition}\label{thm:stability-equivalence-sp2nC}
Let $(V,\varphi)$ be a  $\SO^*(2n)$-Higgs
bundle with $\phi=(\beta,\gamma)$ which admits a skewsymmetric isomorphism $f\colon V
\xrightarrow{\simeq} V^*$ such that $\beta f =
f^{-1}\gamma$. Then  with  $\psi: = \beta f$, the data
$((V,f),\psi)$ defines a
$\U^*(n)$-Higgs bundle (as defined in Section \ref{sect: U*}).

Let $(V,\varphi)$ be stable. Then $((V,f),\psi)$ is stable.  Assume moreover 
that $(V,\varphi)$ is simple. Then $((V,f),\psi)$ is stable
and simple and the corresponding $\GL(n,\CC)$-Higgs bundle $(V,\psi)$
is stable. Hence $((V,f),\psi)$ represents a smooth point in the
moduli space of $\U^*(n)$-Higgs bundles.
\end{proposition}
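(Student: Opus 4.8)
The plan is to prove the four assertions in turn, following the pattern used for $\Sp(2n,\R)$ in \cite{GGM} and isolating the one genuinely new step at the end. Throughout I use that here $\deg V=0$, that $f\colon V\xrightarrow{\simeq}V^{*}$ is skew, and that the hypothesis $\beta f=f^{-1}\gamma$ is equivalent to $f\beta f=\gamma$; thus, writing $\psi:=\beta f\in H^{0}(\End V\otimes K)$, one has $\beta=\psi f^{-1}$ and $\gamma=f\psi$. To see that $((V,f),\psi)$ is a $\U^{*}(n)$-Higgs bundle (Section~\ref{sect: U*}), where $H^{\C}=\Sp(n,\C)$ and $\liemc$ is the skew part of $\End V$ for the symplectic form, I compute the bilinear form $(u,v)\mapsto\langle f\psi u,v\rangle=\langle\gamma u,v\rangle$, which is skew because $\gamma\in H^{0}(\Lambda^{2}V^{*}\otimes K)$; hence $\psi$ is a section of the correct $\liemc$-summand. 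The same skewness shows $\psi$ is self-adjoint for $f$, i.e.\ $\langle f\psi u,v\rangle=\langle fu,\psi v\rangle$, which implies that the $f$-orthogonal complement $W^{\perp_{f}}:=f^{-1}(W^{\perp})$ of any $\psi$-invariant subbundle $W$ is again $\psi$-invariant.

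For stability, given a $\psi$-invariant isotropic subbundle $0\neq W\subsetneq V$ (the test objects for $\U^{*}(n)$-stability), I form the two-step filtration $0\subset W\subset W^{\perp_{f}}\subset V$. A direct check using $\beta=\psi f^{-1}$, $\gamma=f\psi$ and the $\psi$-invariance of both $W$ and $W^{\perp_{f}}$ verifies the four inclusions of Lemma~\ref{lemma:2-step}, so this filtration is $\varphi$-invariant. Since $\deg W^{\perp_{f}}=\deg W^{\perp}=\deg W$ (as $\deg V=0$), the quantity in Proposition~\ref{prop:simplifiedss} is $\deg V-\deg W-\deg W^{\perp_{f}}=-2\deg W$, which is positive because the filtration is nontrivial; hence $\deg W<0$, exactly the $\U^{*}(n)$-stability condition.

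For simplicity I show $\Aut((V,f),\psi)\subseteq\Aut(V,\beta,\gamma)$: an automorphism $g$ preserving $f$ (so $g^{t}fg=f$) and commuting with $\psi$ satisfies $g\beta g^{t}=\beta$ and $g^{t}\gamma g=\gamma$ by the same manipulations, so it is an automorphism of $(V,\beta,\gamma)$. Since $(V,\varphi)$ is simple, this forces $\Aut((V,f),\psi)=\{\pm I\}$, which is simplicity of the $\U^{*}(n)$-Higgs bundle.

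The crux is the stability of the associated $\GL(n,\C)$-Higgs bundle $(V,\psi)$, where one must upgrade stability tested only against isotropic subbundles to ordinary slope-stability against arbitrary $\psi$-invariant subbundles (recall $\mu(V)=0$). I argue by contradiction: suppose a $\psi$-invariant $0\neq W\subsetneq V$ has $\deg W\ge 0$. Then $W\cap W^{\perp_{f}}$ is isotropic and $\psi$-invariant, $\overline{W+W^{\perp_{f}}}=(W\cap W^{\perp_{f}})^{\perp_{f}}$, and combining the subbundle inequality $\deg(W\cap W^{\perp_{f}})+\deg\overline{W+W^{\perp_{f}}}\ge\deg W+\deg W^{\perp_{f}}=2\deg W$ with $\deg U^{\perp_{f}}=\deg U$ (valid since $\deg V=0$) gives $\deg(W\cap W^{\perp_{f}})\ge\deg W\ge 0$. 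If $W\cap W^{\perp_{f}}\neq 0$ this contradicts the $\U^{*}(n)$-stability just established; if $W\cap W^{\perp_{f}}=0$ then $V=W\oplus W^{\perp_{f}}$ is a $\psi$-invariant, $f$-orthogonal splitting, and $\mathrm{id}_{W}\oplus(-\mathrm{id}_{W^{\perp_{f}}})$ is an automorphism different from $\pm I$, contradicting simplicity. Hence no such $W$ exists, $(V,\psi)$ is stable as a $\GL(n,\C)$-Higgs bundle, and being stable, simple and stable as a $G^{\C}$-Higgs bundle, $((V,f),\psi)$ is a smooth point by (4) of Proposition~\ref{prop:smoothpoint}. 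I expect this last step to be the main obstacle: the degree bookkeeping for $W\cap W^{\perp_{f}}$ is what cleanly reduces the non-isotropic case either to the isotropic one (where stability applies) or to a genuine orthogonal splitting (where simplicity applies).
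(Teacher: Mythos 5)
Your strategy is sound, and it is in fact a self-contained version of what the paper does: the paper's own proof is three sentences long, asserting that the $\U^*(n)$-structure follows from the definition, deferring the stability statements to the proof of Theorem 3.22 of \cite{GGM} (the $\Sp(2n,\R)$ analogue, with symmetric $f$ in place of skew-symmetric), and getting simplicity from the observation that for both groups simplicity means $\Aut=\{\pm I\}$. Your verification that skewness of $\gamma=f\psi$ is exactly $\Omega$-symmetry of $\psi$, your use of the $\varphi$-invariant two-step filtration $0\subset W\subset W^{\perp_f}\subset V$ (with $\deg W^{\perp_f}=\deg W$ because $\deg V=0$) to turn $\SO^*(2n)$-stability into $\deg W<0$, and your inclusion $\Aut((V,f),\psi)\subset\Aut(V,\beta,\gamma)$ are all correct and are precisely the GGM-style arguments the paper is invoking.

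There is, however, one step you state as if it were linear algebra but which is false as a general statement about bundles: ``if $W\cap W^{\perp_f}=0$ then $V=W\oplus W^{\perp_f}$.'' Vanishing sheaf intersection plus complementary ranks only gives an injection of sheaves $W\oplus W^{\perp_f}\hookrightarrow V$ with torsion cokernel, not an isomorphism. For instance, take $V=E\oplus E^*$ with the standard symplectic form and $W$ the graph of a skew map $\phi\colon E\to E^*$ that is generically nondegenerate but vanishes at finitely many points; then $W^{\perp_f}$ is the graph of $-\phi$, the sheaf intersection $W\cap W^{\perp_f}$ is zero, yet $W+W^{\perp_f}\neq V$ at the degeneracy points. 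In your argument the claim is rescued only by the standing contradiction hypothesis $\deg W\ge 0$: the torsion cokernel $T=V/(W\oplus W^{\perp_f})$ has $\deg T=\deg V-\deg W-\deg W^{\perp_f}=-2\deg W\le 0$, and a torsion sheaf of non-positive degree is zero, so the splitting is honest and your automorphism $\mathrm{id}_W\oplus(-\mathrm{id}_{W^{\perp_f}})$ exists. With that one-line justification added (and a remark that the identity $\overline{W+W^{\perp_f}}=(W\cap W^{\perp_f})^{\perp_f}$, which you also use, follows from the fiberwise statement at generic points together with saturatedness of symplectic perps), your proof is complete and matches the intended argument.
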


\begin{proof}
The fact that $((V,f),\psi)$ defines a $\U^*(n)$-Higgs bundles follows
directly from the definition given in Section \ref{sect: U*}.  
The argument to prove the stability result is similar to the one 
given in the proof of Theorem 3.22 in \cite{GGM}.
The statement about simplicity follows directly from the fact that for
both $\SO^*(2n)$- and $\U^*(n)$-Higgs bundles simplicity means that
the only automorphisms are $\pm$ Identity.
\end{proof}

\begin{notation*}
  We shall, somewhat imprecisely, say that a $\SO^*(2n)$-Higgs bundle
  of the form described in
  Proposition~\ref{thm:stability-equivalence-sp2nC} is a
  \emph{$\U^*(n)$-Higgs bundle}.
\end{notation*}


\subsection{Structure of polystable $\SO^*(2n)$-Higgs bundles}\label{sec:refined}

A general structure theorem for poly\-stable $G$-Higgs bundles was
given in \cite{garcia-prada-gothen-mundet:2009a}, where it is shown
that any strictly polystable $G$-Higgs bundle admits a reduction to a
stable $G'$-Higgs bundle for a uniquely determined reductive subgroup
$G' \subset G$. Here we give an elementary argument in the case
$G=\SO^*(2n)$, identifying explicitly this reduction, without recourse
to Lie theory. Our result is the following.

\begin{proposition}
\label{prop:SO-star-poly-stability}
A $\SO^*(2n)$-Higgs bundle $(V,\varphi)$ with $\varphi=(\beta,\gamma)$
is polystable if
and only if there are decompositions
\begin{align*}
  V &= V_1 \oplus \dots \oplus V_k, \\
  \varphi &= \varphi_1 + \dots +\varphi_k,
\end{align*}
such that each $(V_i,\varphi_i)$ is a $\SO^*(2n_i)$-Higgs bundle  i.e. $\varphi_i=(\beta_i,\gamma_i)$ with $\beta_i \in
  H^0(X,\Lambda^2V_i \otimes K)$ and $\gamma_i \in H^0(X,\Lambda^2V^*_i
  \otimes K)$, and is of one of the following mutually exclusive types:
  
\begin{enumerate}
\item a stable
 $\SO^*(2n_i)$-Higgs bundle with $\varphi\ne 0$;

\item $V_i = \tilde V_i \oplus
  \tilde W^*_i$, with respect to this decomposition $\beta_i= \begin{pmatrix}
    0 & \tilde\beta_i \\
    -\tilde\beta_i^t & 0
  \end{pmatrix}$ and $\gamma_i= \begin{pmatrix}
    0 & -\tilde\gamma_i^t \\
    \tilde\gamma_i& 0
  \end{pmatrix}$where $\tilde\beta_i \in H^0(X,\Hom(\tilde W_i,\tilde
  V_i)\otimes K)$ and $\tilde\gamma_i \in H^0(X,\Hom(\tilde V_i,\tilde
  W_i)\otimes K)$, and $(\tilde V_i,\tilde
  W_i,\tilde\beta_i,\tilde\gamma_i)$ is a stable $\U(p_i,q_i)$-Higgs
  bundle in which $p_iq_i\ne
  0$, $\deg \tilde V_i +\deg \tilde W_i = 0$ and at least one of $\tilde{\beta}_i, \tilde{\gamma}_i$ is non-zero.
  \item $\varphi_i=0$ and $V_i$ is
  a degree zero stable vector bundle.
\end{enumerate}
\end{proposition}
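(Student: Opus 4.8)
The plan is to characterize polystability via the simplified conditions in Proposition~\ref{prop:simplifiedss} and then decompose the Higgs bundle into indecomposable polystable summands. First I would establish the ``if'' direction, which is the easier one: given a decomposition into summands of types (1), (2), (3), I would verify directly that each summand is polystable as an $\SO^*(2n_i)$-Higgs bundle and that a direct sum of polystable objects (with the same vanishing slope, which here is automatic since the relevant parameter is zero and all summands are forced to have $\deg V_i=0$ or satisfy the balanced degree condition) is again polystable. Type (1) is polystable by hypothesis; type (3) is handled by Remark~\ref{remark: zero}; the content is type (2), where I would check that the $\U(p_i,q_i)$-Higgs bundle structure $(\tilde V_i,\tilde W_i,\tilde\beta_i,\tilde\gamma_i)$ with $\deg\tilde V_i+\deg\tilde W_i=0$ produces, via $V_i=\tilde V_i\oplus\tilde W_i^*$ and the indicated skew forms for $\beta_i,\gamma_i$, an object satisfying the polystability clause of Proposition~\ref{prop:simplifiedss}.

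For the ``only if'' direction, the strategy is a standard induction on the rank $n$. Suppose $(V,\varphi)$ is polystable. If it is already stable, then Theorem~\ref{thm:stable-not-simple-spnr-higgs} gives exactly the dichotomy between a stable simple object (type (1) with $k=1$), a degree-zero stable vector bundle with $\varphi=0$ (type (3)), or a direct sum of stable simple $\SO^*(2n_i)$-pieces (type (1)). The genuinely new case is when $(V,\varphi)$ is strictly polystable but not stable. Then by Proposition~\ref{prop:simplifiedss} there is a nontrivial $\varphi$-invariant two-step filtration $0\subset V_1\subset V_2\subset V$ realizing equality $\deg V-\deg V_1-\deg V_2=0$, together with a splitting $V\simeq V_1\oplus V_2/V_1\oplus V/V_2$ in which $\beta$ and $\gamma$ have the block structure dictated by clauses (b) and (c). I would then read off that the ``middle'' piece $V_2/V_1$ carries an honest $\SO^*(2m)$-Higgs structure $(\Lambda^2$ and $\Lambda^2{}^*$ components), while the pairing of $V_1$ against $V/V_2$ via $\tilde\beta\in\Hom(V/V_2,V_1)\otimes K$ and $\tilde\gamma\in\Hom(V_1,V/V_2)\otimes K$ is precisely the data of a $\U(p,q)$-Higgs bundle; setting $\tilde V=V_1$, $\tilde W^*=V/V_2$ (so that $\tilde W=(V/V_2)^*$) recovers the type~(2) shape $V_i=\tilde V_i\oplus\tilde W_i^*$ with the displayed skew block forms for $\beta_i,\gamma_i$.

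The key reduction step is to argue that the induced structures on the pieces are themselves polystable and that one can iterate. For the middle $\SO^*$-piece this is inherited polystability of a direct summand; for the cross-term I would invoke the stability/polystability transfer machinery already used in Theorem~\ref{thm:stability-equivalence} and Proposition~\ref{thm:stability-equivalence-sp2nC} to see that a polystable off-diagonal pairing on $\tilde V\oplus\tilde W^*$ corresponds to a polystable $\U(p,q)$-Higgs bundle with $\deg\tilde V+\deg\tilde W=0$. Since each nontrivial splitting strictly decreases the rank, the induction terminates, and collecting the indecomposable summands and grouping isomorphic stable ones yields the stated list of mutually exclusive types. Throughout, I would use Lemma~\ref{lemma:2-step} and Lemma~\ref{K-pairing} to translate the filtration invariance into the concrete vanishing conditions $\beta(V_2^\perp)\subset V_1\otimes K$, $\gamma(V_1)\subset V_2^\perp\otimes K$, etc., which is what forces the off-diagonal $\U(p,q)$ shape.

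The hard part will be verifying that the cross-term genuinely assembles into a \emph{stable} $\U(p_i,q_i)$-Higgs bundle with $p_iq_i\neq 0$ and with at least one of $\tilde\beta_i,\tilde\gamma_i$ nonzero, rather than something decomposing further or degenerating. In particular one must rule out, by the mutual-exclusivity of the three types, that a type~(2) piece secretly splits as a sum involving a trivial $\varphi=0$ summand or a smaller $\SO^*$-piece; this is exactly where the stability (as opposed to mere polystability) of the $\U(p,q)$ constituent must be extracted, and where I expect the argument to require the most care in tracking the degree equality $\deg\tilde V_i+\deg\tilde W_i=0$ and the non-vanishing of the Higgs field. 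I anticipate this follows along the lines of the corresponding $\Sp(2n,\R)$ analysis in \cite{GGM}, with the symmetric forms there replaced by the skew-symmetric $\Lambda^2$ data here.
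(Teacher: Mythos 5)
Your proposal is correct in outline, but it organizes the argument quite differently from the paper. The paper never invokes the simplified two-step criterion of Proposition~\ref{prop:simplifiedss} in this proof: it works directly from the general weighted-filtration definition of polystability (Definition~\ref{prop:sp2n-alpha-stability}), takes one equality filtration $\VVV$ with weights $\lambda_1<\dots<\lambda_l$, and partitions the weights into $I_1=\{0\}$, paired weights $\pm\mu_i$, and unpaired weights $\eta_i$. This yields in a single pass the decomposition $V\simeq U_0\oplus\bigoplus_i(U_{\mu_i}\oplus U_{-\mu_i})\oplus\bigoplus_i U_{\eta_i}$, where the weight-zero block carries the $\SO^*$-structure, each pair carries the $\U(p_i,q_i)$-structure, and each unpaired block is a plain bundle; the balancing condition $\deg\tilde V_i+\deg\tilde W_i=0$ then requires a separate argument (two auxiliary three-step filtrations with weights $(-1<0<1)$, each played against semistability). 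Your route --- peel off one two-step equality filtration at a time and induct on rank --- buys two real simplifications: the degree condition is automatic, being exactly the equality $\deg V=\deg V_1+\deg V_2$, and no real weights are needed. Both proofs end the same way: the summands inherit (semi/poly)stability for their own groups by extending any violating filtration of a summand to one of $V$ by adjoining the complementary summands, and Proposition~\ref{prop:Upq-simplification} splits the $\U(p,q)$-type and unitary summands into stable pieces; the paper iterates only on $\SO^*$-summands that fail to be polystable, whereas you iterate globally.

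Two points in your plan need repair, though neither is fatal. First, your termination claim ``each nontrivial splitting strictly decreases the rank'' fails in the case $V_1=V_2$: there the middle block is empty and the cross block is all of $V$, so the $\SO^*$-induction hypothesis cannot be applied to it; that branch must instead terminate through the transfer to a polystable $\U(p,q)$-Higgs bundle and Proposition~\ref{prop:Upq-simplification}. (The fully degenerate filtrations $V_1=V_2=0$ or $V_1=V_2=V$ force $\varphi=0$ and $\deg V=0$, and are finished by polystability of $V$ as a vector bundle.) Second, the transfer results you cite, Theorem~\ref{thm:stability-equivalence} and Proposition~\ref{thm:stability-equivalence-sp2nC}, concern $\SL(2n,\C)$, $\SO(2n,\C)$ and $\U^*(n)$, not $\U(p,q)$; what is actually needed is the filtration-extension argument just described, which is also what the paper uses at the corresponding step.
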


\begin{proof}

Suppose $(V,\beta,\gamma)$ is polystable.  If it is stable then the
result is trivially true (with $k=1$).  Suppose that
$(V,\beta,\gamma)$ is not stable.  Then by Definition
\ref{prop:sp2n-alpha-stability} we can find a non trivial filtration
(i.e.\ with $l \geq 2$)
$\VVV=(0\subsetneq V'_1\subsetneq V'_2\subsetneq\dots\subsetneq
V'_l=V)$ and a sequence of weights
$\lambda=(\lambda_1<\lambda_2<\dots<\lambda_l)$ such that
\begin{itemize}
\item $\varphi\in H^0(X,N(\VVV,\lambda))$
\item  $d(\VVV,\lambda)=0$
\item  there is a
splitting of vector bundles
$$V\simeq V'_1\oplus V'_2/V'_1\oplus\dots\oplus V'_l/V'_{l-1}$$
with respect to which
$$\beta\in H^0(X,\bigoplus_{\lambda_i+\lambda_j=0}K\otimes
V'_i/V'_{i-1}\otimes_A V'_j/V'_{j-1})$$ and
$$\gamma\in H^0(X,\bigoplus_{\lambda_i+\lambda_j=0}K\otimes
(V'_i/V'_{i-1})^*\otimes_A (V'_j/V'_{j-1})^*).$$
\end{itemize}
We can write the set of weights as a disjoint union
\begin{displaymath}
  \{\lambda_1,\dots,\lambda_l\} = I_1 \cup I_2 \cup I_3, 
\end{displaymath}
where each of the sets, if non-empty, can be written as follows:
\begin{align*}
  I_1 &= \{0\},\\
  I_2 &= \{\mu_1,-\mu_1,\dots,\mu_{r},-\mu_{r}\},\\
  I_3 &= \{\eta_1,\dots,\eta_{s}\},
\end{align*}
where $\mu_i >0$ and $\eta_i \neq 0$ for all $i$, and $\abs{\eta_i} \neq
\abs{\eta_j}$ for $i \neq j$. In other words, $I_2$ contains pairs of
non-zero weights $\pm \mu_i$ and $I_3$ contains non-zero weights that
cannot be paired. Note that $I_2 \cup I_3 \neq \emptyset$ since at
least one weight is non-zero.

We can now rewrite the splitting of $V$ as
\begin{equation}
  \label{eq:V=U_i}
  V \simeq U_0 \oplus (U_{-\mu_1} \oplus U_{\mu_1}) \oplus \dots \oplus
  (U_{-\mu_r} \oplus U_{\mu_r}) \oplus U_{\eta_1}\oplus\dots\oplus U_{\eta_s}, 
\end{equation}
where $U_{\nu} = V'_i / V'_{i-1}$ if $\nu=\lambda_i$ for some
$i=1,\dots,l$ and zero otherwise.

If $I_1$ is not empty, let $\beta_{0}$ be the component of $\beta$ in
$H^0(X,K\otimes U_0\otimes_A U_0)$ and
similarly define $\gamma_0$.
If both $\beta_0=0$ and $\gamma_0=0$ then
the vector bundle $U_{0}$ is a $\U(n_0)$-Higgs bundle.  
Otherwise,
$(U_0,\beta_{0},\gamma_{0})$ defines an
$\SO^*(2n_{0})$-Higgs bundle, where $n_{0} =
\rk(U_0)$.

For each positive element $\mu_i \in I_2$, let $\tilde\beta_{i}$ be
the component of $\beta$ in $H^0(X,K\otimes U_{\mu_i}\otimes_A
U_{-\mu_i})$ and similarly define $\tilde\gamma_i$. If both
$\tilde\beta_i=0$ and $\tilde\gamma_i=0$ then the vector bundles
$U_{\mu_i}$ and $U_{-\mu_i}$ are $\U(p_i)$- and $\U(q_i)$-Higgs
bundles respectively, where $p_i=\rk(U_{\mu_i})$ and
$q_i=\rk(U_{-\mu_i})$. Otherwise,
\begin{displaymath}
  (\tilde{V}_i,\tilde{W}_i,\tilde\beta_i,\tilde\gamma_i))
  =(U_{\mu_i} \oplus U_{-\mu_i}^*,\tilde\beta_i,\tilde\gamma_i)
\end{displaymath}
defines a $\U(p_i,q_i)$-Higgs bundle, where $p_i=\rk(U_{\mu_i})$ and
$q_i=\rk(U_{-\mu_i})$. In order  to see that $\deg \tilde V_i
+\deg \tilde W_i = 0$, we note that we can write the decomposition
(\ref{eq:V=U_i}) as
\begin{displaymath}
  V = U_{-\mu_i} \oplus V' \oplus U_{\mu_i},
\end{displaymath}
where we have pulled out $U_{-\mu_i}$ and $U_{\mu_i}$
and we denote the  rest by $V'$.
Now consider the induced filtration $\VVV'$ of $V$ with the weights
$\lambda' = (-1<0<1)$. Clearly $\varphi \in H^0(X,N(\VVV',\lambda'))$. Hence
semistability implies that
\begin{displaymath}
  d(\VVV',\lambda') = \deg(U_{\mu_i}) - \deg(U_{-\mu_i}) \geq 0.
\end{displaymath}
Similarly, considering the filtration induced by 
\begin{math}
  V = U_{\mu_i} \oplus V' \oplus U_{-\mu_i}
\end{math}
with weights $(-1<0<1)$ we obtain $\deg(U_{-\mu_i}) - \deg(U_{\mu_i})
\geq 0$, and hence we conclude that
\begin{displaymath}
  \deg \tilde V_i +\deg \tilde W_i
  = \deg(U_{\mu_i}) - \deg(U_{-\mu_i}) = 0
\end{displaymath}

Finally, for each $\eta_i \in I_3$, the vector bundle $U_{\eta_i}$ is a
$\U(n_i)$-Higgs bundle and we see that $\deg(U_{\eta_i})=0$ by a
similar argument, using the decomposition $V = U_{\eta_i} \oplus V'$.

Altogether, this leads to a decomposition with summands of the type in
the statement of the Proposition. Now we show that each summand is
polystable as a $G$-Higgs bundle, where $G$ is the appropriate group,
i.e, $G=\SO^*(2n_0)$, $G=\U(p_i,q_i)$ or $G=\U(n_i)$. By
Proposition~\ref{prop:Upq-simplification}, it follows that the
$\U(p_i,q_i)$ and $\U(n_i)$ summands are direct sums of stable
ones. Suppose one of the $\SO^*(2n_0)$ summands is not
polystable. Then there is a filtration and weight system violating
polystability of this summand.  This filtration and weight system can
be extended by adding the remaining summands in $V$ to each term and
by taking the same weights. The resulting filtration and weight system
violates polystability for the original $\SO^*(2n)$-Higgs bundle
$(V,\varphi)$.  Moreover, $n_0 < n$ because $I_2 \cup I_3 \neq
\emptyset$. Hence we can iterate the procedure until all summands are
stable.

Finally, we show that the three types are mutually exclusive. The
conditions on $\varphi$ clearly make (1) and (3) mutually exclusive.
Suppose that $(V_i,\beta_i,\gamma_i)$ is of type (2).  Since it is
stable, it must have $\varphi_i\ne 0$ and hence cannot be of type (3).
Suppose that $(V_i,\beta_i,\gamma_i)$ is also stable as a
$\SO^*(2n)$-Higgs bundle.  Then it is infinitesimally simple and thus
$\aut(V_i,\beta_i,\gamma_i)=0$.  But if $(V_i,\beta_i,\gamma_i)$ is of
type (2) then $\C^*\subset \aut(V_i,\beta_i,\gamma_i)$.  Thus cases
(1) and (2) are mutually exclusive.
\end{proof}

\begin{notation*}
  We shall write $(V,\varphi) = (V,\varphi_1) \oplus \dots \oplus
  (V,\varphi_k)$ for a $\SO^*(2n)$-Higgs bundle of the kind described
  in Proposition~\ref{prop:SO-star-poly-stability}.
Moreover, somewhat imprecisely, we shall say that a
  $\SO^*(2n)$-Higgs bundle of the form described in (2) of
  Proposition~\ref{prop:SO-star-poly-stability} is a \textbf{$\U(p,q)$-Higgs
  bundle} (here $n=p+q$).
\end{notation*}

By Theorem~\ref{thm:stable-not-simple-spnr-higgs} and Propositions~\ref{cor:smooth-points} and \ref{thm:stability-equivalence-sp2nC}, case (1) in Proposition~\ref{prop:SO-star-poly-stability} divides further into two cases.  The resulting refinement, given in the next theorem, will be
essential for proving our connectedness results in Section~\ref{sect:morse}.

\begin{theorem}
\label{prop:SO-star-poly-stability-refined}
A $\SO^*(2n)$-Higgs bundles $(V,\varphi=\beta+\gamma)$ is polystable
if and only if there is a decomposition $(V,\varphi) = (V_1,\varphi_1)
\oplus \dots \oplus (V_k,\varphi_k)$ such that each $(V_i,\varphi_i)$
is a $\SO^*(2n_i)$-Higgs bundle of one of the following mutually
exclusive types:
  
\begin{enumerate}
\item $(V_i,\varphi_i)$ is a stable and simple 
 $\SO^*(2n_i)$-Higgs bundle with $\varphi_i\ne 0$ which is stable as an
 $\SO(2n_i,\C)$-Higgs bundle;
\item $(V_i,\varphi_i)$ is a stable and simple 
 $\SO^*(2n_i)$-Higgs bundle with $\varphi_i\ne 0$ which admits a skewsymmetric isomorphism as in Proposition~\ref{thm:stability-equivalence-sp2nC} and thus defines a stable $\U^*(n_i)$-Higgs bundle;
 \item $(V_i,\varphi_i)$ is as described in (2) of
  Proposition~\ref{prop:SO-star-poly-stability}) and thus defines a stable $\U(p_i,q_i)$-Higgs bundle where $p_iq_i\ne 0$, $\deg \tilde V_i +\deg  \tilde W_i = 0$ and  $\varphi_i\ne 0$;
\item $\varphi_i=0$ and $V_i$ defines a degree zero stable vector bundle.
\end{enumerate}
\end{theorem}


\subsection{Bounds on $d=\deg(V)$.}
\label{sec:spn-higgs-moduli}
In this section we give an inequality which bounds the number of
non-empty moduli spaces $\cM_d=\cM_d(\SO^*(2n))$.  The inequality corresponds to the Milnor-Wood
inequality for surface group representations into $\SO^*(2n)$ (see
Section \ref{sect: surfacegroups}).

\begin{proposition}\label{mw-higgs}
Let $(V,\beta,\gamma)$ be a semistable $\SO^*(2n)$-Higgs bundle. Then
\begin{equation}\label{eqn: range}
\rank(\beta)(1-g)\le \deg(V)\le \rank(\gamma)(g-1).
\end{equation}
In particular, 
\begin{equation}
\abs{\deg(V)} \le n(g-1) 
\end{equation}
where  $\deg(V)=n(g-1)$ if and only if $\gamma$ is an isomorphism, and
$\deg(V)=-n(g-1)$ if and only if $\beta$ is an isomorphism.
\end{proposition}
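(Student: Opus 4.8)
The plan is to read off both inequalities from the simplified semistability criterion of Proposition~\ref{prop:simplifiedss}, using the skew-symmetry of $\gamma$ (and of $\beta$) to manufacture two convenient $\varphi$-invariant two-step filtrations, and then to obtain the lower bound for free from the duality isomorphism of Proposition~\ref{prop:duality-iso}. First I would prove the upper bound $\deg(V) \le \rank(\gamma)(g-1)$. Let $N \subset V$ be the saturation of $\ker\gamma$; it is a subbundle of rank $n - \rank(\gamma)$ with $\gamma(N) = 0$. I claim the filtration $0 \subset N \subset V \subset V$ (i.e.\ $V_1 = N$, $V_2 = V$) is $\varphi$-invariant. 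By Lemma~\ref{lemma:2-step} there are four conditions to check: both conditions involving $\beta$ hold trivially because $V_2 = V$ (so $V_2^{\perp} = 0$ and $V_2 \otimes K = V \otimes K$); of the two conditions on $\gamma$, the condition $\gamma(N) = 0$ is immediate from the definition of $N$, while $\gamma(V) \subset N^{\perp} \otimes K$ expresses that $\im\gamma$ annihilates $N$, which follows from skew-symmetry of $\Omega_\gamma$ together with $\gamma(N)=0$ (cf.\ Lemma~\ref{K-pairing}). Feeding this filtration into Proposition~\ref{prop:simplifiedss} gives $\deg(V) - \deg(N) - \deg(V) \ge 0$, that is, $\deg(N) \le 0$.

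Next I would exploit skew-symmetry a second time to bound $\deg(V/N)$. Since $\im\gamma \subset N^{\perp}\otimes K$ and $N^{\perp} \cong (V/N)^*$, the Higgs field descends to an injective morphism of rank-$\rank(\gamma)$ bundles $\bar\gamma \colon V/N \to (V/N)^*\otimes K$. Passing to determinants, the induced nonzero map of line bundles forces $\deg(V/N) \le \deg\bigl((V/N)^*\otimes K\bigr) = -\deg(V/N) + \rank(\gamma)(2g-2)$, hence $\deg(V/N) \le \rank(\gamma)(g-1)$, with equality exactly when $\bar\gamma$ (equivalently $\gamma$ itself) is an isomorphism. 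Combining this with $\deg(N) \le 0$ gives $\deg(V) = \deg(N) + \deg(V/N) \le \rank(\gamma)(g-1)$, which is the upper bound.

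The lower bound $\rank(\beta)(1-g) \le \deg(V)$ then costs nothing: applying the upper bound to the dual Higgs bundle $(V^*,\gamma,\beta)$ and using the isomorphism $\cM_d \simeq \cM_{-d}$ of Proposition~\ref{prop:duality-iso}, under which the roles of $\beta$ and $\gamma$ are swapped and $\deg(V)$ changes sign, yields $-\deg(V) \le \rank(\beta)(g-1)$. Since $\rank(\gamma),\rank(\beta) \le n$, the estimate $\abs{\deg(V)} \le n(g-1)$ is immediate. For the equality cases, if $\deg(V) = n(g-1)$ then both inequalities in $\deg(V) \le \rank(\gamma)(g-1) \le n(g-1)$ are equalities; as $g \ge 2$ this forces $\rank(\gamma) = n$, so $N = 0$ and $\deg(V) = \deg(V/N) = n(g-1)$, which by the determinant argument happens iff $\gamma$ is an isomorphism. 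The converse is a one-line degree count ($\deg(V) = \deg(V^*\otimes K)$ gives $\deg(V)=n(g-1)$), and the statement for $\deg(V) = -n(g-1)$ and $\beta$ follows by duality.

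I expect the only genuine subtleties to be the careful verification of $\varphi$-invariance of the chosen filtration via Lemma~\ref{lemma:2-step} and the bookkeeping with saturations — specifically confirming that $\gamma$ vanishes on the \emph{saturated} kernel $N$ and that $\im\gamma$ really lands in $N^{\perp}\otimes K$ so that $\bar\gamma$ is well defined. Once these are in place, the core of the argument is just the determinant/degree computation for $\bar\gamma$ and the formal appeal to duality, both of which are routine.
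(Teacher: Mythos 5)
Your proof is correct, but it takes a genuinely different route from the paper's. The paper's proof (given only as a sketch) first passes to the associated $\SL(2n,\C)$-Higgs bundle $(V\oplus V^*,\Phi)$ via part (1) of Theorem~\ref{thm:stability-equivalence}, and then applies the slope-semistability criterion there to Higgs subbundles built from the kernel and image of $\Phi$, outsourcing the details to \cite{bradlow-garcia-prada-gothen:2003} and \cite{gothen:2001}. You instead stay entirely inside the $\SO^*(2n)$ framework: skew-symmetry of $\gamma$ makes $0\subset\ker\gamma\subset V\subset V$ a $\varphi$-invariant two-step filtration (your check via Lemmas~\ref{lemma:2-step} and~\ref{K-pairing} is exactly right; note that over a curve $\ker\gamma$ is automatically saturated, so the saturation step is vacuous), Proposition~\ref{prop:simplifiedss} then gives $\deg(\ker\gamma)\le 0$, and the bound $\deg(V/\ker\gamma)\le\rank(\gamma)(g-1)$ follows from the determinant count on the injective map $V/\ker\gamma\to(V/\ker\gamma)^*\otimes K$. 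Your approach buys a self-contained argument that also delivers the equality characterization ($\gamma$ an isomorphism) directly from the determinant count, whereas the paper's approach is shorter by citation and leans on the $\SU(n,n)$/$\SL(2n,\C)$ machinery that recurs throughout the paper. Two small points to tighten: the mid-proof parenthetical that equality holds exactly when ``$\gamma$ itself'' is an isomorphism is premature, since $\bar\gamma$ being an isomorphism forces $\gamma$ to be one only once $\ker\gamma=0$ is known --- which you do establish in the maximal case before using it; and Proposition~\ref{prop:duality-iso} as stated concerns polystable moduli, so for the lower bound you should record the one-line fact that $(V,\beta,\gamma)\mapsto(V^*,\gamma,\beta)$ preserves semistability, e.g.\ by checking that a two-step filtration $0\subset V_1\subset V_2\subset V$ corresponds to $0\subset V_2^{\perp}\subset V_1^{\perp}\subset V^*$ with
\begin{equation*}
\deg V^* - \deg V_2^{\perp} - \deg V_1^{\perp} \;=\; \deg V - \deg V_1 - \deg V_2 .
\end{equation*}
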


\begin{proof} This is proved by first using the equivalence between
  the semistability of $(V,\beta,\gamma)$ and the $\SL(2n,\CC)$-Higgs
  bundle $(W,\Phi)$ associated to it (see (1) in Theorem \ref
  {thm:stability-equivalence}), and then applying the semistability
  numerical criterion to special Higgs subbundles defined by the
  kernel and image of $\Phi$ (see Section 3.4 in 
  \cite{bradlow-garcia-prada-gothen:2003}, and also \cite{gothen:2001}).
\end{proof}
Notice that since $\beta$ and $\gamma$ are skew-symmetric, they
cannot be isomorphisms if $n$ is odd. If $n=2m+1$ then $2m$ is the
upper bound on $\rank(\beta)$ and $\rank(\gamma)$.  Denote by
$\left\lfloor\frac{n}{2}\right\rfloor$ the integer part of $\frac{n}{2}$. As a
corollary of Proposition \ref{mw-higgs}, we obtain the following.

\begin{proposition}\label{non-emptiness-higgs}
The moduli space $\cM_d$ is empty unless
\begin{equation}\label{eqn: mwHiggs}
\abs{d} \leq \left\lfloor\frac{n}{2}\right\rfloor(2g-2).
\end{equation}
\end{proposition}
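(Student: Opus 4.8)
The plan is to derive the bound directly from Proposition~\ref{mw-higgs} together with a parity observation about skew-symmetric maps, exactly as foreshadowed in the remark preceding the statement.

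First I would invoke Proposition~\ref{mw-higgs}. Since every point of $\cM_d$ is represented by a polystable, hence semistable, $\SO^*(2n)$-Higgs bundle $(V,\beta,\gamma)$ with $\deg(V)=d$, we have
\[
\rank(\beta)(1-g)\le d\le\rank(\gamma)(g-1).
\]
Thus it suffices to bound both $\rank(\beta)$ and $\rank(\gamma)$ from above by $2\lfloor n/2\rfloor$ and then substitute.

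The key step is the observation that $\gamma$ is a section of $\Lambda^2V^*\otimes K$, so at a generic point of $X$ it is represented by a skew-symmetric $n\times n$ matrix. Since a skew-symmetric matrix always has even rank, the (generic) rank $\rank(\gamma)$ is an even integer with $0\le\rank(\gamma)\le n$, and the largest such integer is $2\lfloor n/2\rfloor$ (equal to $n$ when $n$ is even and to $n-1$ when $n$ is odd). The same argument applies to $\beta\in H^0(\Lambda^2V\otimes K)$. Substituting $\rank(\gamma)\le 2\lfloor n/2\rfloor$ and $\rank(\beta)\le 2\lfloor n/2\rfloor$ into the displayed inequality yields
\[
-\left\lfloor\tfrac{n}{2}\right\rfloor(2g-2)\le d\le\left\lfloor\tfrac{n}{2}\right\rfloor(2g-2),
\]
that is, $\abs{d}\le\lfloor n/2\rfloor(2g-2)$. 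Contrapositively, if $d$ violates \eqref{eqn: mwHiggs} then there is no semistable $\SO^*(2n)$-Higgs bundle of degree $d$, so $\cM_d=\emptyset$.

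There is no serious obstacle here: the result is immediate once Proposition~\ref{mw-higgs} is in hand. The only point deserving a word of justification is that the quantity appearing in Proposition~\ref{mw-higgs} is the \emph{generic} rank of the bundle map, i.e.\ its rank on the dense open subset of $X$ where it is maximal, and that this generic rank inherits the evenness of skew-symmetric matrices precisely because it is computed pointwise at a generic point of $X$.
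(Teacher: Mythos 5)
Your proof is correct and follows exactly the paper's route: the paper deduces this proposition as an immediate corollary of Proposition~\ref{mw-higgs}, using precisely the remark that $\beta$ and $\gamma$ are skew-symmetric and hence have even rank, bounded by $2\lfloor n/2\rfloor$. Your added justification about the generic rank is a sensible (and accurate) elaboration of a point the paper leaves implicit.
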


In view of this result, we say that $d = \deg(V)$ is \textbf{maximal}
when equality holds in (\ref{eqn: mwHiggs}).

\section{The case of maximal $d$}
\label{sect: max}

\subsection{Cayley correspondence for $n=2m$}\label{subs:cayley}

In this section we will assume that $n=2m$ is even and we will describe
the $\SO^*(2n)$ moduli space for the extreme value $\abs{d} = 2m(g-1)$.
In fact, for the rest of this section we shall assume that $d =
2m(g-1)$.  This involves no loss of generality, since, by Proposition
\ref{prop:duality-iso} there is an isomorphism between the moduli
spaces for $d$ and $-d$. The main result is Theorem~\ref{thm:cayley},
which we refer to as the \emph{Cayley correspondence}.

Let  $(V,\beta,\gamma)$ 
be a $\SO^*(4m)$-Higgs bundle such that $\gamma\in H^0(X,K\otimes
\Lambda^2V^*)$ 
is an isomorphism. 
Let $L_0=K^{-1/2}$ be a fixed  square root of $K^{-1}$, and
define $ W:=V\otimes L_0$. Then $\omega:=\gamma\otimes I_{L_0}:
W \to W^*$ is a skew-symmetric isomorphism defining a non-degenerate 
symplectic  $\Omega$ on $W$,  in other words,  $(W,\Omega)$ is a
$\Sp(2m,\CC)$-holomorphic bundle. The $K^2$-twisted endomorphism
$\psi:W\to W \otimes K^2$ defined by $\psi: = 
\beta \otimes I_{L_0^{-1}}\circ  (\gamma\otimes I_{L_0})$ is $\Omega$-skewsymmetric and
hence $(W,\Omega,\psi)$ defines a $K^2$-twisted $\U^\ast(2m)$-Higgs
pair (in the sense of Section \ref{sect: U*}, suitably modified to incorporate a twisting by an arbitrary line bundle), from which we can recover the original $\SO^*(4m)$-Higgs bundle.

\begin{definition}\label{defn: cayley} With  $(V,\beta,\gamma)$ and
  $(W,\Omega,\psi)$ 
as above, we say that $(W,\Omega,\psi)$ is the \textbf{Cayley partner} to $(V,\beta,\gamma)$.
\end{definition}

\begin{theorem}\label{equivalence-stability}
Let  $(V,\beta,\gamma)$ be a   $\SO^*(4m)$-Higgs bundle with $d=2m(g-1)$
such that $\gamma$ is an isomorphism. Let $(W,\Omega,\psi)$ be the corresponding
$K^2$-twisted $\U^*(n)$-Higgs pair. Then $(V,\beta,\gamma)$ is semistable
(resp.\ stable, polystable)
if and only if $(W,\Omega,\psi)$ is semistable
(resp.\ stable, polystable).
\end{theorem}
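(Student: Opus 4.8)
The plan is to match the test objects on the two sides. By Proposition~\ref{prop:simplifiedss} the (semi/poly)stability of $(V,\beta,\gamma)$ is tested on $\varphi$-invariant two-step filtrations $0\subset V_1\subset V_2\subset V$ through the quantity $\deg V-\deg V_1-\deg V_2$, while (as recalled in Section~\ref{sect: U*}) the (semi/poly)stability of the $\U^*(n)$-pair $(W,\Omega,\psi)$ is tested on $\psi$-invariant $\Omega$-isotropic subbundles of $W=V\otimes K^{-1/2}$ through their degree. Since $\Omega$ is induced by $\gamma$, a subbundle $V_1\subset V$ is $\Omega_\gamma$-isotropic if and only if $W_1:=V_1\otimes K^{-1/2}$ is $\Omega$-isotropic, it is $\psi$-invariant iff $W_1$ is, and $\deg W_1=\deg V_1-\rank(V_1)(g-1)$. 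The core is a dictionary lemma: when $\gamma$ is an isomorphism and $V_1$ is isotropic, the filtration $0\subset V_1\subset V_1^{\perp_\gamma}\subset V$ is $\varphi$-invariant if and only if $V_1$ is $\psi$-invariant. I would prove this from Lemma~\ref{lemma:2-step} and Lemma~\ref{K-pairing} together with the identities $\gamma(V_1^{\perp_\gamma})=V_1^{\perp}\otimes K$ and $(V_1^{\perp_\gamma})^{\perp_\gamma}=V_1$ and the relation $\psi=\beta\gamma$; the point is that the two $\beta$-conditions of Lemma~\ref{lemma:2-step} translate exactly into $\psi(V_1)\subset V_1\otimes K^2$ and $\psi(V_1^{\perp_\gamma})\subset V_1^{\perp_\gamma}\otimes K^2$, the latter being automatic once $V_1$ is $\psi$-invariant because $\psi$ (being $\Omega$-(skew)symmetric) carries $\Omega$-orthogonal complements of $\psi$-invariant subbundles to themselves.

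Next comes the degree bookkeeping. Using that $\gamma$ is an isomorphism one computes $\deg V_1^{\perp_\gamma}=\deg V_1-\deg V+(n-\rank V_1)(2g-2)$, whence, since $\deg V=n(g-1)$,
\begin{equation*}
\deg V-\deg V_1-\deg V_1^{\perp_\gamma}=-2\bigl(\deg V_1-\rank(V_1)(g-1)\bigr)=-2\deg W_1.
\end{equation*}
This already yields the easy implications: if $(V,\beta,\gamma)$ is semistable then, applying the criterion to the $\varphi$-invariant filtration attached to any isotropic $\psi$-invariant $W_1$, we get $\deg W_1\le 0$, so $(W,\Omega,\psi)$ is semistable; the stable and polystable versions follow the same pattern, the excluded filtration $0=V_1\subset V_2=V$ corresponding to the trivial subbundles $W_1=0$ and $W_1=W$.

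The hard part is the reverse implication, since a general $\varphi$-invariant two-step filtration carries two subbundles $V_1\subsetneq V_2$ rather than a single isotropic one, and so does not immediately produce a destabilizing subbundle of $W$. Here I would proceed as follows. From Lemma~\ref{lemma:2-step} one checks that $V_1$ is $\Omega_\gamma$-isotropic and $\psi$-invariant, that $V_2$ is $\psi$-invariant, and that $\rank V_1+\rank V_2\le n$ (from $V_2\subset V_1^{\perp_\gamma}$). Writing $W_i=V_i\otimes K^{-1/2}$ one has
\begin{equation*}
\deg W_1+\deg W_2=\deg V_1+\deg V_2-(\rank V_1+\rank V_2)(g-1),
\end{equation*}
so a destabilizing filtration, $\deg V_1+\deg V_2>\deg V=n(g-1)$, forces $\deg W_1+\deg W_2>(n-\rank V_1-\rank V_2)(g-1)\ge 0$ (using $g\ge 2$), hence $\deg W_1>0$ or $\deg W_2>0$. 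If $\deg W_1>0$, then $W_1$ is an isotropic $\psi$-invariant destabilizer and we are done. If $\deg W_2>0$, I would invoke the symplectic-complement construction on the $\Sp(n,\C)$-bundle $(W,\Omega)$ (note $\deg W=0$): one has $\deg(W_2\cap W_2^{\perp})=\deg W_2$, and $W_2\cap W_2^{\perp}$ is $\psi$-invariant and $\Omega$-isotropic; if it is nonzero it destabilizes $(W,\Omega,\psi)$, while if $W_2\cap W_2^{\perp}=0$ then $W_2$ is a $\psi$-invariant symplectic subbundle and $(W,\Omega,\psi)$ splits as a nontrivial $\Omega$-orthogonal direct sum, contradicting stability. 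The same dichotomy, run with non-strict inequalities and keeping track of the excluded filtration $0=V_1\subset V_2=V$, gives the stable case; the polystable case is obtained by translating, through the isomorphism $W=V\otimes K^{-1/2}$, the holomorphic splitting furnished by conditions (a)--(c) of Proposition~\ref{prop:simplifiedss} into the $\Omega$-orthogonal, $\psi$-compatible splitting required of a polystable $\U^*(n)$-pair, and conversely. I expect the main obstacle to be exactly this reverse direction, and within it the symplectic-complement step, where one must guarantee that a non-isotropic $\psi$-invariant destabilizer of $W$ can always be replaced by an isotropic one or else forces a stability-violating splitting.
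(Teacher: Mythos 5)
Your proposal follows essentially the same route as the paper's proof. The paper handles the forward direction by sending an isotropic $\psi$-invariant $W'\subset W$ of positive degree to the $\varphi$-invariant filtration $0\subset V_1\subset V_1^{\perp_\gamma}\subset V$ with $V_1=W'\otimes L_0^{-1}$, and handles the converse by producing, from a destabilizing $\varphi$-invariant filtration $0\subset V_1\subset V_2\subset V$, the two auxiliary filtrations $0\subset V_1\subset V_1^{\perp_\gamma}\subset V$ and $0\subset V_2\cap V_2^{\perp_\gamma}\subset V_2+V_2^{\perp_\gamma}\subset V$, at least one of which is destabilizing. Your two cases ($\deg W_1>0$, or else $\deg W_2>0$ and pass to $W_2\cap W_2^{\perp_\Omega}$) are exactly these two filtrations read on the $W$-side, since $W_2\cap W_2^{\perp_\Omega}=(V_2\cap V_2^{\perp_\gamma})\otimes L_0$; your dictionary lemma and the formula $\deg V-\deg V_1-\deg V_1^{\perp_\gamma}=-2\deg W_1$ supply precisely the details that the paper defers to Theorem 4.2 of \cite{GGM}, and those parts are correct.

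There is, however, one step that is wrong as stated: the equality $\deg(W_2\cap W_2^{\perp_\Omega})=\deg W_2$ is false in general (for instance, if $\Omega|_{W_2}$ is generically nondegenerate but degenerates at a point, then $W_2\cap W_2^{\perp_\Omega}=0$ while $\deg W_2$ may be negative). What is true, and is all your argument needs, is the inequality $\deg(W_2\cap W_2^{\perp_\Omega})\geq\deg W_2$ when $\deg W=0$: from the sheaf sequence $0\to W_2\cap W_2^{\perp_\Omega}\to W_2\oplus W_2^{\perp_\Omega}\to W_2+W_2^{\perp_\Omega}\to 0$ and $\deg W_2^{\perp_\Omega}=\deg W_2$ one gets $\deg(W_2\cap W_2^{\perp_\Omega})=2\deg W_2-\deg(W_2+W_2^{\perp_\Omega})$, and since the saturation of the subsheaf $W_2+W_2^{\perp_\Omega}$ is the subbundle $(W_2\cap W_2^{\perp_\Omega})^{\perp_\Omega}$, whose degree equals $\deg(W_2\cap W_2^{\perp_\Omega})$, the inequality follows. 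With this correction the case $\deg W_2>0$ produces a nonzero isotropic $\psi$-invariant subbundle of positive degree, as required. Relatedly, your fallback in the sub-case $W_2\cap W_2^{\perp_\Omega}=0$ (that a symplectic splitting "contradicts stability") is a non sequitur where you are only assuming semistability of $(W,\Omega,\psi)$; fortunately that sub-case is vacuous, because $W_2\cap W_2^{\perp_\Omega}=0$ makes the sheaf map $W_2\to W_2^*$ induced by $\Omega$ generically injective, forcing $\deg W_2\leq -\deg W_2$, i.e.\ $\deg W_2\leq 0$. Once these two points are repaired, your argument is sound and matches the paper's.
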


\begin{proof}
The proof is similar to that of Theorem 4.2 in \cite{GGM}, so we will just 
sketch the main arguments. We will used the simplified 
stability notions given in Propositions 
\ref{prop:simplifiedss} and 
\ref{prop:orthogonal-stability}.
We first show that if $(V,\beta,\gamma)$ is semistable then the corresponding
$\U^*(2m)$-Higgs pair is semistable. 
Suppose otherwise, then  there exists
an isotropic $\psi$-invariant subbundle $W'\subset W$ such that $\deg W'>0$.
Let $V_1:=W'\otimes L_0^{-1}$ and let $V_2= V_1^{\perp_\gamma}$ (see Lemma
\ref{K-pairing} for the definition of $\perp_\gamma$). 
We can check that the filtration 
$0 \subset V_1 \subset V_2 \subset V$ is $\varphi$-invariant 
and  $\deg(V) - \deg(V_1) - \deg(V_2) < 0$, 
contradicting the semistability
of $(V,\beta,\gamma)$. 

To prove the converse, i.e., that  $(V,\beta,\gamma)$ is semistable 
if the corresponding $\U^*(2m)$-Higgs pair is semistable, suppose that
there is a $\varphi$-invariant filtration 
$0\subset V_1\subset V_2\subset V$ such that
$\deg(V) - \deg(V_1) - \deg(V_2) < 0$.  From this filtration we cannot
immediately obtain a destabilizing isotropic subbundle of the 
$\U^*(2m)$-Higgs pair, but we can construct an appropriate filtration 
giving the destabilizing subobject of the $\U^*(2m)$-Higgs pair.
To do this, we first  observe that the 
$\varphi$-invariance condition for $\gamma$ (second condition in
(\ref{invariance})) is equivalent, by Lemma \ref{K-pairing}, to 
$V_2\subset V_1^{\perp_{\gamma}}$.  We define  two new  filtrations 
as follows:
$$(0\subset V_1'\subset V_2'\subset V):=(0\subset V_1\subset V_1^{\perp_{\gamma}}\subset V)$$
(we indeed have $V_1\subset V_1^{\perp_{\gamma}}$ because $V_1\subset V_2$
and $V_2\subset V_1^{\perp_{\gamma}}$) and
$$(0\subset V_1''\subset V_2''\subset V):=(0\subset V_2\cap V_2^{\perp_{\gamma}}
\subset V_2+V_2^{\perp_{\gamma}}\subset V).$$
One can check (see Theorem 4.2 in \cite{GGM}) that these two filtrations
are $\varphi$-invariant and that one of  the two inequalities 
$$\deg V-\deg V_1-\deg V_1^{\perp_{\gamma}}<0,\qquad\qquad
\deg V-\deg (V_2\cap V_2^{\perp_{\gamma}})-\deg(V_2+V_2^{\perp_{\gamma}})<0$$
holds. These two filtrations give $\psi$-invariant isotropic 
subbundles $W':=V_1'\otimes L_0$ and  $W'':=V_1''\otimes L_0$  such that
either $\deg W'>0$ or  $\deg W''>0$, contradicting the
semistability of  $(W,\Omega,\psi)$. 

The proof of the statement for stability is basically the same, observing
that the trivial filtration 
$0=V_1\subset V_2=V$ corresponds to the trivial subbundle $0\subset W$.
The proof of the equivalence of the polystability
conditions follows word by word the argument for $\Sp(2n,\R)$ 
given in Theorem 4.2 in \cite{GGM}.
\end{proof}

\begin{theorem}
\label{thm:cayley}
Let $\cM_{\max}(\SO^*(4m))$ be the moduli space of polystable
$\SO^*(4m)$-Higgs bundles with $d=2m(g-1)$ and let $\cM_{K^2}(\U^*(2m))$ be
the moduli space of polystable $K^2$-twisted $\U^*(2m)$-Higgs pairs.
The map $(V,\beta,\gamma)\mapsto (W,\Omega,\psi)$ defines an
isomorphism of complex algebraic varieties
$$
\cM_{{\mathrm{max}}}(\SO^*(4m))\simeq \cM_{K^2}(\U^*(2m)).
$$
\end{theorem}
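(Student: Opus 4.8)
The plan is to build on Theorem~\ref{equivalence-stability}, which already establishes that the assignment $(V,\beta,\gamma)\mapsto(W,\Omega,\psi)$ sends semistable (resp.\ stable, polystable) $\SO^*(2n)$-Higgs bundles with $d=n(g-1)$ to semistable (resp.\ stable, polystable) $K^2$-twisted $\U^*(n)$-Higgs pairs, and conversely. What remains is therefore twofold: first, to promote this to a bijection on isomorphism classes of polystable objects, so that the map is a well-defined bijection of the underlying point sets $\cM_{\max}(\SO^*(2n))\to\cM_{K^2}(\U^*(n))$; and second, to upgrade this set-theoretic bijection to an isomorphism of complex algebraic varieties.

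For the bijection I would first record the inverse construction explicitly. Given a $K^2$-twisted $\U^*(n)$-Higgs pair $(W,\Omega,\psi)$, set $V:=W\otimes L_0^{-1}$, let $\gamma:=\Omega\otimes I_{L_0^{-1}}\colon V\to V^*\otimes K$, which is skew-symmetric and an isomorphism because $\Omega$ is symplectic, and let $\beta:=(\psi\circ\Omega^{-1})\otimes I_{L_0}\colon V^*\to V\otimes K$, which is skew-symmetric precisely because $\psi$ is $\Omega$-skew. This reconstructs an $\SO^*(2n)$-Higgs bundle with $d=n(g-1)$ and is visibly inverse to the Cayley construction. The essential point is then that the fixed choice of square root $L_0=K^{-1/2}$ makes $f\mapsto g:=f\otimes I_{L_0}$ a bijection between isomorphisms $(V,\beta,\gamma)\xrightarrow{\simeq}(V',\beta',\gamma')$ and isomorphisms $(W,\Omega,\psi)\xrightarrow{\simeq}(W',\Omega',\psi')$: the $\SO^*(2n)$ condition $\gamma=f^*\gamma'f$ is identical, after tensoring by $L_0$, to the symplectic compatibility $\Omega=g^*\Omega'g$, while the condition on $\beta$ becomes, granting this, the intertwining $\psi'=g\psi g^{-1}$. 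Hence isomorphic objects correspond to isomorphic objects in both directions, and together with Theorem~\ref{equivalence-stability} this yields a bijection of sets.

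To obtain the algebraic isomorphism I would argue that both the Cayley construction and its inverse are algebraic operations compatible with base change, since each is given by tensoring with the fixed line bundle $L_0^{\pm1}$ together with the tautological identifications above. Concretely, for any scheme $S$ a family of polystable $\SO^*(2n)$-Higgs bundles over $X\times S$ is carried, by tensoring the universal data with $p_X^*L_0$, to a family of polystable $K^2$-twisted $\U^*(n)$-Higgs pairs over $X\times S$, and vice versa. This defines morphisms between the two moduli functors, hence morphisms between the coarse moduli varieties supplied by the GIT/Simpson-type constructions invoked in Theorem~\ref{alg-moduli} and their $L$-twisted analogues. Since the forward and backward families are mutually inverse, the induced morphisms of varieties are mutually inverse, so the bijection is an isomorphism of complex algebraic varieties.

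The main obstacle I anticipate is precisely this last upgrade from a bijection of point sets to an isomorphism of varieties. The delicate part is to phrase the tensoring operation at the level of families of relative Higgs pairs over an arbitrary base, so that it genuinely induces morphisms of coarse moduli spaces, and to ensure that the algebraic structure on $\cM_{K^2}(\U^*(n))$ coming from the $K^2$-twisted construction is compatible with the one on $\cM_{\max}(\SO^*(2n))$. An alternative, and perhaps safer, route to the same conclusion is deformation-theoretic: the construction identifies the deformation complexes of corresponding objects up to the fixed twist by $L_0$, hence induces isomorphisms on the hypercohomology groups computing Zariski tangent spaces, so the bijection is \'etale; being a bijective \'etale morphism between these varieties it is an isomorphism. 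Following \cite{GGM} (Theorem~4.2), I expect the family-theoretic argument to be the intended one.
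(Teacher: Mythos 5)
Your proposal is correct and takes essentially the same route as the paper: the paper's proof likewise combines Theorem~\ref{equivalence-stability} with the existence of local universal families (citing Schmitt) to obtain the isomorphism of algebraic varieties, which is exactly the family-theoretic argument you anticipate as the intended one. The only ingredient the paper makes explicit that you elide is the well-definedness of the map on all of $\cM_{\max}$: by Proposition~\ref{mw-higgs}, every semistable $(V,\beta,\gamma)$ with $d=n(g-1)$ automatically has $\gamma$ an isomorphism, so the Cayley construction applies to every point of the moduli space.
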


\begin{proof}
  Let $(V,\beta,\gamma)$ be a semistable $\SO^*(4m)$-Higgs bundle
  with $d=2m(g-1)$.  By Proposition \ref{mw-higgs}, $\gamma$ is an
  isomorphism and hence the map $(V,\beta,\gamma)\mapsto (W,\Omega,\psi)$
  is well defined.  The result follows now from
  Theorem~\ref{equivalence-stability} 
and the existence of local
  universal families (see \cite{schmitt:2008}).
\end{proof}

\begin{remark}
  \label{rem:cayley-zero}
  Note that a maximal  $\SO^*(2n)$-Higgs bundle $(V,\beta,\gamma)$ has
  $\beta=0$ if and only if the Cayley partner $(W,\Omega,\psi)$ has
  $\psi=0$. Thus, in particular, Theorem~\ref{equivalence-stability}
  implies that a maximal $\SO^*(2n)$-Higgs bundle of the form
  $(V,0,\gamma)$ is polystable if and only if the corresponding
  $\Sp(n,\C)$-bundle $(W,\Omega)$ is polystable. Hence, the isomorphism of
  Theorem~\ref{thm:cayley} restricts to an isomorphism between the
  subspace of $\SO^*(2n)$-Higgs bundles with $\beta=0$ in
  $\mathcal{M}_{\max}(\SO^*(2n))$ and the moduli space of polystable
  $\Sp(n,\C)$-bundles (note that there is only one topological class of
  such bundles, since $\Sp(n,\C)$ is simply connected.)  This will be important in the proof of Theorem \ref{theorem: main1}.
\end{remark}

\subsection{Rigidity for $n=2m+1$}\label{subs:rigidity}
In this section we consider the case in which $n=2m+1$ and describe
the $\SO^*(2n)$ moduli space for the extreme value $\abs{d} =
2m(g-1)$. As in Section \ref{subs:cayley}, we assume without loss of
generality that $d$ is positive. The main result is the following
Theorem\footnotemark\footnotetext{Announced without proof as
  Theorem~4.8 in \cite{bradlow-garcia-prada-gothen:2005}.}.

\begin{theorem}\label{rigidityso*2n} Let $\mathcal{M}_{\max} (\SO^*(4m+2))$ be the moduli space of polystable $\SO^*(2n)$-Higgs bundles with $n=2m+1$ and $d=2m(g-1)$. If $m>0$ and $g\ge 2$ then the stable locus of $\mathcal{M}_{\max} (\SO^*(4m+2))$ is empty and
$$
\mathcal{M}_{\max} (\SO^*(4m+2))\simeq \mathcal{M}_{\max}(\SO^*(4m))\times \Jac(X),
$$
where $\Jac(X)$ is the Jacobian of $X$.
\end{theorem}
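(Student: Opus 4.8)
The plan is to pin down the structure of a maximal $\SO^*(4m+2)$-Higgs bundle $(V,\beta,\gamma)$ and read off a decomposition $V = N \oplus \hat V$ in which $N$ is a degree-zero line bundle and $\hat V$ carries a maximal $\SO^*(4m)$-Higgs bundle. Since $d$ is maximal, Proposition~\ref{mw-higgs} forces $\gamma$ to have rank $2m$ (the largest even number $\le n=2m+1$) and $\deg V = m(2g-2)$, with equality throughout the Milnor--Wood estimate. Set $N := \ker\gamma \subset V$; this is the saturated rank-one radical of the $K$-twisted skew form $\Omega_\gamma$, so in particular $\gamma(N)=0$ and $\gamma(V) \subset N^{\perp}\otimes K$.

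First I would show $\deg N = 0$ by a two-sided estimate. For the upper bound, the two-step filtration $0 \subset N \subset V$ (with $V_1 = N$, $V_2 = V$) is $\varphi$-invariant: by Lemma~\ref{lemma:2-step} the only conditions that are not automatic are $\gamma(V) \subset N^{\perp}\otimes K$ and $\gamma(N)=0$, and both hold because $N$ lies in the radical of $\Omega_\gamma$ (Lemma~\ref{K-pairing}). Semistability (Proposition~\ref{prop:simplifiedss}) then gives $\deg V - \deg N - \deg V \ge 0$, i.e.\ $\deg N \le 0$. For the lower bound, $\gamma$ induces an injection $\bar\gamma\colon V/N \hookrightarrow V^*\otimes K$ whose image lies in $N^{\perp}\otimes K$, where $N^{\perp}\cong (V/N)^*$. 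Comparing degrees of the rank-$2m$ sheaves $V/N$ and $N^{\perp}\otimes K$ yields $\deg V - \deg N \le -\deg V + \deg N + 2m(2g-2)$, which together with $\deg V = m(2g-2)$ gives $\deg N \ge 0$. Hence $\deg N = 0$, and the resulting equality of degrees forces $\bar\gamma\colon V/N \xrightarrow{\simeq} (V/N)^*\otimes K$ to be an isomorphism.

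With $\deg N = 0$ the invariant filtration above has $d(\VVV,\lambda)=0$, so the polystability clause of Proposition~\ref{prop:simplifiedss} supplies a splitting $V \simeq N \oplus (V/N)$ with respect to which $\beta \in H^0(K\otimes \Lambda^2(V/N))$ and $\gamma \in H^0(K\otimes \Lambda^2(V/N)^*)$; in particular the $N$-components of $\beta$ and $\gamma$ vanish. Writing $\hat V := V/N$, the triple $(\hat V,\beta,\gamma)$ is an $\SO^*(4m)$-Higgs bundle with $\gamma$ an isomorphism, hence maximal, and it is polystable as a direct summand of a polystable object. Conversely, given a maximal polystable $\SO^*(4m)$-Higgs bundle $(\hat V,\hat\beta,\hat\gamma)$ and $N \in \Jac(X) = \Pic^0(X)$, the direct sum $(\hat V \oplus N, \hat\beta, \hat\gamma)$, with the Higgs field extended by zero on $N$, is a maximal polystable $\SO^*(4m+2)$-Higgs bundle whose $\gamma$ has kernel exactly $N$. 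These two assignments are mutually inverse, giving a bijection $\mathcal{M}_{\max}(\SO^*(4m+2)) \leftrightarrow \mathcal{M}_{\max}(\SO^*(4m)) \times \Jac(X)$.

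Finally I would upgrade this bijection to an isomorphism of algebraic varieties exactly as in the proof of Theorem~\ref{thm:cayley}: the assignments $V \mapsto (\ker\gamma,\, V/\ker\gamma)$ and its inverse are algebraic in families, so the existence of local universal families \cite{schmitt:2008} promotes the set-theoretic bijection to an isomorphism of moduli varieties. Emptiness of the stable locus is then immediate, since for $m>0$ both $N$ and $\hat V$ are nonzero, so the filtration $0 \subset N \subset V$ is a nontrivial $\varphi$-invariant filtration with $\deg V - \deg N - \deg V = 0$, violating strict stability. The main obstacle is the two-sided degree estimate pinning down $\deg N = 0$; once that is in hand the splitting and the vanishing of the cross terms follow formally from polystability, and the families argument needed for the variety isomorphism is routine given the template of Theorem~\ref{thm:cayley}.
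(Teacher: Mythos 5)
Your proof is correct, but it reaches the crucial splitting $V \cong \ker(\gamma) \oplus V/\ker(\gamma)$ by a genuinely different route than the paper. The degree computation is essentially the same in content: your filtration $0 \subset N \subset V$ coincides with the paper's ($W_\gamma^\perp = \ker(\gamma)$ by skew-symmetry), and your lower bound $\deg N \ge 0$ via injectivity of $\bar\gamma\colon V/N \hookrightarrow N^\perp\otimes K$ between same-rank bundles repackages the paper's bookkeeping with the saturation $W_\gamma$ and the torsion sheaf $T$ (your $\deg N$ is their $t/2$), arguably more cleanly. The real divergence is afterwards: the paper invokes the Hitchin--Kobayashi correspondence (Theorem~\ref{HKcorrespond}), takes the smooth orthogonal decomposition $V = \ker(\gamma)\oplus V_\perp$ with respect to the solution metric, and runs a Chern--Weil argument with the second fundamental form ($\Pi \ge 0$ plus $\deg\ker(\gamma)=0$ forces $\Pi = 0$ and the vanishing of the off-diagonal blocks of $\beta$), whereas you extract the holomorphic splitting and the vanishing of the cross-terms of $\beta,\gamma$ purely algebraically from the polystability condition applied to the degree-zero $\varphi$-invariant filtration $N \subset V$. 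Your route is shorter and stays inside the algebraic stability framework; the paper's analytic route costs more machinery but produces the metric-orthogonal decomposition directly. One caveat on citation rather than substance: Proposition~\ref{prop:simplifiedss} as literally worded states the splitting property as a \emph{sufficient} condition for polystability, while you need the converse direction (polystable $\Rightarrow$ splitting); that direction is immediate from the defining Definition~\ref{prop:sp2n-alpha-stability} (apply it to the filtration $0\subset N\subset V$ with weights $(-1,0)$, which reproduces exactly your conditions on $\beta$ and $\gamma$), and it is precisely how the paper itself uses polystability in the proof of Proposition~\ref{prop:SO-star-poly-stability}, so you should cite the definition or state the proposition as an equivalence. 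Your treatment of the converse direction (direct sums of polystable objects are polystable, via Proposition~\ref{prop:SO-star-poly-stability}) and of the non-existence of stable points matches the paper's, and your families argument for the variety isomorphism is at the same level of detail as the paper's proof of Theorem~\ref{thm:cayley}.
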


\begin{proof}
 Let $(V,\beta,\gamma)$ be a polystable $\SO^*(2n)$-Higgs bundle with
 $n=2m+1$.  The map $\gamma:V\longrightarrow V^*\otimes K$ defines kernel and image 
sheaves:
\begin{equation}\label{ker}
0\longrightarrow \ker (\gamma)\longrightarrow V\longrightarrow \im(\gamma)\longrightarrow 0.
\end{equation}
\noi The kernel $ \ker(\gamma)$ is a subbundle of $V$, while $\im(\gamma)$ is
in general a subsheaf of $V^*\otimes  K$. Let $W_{\gamma}$ denote the
saturation of $\im(\gamma)\otimes K^{-1}\subset V^*$, so that we have
\begin{equation}\label{Im}
0\longrightarrow \im(\gamma)\otimes K^{-1}\longrightarrow W_{\gamma}\longrightarrow T\longrightarrow 0,
\end{equation}
\noindent where $T$ is a torsion sheaf.  

\noi Let $\ker(\gamma)^{\perp}$ denote the annihilator of $\ker(\gamma)$, i.e. let it be defined by
\begin{equation}\label{kerperp}
0\longrightarrow \ker(\gamma)^{\perp}\longrightarrow V^*\longrightarrow \ker(\gamma)^*\longrightarrow 0
\end{equation}

\noi The skew-symmetry of $\gamma$ implies the following:
\begin{align} 
\ker(\gamma)^{\perp}=W_{\gamma}, \label{skewconseq}\\
\rank(\gamma)\le 2m\label{skewconseq2}
\end{align}

 




\noi Combining \eqref{skewconseq} with (\ref{kerperp}), we get
\begin{equation}\label{degW}
\deg(\ker(\gamma))-\deg(W_{\gamma})= d
\end{equation} 
\noi In addition, we get linear relations from (\ref{ker}) and
(\ref{Im}), namely
\begin{equation}\label{ker-deg}
\deg(\ker(\gamma))+\deg(\im(\gamma))=d
\end{equation}
and
\begin{equation}\label{Im-deg}
\deg(\im(\gamma))-\deg(W_{\gamma})=l(2g-2)-t
\end{equation}
\noi where $t=\deg(T)$ and $l=\rank(\gamma)$.  The system (\ref{degW}),
(\ref{ker-deg}), 
(\ref{Im-deg}) can be solved, giving in particular
\begin{equation}\label{degK}
\deg(\ker(\gamma))=d+\deg(W_{\gamma})=d-l(g-1)+\frac{t}{2}.
\end{equation}

Consider now the subobject $V\oplus W_{\gamma}\subset V\oplus V^*$. This clearly satisfies

\begin{enumerate}
\item $W_{\gamma}^{\perp}\subset V$,
\item $\beta(W_{\gamma})\subset V\otimes K$,
\item $\gamma(V)\subset W_{\gamma}\otimes K$.
\end{enumerate}
\noi Thus, setting $V_1=W_{\gamma}^{\perp}$ and $V_2=V$, we get 
a filtration which  is $\varphi$-invariant, i.e.
satisfies condition (\ref{invariance}) in 
Definition \ref{prop:SO-star-semi-stability}. The semistability 
condition thus yields the inequality $\deg(W_{\gamma}^{\perp})\le 0$ or, equivalently,
\begin{equation}
d+\deg(W_{\gamma})\le 0.
\end{equation}
\noi Combined with (\ref{degK}) this gives 
 \begin{equation}\label{8}
d-l(g-1)+\frac{t}{2}\le 0.
\end{equation}
\noi It follows immediately from (\ref{8}) and (\ref{degK}) --- and the non-negativity of $t$ ---  that if $d=2m(g-1)=l(g-1)$ then $T=0$, i.e. $\im(\gamma)\otimes K^{-1}$ is a subbundle of $V^*$, and $\deg(\ker(\gamma))=0$.

By Theorem \ref{HKcorrespond} the $\SO^*(2n)$-Higgs bundle
$(V,\beta,\gamma)$ is polystable if and only if $V$ admits a Hermitian
metric $h$ satisfying the $\SO^*(2n)$-Hitchin equations. As described in
Section \ref{hitcheq}, these equations take the form
\begin{equation}\label{SO*Hitchin}
F_V+\beta\beta^*+\gamma^*\gamma=0
\end{equation}
\noi where $F_V$ is the curvature of the metric connection determined by $h$,
and the adjoints $\beta^*$ and $\gamma^*$ are with respect to $h$.  
Fix a local frame for $V$ and take the dual frame for $V^*$.  With respect 
to these frames, $\beta$ and $\gamma$ are represented by a skew-symmetric
matrices. If the frame for $V$ is compatible with the smooth decomposition 
$V=\ker(\gamma)\oplus V_{\perp}$, where $V_{\perp}$ denotes the complement to 
$\ker(\gamma)$, then the matrices have the form
\begin{equation}\label{GBmatrices}
\gamma=\begin{pmatrix}0&0\\0&\gamma\end{pmatrix}\ ,\  \beta=\begin{pmatrix}\beta1&\beta_2\\-\beta_2&\beta_3\end{pmatrix}
\end{equation}
with respect to the decompositions $V=\ker(\gamma)\oplus V_{\perp}$ and
$V^*=(\ker(\gamma))^*\oplus (V_{\perp})^*$. 

The metric connection  decomposes as
\begin{equation}
D_{V}=\begin{pmatrix}D_{\ker}&A\\-\bar{A}^T&D_{\perp}\end{pmatrix}\end{equation}
\noi where $A\in\Omega^{0,1}(\Hom(V_{\perp},\ker(\gamma)))$ is the second
fundamental form for the embedding of the subbundle $\ker(\gamma)\subset
V$.  The corresponding decomposition of the curvature is
\begin{equation}
F_{V}=\begin{pmatrix}F_{\ker}-A\wedge\bar{A}^T&*\\ *& F_{V_{\perp}}-\bar{A}^T\wedge A\end{pmatrix} \ .
\end{equation}

Applying  $i\Lambda \Tr$ to equation (\ref{SO*Hitchin}), and using  (\ref{GBmatrices}) thus yields
\begin{align}
& \deg(\ker(\gamma))+\Pi+||\beta_1||^2+||\beta_2||^2=0\label{1}\\
& \deg(V_{\perp})-\Pi+||\beta_{2}||^2+||\beta_3||^2-||\gamma||^2=0\label{2}
\end{align}
\noi where $\Pi=-i\Lambda \Tr(A\wedge\bar{A}^T)$.  Notice that, since the second fundamental form is of type $(0,1)$, we get that 
\begin{equation}\Pi\ge 0\ .
\end{equation}
\noi But if $d=2m(g-1)$ and $\rank(\gamma)=2m$ then $\deg(\ker(\gamma))=0$. It thus follows from (\ref{1}) that $\Pi=0$ and also that $\beta_1=\beta_2=0$.  This immediately implies that the $\SO^*(2n)$-Higgs bundle $(V,\beta,\gamma)$ decomposes as a sum
\begin{equation}
(V,\beta,\gamma)=(\ker(\gamma),0,0)\oplus (V_{\perp},\beta_3,\gamma).
\end{equation}
\noi Notice that with $V_1=0$ and $V_2= V_{\perp}$ we get a $\varphi$-invariant two-step filtration (see definition \ref{prop:SO-star-semi-stability}) with
\begin{equation}
\deg(V)-\deg(V_1)-\deg(V_2) = 0
\end{equation}
\noi By Proposition \ref{prop:simplifiedss} $(V,\beta,\gamma)$ is thus not stable. Moreover, $\ker(\gamma)$ is a holomorphic line bundle, while $(V_{\perp},\beta_3,\gamma)$ is a
$\SO^*(4m)$-Higgs bundle.  The data thus define a Higgs bundle with structure group
\begin{displaymath}
\SO^*(4m) \times \SO(2) = \SO^*(4m) \times \U(1)\ .
\end{displaymath}
This completes the proof of Theorem \ref{rigidityso*2n}.
\end{proof}

\begin{remark} It follows from Theorem \ref{rigidityso*2n} that $\mathcal{M}_{\max} (\SO^*(4m+2))$ has dimension $2m(2m-1)(g-1)+g$. Comparing with the expected dimension given in Proposition\ref{prop:modspace} we see that $\dim(\mathcal{M}_{\max} (\SO^*(4m+2)))$ is smaller than expected if $g\ge 2$ and $m>0$.  This explains why we refer to Theorem \ref{rigidityso*2n} as a rigidity result.
\end{remark}

\section{Connected components of the moduli space}\label{sect:morse}

\subsection{The Hitchin functional and connected components of the
  moduli space}

The method we shall use for studying the topology of the moduli space
goes back to Hitchin \cite{hitchin:1987}. In the following, we very
briefly outline the general aspects of this approach, applied to the
count of connected components (more details can be found in, for
instance,
\cite{hitchin:1992,bradlow-garcia-prada-gothen:2003,bradlow-garcia-prada-gothen:2005,GGM}). We
then apply this programme (in Theorem~\ref{theorem: main1} below) to show that $\cM_d$ is connected for $d=0$ and the maximal value of $|d|$ (where $\cM_d=\cM_d(\SO^*(2n)$, as in Section \ref{sec:moduli spaces}).

The method rests on the gauge theoretic interpretation of the moduli
space (provided by Theorem~\ref{HKcorrespond}) as the moduli space of
solutions to the Hitchin equations (\ref{eqn:GHitch}).  Given defining
data for a $\SO^*(2n)$-Higgs bundle, namely $(V,\beta,\gamma)$, the
solution to the equations is a Hermitian metric on the vector bundle $V$.
Thus it makes sense to define the \textbf{Hitchin function}
\begin{equation}
\label{eq:def-hitchin-functional}
\begin{aligned}
 f\colon \mathcal{M}_d&\to \R \\
 (V,\beta, \gamma) &\mapsto \norm{\beta}^2 + \norm{\gamma}^2
\end{aligned}
\end{equation}
\noi where the $L^2$-norms of $\beta$ and $\gamma$ are computed using
the metric which satisfies the Hitchin equation. The function $f$ is
proper and therefore attains a minimum on each connected component of
$\mathcal{M}_d$. Hence, if the subspace of local minima of
$f$ restricted to $\mathcal{M}_{d}$ can be shown to be
connected, then it will follow that $\mathcal{M}_{d}$
itself is connected.

\begin{theorem}\label{thm:SO*-minima}Let $(V,\beta,\gamma)$ be a poly-stable $\SO^*(2n)$-Higgs bundle.
\begin{itemize}
\item[(1)] If $d > 0$, then $(V,\beta,\gamma)$ represents a local
minimum on $\mathcal{M}_d$ if and only if $\beta=0$.
\item[(2)] If $d < 0$, then $(V,\beta,\gamma)$ represents a local
minimum on $\mathcal{M}_d$ if and only if $\gamma=0$.
\item[(3)] If $d = 0$, then $(V,\beta,\gamma)$ represents a local minimum
 on $\mathcal{M}_d$ if and only if $\beta=0$ and $\gamma=0$.
\end{itemize}
\end{theorem}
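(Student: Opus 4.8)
The plan is to run the Morse-theoretic argument sketched just before the statement, in close parallel with the treatment of $\Sp(2n,\R)$ in \cite{GGM} (with $\Sym^2$ systematically replaced by $\Lambda^2$). By Theorem~\ref{HKforSO*} the space $\mathcal{M}_d$ is the moduli of solutions of the $\SO^*(2n)$-Hitchin equation, and $f(V,\beta,\gamma)=\norm{\beta}^2+\norm{\gamma}^2$ is, up to a constant, the moment map for the Hamiltonian circle action $(V,\beta,\gamma)\mapsto (V,e^{i\theta}\beta,e^{i\theta}\gamma)$. Hence the critical points of $f$ are exactly the fixed points of this action, i.e.\ the complex variations of Hodge structure, and at a smooth critical point the Hessian of $f$ is diagonal in the weight decomposition of the tangent space $\HH^1(C^\bullet(V,\varphi))$ induced by the action. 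The governing principle (\cite{hitchin:1987}, as used in \cite{GGM}) is that such a critical point is a local minimum if and only if this weight decomposition has no strictly negative weights; equivalently, the negative-weight part of the deformation complex \eqref{eq:spnr-def-complex} has vanishing hypercohomology. To apply this at non-smooth points I would first reduce to stable summands: by Proposition~\ref{prop:SO-star-poly-stability-refined} a polystable Higgs bundle splits as a sum of stable summands, a sum is a local minimum exactly when each summand is, and smoothness of each summand is guaranteed by Propositions~\ref{cor:smooth-points} and \ref{thm:stability-equivalence-sp2nC}. This also organises an induction on $n$.

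The ``if'' direction is then a direct weight count. Suppose $\beta=0$. The scalar gauge transformation $e^{i\theta/2}\,\Id_V$ realises $(V,0,\gamma)$ as a fixed point, and the combined action (the explicit scaling of $\varphi$ followed by this regauging) acts on the three slots of the deformation complex with non-negative weight: the holomorphic-structure deformations in $\End V$ and the deformations of $\gamma$ in $\Lambda^2V^*\otimes K$ carry weight $0$, while the deformations that turn on $\beta$ in $\Lambda^2V\otimes K$ carry weight $+2$. As there are no negative weights, $(V,0,\gamma)$ is a local minimum, which is the ``if'' part of (1). The case $\gamma=0$ for (2) is identical after applying the duality isomorphism of Proposition~\ref{prop:duality-iso}, and $\beta=\gamma=0$ gives the absolute minimum $f=0$, settling the ``if'' part of (3).

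For the ``only if'' direction the heart of the matter is the claim that a local minimum cannot have both $\beta\neq0$ and $\gamma\neq0$. Such a Higgs bundle is fixed only by a \emph{non-scalar} grading $g_\theta$, i.e.\ a genuine multi-step variation of Hodge structure $V=\bigoplus_m V_m$, and the goal is to exhibit a deformation of strictly negative weight in $\HH^1$. Following \cite{GGM}, I would write out the weight-graded pieces of \eqref{eq:spnr-def-complex} relative to the eigenbundle decompositions of $\End V$ and of $\Lambda^2 V\oplus\Lambda^2 V^*$, and show by a Riemann--Roch/degree estimate that $\ad(\varphi)$ fails to be an isomorphism from the lowest negative eigenbundle of $E(\liehc)$ onto the corresponding eigenbundle of $E(\liemc)\otimes K$, so that the weight-$(-1)$ hypercohomology is nonzero. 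This is the main obstacle, and the only genuinely computational step: it requires the careful normalisation of the non-scalar $g_\theta$ (so that $\varphi$ is homogeneous of weight one) and the verification that, when both off-diagonal blocks $\beta,\gamma$ of $\varphi$ are switched on, the eigenbundle map cannot be an isomorphism — this is precisely where having both blocks nonzero is used.

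Finally I would assemble the three cases from this core claim together with the Milnor--Wood bound. By the core claim a local minimum has $\beta=0$ or $\gamma=0$. If $d>0$ the possibility $\gamma=0$ is excluded, since Proposition~\ref{mw-higgs} would force $d\le\rank(\gamma)(g-1)=0$; hence $\beta=0$, proving (1). The case $d<0$ is symmetric via Proposition~\ref{prop:duality-iso}, proving (2). For $d=0$, suppose without loss of generality $\beta=0$ and decompose $(V,0,\gamma)$ by Proposition~\ref{prop:SO-star-poly-stability-refined}. Each summand is itself an $\SO^*(2n_i)$-Higgs bundle with $\beta_i=0$, so $\deg V_i\ge0$ by Proposition~\ref{mw-higgs}; moreover a stable summand with $\beta_i=0$ and $\gamma_i\neq0$ satisfies the \emph{strict} inequality $\deg V_i>0$ (for the $\SO^*$-type summands this follows from Theorem~\ref{thm:stability-equivalence}, which makes the associated $\SO(2n_i,\C)$-Higgs bundle stable when $\beta_i=0\neq\gamma_i$, so that the invariant subbundle $V_i^*$ is strictly destabilising unless $\deg V_i>0$; for the $\U(p_i,q_i)$-type summands it is the strict Milnor--Wood inequality for $\U(p_i,q_i)$). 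Since $\sum_i\deg V_i=d=0$ and every term is non-negative, no summand can have $\gamma_i\neq0$, whence $\gamma=0$, completing (3).
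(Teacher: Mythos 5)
Your overall architecture (reduce to stable summands, apply a weight/Hessian criterion, assemble with Milnor--Wood) parallels the paper's, but there is a genuine gap at the step ``a sum is a local minimum exactly when each summand is.'' Only one direction of this equivalence holds: additivity of $f$ gives that if the sum is a local minimum then each summand is a local minimum in its own moduli space. The converse is false, and its failure is exactly what the theorem turns on. Indeed, take a stable summand $(V',0,\gamma')$ with $\deg V'>0$ and a stable summand $(V'',\beta'',0)$ with $\deg V''<0$: by the Chern--Weil identity $f=d+2\norm{\beta}^2=-d+2\norm{\gamma}^2$ (Proposition~\ref{prop:absolute-minimum}) each is an \emph{absolute} minimum in its own moduli space, yet their direct sum is polystable with $\beta\neq 0\neq\gamma$, so by the theorem it must \emph{not} be a local minimum. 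Your proposal never excludes such mixed sums. Your core claim (exhibiting a nonzero negative-weight class in $\HH^1$) is a Hessian argument; the criterion of Proposition~\ref{cor:adjoint-minima} is only available at points that are stable as $\SO(2n,\C)$-Higgs bundles, whereas mixed sums are strictly polystable, i.e.\ exactly the possibly singular points where a nonzero class in $\HH^1(C^{\bullet}_{-}(V,\varphi))$ does not by itself disprove local minimality. This is why the paper needs Lemma~\ref{prop:non-smooth-non-minima}: at such a point one must produce an actual \emph{family} $(V_t,\varphi_t)$ of polystable Higgs bundles whose infinitesimal deformation is a nonzero negative-weight class, and the construction of these families for mixed sums is imported from Lemmas~7.2 and 7.3 of \cite{GGM} (the paper checks they are insensitive to the skew-symmetry of $\beta$ and $\gamma$). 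Without this ingredient the ``only if'' direction does not close, for any value of $d$.

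Two smaller points. First, your ``if'' direction also rests on the weight criterion and hence again presupposes smoothness; the paper's route is simpler and valid everywhere: by Proposition~\ref{prop:absolute-minimum}, $\beta=0$ (resp.\ $\gamma=0$) realizes the absolute minimum $\abs{d}$ of $f$ on $\mathcal{M}_d$ when $d\geq 0$ (resp.\ $d\leq 0$), hence in particular a local minimum. Second, your core claim treats all stable summands uniformly by a rank count on $\ad(\varphi)$, but that computation (the paper's Lemma~\ref{lem:smooth-SO*-minima}, where $ab=\frac{a(a-1)}{2}$ forces $a=2b+1>b$) applies to summands that are simple and stable as $\SO(2n_i,\C)$-Higgs bundles; for the $\U(p_i,q_i)$-type and $\U^*(n_i)$-type summands of Proposition~\ref{prop:SO-star-poly-stability-refined} the paper instead invokes the corresponding minima results of \cite[Theorem~4.6]{bradlow-garcia-prada-gothen:2003} and \cite[Proposition~4.6]{garcia-prada-oliveira:2010}, and your argument would need to do the same. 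Your final assembly via Milnor--Wood (including the strictness argument for $d=0$, which correctly mirrors the paper's use of polystability) is sound, but it rests on the unproved core claim.
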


Before giving the proof of this result (at the end of
Section~\ref{sec:minima-hitchin-functional} below), we apply it to
prove our main theorem on the connectedness of
$\mathcal{M}_{0}$ and $\mathcal{M}_{\max}$.

\begin{theorem}\label{theorem: main1}
  The moduli space $\cM_d$ is non-empty\footnote{Non-emptiness, also
    for non-maximal components, follows from the results of
    \cite{hitchin-schaposnik:2013} which appeared after the present paper.} and connected if $d=0$ or
  $|d|=\left\lfloor\frac{n}{2}\right\rfloor(2g-2)$.
\end{theorem}

\begin{proof}
  Consider first the case $d=0$. From (3) of
  Theorem~\ref{thm:SO*-minima} it is immediate that the subspace of
  local minima of the Hitchin function on $\cM_0$ consists of
  polystable $\SO^*(n)$-Higgs bundles $(V,\beta,\gamma)$ with
  $\beta=\gamma=0$. Furthermore, we conclude from
  Theorem~\ref{prop:SO-star-poly-stability-refined} that such an
  $\SO^*(2n)$-Higgs bundle is polystable if and only if $V$ is
  a polystable vector bundle. Therefore, the subspace of local minima of
  the Hitchin function on $\cM_0$ can be identified with the moduli space
  of polystable vector bundles of degree zero, which is known to be
  connected. This completes the proof of the case $d=0$.

  Next we turn to the case $\abs{d}=\lfloor\frac{n}{2}\rfloor(2g-2)$, i.e., the
  proof of connectedness of $\mathcal{M}_{\max}$. By
  Proposition~\ref{prop:duality-iso} we may assume, without loss of
  generality, that $d$ is positive. From (1) of Theorem~\ref{thm:SO*-minima}, we
  have that the subspace of local minima of the Hitchin function on
  $\mathcal{M}_{\max}$ can be identified with the subspace of
  $(V,\beta,\gamma)$ with $\beta=0$. Suppose now that $n$ is
  even. Then, using Remark~\ref{rem:cayley-zero}, we have that this
  subspace is isomorphic to the moduli space of polystable
  $\Sp(n,\C)$-bundles. This space is connected by Ramanathan
  \cite[Proposition~4.2]{ramanathan:1975} and hence
  $\mathcal{M}_{\max}$ is connected when $n$ is even. The
  connectedness of $\mathcal{M}_{\max}$ for odd $n$ now follows from
  the rigidity result of Theorem~\ref{rigidityso*2n} and the
  connectedness of $\mathcal{M}_{\max}$ for even $n$.

Finally, non-emptiness of the moduli spaces follows from the
non-emptiness of the subspaces of local minima of the Hitchin
functional, which in turn follows from the identifications given in
the course of the present proof.
\end{proof}

\subsection{Minima of the Hitchin functional}
\label{sec:minima-hitchin-functional}

The purpose of this section is to prove
Theorem~\ref{thm:SO*-minima}. For this we need to show various
preliminary results and, using these, we give the proof of the Theorem
at the end of the section.

The following result is completely analogous to
\cite[Proposition~4.5]{bradlow-garcia-prada-gothen:2003}.

\begin{proposition}
  \label{prop:absolute-minimum}
  The absolute minimum of the Hitchin functional restricted to
  $\mathcal{M}_d$ is $\abs{d}$. This minimal value is
  attained at a point represented by $(V,\beta,\gamma)$ (with
  $\deg(V)=d$) if and only
  if $\beta=0$ (if $d\ge 0$) or $\gamma=0$ (if $d\le 0$).
\end{proposition}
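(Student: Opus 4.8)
The plan is to extract a single linear identity relating $\norm{\beta}^2$, $\norm{\gamma}^2$ and $d$ from the Hitchin equation, and then read off both the minimal value and the characterization of the minimizers from it. Throughout, the $L^2$-norms are those of \eqref{eq:def-hitchin-functional}, i.e. computed with respect to the Hermitian metric on $V$ solving the $\SO^*(2n)$-Hitchin equation \eqref{eqn:SO*Hitch}; such a metric exists at every point of $\cM_d$ by polystability (Theorem~\ref{HKforSO*}), and using it is precisely what makes the identity below hold.

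First I would take the trace of the Hitchin equation $F_V^h + \beta\beta^* + \gamma^*\gamma = 0$, apply the contraction $i\Lambda$, and integrate over $X$. By Chern--Weil theory the curvature term contributes $\deg(V)=d$. The two remaining terms are pointwise norms of $\beta$ and $\gamma$, but entering with opposite signs: the relative minus sign is exactly the one already recorded in Section~\ref{hitcheq}, where the local form of the equation reads ${\mathbf F}_h + \beta_h\overline{\beta_h}^t - \overline{\gamma_h}^t\gamma_h = 0$. Concretely, $i\Lambda\Tr(\beta\beta^*)$ integrates to $+\norm{\beta}^2$ while $i\Lambda\Tr(\gamma^*\gamma)$ integrates to $-\norm{\gamma}^2$. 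With the standard normalization of the $L^2$-metric this gives the key identity
$$
\norm{\gamma}^2 - \norm{\beta}^2 = d .
$$

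With this identity the conclusion is immediate. Writing the Hitchin functional as
$$
f(V,\beta,\gamma) = \norm{\beta}^2 + \norm{\gamma}^2 = 2\norm{\beta}^2 + d = 2\norm{\gamma}^2 - d,
$$
I would argue as follows. If $d\ge 0$, the first expression gives $f = 2\norm{\beta}^2 + d \ge d = \abs{d}$, with equality if and only if $\beta=0$. If $d\le 0$, the second gives $f = 2\norm{\gamma}^2 - d \ge -d = \abs{d}$, with equality if and only if $\gamma=0$. Hence the absolute minimum equals $\abs{d}$ and is attained exactly on the claimed locus. That this locus is non-empty, so that the value $\abs{d}$ is genuinely achieved, follows from the identifications used in the proof of Theorem~\ref{theorem: main1} (the subspaces of polystable bundles with $\beta=0$, resp. $\gamma=0$); here we need only the inequality and the equality characterization.

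I do not expect a serious obstacle. Once one commits to computing the norms with the metric solving the Hitchin equation, the whole argument reduces to the trace computation already carried out in Section~\ref{hitcheq}. The only point requiring genuine care is the bookkeeping of signs together with the normalization constant relating $\int_X i\Lambda\Tr F_V^h$ to $\deg V$; getting the relative minus sign between the $\beta$- and $\gamma$-terms right is exactly what forces the minima to sit at $\beta=0$ for $d>0$ (rather than at $\gamma=0$), in agreement with the Milnor--Wood bounds of Proposition~\ref{mw-higgs} and with the statement of Theorem~\ref{thm:SO*-minima}.
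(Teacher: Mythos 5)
Your proof is correct and follows essentially the same route as the paper: both extract the Chern--Weil identity $d+\norm{\beta}^2-\norm{\gamma}^2=0$ from the Hitchin equation and then read off $f = d+2\norm{\beta}^2 = -d+2\norm{\gamma}^2$, from which the minimum value and the equality characterization are immediate. Your extra remarks on sign bookkeeping and on non-emptiness of the minimizing locus are sensible additions but do not change the argument.
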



\begin{proof} Using the Hitchin equation and Chern--Weil theory we get
that
\begin{equation}
d+\norm{\beta}^2 - \norm{\gamma}^2=0
\end{equation}
and hence the Hitchin function can be expressed as
\begin{equation}\label{eqn:hitchinmin}
f(V,\beta,\gamma)=\begin{cases}d+2\norm{\beta}^2\\
-d+ 2\norm{\gamma}^2
\end{cases}
\end{equation}
The result follows immediately from \eqref{eqn:hitchinmin}.
\end{proof}

Of course not all local minima are necessarily absolute minima.  We thus need to examine more closely the structure of the local minima.

On the smooth locus of $\mathcal{M}_d$, the Hitchin
functional $f$ arises as the moment map of the $S^1$-action given by
multiplication of the Higgs field $\varphi$ by complex numbers of modulus
one. Considering the moduli space from the algebraic or
holomorphic point of view, this action extends to the $\C^*$-action
given by $(V,\phi) \mapsto (V,w \phi)$ for $w \in \C^*$. The moment
map interpretation shows that, on the smooth locus of
$\mathcal{M}_d(\SO^*(2n))$, the critical points of $f$ are exactly the
fixed points of the $\C^*$-action. On the full moduli space, the fixed
point locus of the $\C^*$-action coincides with the locus of
\emph{Hodge bundles} (this can be easily seen by arguments like the
ones used in \cite{hitchin:1987,hitchin:1992, simpson:1992}), which are defined as follows.

\begin{definition}
 A $\SO^*(2n)$-Higgs bundle $(V,\beta,\gamma)$ is called a
 \textbf{Hodge bundle} if 
\begin{itemize}
\item there is a decomposition of $V$ into holomorphic subbundles 
\begin{equation}\label{eqn:Vdecomp}
V= \bigoplus_{i}F_{i}
\end{equation}
and, with respect to this decomposition,
\item $\beta: F^*_{-i}\longrightarrow
F_{i+1}\otimes K$, and $\gamma: F_{i}\longrightarrow
F^*_{-i+1}\otimes K$
\end{itemize}
Here $F^*_i\subset V^*$ is the dual of $F_i$.  

The \textbf{weight of $F_{i}$} is $i$ and the
\textbf{weight of $F^*_{i}$} is $-i$.
\end{definition}

Thus, in view of (4) of Proposition~\ref{prop:smoothpoint}, we have
the following characterization of the critical points of $f$.

\begin{proposition}\label{prop:critical_point}
 A simple $\SO^*(2n)$-Higgs bundle, which is stable as an
 $\SO(2n,\C)$-Higgs bundle, represents a critical
 point of $f$ if and only if it is a Hodge bundle. 
\end{proposition}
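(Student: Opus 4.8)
The plan is to combine the smoothness criterion of Proposition~\ref{prop:smoothpoint} with the moment map description of the Hitchin functional recalled just before the statement, and with the classical identification of $\C^*$-fixed points as Hodge bundles.

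First I would check that the hypotheses force $(V,\varphi)$ to be a smooth point of $\cM_d$, so that the differential of $f$ and the moment map formalism make sense there. Stability as an $\SO(2n,\C)$-Higgs bundle entails polystability there, hence by part~(1) of Theorem~\ref{thm:stability-equivalence} the $\SO^*(2n)$-Higgs bundle $(V,\varphi)$ is itself polystable; since it is simple, the decomposition of Proposition~\ref{prop:SO-star-poly-stability-refined} must be trivial, because any nontrivial splitting produces automorphisms beyond $\pm I$ (cf.\ Theorem~\ref{thm:stable-not-simple-spnr-higgs}), so $(V,\varphi)$ is in fact stable. With $(V,\varphi)$ stable, simple, and stable as an $\SO(2n,\C)$-Higgs bundle, part~(4) of Proposition~\ref{prop:smoothpoint} applies and shows that $(V,\varphi)$ represents a smooth point.

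Next I would invoke the symplectic picture: on the smooth locus, $f$ is the moment map for the Hamiltonian $S^1$-action $(V,\beta,\gamma)\mapsto(V,e^{i\theta}\beta,e^{i\theta}\gamma)$. The standard fact about moment maps then gives that $(V,\varphi)$ is a critical point of $f$ if and only if it is fixed by this $S^1$-action; and since on the complex analytic variety $\cM_d$ this action is the restriction of the holomorphic $\C^*$-action $(V,\varphi)\mapsto(V,w\varphi)$, the $S^1$-fixed points coincide with the $\C^*$-fixed points. It thus remains to identify the $\C^*$-fixed points with the Hodge bundles.

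Finally I would prove that a smooth stable $\SO^*(2n)$-Higgs bundle is $\C^*$-fixed if and only if it is a Hodge bundle. If $(V,\beta,\gamma)$ is a Hodge bundle with $V=\bigoplus_i F_i$, then the one-parameter family of gauge transformations acting as a fixed power of $w$ on each $F_i$ conjugates $\varphi$ into $w\varphi$ --- the weight-shift conditions on $\beta$ and $\gamma$ are exactly what makes this possible --- so $(V,w\varphi)\cong(V,\varphi)$ and the point is fixed. Conversely, a fixed point on the smooth stable locus yields a holomorphic one-parameter family of automorphisms $g_w$ with $g_w\varphi g_w^{-1}=w\varphi$; diagonalizing the generating infinitesimal automorphism produces the weight decomposition $V=\bigoplus_i F_i$, and the relation $g_w\varphi g_w^{-1}=w\varphi$ forces $\varphi$ to raise weights by one. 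I expect the main obstacle to be precisely this last bookkeeping: because $\beta\in H^0(\Lambda^2V\otimes K)$ and $\gamma\in H^0(\Lambda^2V^*\otimes K)$ enter $\varphi$ as $\beta\colon V^*\to V\otimes K$ and $\gamma\colon V\to V^*\otimes K$, one must track the induced weights on $V$ and on $V^*$ and verify that the weight-raising condition translates into exactly the stated conditions $\beta\colon F_{-i}^*\to F_{i+1}\otimes K$ and $\gamma\colon F_i\to F_{-i+1}^*\otimes K$. The existence of the family $g_w$ and the moment-map equivalence are standard (cf.\ \cite{hitchin:1987,hitchin:1992,simpson:1992}), so organizing this weight matching correctly is where the care lies; combining the three steps yields the characterization.
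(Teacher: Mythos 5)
Your proposal is correct and follows essentially the same route as the paper: the paper's proof consists precisely of the preceding moment-map discussion (on the smooth locus, critical points of $f$ are the $S^1$-fixed, hence $\C^*$-fixed, points, and these are the Hodge bundles, by the arguments of Hitchin and Simpson) combined with part (4) of Proposition~\ref{prop:smoothpoint}; your first step usefully makes explicit the reduction the paper leaves implicit, namely that stability as an $\SO(2n,\C)$-Higgs bundle together with simplicity forces stability as an $\SO^*(2n)$-Higgs bundle (via Theorem~\ref{thm:stability-equivalence}(1) and Proposition~\ref{prop:SO-star-poly-stability-refined}), which is exactly what part (4) requires.

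One concrete warning about the bookkeeping you defer to the end: carried out with the paper's own weight conventions (the weight of $F_i$ is $i$, that of $F_i^*$ is $-i$, cf.\ \eqref{eqn:U+-k}), the requirement that $\varphi$ raise weights by one gives $\beta\colon F_{-i}^*\to F_{i+1}\otimes K$ but $\gamma\colon F_i\to F_{-i-1}^*\otimes K$, not $F_{-i+1}^*$ as in the paper's definition of Hodge bundle; the index there is a typo, as one can confirm by comparing with \eqref{eq:type1} and \eqref{eq:type2}, whose $\gamma$-components satisfy the corrected condition (source of weight $w$ mapping to target of weight $w+1$). So your final verification will not reproduce ``exactly the stated conditions'' as written, but the discrepancy lies in the paper's definition rather than in your method, which, done carefully, yields the corrected index.
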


The deformation complex (\ref{eq:def-complex}) for a
$\SO^*(2n)$-Higgs bundle $(E,\varphi)$ is
\begin{equation}
\label{eq:spnr-def-complex}
\begin{aligned}
  C^\bullet(V,\varphi)\colon \End(V) &\xrightarrow{\ad(\varphi)} \Lambda^2 V
  \otimes K \oplus \Lambda^2V^* \otimes K\ . \\
  \psi & \mapsto (-\beta\psi^t-\psi\beta,\gamma\psi+\psi^t\gamma).
\end{aligned}
\end{equation}
If $(V,\phi)$ is a Hodge bundle, then the decomposition
(\ref{eqn:Vdecomp}) of $V$ induces corresponding weight decompositions
\begin{displaymath}
 \End(V) = \bigoplus U_k^+\qquad\text{and}\qquad
 \Lambda^2 V\oplus \Lambda^2 V^*
 = \bigoplus U_k^-
\end{displaymath}
where
\begin{equation}
\label{eqn:U+-k}
\begin{aligned}
U^+_k &=\bigoplus_{j-i=k}F^*_i\otimes
F_j,\qquad\text{and}\qquad
U^-_k &=\bigoplus_{i+j=k}F_i\otimes_A F_j \oplus
\bigoplus_{i+j=-k}F^*_i\otimes_A F^*_j.
\end{aligned}
\end{equation}
Moreover, since the Higgs field $\phi$ has weight one, the deformation
complex (\ref{eq:spnr-def-complex}) decomposes accordingly as
\begin{displaymath}
 C^{\bullet}(V,\phi) = \bigoplus_k C^{\bullet}_k(V,\phi),
\end{displaymath}
where we let
\begin{math}
 C^{\bullet}_k(V,\phi)\colon U^+_k \xrightarrow{\ad(\phi)} U^-_{k+1}
 \otimes K.
\end{math}
If we write $C^{\bullet}_{-}(V,\phi) =
\bigoplus_{k>0}C^{\bullet}_k(V,\phi)$ we then have the corresponding
\textbf{positive weight subspace}
\begin{displaymath}
 \HH^1(C^{\bullet}_{-}(V,\phi)) \subset \HH^1(C^{\bullet}(V,\phi))
\end{displaymath}
of the infinitesimal deformation space. When $(V,\phi)$ represents a
smooth point of the moduli space, the hypercohomology
$\HH^1(C^{\bullet}_{-}(V,\phi))$ is the negative eigenvalue subspace
of the Hessian of $f$ and so $(V,\phi)$ is a local minimum of $f$ if
and only if $\HH^1(C^{\bullet}_{-}(V,\phi)) = 0$.

The key result we need for identifying the minima of $f$ on the smooth
locus of the moduli space is the following
(\cite[Corollary~5.8]{bradlow-garcia-prada-gothen:2005}).

\begin{proposition}
\label{cor:adjoint-minima}
Assume that $(V,\phi)$ is a $\SO^*(2n)$-Higgs bundle which is
  stable as a $\SO(2n,\C)$-Higgs bundle. Then $(V,\phi)$ represents
  a local minimum of $f$ in $\mathcal{M}_d$ if and only if
  it is a Hodge  bundle and 
$$
\ad(\phi)\colon U^+_k \lra U^-_{k+1}\otimes K
$$
is an isomorphism for all
$k>0$.
\end{proposition}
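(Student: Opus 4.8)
The plan is to prove Proposition~\ref{cor:adjoint-minima} by identifying the negative eigenspace of the Hessian of the Hitchin functional $f$ with the positive-weight part of the infinitesimal deformation space, and then analyzing when that part vanishes. Since $(V,\phi)$ is assumed stable as an $\SO(2n,\C)$-Higgs bundle, Proposition~\ref{prop:smoothpoint}(4) guarantees it is a smooth point of the moduli space, so the general Morse-theoretic machinery applies: the critical points of $f$ are the fixed points of the $\C^*$-action (the Hodge bundles, by Proposition~\ref{prop:critical_point}), and at a smooth fixed point the negative eigenspace of the Hessian of $f$ is precisely $\HH^1(C^{\bullet}_{-}(V,\phi))$. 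Thus $(V,\phi)$ is a local minimum if and only if $\HH^1(C^{\bullet}_{-}(V,\phi)) = 0$, which is the criterion I must translate into the stated isomorphism condition.

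First I would record the weight decomposition of the deformation complex into pieces $C^{\bullet}_k(V,\phi)\colon U^+_k \xrightarrow{\ad(\phi)} U^-_{k+1}\otimes K$ as given just before the statement, so that $\HH^1(C^{\bullet}_{-}(V,\phi)) = \bigoplus_{k>0}\HH^1(C^{\bullet}_k(V,\phi))$. The vanishing of the whole positive-weight hypercohomology is then equivalent to $\HH^1(C^{\bullet}_k(V,\phi)) = 0$ for every $k > 0$. For a two-term complex of sheaves on a curve, $\HH^1$ fits into the long exact sequence
\begin{displaymath}
0 \to \HH^0(C^{\bullet}_k) \to H^0(U^+_k) \xrightarrow{\ad(\phi)} H^0(U^-_{k+1}\otimes K) \to \HH^1(C^{\bullet}_k) \to H^1(U^+_k) \xrightarrow{\ad(\phi)} H^1(U^-_{k+1}\otimes K) \to \HH^2(C^{\bullet}_k) \to 0.
\end{displaymath}
The strategy is to show that if $\ad(\phi)\colon U^+_k \to U^-_{k+1}\otimes K$ is a sheaf isomorphism for all $k>0$, then every $C^{\bullet}_k$ is quasi-isomorphic to zero in the relevant degree and hence $\HH^1(C^{\bullet}_k) = 0$; conversely, one must rule out the possibility that $\HH^1$ vanishes while the map fails to be an isomorphism.

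The sufficiency direction is the easy half: if $\ad(\phi)$ is an isomorphism of sheaves on each positive weight piece, the complex $C^{\bullet}_k$ is exact as a complex of sheaves (it is the graph of an isomorphism, placed in degrees $0$ and $1$), so all its hypercohomology vanishes, in particular $\HH^1(C^{\bullet}_k)=0$. The main obstacle is the necessity direction: I must show that if $\HH^1(C^{\bullet}_{-}(V,\phi))=0$ then $\ad(\phi)$ is actually an isomorphism of sheaves, not merely a map inducing isomorphisms on cohomology. The standard argument (following \cite{bradlow-garcia-prada-gothen:2005} and the analogous $\Sp(2n,\R)$ analysis in \cite{GGM}) uses stability as an $\SO(2n,\C)$-Higgs bundle to control the kernel and cokernel of $\ad(\phi)$: one shows that if $\ad(\phi)$ on some $U^+_k$ with $k>0$ were not injective, or not surjective, then its kernel (respectively the saturation of its image) would produce a $\phi$-invariant subbundle of $V\oplus V^*$ contradicting stability, or else would force $\HH^1(C^{\bullet}_k)\neq 0$ via the exact sequence above together with a Riemann--Roch degree count. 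The delicate point is making the degree and rank bookkeeping across all positive weights compatible with the orthogonal structure $Q$, so that the argument genuinely uses $\SO(2n,\C)$-stability rather than only $\SL(2n,\C)$-stability; I expect this to be where the proof requires the most care, and I would handle it by invoking the corresponding computation in \cite{bradlow-garcia-prada-gothen:2005}, to which the present setup reduces verbatim.
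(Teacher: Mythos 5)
First, a structural point: the paper never proves this proposition. It is imported verbatim as \cite[Corollary~5.8]{bradlow-garcia-prada-gothen:2005}, so there is no internal argument to compare yours against; the relevant benchmark is the proof in that reference (and its antecedents in \cite[\S~8]{hitchin:1992} and \cite{GGM}). Measured against that, your framework is the right one and matches it: smoothness of the point, critical points of $f$ equal Hodge bundles, negative eigenspace of the Hessian equal to $\HH^1(C^{\bullet}_{-}(V,\phi))$, and then a translation of $\HH^1(C^{\bullet}_k)=0$ for $k>0$ into the isomorphism statement. Your sufficiency direction (a sheaf isomorphism makes each $C^{\bullet}_k$ exact, hence all its hypercohomology vanishes) is correct.

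Two corrections, one minor and one substantive. Minor: Proposition~\ref{prop:smoothpoint}(4) requires $(V,\phi)$ to be stable \emph{and simple} as an $\SO^*(2n)$-Higgs bundle in addition to being stable as a $G^{\C}$-Higgs bundle, so it cannot be quoted from the hypothesis of the present proposition alone (note that Proposition~\ref{prop:critical_point} adds ``simple'' to its hypotheses for exactly this reason). Substantive: the mechanism you sketch for the necessity direction fails as stated. The kernel of $\ad(\phi)\colon U^+_k \to U^-_{k+1}\otimes K$ is a subsheaf of $\End(V\oplus V^*)$, not of $V\oplus V^*$, so it does not directly produce a $\phi$-invariant subbundle of $V\oplus V^*$ contradicting stability. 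The argument that works is cohomological. Writing $N=\ker$ and $Q=\coker$ of $\ad(\phi)$ on the weight-$k$ piece, one has $\HH^0(C^{\bullet}_k)=H^0(N)$, $\HH^2(C^{\bullet}_k)=H^1(Q)$, and an exact sequence $0\to H^1(N)\to \HH^1(C^{\bullet}_k)\to H^0(Q)\to 0$. Stability as a $G^{\C}$-Higgs bundle gives infinitesimal simplicity, which kills $\HH^0$ in every nonzero weight, and Serre duality of the $G^{\C}$-deformation complex then kills $\HH^2$; hence $\HH^1(C^{\bullet}_k)=0$ forces $N$ and $Q$ to have no cohomology at all, so $\deg N=\rk(N)(g-1)$ by Riemann--Roch, and $Q$ is torsion-free. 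On the other hand, $N$ consists of endomorphisms commuting with the Higgs field, i.e.\ it is an $\ad(\Phi)$-invariant subsheaf of the slope-zero semistable Higgs bundle $(\End(V\oplus V^*),\ad(\Phi))$, whence $\deg N\le 0$; since $g\ge 2$ this forces $N=0$, and the same argument applied to the Serre-dual weight piece gives $Q=0$. Because you explicitly defer this ``delicate point'' to \cite{bradlow-garcia-prada-gothen:2005} --- the same citation the paper itself uses in place of any proof --- your proposal sits at the paper's own level of rigor, but the destabilization heuristic in your sketch should be replaced by the count just described.
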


Using this result, we can prove the following lemma.

\begin{lemma}\label{lem:smooth-SO*-minima}
  Let $(V,\beta,\gamma)$ be a simple $\SO^*(2n)$-Higgs bundle which is
  stable as a $\SO(2n,\C)$-Higgs bundle and assume that
  $(V,\beta,\gamma)$ represents a local minimum of $f$ on
  $\mathcal{M}_d$. Then, if $d = \deg(V) \geq 0$ the vanishing
  $\beta=0$ holds and, if $d = \deg(V) \leq 0$ the vanishing $\gamma=0$
  holds.
\end{lemma}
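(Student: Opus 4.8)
The plan is to reduce immediately to a single case using the duality $\cM_d \simeq \cM_{-d}$ of Proposition~\ref{prop:duality-iso}. That isomorphism is induced by $(V,\beta,\gamma)\mapsto (V^*,\gamma,\beta)$, which interchanges the roles of $\beta$ and $\gamma$, sends $d$ to $-d$, and manifestly preserves the Hitchin functional $f=\norm{\beta}^2+\norm{\gamma}^2$ (the solution metric transports to the dual metric on $V^*$). Hence it carries local minima to local minima, and it suffices to prove that $d=\deg(V)\geq 0$ forces $\beta=0$; the assertion $\gamma=0$ for $d\leq 0$ then follows by transporting along the duality. So from now on assume $d\geq 0$ and that $(V,\beta,\gamma)$ is a local minimum.

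Since $(V,\beta,\gamma)$ is stable as an $\SO(2n,\C)$-Higgs bundle, Proposition~\ref{cor:adjoint-minima} applies and tells us that $(V,\varphi)$ is a Hodge bundle, with weight decomposition $V=\bigoplus_i F_i$ and weight spaces $U^+_k,U^-_k$ as in \eqref{eqn:U+-k}, and that $\ad(\varphi)\colon U^+_k \to U^-_{k+1}\otimes K$ is a \emph{sheaf isomorphism} for every $k>0$. From each such isomorphism I would extract two numerical consequences, valid for all $k>0$: the rank equality $\rk U^+_k = \rk U^-_{k+1}$, and, because an isomorphism of locally free sheaves on a curve preserves degree, the degree equality $\deg U^+_k = \deg U^-_{k+1} + \rk(U^-_{k+1})(2g-2)$. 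It is also worth recording the internal structure of the map: by \eqref{eq:spnr-def-complex} we have $\ad(\varphi)(\psi)=(-\beta\psi^t-\psi\beta,\;\gamma\psi+\psi^t\gamma)$, so the component of $\ad(\varphi)$ landing in the ``$\beta$-part'' $\bigoplus_{i+j=k+1}F_i\otimes_A F_j$ of $U^-_{k+1}$ depends only on $\beta$, while the component landing in the ``$\gamma$-part'' $\bigoplus_{i+j=-(k+1)}F^*_i\otimes_A F^*_j$ depends only on $\gamma$. Surjectivity of $\ad(\varphi)$ then gives surjectivity of the $\beta$-component alone, but this by itself does \emph{not} force $\beta=0$, so the degree information is essential.

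The engine of the argument is therefore to feed the rank and degree identities, written in terms of $r_i=\rk F_i$ and $\delta_i=\deg F_i$, into a telescoping computation. Using \eqref{eqn:U+-k} one has $\rk U^+_k=\sum_i r_i r_{i+k}$ and $\deg U^+_k=\sum_i(r_i\delta_{i+k}-r_{i+k}\delta_i)$, with the analogous expansions for $U^-_{k+1}$ split into its $\beta$- and $\gamma$-parts. Summing the degree identities over all $k>0$, and using the global constraints $\sum_i\delta_i=d$ and $\sum_i r_i=n$ together with the semistability inequalities of Proposition~\ref{prop:simplifiedss}, I expect to arrive at a relation in which the contribution of a nonzero $\beta$ enters with a definite sign opposite to that of $d$; since $d\geq 0$, this should force the $\beta$-part of every positive-weight $U^-_{k+1}$ to vanish, and hence $\beta=0$ on each graded piece. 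A useful preliminary reduction to run in parallel is an extremal-weight observation: if $i_{\max},i_{\min}$ are the extreme weights of $V$ and $i_{\max}>i_{\min}$, the top isomorphism $\ad(\varphi)\colon U^+_{\,i_{\max}-i_{\min}} \to U^-_{\,i_{\max}-i_{\min}+1}\otimes K$ cannot hold with $i_{\max}+i_{\min}=0$, since then the target vanishes while the source $\Hom(F_{i_{\min}},F_{i_{\max}})$ does not; one can then peel off the extremal summand and induct down the chain, with the $\SO(2n,\C)$-self-duality (the form $Q$ pairs the weight-$k$ and weight-$(-k)$ spaces) keeping the bookkeeping symmetric.

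The main obstacle is precisely this bookkeeping. One must track the skew-symmetric factors carefully, since $F_i\otimes_A F_j$ equals $\Lambda^2 F_i$ when $i=j$ but $F_i\otimes F_j$ when $i\neq j$; one must keep the $\beta$- and $\gamma$-contributions rigorously separated through the telescoping; and, most delicately, one must verify that the sign of the $\beta$-contribution in the final relation is genuinely governed by $\operatorname{sgn}(d)$, for this is exactly the point at which the hypothesis $d\geq 0$ (rather than $d\leq 0$) selects $\beta$ (rather than $\gamma$) to vanish. This computation is the $\SO^*(2n)$-analogue of the one carried out for $\Sp(2n,\R)$ in \cite{GGM}, the only structural change being the replacement of the symmetric factors $\Sym^2$ there by the exterior factors $\Lambda^2$ here; I would follow that template closely. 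Having established $\beta=0$ for $d\geq 0$, the case $d\leq 0$ and the conclusion $\gamma=0$ follow from the duality reduction of the first paragraph.
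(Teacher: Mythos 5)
Your setup is fine: the duality reduction via Proposition~\ref{prop:duality-iso} is legitimate, Proposition~\ref{cor:adjoint-minima} is indeed the key tool, and the rank and degree identities you extract from the isomorphisms $\ad(\varphi)\colon U^+_k\to U^-_{k+1}\otimes K$ are correct. But the heart of the proof is missing: the ``telescoping computation'' that is supposed to produce a sign contradiction is never carried out --- you say you ``expect to arrive at a relation'' and you yourself flag its sign verification as ``the main obstacle'' --- so what you have is a plan, not a proof. Worse, the intended conclusion of that plan is logically off. The field $\beta$ is a section of the \emph{weight-one} piece: by \eqref{eqn:U+-k} its components pair weights summing to $1$, i.e.\ $\beta$ lives in $U^-_1\otimes K$, so ``the $\beta$-part of every positive-weight $U^-_{k+1}$ vanishes'' says nothing about $\beta$ itself. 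What such vanishing would actually give (via the isomorphisms) is $U^+_k=0$ for all $k>0$, i.e.\ that the Hodge decomposition has a single summand; from there one of $\beta,\gamma$ vanishes for weight reasons and one still needs Chern--Weil (Proposition~\ref{prop:absolute-minimum}) to conclude $\beta=0$ when $d\ge 0$ --- a step absent from your proposal. Finally, it is doubtful that rank-plus-degree bookkeeping can close the argument at all: for $n=4$, a two-step Hodge bundle $V=F_{-1/2}\oplus F_{3/2}$ with ranks $(b,a)=(1,3)$ satisfies the rank identity $ab=a(a-1)/2$, and the degree identity $-3\deg F_{-1/2}+\deg F_{3/2}=2\deg F_{3/2}+3(2g-2)$ admits solutions with $d\ge 0$; excluding this configuration requires non-numerical input.

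The paper's proof is genuinely different from the template you describe, and your appeal to \cite{GGM} mischaracterizes it: the argument there is not a degree telescope. It first uses the \emph{simplicity} hypothesis --- which your proposal never invokes --- arguing as in Section~6 of \cite{GGM}, to show that the weight decomposition is a chain of one of two explicit types. Then, at the extremal weight $k_0=2p>0$, the isomorphism \eqref{eqn:k=2p} reads $F^*_{-p+1/2}\otimes F_{p+1/2}\xrightarrow{\ \sim\ }\Lambda^2 F_{p+1/2}\otimes K$, and the contradiction is sheaf-theoretic rather than numerical: the rank count forces $a=2b+1>b$ (with $a=\rk F_{p+1/2}$, $b=\rk F_{-p+1/2}$), so the extremal component $\beta\colon F^*_{p+1/2}\to F_{-p+1/2}\otimes K$ has kernel of rank at least $2$; every skew form in the image of $\ad(\varphi)$ restricts to zero on that kernel, yet there exist local skew forms which do not --- contradicting surjectivity. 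This is exactly where skew-symmetry ($\Lambda^2$ versus $\Sym^2$) does real work, not merely as bookkeeping. The upshot is $k_0=0$, and then the residual single-summand case is finished by Chern--Weil. So: genuine gap --- the mechanism you propose is both unexecuted and, as stated, not capable of delivering the lemma.
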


\begin{proof} 
  Let $(V,\beta,\gamma)=(V,\phi)$ be a minimum. Then
  Proposition~\ref{prop:critical_point} implies that
  $(V,\beta,\gamma)$ is a Hodge bundle. Moreover, arguing as in
  \cite[Section~6]{GGM}, we see that $(V,\beta,\gamma)$ being simple
  implies the following: there is a decomposition of $V$ into $2p+1$
  non-zero holomorphic subbundles (for some $p \in \frac{1}{2}\ZZ$),
  which is either of the form:
 \begin{equation}
   \label{eq:type1}
   \begin{aligned}
     &V=F_{-p+\frac{1}{2}}\oplus F_{-p+2+\frac{1}{2}}\oplus\dots\oplus
     F_{p-2+\frac{1}{2}}\oplus F_{p+\frac{1}{2}},\\
    &\beta\colon F^*_{p-2j+\frac{1}{2}}\longrightarrow
     F_{-p+2j+\frac{1}{2}}\otimes K,\quad \text{for $0\le j\le p$, and}\\
    &\gamma\colon F_{-p+2j+\frac{1}{2}}\longrightarrow
     F^*_{p-2(j+1)+\frac{1}{2}}\otimes K,\quad \text{for $0\le j\le p$}.
   \end{aligned}
 \end{equation}
or of the form
 \begin{equation}
   \label{eq:type2}
   \begin{aligned}
     &V=F_{-p-\frac{1}{2}}\oplus F_{-p+2-\frac{1}{2}}\oplus\dots\oplus
     F_{p-2-\frac{1}{2}}\oplus F_{p-\frac{1}{2}},\\
    &\beta\colon F^*_{p-2j-\frac{1}{2}}\longrightarrow
     F_{-p+2j-\frac{1}{2}}\otimes K,\quad \text{for $0\le j\le p$, and}\\
    &\gamma\colon F_{-p+2j-\frac{1}{2}}\longrightarrow
     F^*_{p-2(j+1)-\frac{1}{2}}\otimes K,\quad \text{for $0\le j\le p$}.
   \end{aligned}
 \end{equation}

 Let $k_0$ be the largest index such that $U^+_{k_0}\ne 0$. Since
 otherwise there is nothing to prove, we may assume that $k_0 >0$. For
 definiteness, assume that the decomposition of $V$ is of the form
 \eqref{eq:type1} --- a similar argument applies when $V$ is of the form
 \eqref{eq:type2}. Using (\ref{eqn:U+-k}), we see that $k_0=2p$ and
 thus (by Proposition \ref{cor:adjoint-minima}) we have an isomorphism
\begin{equation}\label{eqn:k=2p}
\ad(\varphi)\colon F^*_{-p+\frac{1}{2}}\otimes F_{p+\frac{1}{2}}\lra
\Lambda^2 F_{p+\frac{1}{2}}\otimes K.
\end{equation}
In this case, since $\gamma=0$ on $F_{p+\frac{1}{2}}$, the map
$\ad(\phi)$ is given explicitly by
\begin{displaymath}
x\mapsto \phi\circ x-x\circ\phi = -x\circ\beta,
\end{displaymath}
where
\begin{equation}
 \label{eq:beta-p+half}
 \beta\colon F_{p+\frac{1}{2}}^* \to F_{-p+\frac{1}{2}}
\end{equation}
for a local section $x:F^*_{-p+\frac{1}{2}}\lra F_{p+\frac{1}{2}}$.
Denote the ranks of $F_{p+\frac{1}{2}}$ and $F_{-p+\frac{1}{2}}$ by
$a$ and $b$ respectively. Then (\ref{eqn:k=2p}) implies that
$ab=\frac{a(a-1)}{2}$ and hence that
\begin{equation}
 \label{eq:a>b}
 a=2b+1>b.
\end{equation}
But then the map $\beta$ in (\ref{eq:beta-p+half})
must have a non-trivial kernel and, therefore, the map
$$
-x\circ\beta: F^*_{p+\frac{1}{2}}\lra F_{-p+\frac{1}{2}}\lra
F_{p+\frac{1}{2}}
$$
vanishes on $\ker(\beta)$ for any local section $x$.  Now,
(\ref{eq:a>b}) implies that
\begin{displaymath}
 a=\rk(F_{p+\frac{1}{2}}) \geq 2.
\end{displaymath}
Hence there are non-zero antisymmetric local sections $y$ of
$\Lambda^2 F_{p+\frac{1}{2}}\otimes K$ which do not vanish on the
kernel of $\beta$. This is in contradiction with the existence of the
isomorphism (\ref{eqn:k=2p}).
\end{proof}

In order to show that certain singular points  of the
moduli space are not minima, we need the following lemma (cf.\ Hitchin
\cite[\S 8]{hitchin:1992}).

\begin{lemma}
 \label{prop:non-smooth-non-minima}
 Let $(V,\varphi)$ be a polystable $\SO^*(2n)$-Higgs bundle which is a
 Hodge bundle. Suppose
 there is a family $(V_t,\varphi_t)$ of polystable $\SO^*(2n)$-Higgs
 bundles, pa\-ra\-me\-trized by $t$ in the open unit disk $D \subset \C$, such
 that $(V_0,\varphi_0) = (V,\varphi)$ and the corresponding infinitesimal
 deformation is a non-zero element of $\HH^1(C^\bullet_-(V,\varphi))$.
 Then $(V,\varphi)$ is not a local minimum of $f$ on
 $\mathcal{M}_d$.
\end{lemma}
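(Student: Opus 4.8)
The plan is to prove non-minimality directly, by showing that $f(V_t,\varphi_t)<f(V,\varphi)$ for all small $t\neq 0$, so that $(V,\varphi)$ fails to minimize $f$ on any neighbourhood. The framework is the moment-map interpretation of $f$ recalled above: $f$ is the moment map for the circle action $\varphi\mapsto e^{i\theta}\varphi$, which complexifies to the $\C^*$-action $(V,\varphi)\mapsto(V,w\varphi)$, and the Hodge bundle $(V,\varphi)$ is a fixed point of this action. On the infinitesimal deformation space the summand $\HH^1(C^\bullet_k(V,\varphi))$ is the weight-$k$ eigenspace of the induced action, and by hypothesis the deformation $\eta$ of the family lies in the strictly positive part $\HH^1(C^\bullet_-(V,\varphi))=\bigoplus_{k>0}\HH^1(C^\bullet_k(V,\varphi))$, which is the descending subspace for $f$ (this is the content of the weight-versus-Hessian correspondence underlying Proposition~\ref{cor:adjoint-minima}).

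The key step is to use the $\C^*$-flow to turn this weight statement into an honest inequality. For fixed small $t\neq0$, consider the orbit $w\mapsto[(V_t,w\varphi_t)]$ in $\mathcal{M}_d$. By properness of $f$ its limit as $w\to0$ exists and is a fixed point; since the displacement $(V_t,\varphi_t)-(V,\varphi)$ has leading term $t\eta$ with $\eta\in\HH^1(C^\bullet_-(V,\varphi))$ of strictly positive weight, this limit is $(V,\varphi)$ itself, i.e.\ $(V_t,\varphi_t)$ lies in the set of points attracted to $(V,\varphi)$ as $w\to0$. The analytic input I would import from the gradient-flow picture is that $f$ increases monotonically along this flow towards the fixed point, so that $f(V_t,\varphi_t)\le\lim_{w\to0}f([(V_t,w\varphi_t)])=f(V,\varphi)$, with strictness because the orbit is non-constant for $t\neq0$ (as $\eta\neq0$). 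This gives the desired $f(V_t,\varphi_t)<f(V,\varphi)$.

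An equivalent, more infinitesimal, way to see the same thing is through the second variation: the first variation of $f(V_t,\varphi_t)$ at $t=0$ vanishes because $(V,\varphi)$ is a critical point of $f$, while the second variation is governed by the Hessian of $f$ on $\eta$, which is negative on $\HH^1(C^\bullet_-(V,\varphi))$. Hence $t\mapsto f(V_t,\varphi_t)$ has a strict local maximum at $t=0$. One may also keep track of the bookkeeping using the identity $d+\norm{\beta_t}^2-\norm{\gamma_t}^2=0$ from the proof of Proposition~\ref{prop:absolute-minimum}, which reduces the comparison of $f$-values to a comparison of $\norm{\beta_t}^2$ (when $d\ge0$).

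The hard part, and the reason the hypothesis is phrased in terms of an actual family rather than an abstract class in $\HH^1(C^\bullet_-(V,\varphi))$, is that $(V,\varphi)$ may be a singular point of $\mathcal{M}_d$, where the smooth Morse-theoretic Hessian computation is not directly available. To circumvent this I would carry out the argument upstairs, on the space of solutions of the Hitchin equations (Theorem~\ref{HKcorrespond}), where $f$ is the genuinely smooth $L^2$-norm functional and the $\C^*$-action together with its gradient flow are smooth; the family lifts to this space and both the monotonicity and the identification of the limit can be verified there, exactly as in Hitchin~\cite[\S8]{hitchin:1992}. The two technical points demanding care are the verification that the orbit limit is the given fixed point $(V,\varphi)$ and not some other Hodge bundle, and the strictness of the monotonicity; both follow from the positivity of the weights of $\eta$ together with properness of $f$.
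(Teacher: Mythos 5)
The paper itself offers no proof of this lemma; it is invoked with a pointer to Hitchin \cite[\S 8]{hitchin:1992} (the $\Sp(2n,\R)$ analogue, with details, is in \cite{GGM}), so your attempt has to be measured against that standard argument. Your moment-map framework is the right one, but the two load-bearing claims in your key step are both false, and they happen to cancel, which is why your final inequality looks right. (i) The monotonicity is backwards: along a $\C^*$-orbit, $f$ is non-decreasing in $|w|$ --- already in the abelian case $f(L,w\Phi)=|w|^2\|\Phi\|^2$ --- so $f$ \emph{decreases} as $w\to 0$. Indeed this is exactly why properness yields existence of the $w\to 0$ limit (the orbit stays in a sublevel set), and that limit satisfies $f(\lim_{w\to 0}w\cdot x)\le f(x)$, not $\ge$. (ii) The attraction claim is backwards: with the paper's conventions (positive weight $=$ descending $=$ negative Hessian), the $\C^*$-action scales $\HH^1(C^\bullet_k(V,\varphi))$ by $w^{-k}$, so positive-weight directions are \emph{repelled} as $w\to0$ and attracted as $w\to\infty$; the set of points attracted to $(V,\varphi)$ as $w\to0$ is the \emph{ascending} set, on which $f\ge f(V,\varphi)$. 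Generically $\lim_{w\to0}(V_t,w\varphi_t)$ is a different fixed point, far from $(V,\varphi)$ and of smaller $f$. So both links in your chain $f(V_t,\varphi_t)\le\lim_{w\to0}f=f(V,\varphi)$ are broken, even though the conclusion is the desired one.

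Swapping $w\to0$ for $w\to\infty$ does not repair the argument, and this is the genuine gap. Limits as $w\to\infty$ exist only for points of the nilpotent cone (properness gives nothing there, since $f$ increases in that direction), and, more seriously, tangency of the family to $\HH^1(C^\bullet_-(V,\varphi))$ does not place the family inside the attracting cell: in $\C^2$ with action $w\cdot(z_1,z_2)=(w^{-1}z_1,wz_2)$, the curve $x_t=(t,t^2)$ is tangent to the descending axis, yet for $t\ne0$ the point $x_t$ has no limit under either $w\to0$ or $w\to\infty$. Hence ``the limit is $(V,\varphi)$ itself'' cannot be deduced from positivity of the weights plus properness, in either direction. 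The proofs this lemma actually rests on are different: one works in a $\C^*$-invariant local (Kuranishi-type) model of $\cM_d$ at the possibly singular point and either (a) runs Hitchin's quantitative estimate, comparing the $O(|t|^2)$ excess $f(V_t,\varphi_t)-f(V,\varphi)$ with the drop of $f$ along a flow of fixed duration $\delta$, a drop of size $c(\delta)|t|^2$ with $c(\delta)\to\infty$, or (b) takes a \emph{diagonal} limit $w(t)\cdot(V_t,\varphi_t)$ with $w(t)\to0$ tuned to the top weight of $\eta$, which by invariance and closedness of the model produces an honest nonzero point of the descending set; flowing that point into $(V,\varphi)$ by $w\to\infty$ (which now converges) gives points arbitrarily close to $(V,\varphi)$ with strictly smaller $f$. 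Finally, your second-variation alternative presupposes smoothness at the point, which is precisely what is unavailable and the reason the lemma is phrased in terms of a family; lifting to the space of solutions of Hitchin's equations does not by itself supply the missing identification of the limit.
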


Using this criterion and
Theorem~\ref{prop:SO-star-poly-stability-refined}, we can now
extend the result of Lemma~\ref{lem:smooth-SO*-minima} to cover all
polystable $\SO^*(2n)$-Higgs bundles.

\begin{lemma}
  \label{lem:non-smooth-minima}
  Let $(V,\beta,\gamma)$ be a polystable $\SO^*(2n)$-Higgs bundle and
  assume that $(V,\beta,\gamma)$ represents a local minimum of $f$ on
  $\mathcal{M}_d$. Then, if $d = \deg(V) \geq 0$ the vanishing
  $\beta=0$ holds and, if $d = \deg(V) \leq 0$ the vanishing $\gamma=0$
  holds.
\end{lemma}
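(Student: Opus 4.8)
The plan is to reduce to the smooth case handled in Lemma~\ref{lem:smooth-SO*-minima} by decomposing $(V,\beta,\gamma)$ into stable pieces. By the duality isomorphism $\cM_d\simeq\cM_{-d}$ of Proposition~\ref{prop:duality-iso}, which interchanges the roles of $\beta$ and $\gamma$, it suffices to treat $d=\deg(V)\ge 0$ and to prove $\beta=0$. First I would note that a local minimum of $f$ must be a fixed point of the $\C^*$-action $(V,\varphi)\mapsto(V,w\varphi)$: for any point that is not fixed, the limit $\lim_{w\to0}(V,w\varphi)$ exists in $\cM_d$ and $f$ decreases strictly along the flow (the downward gradient flow of the moment map $f$ is the flow of $\R_{>0}\subset\C^*$, cf.\ \cite{hitchin:1987,hitchin:1992,simpson:1992}), so no non-fixed point is a local minimum. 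As recalled before Proposition~\ref{prop:critical_point}, the $\C^*$-fixed points are precisely the Hodge bundles, so $(V,\beta,\gamma)$ is a Hodge bundle.

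Next I would apply Proposition~\ref{prop:SO-star-poly-stability-refined} to write $(V,\varphi)=\bigoplus_i(V_i,\varphi_i)$ with each $(V_i,\varphi_i)$ of one of the types (1)--(4) listed there. This decomposition is canonical up to reordering and is therefore preserved by the $\C^*$-action, so each summand is again a Hodge bundle. The deformation complex \eqref{eq:spnr-def-complex} splits, compatibly with the weight grading, into the diagonal complexes $C^\bullet(V_i,\varphi_i)$ and the off-diagonal blocks mixing distinct summands; in particular each $\HH^1(C^\bullet_-(V_i,\varphi_i))$ is a direct summand of $\HH^1(C^\bullet_-(V,\varphi))$. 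Using Lemma~\ref{prop:non-smooth-non-minima}, if some $(V_i,\varphi_i)$ admitted a non-zero positive-weight deformation realized by a family, then extending that family trivially on the other summands would realize a non-zero class in $\HH^1(C^\bullet_-(V,\varphi))$ and show $(V,\varphi)$ is not a minimum; hence each summand must itself be a local minimum of $f$ in the moduli space attached to its structure group.

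I would then run through the types, writing $d_i=\deg(V_i)$. Type (4) has $\varphi_i=0$, so it contributes nothing to $\beta$. A type (1) summand is stable as an $\SO(2n_i,\C)$-Higgs bundle, hence a smooth point, and Lemma~\ref{lem:smooth-SO*-minima} applies verbatim: minimality forces $\beta_i=0$ if $d_i\ge0$ and $\gamma_i=0$ if $d_i\le0$. For the non-smooth types (2) and (3) (the $\U^*(n_i)$ and $\U(p_i,q_i)$ summands), where Lemma~\ref{lem:smooth-SO*-minima} does not apply, I would establish the same dichotomy by exhibiting an explicit positive-weight deforming family whenever the forbidden component is non-zero --- for a $\U^*(n_i)$-summand by deforming the Cayley field, and for a $\U(p_i,q_i)$-summand by the standard analysis of minima of the $\U(p,q)$-Hitchin functional (as in \cite{bradlow-garcia-prada-gothen:2003}) --- and applying Lemma~\ref{prop:non-smooth-non-minima}. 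After this step every summand satisfies $\beta_i=0$ when $d_i\ge0$ and $\gamma_i=0$ when $d_i\le0$.

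The remaining, and hardest, point is the sign bookkeeping: a summand with $d_i<0$ can be a genuine minimum in its own moduli space (with $\gamma_i=0$) yet have $\beta_i\ne0$, so the diagonal analysis alone does not give $\beta=0$. To finish I would rule out negative-degree summands. Since $d=\sum_i d_i\ge0$, a summand with $d_i<0$ (so $\gamma_i=0$, $\beta_i\ne0$) forces a second summand with $d_j>0$ (so $\beta_j=0$, $\gamma_j\ne0$). The plan is to produce a non-zero class in the off-diagonal, strictly positive-weight part of $\HH^1(C^\bullet_-(V,\varphi))$ arising from the block of the deformation complex that mixes $V_i$ and $V_j$, and to check that it is realized by an actual family of polystable $\SO^*(2n)$-Higgs bundles, so that Lemma~\ref{prop:non-smooth-non-minima} contradicts minimality. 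I expect this explicit cross-term construction --- together with the verification that the class is non-zero, of positive weight, and unobstructed --- to be the main obstacle, mirroring the computation for $\Sp(2n,\R)$ in \cite[Section~6]{GGM}. Once negative-degree summands are excluded, all $d_i\ge0$, so all $\beta_i=0$ and hence $\beta=\sum_i\beta_i=0$; the case $d\le0$ follows by the same argument with $\beta$ and $\gamma$ interchanged.
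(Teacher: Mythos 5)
Your proposal follows the same architecture as the paper's proof: decompose the polystable bundle via Proposition~\ref{prop:SO-star-poly-stability-refined}, reduce to the statement that each summand is a local minimum in its own moduli space, handle type (1) summands with Lemma~\ref{lem:smooth-SO*-minima} and types (2)--(4) with the known minima results for $\U^*(n_i)$ and $\U(p_i,q_i)$, and finally exclude the coexistence of a summand with $\beta'=0$, $\gamma'\neq 0$ and a summand with $\beta''\neq 0$, $\gamma''=0$ by producing a positive-weight family and invoking Lemma~\ref{prop:non-smooth-non-minima}. The step you single out as ``the main obstacle'' --- the explicit cross-term family --- is exactly where the paper cites Lemmas~7.2 and 7.3 of \cite{GGM}: those constructions are not sensitive to the (skew-)symmetry of $\beta$ and $\gamma$ and go through unchanged for $\SO^*(2n)$, so no new computation is required; similarly, for type (2) summands the paper simply quotes \cite[Proposition~4.6]{garcia-prada-oliveira:2010} (a stable $\U^*(n_i)$-Higgs bundle at a minimum has vanishing Higgs field), rather than your vaguer ``deform the Cayley field''.

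There is, however, one genuine logical misstep in your reduction. You argue that if some summand admitted a nonzero positive-weight family deformation, then extending it trivially would contradict minimality of $(V,\varphi)$, and conclude ``hence each summand must itself be a local minimum.'' That inference is backwards: Lemma~\ref{prop:non-smooth-non-minima} only says that the existence of such a family obstructs minimality; you have not shown that a summand which fails to be a local minimum must admit such a family, so you cannot deduce summand minimality this way. The point matters, because Lemma~\ref{lem:smooth-SO*-minima} and the cited results for $\U(p_i,q_i)$ (\cite[Theorem~4.6]{bradlow-garcia-prada-gothen:2003}) and $\U^*(n_i)$ all take minimality of the summand as their hypothesis. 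The paper's fix is elementary and is the one you should use: the Hitchin function is additive, $f(V,\varphi)=\sum_i f(V_i,\varphi_i)$, and a deformation of a single summand inside its own moduli space, direct-summed with the untouched remaining summands, stays inside $\cM_d$ by Proposition~\ref{prop:SO-star-poly-stability}; hence local minimality of $(V,\varphi)$ immediately forces each $(V_i,\varphi_i)$ to be a local minimum on its own moduli space. With that repair, and with the citation of \cite{GGM} replacing your deferred cross-term construction, your argument coincides with the paper's.
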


\begin{proof}
  Let $(V,\varphi) = (V_1,\varphi_1) \oplus \dots \oplus
  (V_k,\varphi_k)$ be the decomposition given in
  Theorem~\ref{prop:SO-star-poly-stability-refined}. 
  As observed by Hitchin \cite{hitchin:1992}, the Hitchin function
  \eqref{eq:def-hitchin-functional} is additive in the sense that
  \begin{displaymath}
    f(V,\varphi) =\sum_{i=1}^k f (V_i,\varphi_i).
  \end{displaymath}
  It follows that each summand $(V_i,\varphi_i)$ represents a local
  minimum for the Hitchin functional on its own moduli space.

  If a summand $(V_i,\varphi_i)$ is of type (1) in
  Theorem~\ref{prop:SO-star-poly-stability-refined}, then
  Lemma~\ref{lem:smooth-SO*-minima} shows that $\beta_i=0$ or
  $\gamma_i=0$. Similarly, if a summand $(V_i,\varphi_i)$ is of type
  (3), then it is shown in
  \cite[Theorem~4.6]{bradlow-garcia-prada-gothen:2003} that
  $\beta_i=0$ or $\gamma_i=0$. With regard to summands of type (2), it
  is shown in \cite[Proposition~4.6]{garcia-prada-oliveira:2010} that
  a stable $\U^*(n_i)$-Higgs bundle $(V_i,\varphi_i)$ representing a
  local minimum on the corresponding moduli space has $\varphi_i=0$.
  Finally we note that the summands $(V_i,\varphi_i)$ of type (4) have
  $\varphi_i=0$.

  Thus each of the summands $(V_i,\varphi_i)$ of type (1) or (3) has
  either $\beta_i=0$ or $\gamma_i=0$ and each of the summands of type
  (2) or (4) has $\varphi_i=0$.
  
  To complete the proof, assume that there are summands
  $(V',\beta',\gamma')$ and $(V'',\beta'',\gamma'')$ with $\beta'=0$,
  $\gamma'\neq 0$, $\beta''\neq 0$ and $\gamma''=0$, and that each of
  these summands is either of type (1) or of type (3). If we can
  construct a family $(V_t,\varphi_t)$ of polystable $\SO^*(2n)$-Higgs
  bundles such that
  \begin{displaymath}
    (V_0,\varphi_0) = (V',\beta'+\gamma') \oplus (V'',\beta''+\gamma'')
  \end{displaymath}
  and satisfying the hypothesis of
  Lemma~\ref{prop:non-smooth-non-minima}, this proposition
  guarantees that $(V',\beta'+\gamma') \oplus (V'',\beta''+\gamma'')$
  is not a minimum (on its own moduli space) and hence $(V,\varphi)$ cannot
  be a minimum. In the analogous case of $\Sp(2n,\R)$-Higgs bundles,
  such a family is constructed in Lemmas~7.2 and 7.3 of
  \cite{GGM}. Inspection of the proofs of these two lemmas shows that
  they are not sensitive to the symmetry properties of $\beta$ and
  $\gamma$ and so go through unchanged in the present case of
  $\SO^*(2n)$-Higgs bundles. This completes the proof.
\end{proof}

\noi Finally we are in a position to prove Theorem~\ref{thm:SO*-minima}.

\begin{proof}[Proof of Theorem~\ref{thm:SO*-minima}]
  The ``if'' part is immediate
  from Proposition~\ref{prop:absolute-minimum}.  In the case $|d|= \lfloor \frac{n}{2}\rfloor(2g-2)$, the ``only
  if'' part follows from Lemma~\ref{lem:non-smooth-minima}. In the
  case $d=0$ the result follows from the observation that if one of the Higgs fields $\beta$ and $\gamma$ vanishes, then polystability of $(V,\beta,\gamma)$ forces
  the other Higgs field to vanish.  
\end{proof}

\section{Representations of $\pi_1(X)$ in $\SO^*(2n)$}
\label{sect: surfacegroups}

Let $X$ be a compact Riemann surface of genus $g$ and let
\begin{displaymath}
  \pi_{1}(X) = \langle a_{1},b_{1}, \dotsc, a_{g},b_{g} \suchthat
  \prod_{i=1}^{g}[a_{i},b_{i}] = 1 \rangle
\end{displaymath}
be its fundamental group.  
By a representation of $\pi_1(X)$ in
$\SO^*(2n)$ we mean  a homomorphism $\rho\colon \pi_1(X) \to \SO^*(2n)$.
The set of all such homomorphisms,
$$\Hom(\pi_1(X),\SO^*(2n)),$$ can be naturally identified with the subset
of $\SO^*(2n)^{2g}$ consisting of $2g$-tuples
$$(A_{1},B_{1}\dotsc,A_{g},B_{g})$$ 
satisfying the algebraic equation
$\prod_{i=1}^{g}[A_{i},B_{i}] = 1$.  This shows that
$\Hom(\pi_1(X),\SO^*(2n))$ is a  real algebraic   variety.

The group $\SO^*(2n)$ acts on $\Hom(\pi_1(X),\SO^*(2n))$ by conjugation:
\[
(g \cdot \rho)(\gamma) = g \rho(\gamma) g^{-1}
\]
for $g \in \SO^*(2n)$, $\rho \in \Hom(\pi_1(X),\SO^*(2n))$ and $\gamma\in
\pi_1(X)$. Recall that a representation is reductive if its composition 
with the adjoint representation is semisimple. 
If we restrict the action to the subspace
$\Hom^{\mathrm{red}}(\pi_1(X),\SO^*(2n))$ consisting of reductive
representations, the orbit space is Hausdorff.  
By a reductive representation we mean one
for which the Zariski closure of the
image of $\pi_1(X)$ in $\SO^*(2n)$ is  a reductive group. 
Define the
{\bf moduli space of representations} of $\pi_1(X)$ in $\SO^*(2n)$ to be
the orbit space
\[
\mathcal{R} = \Hom^{\mathrm{red}}(\pi_1(X),\SO^*(2n)) / \SO^*(2n). \]





Since
$\U(n)\subset \SO^*(2n)$ is a maximal compact subgroup, we have
$$
\pi_1(\SO^*(2n))\simeq \pi_1(\U(n))\simeq \ZZ,
$$
and there is a topological invariant attached to a representation
$\rho\in\cR$ given by  an element $d=d(\rho)\in\ZZ$. This integer is
called the \textbf{Toledo invariant} and coincides with the first
Chern class of a reduction to a $\U(n)$-bundle of the flat
$\SO^*(2n)$-bundle associated to $\rho$.

Fixing the invariant $d\in \ZZ$ we consider, 
$$
\mathcal{R}_d:=\{\rho \in \mathcal{R}\;\;\;\mbox{such that}
\;\;\; d(\rho)=d\}.
$$

\begin{proposition}\label{rep-duality}
The transformation  $\rho\mapsto {(\rho^t)}^{-1}$ in $\cR$ induces an
isomorphism of the moduli  spaces $\cR_d$ and $\cR_{-d}$.
\end{proposition}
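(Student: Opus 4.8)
The plan is to exhibit $\sigma\colon\rho\mapsto(\rho^t)^{-1}$ as a well-defined involution of $\cR$ which negates the Toledo invariant, and to deduce that the induced map on orbit spaces is the desired isomorphism $\cR_d\xrightarrow{\sim}\cR_{-d}$. Throughout I work in the realization of the Remark following Proposition~\ref{prop:SUnn}, in which $\SO^*(2n)\subset\SL(2n,\C)$ is the subgroup preserving the symmetric bilinear form $M:=I_{n,n}J_n$; in the defining orthogonal realization \eqref{eqn:SO*defn} the transpose-inverse is vacuous, so this is the realization that makes the statement meaningful.

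First I would check that $\sigma$ is well defined on homomorphisms. That the contragredient $\gamma\mapsto(\rho(\gamma)^t)^{-1}$ is again a homomorphism is formal, since transposition reverses products and inversion reverses them back. For the image to lie in $\SO^*(2n)$, note that every $A$ in this realization satisfies $A^tMA=M$ with $M$ symmetric and $M^{-1}=M$; hence $(A^t)^{-1}=MAM^{-1}$, which again preserves $M$ (and, by the same computation applied to the defining skew-Hermitian form, lies in $\SU(n,n)$). Thus $\sigma$ is induced by the automorphism $A\mapsto MAM^{-1}$ of $\SO^*(2n)$; since $M\notin\SO^*(2n)$ this is an outer automorphism. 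Reductivity is preserved because conjugating the image by the fixed element $M\in\GL(2n,\C)$ leaves the semisimplicity of the composite with the adjoint representation unchanged.

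Next I would verify that $\sigma$ descends to $\cR$. If $\rho'=h\rho h^{-1}$ with $h\in\SO^*(2n)$, a direct manipulation gives $\sigma(\rho')=(h^t)^{-1}\,\sigma(\rho)\,h^t$, i.e. $\sigma(\rho')$ is the conjugate of $\sigma(\rho)$ by $(h^t)^{-1}\in\SO^*(2n)$; so $\sigma$ is equivariant for the conjugation action and induces a map $\cR\to\cR$. Since $\bigl(((A^t)^{-1})^t\bigr)^{-1}=A$ we have $\sigma^2=\mathrm{id}$, so the induced map is a bijective morphism of the real algebraic varieties underlying $\cR$, hence an isomorphism. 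It remains only to track its effect on the components $\cR_d$.

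The key step, which I expect to be the main obstacle, is to compute the effect of $\sigma$ on the Toledo invariant $d(\rho)$, the first Chern class of the reduction of the flat bundle to the maximal compact $\U(n)\subset\SO^*(2n)$. In the present realization, $\U(n)$ sits inside $\SO^*(2n)$ (via \eqref{eqn:U(n)} and \eqref{eqn:GL(n)}) as the elements $Z\mapsto\mathrm{diag}(Z,(Z^t)^{-1})=\mathrm{diag}(Z,\bar Z)$, and conjugation by $M$ sends $\mathrm{diag}(Z,\bar Z)$ to $\mathrm{diag}(\bar Z,Z)$; that is, $\sigma$ acts on the maximal compact as complex conjugation $Z\mapsto\bar Z$. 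Since complex conjugation on $\U(n)$ negates $c_1$, we obtain $d(\sigma(\rho))=-d(\rho)$, so $\sigma$ restricts to an isomorphism $\cR_d\cong\cR_{-d}$. As a check, and an alternative route bypassing this maximal-compact computation, one may transport the statement across the non-abelian Hodge correspondence of Section~\ref{sect: surfacegroups}: $\sigma$ then corresponds to the Higgs-bundle duality $(V,\beta,\gamma)\mapsto(V^*,\gamma,\beta)$ of Proposition~\ref{prop:duality-iso}, which sends $\deg V=d$ to $\deg V^*=-d$.
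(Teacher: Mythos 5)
Your proof is correct, and in fact there is nothing in the paper to compare it against: Proposition~\ref{rep-duality} is stated without any proof, treated---like its Higgs-bundle counterpart, Proposition~\ref{prop:duality-iso}---as an easy standard fact. Your argument supplies the missing details, and it isolates a genuine subtlety the paper glosses over: in the defining realization \eqref{eqn:SO*defn} one has $g^tg=I_{2n}$, so $\rho\mapsto(\rho^t)^{-1}$ is literally the identity map and could not interchange $\cR_d$ and $\cR_{-d}$; the statement only becomes meaningful (and true) in the realization inside $\SU(n,n)$ preserving the symmetric form $M=I_{n,n}J_n$, where transpose-inverse is conjugation by $M$, i.e.\ the contragredient. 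Your computation that this automorphism preserves the maximal compact $\U(n)=\{\mathrm{diag}(Z,\bar Z)\}$ and acts on it by $Z\mapsto\bar Z$, hence by $-1$ on $\pi_1(\U(n))\cong\pi_1(\SO^*(2n))\cong\Z$, is exactly what is needed for the sign flip of the Toledo invariant; your closing remark, matching the involution under non-abelian Hodge theory with the duality $(V,\beta,\gamma)\mapsto(V^*,\gamma,\beta)$ of Proposition~\ref{prop:duality-iso}, is the other natural route and is presumably what the authors had in mind. Two cosmetic points, neither of which affects correctness: (i) $M\notin\SO^*(2n)$ does not by itself show that conjugation by $M$ is outer (it could a priori agree with conjugation by some group element); outerness actually follows from the action being $-1\neq 1$ on $\pi_1$, since inner automorphisms of a connected group act trivially on $\pi_1$---but you never use outerness anyway; (ii) ``bijective morphism, hence isomorphism'' is not a valid general principle in algebraic geometry, though here the induced map is its own inverse, so it is indeed an isomorphism.
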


As shown by  Domic--Toledo
\cite{domic-toledo:1987}, the Toledo invariant $d$ of a representation
satisfies the Milnor--Wood type inequality:

\begin{proposition}
The moduli space $\cR_d$ is empty unless
$$
\abs{d} \leq \left\lfloor\frac{n}{2}\right\rfloor(2g-2).
$$
\end{proposition}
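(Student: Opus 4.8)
The plan is to reduce this to the analogous statement for Higgs bundles, which has already been established, rather than to reprove the bound from scratch. First I would invoke the non-abelian Hodge theory correspondence, set up in this section, identifying the moduli space $\cR_d$ of reductive representations of Toledo invariant $d$ with the moduli space $\cM_d = \cM_d(\SO^*(2n))$ of polystable $\SO^*(2n)$-Higgs bundles with $\deg(V) = d$. The point that makes the invariant transfer correctly is that both the Toledo invariant of a representation and the degree of $V$ are, by construction, the image of the relevant class under the identifications $\pi_1(\SO^*(2n)) \simeq \pi_1(\U(n)) \simeq \ZZ$ coming from the maximal compact subgroup $\U(n) \subset \SO^*(2n)$. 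Hence $\cR_d$ is nonempty if and only if $\cM_d$ is nonempty.

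Granting this identification, the result is immediate: by Proposition~\ref{non-emptiness-higgs} the moduli space $\cM_d$ is empty unless $|d| \le \left\lfloor\frac{n}{2}\right\rfloor(2g-2)$, and so the same bound governs nonemptiness of $\cR_d$.

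The substantive content therefore lies entirely on the Higgs side and is already done: Proposition~\ref{mw-higgs} bounds $\deg(V)$ in terms of the ranks of the skew-symmetric maps $\beta$ and $\gamma$, and since a skew-symmetric homomorphism has even rank, hence rank at most $2\left\lfloor\frac{n}{2}\right\rfloor$, the Milnor--Wood bound follows. The only real obstacle is to ensure that the correspondence and the matching of topological invariants are stated precisely; once that bookkeeping is in place there is nothing further to prove. As an independent check one could instead appeal directly to the differential-geometric argument of Domic--Toledo \cite{domic-toledo:1987}, who bound the Toledo invariant by integrating the pullback of the invariant K\"ahler form on the symmetric space; but the Higgs-theoretic route keeps the argument self-contained within the framework developed in this paper.
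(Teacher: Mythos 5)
Your proof is correct, but it takes a genuinely different route from the paper. The paper gives no argument of its own here: it states the inequality as a theorem of Domic--Toledo \cite{domic-toledo:1987}, i.e.\ as a direct differential-geometric bound on the Toledo invariant of a representation (obtained by integrating the pullback of the invariant K\"ahler form and using the Gromov norm of the K\"ahler class), which is the option you mention only as an aside. Your route instead transfers the bound from the Higgs side: the homeomorphism $\cR_d \simeq \cM_d$ of Proposition~\ref{Md-Rd} together with the emptiness criterion of Proposition~\ref{non-emptiness-higgs}. This is legitimate and non-circular, since Proposition~\ref{Md-Rd} is quoted from general non-abelian Hodge theory and does not presuppose any bound on $d$, and Proposition~\ref{non-emptiness-higgs} is proved independently via semistability (Proposition~\ref{mw-higgs}) plus the parity constraint on the rank of a skew-symmetric map; you are also right that the invariants match, both being the class in $\pi_1(\U(n))\simeq\ZZ$, as the paper itself notes in Section~\ref{sec:moduli spaces}. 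Indeed the paper implicitly endorses your mechanism, since Theorem~\ref{theorem:main-rep} transfers non-emptiness and connectedness through exactly this correspondence. The trade-off: your argument is self-contained within the paper's framework but only applies to $\cR_d$ as defined, i.e.\ to \emph{reductive} representations (the only ones covered by the correspondence), whereas the Domic--Toledo argument bounds the Toledo invariant of an arbitrary representation and requires none of the Hodge-theoretic machinery (existence of harmonic metrics, polystability), which is presumably why the paper cites it rather than arguing as you do.
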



As a special case of of the non-abelian Hodge theory correspondence
(see \cite[Theorem~3.32]{garcia-prada-gothen-mundet:2009a}) we have
the following.

\begin{proposition}\label{Md-Rd}
The moduli spaces $\cR_d$ and $\cM_d$ are homeomorphic.
\end{proposition}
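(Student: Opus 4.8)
The plan is to deduce this from the general non-abelian Hodge correspondence for a real reductive Lie group, established in \cite{bradlow-garcia-prada-mundet:2003,GGM}, specialized to $G=\SO^*(2n)$. The correspondence is the composite of two analytic results, and the task here is essentially to assemble them and track the invariant $d$. For the map $\cM_d\to\cR_d$ I would start from the Hitchin--Kobayashi correspondence in the form already recorded for our group (Theorem~\ref{HKforSO*}, specializing Theorem~\ref{HKcorrespond}): every polystable $\SO^*(2n)$-Higgs bundle $(V,\beta,\gamma)$ carries a Hermitian metric $h$ solving the $\SO^*(2n)$-Hitchin equation~\eqref{eqn:SO*Hitch}. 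Given such a solution one forms, on the associated principal bundle, the connection $D=D_h+\varphi-\tau_h(\varphi)$, where $D_h$ is the Chern connection; the Hitchin equation is exactly the flatness condition for $D$. The holonomy of $D$ is a reductive representation $\rho\in\cR$, and sending $(V,\beta,\gamma)$ to its conjugacy class defines the map $\cM_d\to\cR_d$.

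For the inverse direction I would invoke the theorem of Corlette and Donaldson: every reductive representation $\rho\colon\pi_1(X)\to\SO^*(2n)$ admits a $\rho$-equivariant harmonic map from the universal cover of $X$ to the symmetric space $\SO^*(2n)/\U(n)$, equivalently a harmonic reduction of structure group of the flat $\SO^*(2n)$-bundle to the maximal compact $\U(n)$. Combining this harmonic reduction with the holomorphic structure determined by the complex structure of $X$ produces a polystable $\SO^*(2n)$-Higgs bundle $(V,\beta,\gamma)$, which is the image of $\rho$ under $\cR_d\to\cM_d$. That these two constructions are mutually inverse, and that both are continuous for the natural topologies---on $\cM_d$ as a complex analytic variety (Theorem~\ref{alg-moduli}), and on $\cR_d$ as the quotient of the reductive representation variety by $\SO^*(2n)$---is precisely what the cited general correspondence supplies, since $\SO^*(2n)$ satisfies the Cartan-data hypotheses of that theory.

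The one point that genuinely requires attention, and which I expect to be the main (indeed the only substantive) step, is the matching of the discrete invariant on the two sides. On the Higgs bundle side $d=\deg V$ is the image of the topological type of the $\U(n)$-bundle under $\pi_1(\U(n))\cong\Z$; on the representation side the Toledo invariant is, by definition, the first Chern class of the $\U(n)$-reduction of the flat $\SO^*(2n)$-bundle furnished by the harmonic metric. Since the harmonic reduction to $\U(n)$ is the common object underlying both descriptions, these two integers coincide. Consequently the homeomorphism $\cM\cong\cR$ respects the decompositions $\cM=\coprod_d\cM_d$ and $\cR=\coprod_d\cR_d$ and restricts to a homeomorphism $\cM_d\cong\cR_d$ for each $d$, which is the assertion. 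The analytic content is entirely packaged in the references; the work is this bookkeeping of invariants together with the verification that the hypotheses of the general theorem hold for $\SO^*(2n)$.
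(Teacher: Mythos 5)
Your proposal is correct and follows essentially the same route as the paper, which simply cites the general non-abelian Hodge correspondence of \cite{bradlow-garcia-prada-mundet:2003,GGM} specialized to $G=\SO^*(2n)$; you additionally spell out the two constituent analytic inputs (Hitchin--Kobayashi via Theorem~\ref{HKforSO*} and the Corlette--Donaldson harmonic metric theorem) and the identification of $\deg V$ with the Toledo invariant, all of which is consistent with what the paper leaves implicit in its citation.
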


{}From Proposition \ref{Md-Rd} and Theorem
\ref{theorem: main1} we have the main
result of this paper regarding the connectedness properties of $\cR$
given by the following.

\begin{theorem}\label{theorem:main-rep}
  The moduli space $\cR_d$ is non-empty and connected if $d=0$ or
  $|d|=\lfloor \frac{n}{2}\rfloor(2g-2)$.
\end{theorem}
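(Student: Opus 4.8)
The plan is to transport the statement across the non-abelian Hodge correspondence, so that all the genuine work is done on the Higgs bundle side. Concretely, Proposition~\ref{Md-Rd} supplies a homeomorphism $\cR_d \cong \cM_d$ between the moduli space of reductive representations of $\pi_1(X)$ with Toledo invariant $d$ and the moduli space of polystable $\SO^*(2n)$-Higgs bundles with $\deg(V)=d$. Since a homeomorphism preserves both non-emptiness and the number of connected components, any such topological conclusion about $\cM_d$ transfers verbatim to $\cR_d$.

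The second step is simply to invoke Theorem~\ref{theorem: main1}, which asserts precisely that $\cM_d$ is non-empty and connected when $d=0$ or $|d|=\lfloor n/2 \rfloor (2g-2)$. Combining this with the homeomorphism of the previous paragraph immediately yields that $\cR_d$ is non-empty and connected in exactly these two cases, which is the assertion of the theorem.

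In this sense the statement is a formal corollary, and there is no real obstacle remaining at this point: the substance has already been expended on its two ingredients. The harder of these is Theorem~\ref{theorem: main1}, whose proof rests on the Morse theory of the Hitchin functional---identifying the subspace of local minima (Theorem~\ref{thm:SO*-minima}) and showing it is connected, using the Cayley correspondence (Theorem~\ref{thm:cayley}) and the rigidity result (Theorem~\ref{rigidityso*2n}) in the maximal case. The only point requiring a modicum of care in the present deduction is that Proposition~\ref{Md-Rd} is a homeomorphism of topological spaces, not merely a bijection, so that connectedness---and not just a counting statement about components---is genuinely preserved.
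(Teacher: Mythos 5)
Your proposal is correct and is exactly the paper's argument: the theorem is deduced by combining the non-abelian Hodge homeomorphism $\cR_d \cong \cM_d$ of Proposition~\ref{Md-Rd} with the connectedness result for Higgs bundle moduli, Theorem~\ref{theorem: main1}. Your remark that the homeomorphism (not a mere bijection) is what makes connectedness transfer is the right point of care, and nothing further is needed.
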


{}From Proposition \ref{Md-Rd} and Theorem \ref{theorem:main-rep} we also have
the following rigidity result for maximal representations.

\begin{theorem}\label{rigidityso*2n-rep} Let $\mathcal{R}_{\max}(\SO^*(4m+2))$
  be the moduli space of maximal representations in $\SO^*(2n)$ with
  $n=2m+1$ and $d=2m(g-1)$. If $m>0$ and $g\ge 2$ then the locus of
  irreducible representations of $\mathcal{R}_{\max} (\SO^*(4m+2))$ is
  empty and
$$
\mathcal{R}_{\max} (\SO^*(4m+2))\simeq
\mathcal{R}_{\max}(\SO^*(4m))\times \Hom(\pi_1(X),\U(1)).
$$
\end{theorem}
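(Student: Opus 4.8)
The plan is to transport the Higgs-bundle rigidity of Theorem~\ref{rigidityso*2n} across the non-abelian Hodge correspondence. First I would apply Proposition~\ref{Md-Rd} to obtain homeomorphisms $\mathcal{R}_{\max}(\SO^*(4m+2)) \cong \mathcal{M}_{\max}(\SO^*(4m+2))$ and $\mathcal{R}_{\max}(\SO^*(4m)) \cong \mathcal{M}_{\max}(\SO^*(4m))$. Combining the first of these with the isomorphism $\mathcal{M}_{\max}(\SO^*(4m+2)) \cong \mathcal{M}_{\max}(\SO^*(4m)) \times \Jac(X)$ of Theorem~\ref{rigidityso*2n} immediately yields a homeomorphism $\mathcal{R}_{\max}(\SO^*(4m+2)) \cong \mathcal{M}_{\max}(\SO^*(4m)) \times \Jac(X)$. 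It then remains to re-express both factors on the right in representation-theoretic terms.

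For the Jacobian factor I would invoke the abelian (rank-one) case of the correspondence: the moduli space $\Jac(X) = \Pic^0(X)$ of degree-zero line bundles is homeomorphic to the character variety $\Hom(\pi_1(X),\U(1))$, since a degree-zero holomorphic line bundle carries a unique flat unitary connection by the rank-one Narasimhan--Seshadri theorem. Concretely, the $\U(1)$-factor in Theorem~\ref{rigidityso*2n} originates from the reduction of structure group along $\SO^*(4m) \times \SO(2) \hookrightarrow \SO^*(4m+2)$, and under the correspondence the $\SO(2) \cong \U(1)$-piece becomes $\Hom(\pi_1(X),\SO(2)) = \Hom(\pi_1(X),\U(1))$.

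The step requiring the most care, and the main obstacle, is verifying that the product decomposition genuinely survives the passage to representations. Theorem~\ref{rigidityso*2n} shows that each polystable maximal $\SO^*(4m+2)$-Higgs bundle splits holomorphically as $(\ker(\gamma),0,0) \oplus (V_\perp,\beta_3,\gamma)$, reducing its structure group to $\SO^*(4m) \times \U(1)$. Because the harmonic metric solving the Hitchin equation is block-diagonal with respect to this splitting (the equations decouple on the two summands), the associated flat connection is a direct sum, so the holonomy representation factors as a product of an $\SO^*(4m)$-representation and a $\U(1)$-representation. This functoriality of the Hodge correspondence under reduction of structure group is exactly what matches the product on the Higgs side with the product on the representation side, establishing the claimed isomorphism.

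Finally, for the assertion that the irreducible locus is empty, I would use that under the correspondence irreducible representations correspond to stable $\SO^*(2n)$-Higgs bundles. Since Theorem~\ref{rigidityso*2n} asserts that the stable locus of $\mathcal{M}_{\max}(\SO^*(4m+2))$ is empty for $m>0$ and $g\ge 2$, the locus of irreducible representations in $\mathcal{R}_{\max}(\SO^*(4m+2))$ is empty as well, completing the proof.
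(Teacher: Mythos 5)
Your proposal is correct and follows essentially the same route as the paper, which simply obtains this theorem by combining the non-abelian Hodge correspondence (Proposition~\ref{Md-Rd}) with the Higgs-bundle rigidity of Theorem~\ref{rigidityso*2n}. The details you supply — the identification $\Jac(X)\cong\Hom(\pi_1(X),\U(1))$, the compatibility of the product decomposition with the correspondence, and the implication that irreducible representations give stable Higgs bundles — are exactly the points the paper leaves implicit.
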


\section{Low rank cases}\label{sect:lowrank}
In this section we exploit well known Lie-theoretic isomorphisms to examine $\SO^*(2n)$-Higgs bundles for low values of $n$. 

\subsection{The case $n=1$}
\label{sec:case-n=1}

The group $\SO^*(2)$ is isomorphic to $\SO(2)$ and hence, in
particular, it is compact. A $\SO^*(2)$-Higgs bundle is thus simply a
bundle (with zero Higgs field).  Identifying the maximal compact
subgroup (in this case the group itself) with $\U(1)$, we see that a
$\SO^*(2)$-Higgs bundle consists of a $\GL(1,\C)$-bundle, or
equivalently, a holomorphic line bundle.  Using the usual
identification $\GL(1,\C)\simeq\SO(2,\C)$, we see that the associated
$\SO(2,\C)$-Higgs bundle is equivalent to the vector bundle $L\oplus
L^{-1}$ with the standard off-diagonal quadratic form.

\begin{proposition} \label{prop: 2ss} As a $\SO^*(2)$-Higgs bundle, a line bundle $L$ is semistable if and only if $\deg(L)=0$. Moreover, semistability implies stability for $\SO^*(2)$-Higgs bundles.
\end{proposition}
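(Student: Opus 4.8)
The plan is to exploit the fact that for $n=1$ the Higgs field is forced to vanish, and then to reduce both assertions to the elementary observation that a line bundle has no proper nonzero subbundle.

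First I would record that, since $V=L$ has rank one, $\Lambda^2 L=0$ and $\Lambda^2 L^*=0$; hence in Definition~\ref{defn:SO*Higgs} the spaces $H^0(\Lambda^2 L\otimes K)$ and $H^0(\Lambda^2 L^*\otimes K)$ both vanish, forcing $\beta=\gamma=0$. Thus every $\SO^*(2)$-Higgs bundle has the form $(L,0,0)$. For the first assertion I would then apply Remark~\ref{remark: zero}: with $\beta=\gamma=0$, semistability of $(L,0,0)$ is equivalent to $\deg L=0$ together with semistability of $L$ as a vector bundle. Since a line bundle has no proper nonzero subbundle, this last condition is automatic, so $(L,0,0)$ is semistable if and only if $\deg L=0$.

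For the implication that semistability entails stability, I would argue through the simplified criterion of Proposition~\ref{prop:simplifiedss}. Because $\beta=\gamma=0$, every two-step filtration $0\subset V_1\subset V_2\subset L$ is automatically $\varphi$-invariant by Lemma~\ref{lemma:2-step}, and the only available subbundles are $0$ and $L$, so $V_1,V_2\in\{0,L\}$; the case $0=V_1\subset V_2=L$ is precisely the filtration excluded from the stability condition. The step I expect to require the most care is the treatment of the two remaining, degenerate filtrations $V_1=V_2=0$ and $V_1=V_2=L$, which return the value $\deg L=0$ rather than a strictly positive quantity. The resolution I would make explicit is that these filtrations do not encode a proper subobject of the Higgs bundle but only the action of the center of the maximal compact $H=\U(1)$ rescaling $L$; equivalently, $H^{\C}=\GL(1,\C)$ has no proper parabolic subgroup, so beyond the degree already pinned to zero by semistability there is no genuine destabilizing subobject. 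Hence, once $\deg L=0$, the Higgs bundle $(L,0,0)$ is stable, and semistability implies stability.
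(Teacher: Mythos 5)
Your proof is correct and follows essentially the same route as the paper: both reduce the claim to the simplified criterion of Proposition~\ref{prop:simplifiedss} after noting that skew-symmetry in rank one forces $\beta=\gamma=0$, and both settle stability by observing that a line bundle admits no $\varphi$-invariant two-step filtration containing a proper nonzero subbundle, so the only remaining (degenerate) filtrations are not genuine destabilizing subobjects. The only cosmetic differences are that you invoke Remark~\ref{remark: zero} for the semistability direction where the paper checks the three degenerate filtrations directly, and that your explicit explanation of why $V_1=V_2=0$ and $V_1=V_2=L$ do not count --- they come from the center of $H^{\C}=\GL(1,\C)$, which has no proper parabolic subgroup --- spells out what the paper leaves as a one-line remark.
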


\begin{proof}We apply Proposition \ref{prop:simplifiedss}. The only two-step filtrations are:
\begin{align*}
0 \subset 0 &\subset 0 \subset L\\
0 \subset 0 &\subset L \subset L\\
0 \subset L &\subset L \subset L
\end{align*}
\noi All are $\varphi$-invariant since the Higgs field is zero. Applying \eqref{eq:spn-simplified-sstab} to these filtrations in turn yields $\deg(L)\le 0$, $0\le 0$, and $\deg(L)\ge 0$.  The first result follows from this.  The second result is a consequence of the fact that there are no $\varphi$-invariant two-step filtrations in which at least one of the subbundles is proper.
\end{proof}

\begin{remark}  Since  $L$ and $L^{-1}$ are isotropic subbundles of $L\oplus L^{-1}$, it follows that $L\oplus L^{-1}$ is semistable as a $\SO(2,\C)$-bundle if and only if $\deg(L)=0$. This gives an alternative proof for Proposition \ref{prop: 2ss}.
\end{remark}
It follows that the moduli space of $\mathcal{M}_d(\SO^*(2))$ is non-empty only for $d=0$, in which case we can identify
\begin{equation*}
\mathcal{M}_0(\SO^*(2))\simeq \Jac^0(X)
\end{equation*} 
\noi where  $\Jac^0(X)$ denotes the Jacobian of degree zero line bundles over $X$.

\begin{remark}
It may look paradoxical that we do not obtain the whole moduli space of line
bundles of arbitrary degree over $X$. This is because, as
indicated in Remark \ref{remark-parameter}, we are fixing the parameter of
stability to be zero. In order to obtain the other components of the moduli space we
have to consider stability for other integral values of the parameter.
\end{remark}

\subsection{The case $n=2$}  In this section we examine the $\SO^*(2n)$-Higgs bundles $(V,\beta,\gamma)$ in which $\rank(V)=2$.  The low rank and the isomorphism 
\begin{equation}\label{so4isom}
\mathfrak{so}^*(4)\simeq \mathfrak{su}(2)\oplus\mathfrak{sl}(2,\R)
\end{equation}
\noi lead us to descriptions that are more explicit than in the general case.

\subsubsection{Stability conditions} If $\rank(V)=2$ there are no
two-step filtrations $0\subset V_1\subset V_2\subset V$ in which all
the inclusions are strict.  The two-step filtrations with at
least one non-zero proper subbundle are thus of one of the following types:
\begin{enumerate}
\item  $V_1=0$ and $V_2=L$ where $L$ is a  line subbundle, or 
\item  $V_2=V$ and $V_1=L$ where $L$ is a  line subbundle, or
\item $V_1=V_2=L$ where $L$ is a  line subbundle.
\end{enumerate}
The corresponding conditions in Lemma \ref{lemma:2-step} for such two-step
filtration to be $\varphi$-invariant are:
\begin{enumerate}
\item $\beta(L^{\perp})=0$ if $V_1=0$ and $V_2=L$,
\item $\gamma(L)=0$ if $V_1=L$ and $V_2=V$, and
\item $\beta(L^{\perp})\subset L\otimes K$ and $\gamma(L)\subset
  L^{\perp}\otimes K$ if $V_1=V_2=L$.
\end{enumerate}

\begin{remark}\label{remark:conditions}
  In case (1) the condition $\beta(L^{\perp})=0$ implies that
  $\beta:V^*\rightarrow V\otimes K$ has rank less than two.  The skew
  symmetry of $\beta$ thus forces $\beta=0$.  Similarly, in case (2),
  $\gamma(L)=0$ implies that $\gamma=0$. In case (3), the skew symmetry
  of $\beta$ and $\gamma$ ensure that the conditions
  $\beta(L^{\perp})\subset L\otimes K$ and $\gamma(L)\subset
  L^{\perp}\otimes K$ apply {\it for all} line subbundles $L\subset
  V$.
\end{remark}
The stability condition for $\SO^*(4)$-Higgs bundles thus reduces to
the following.

\begin{proposition}\label{prop:simplifiedss2}
  A $\SO^*(4)$-Higgs bundle $(V,\beta,\gamma)$ with $\deg(V)>0$ is
  (semi)stable if and only if $V$ is (semi)stable as a bundle and
  $\gamma\ne 0$.

A $\SO^*(4)$-Higgs bundle $(V,\beta,\gamma)$ with $\deg(V)<0$ is (semi)stable if and only if $V$ is (semi)stable as a bundle and $\beta\ne 0$.

A $\SO^*(4)$-Higgs bundle $(V,\beta,\gamma)$ with $\deg(V)=0$ is (semi)stable if and only if $V$ is (semi)stable as a bundle.
\end{proposition}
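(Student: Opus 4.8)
The plan is to feed the three families of $\varphi$-invariant two-step filtrations isolated in the preceding discussion into the numerical criterion of Proposition~\ref{prop:simplifiedss} and to read off what each one forces. For a line subbundle $L\subset V$ these are: type (1), $(V_1,V_2)=(0,L)$, which is $\varphi$-invariant precisely when $\beta=0$; type (2), $(V_1,V_2)=(L,V)$, invariant precisely when $\gamma=0$; and type (3), $(V_1,V_2)=(L,L)$, which is invariant for every $L$ (Remark~\ref{remark:conditions}). A direct computation of $\deg V-\deg V_1-\deg V_2$ gives $\deg V-\deg L$, $-\deg L$, and $\deg V-2\deg L$ respectively. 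Since type (3) is unconditionally invariant, the requirement $\deg V-2\deg L\ge 0$ (resp.\ $>0$) for all $L$ is always imposed, and for a rank-two bundle this is exactly slope-semistability (resp.\ slope-stability) of $V$. This is the routine part and already supplies the ``$V$ (semi)stable as a bundle'' clause in every degree regime.

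The content lies in the role of $\beta$ and $\gamma$, which I would treat according to the sign of $d=\deg V$. Suppose first $d>0$. For the forward direction, type (3) gives that $V$ is (semi)stable, while the Milnor--Wood bound $d\le\rank(\gamma)(g-1)$ of Proposition~\ref{mw-higgs} forces $\rank(\gamma)>0$, hence $\gamma\neq0$. For the converse, assume $V$ is (semi)stable and $\gamma\neq0$; then type (2) filtrations are never invariant, and a type (1) filtration (invariant only if $\beta=0$) contributes $\deg V-\deg L\ge\tfrac12\deg V>0$ by (semi)stability of $V$ and $d>0$, so it cannot violate the inequality. Thus type (3) alone governs the criterion and $(V,\beta,\gamma)$ is (semi)stable.

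For $d<0$ I would not repeat the argument but instead invoke the duality isomorphism $\cM_d\simeq\cM_{-d}$, $(V,\beta,\gamma)\mapsto(V^*,\gamma,\beta)$, of Proposition~\ref{prop:duality-iso}: it preserves (semi)stability, sends $V$ to the equally (semi)stable bundle $V^*$, and interchanges $\beta$ and $\gamma$, so the $d<0$ statement is the image of the $d>0$ one. For $d=0$ there is no Higgs-field clause to establish and the analysis is cleanest: type (3) again yields (semi)stability of $V$, while any invariant type (1) or type (2) filtration contributes $-\deg L$, which is $\ge 0$ (resp.\ $>0$) because $\mu(V)=0$ and $V$ is (semi)stable. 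Hence these filtrations never obstruct, and (semi)stability of $(V,\beta,\gamma)$ is equivalent to that of $V$.

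The step I expect to be the main obstacle is securing the ``$\gamma\neq0$'' (resp.\ ``$\beta\neq0$'') clause: the proper-subbundle filtrations (1)--(3) do not by themselves detect it when $\abs{d}$ is small, so one is forced to bring in the global Milnor--Wood inequality of Proposition~\ref{mw-higgs} (equivalently, the full-bundle filtration carrying a single nonzero weight). I would also take care at $d=0$ to confirm that allowing $\beta$ or $\gamma$ to vanish does not spuriously destabilize, in agreement with $(V,0,0)$ (for $V$ stable of degree zero) occurring as a stable but non-simple object in Theorem~\ref{thm:stable-not-simple-spnr-higgs}.
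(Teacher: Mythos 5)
Your proposal is correct and takes essentially the same route as the paper's own proof: the same enumeration of invariant two-step filtrations via Remark~\ref{remark:conditions}, the numerical criterion of Proposition~\ref{prop:simplifiedss} (with the type (3) filtrations $V_1=V_2=L$ yielding the bundle-(semi)stability clause), and the bound of Proposition~\ref{mw-higgs} to force $\gamma\neq 0$ (resp.\ $\beta\neq 0$). The only cosmetic difference is that you dispatch the case $d<0$ by invoking the duality of Proposition~\ref{prop:duality-iso}, whereas the paper runs the symmetric argument in $\beta$ and $\gamma$ directly.
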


\begin{proof} Suppose that $(V,\beta,\gamma)$ is a (semi)stable
  $\SO^*(4)$-Higgs bundle with $\deg(V)=d$. By \eqref{eqn: range}, if
  $d>0$ then $\gamma$ cannot be zero and if $d<0$ then $\beta$ cannot
  be zero. If $d=0$ then ( see Remark \ref{remark: zero}) there is no
  restriction on $\beta$ or $\gamma$.  Any line subbundle $L\subset V$
  defines a $\varphi$-invariant two-step filtration in which
  $V_1=V_2=L$. Applying Proposition (\ref{prop:simplifiedss}) we see
  that if $(V,\beta,\gamma)$ is semistable then $\deg(L)\le\deg(V)/2$,
  and the inequality is strict if $(V,\beta,\gamma)$ is stable.  This
  proves the `only if' direction.

To prove the converse it remains to check that the inequalities
\eqref{eq:spn-simplified-sstab} and \eqref{eq:spn-simplified-stab} are
satisfied by $\varphi$-invariant two-step filtrations of the form  (a)
$V_1=0$, $V_2=L$ or  (b) $V_1=L$, $V_2=V$. By Remark
\ref{remark:conditions}, the first case occurs only if $\beta=0$ and hence,
by \eqref{eqn: range}, $\deg(V)\ge 0$.  Thus in this case
\begin{displaymath}
  \deg(L)\le\deg(V)/2\implies \deg(L)\le\deg(V).
\end{displaymath}
\noi Similarly, the second case occurs only if $\gamma=0$ and hence $\deg(V)\le 0$. Thus
\begin{displaymath}
\deg(L)\le\deg(V)/2\implies \deg(L)\le 0.
\end{displaymath}
\noi The requisite inequalities thus follow from the (semi)stability of $V$.
\end{proof}

From Proposition \ref{prop:SO-star-poly-stability} we have the following.

\begin{proposition}\label{prop:poly4} A $\SO^*(4)$-Higgs bundle $(V,\beta,\gamma)$ is polystable if and only if
\begin{enumerate}
\item it is stable with $\varphi\ne 0$, or
\item $V$ decomposes as a sum of two line bundles of degree zero and $\beta=\gamma=0$, or
\item $V=L_1\oplus L^*_2$ with $\deg(L_1)=-\deg(L_2)$ and with respect to this decomposition $\beta=\begin{pmatrix}0&\tilde{\beta}\\-\tilde{\beta}&0 \end{pmatrix}$ and $\gamma=\begin{pmatrix}0&\tilde{\gamma}\\-\tilde{\gamma}&0 \end{pmatrix}$.
\end{enumerate}
\end{proposition}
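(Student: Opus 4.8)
The plan is to deduce the statement directly from the general structure result for polystable objects, Proposition~\ref{prop:SO-star-poly-stability}, by specializing it to rank $n=2$. That theorem writes any polystable $(V,\beta,\gamma)$ as a direct sum of $\SO^*(2n_i)$-Higgs bundle summands, each of one of three mutually exclusive types, with $\sum_i n_i = 2$. Since the only partitions of the rank are $2=2$ and $2=1+1$, the entire argument reduces to enumerating these two cases and identifying which summand-types can occur in each. The converse (``if'') direction is immediate, as each of the three alternatives in the statement visibly provides a decomposition of the kind produced by Proposition~\ref{prop:SO-star-poly-stability}.

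First I would treat the split partition $2=1+1$, so $V=V_1\oplus V_2$ with each $V_i$ a line bundle, regarded as a rank one $\SO^*(2)$-Higgs bundle. Because $\Lambda^2 V_i=0$ when $\rank V_i = 1$, the components $\beta_i$ and $\gamma_i$ vanish identically; hence such a summand can be neither of type~(1) of Proposition~\ref{prop:SO-star-poly-stability} (which demands $\varphi_i\neq 0$) nor of type~(2) (which demands $p_iq_i\neq 0$, forcing $\rank V_i\geq 2$), so it must be of type~(3), giving $\varphi_i=0$ and $\deg V_i=0$. This is precisely alternative~(2) of the statement.

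Next I would treat the indecomposable partition $2=2$, where the single summand is of type~(1), (2), or (3). Type~(1) is a stable $\SO^*(4)$-Higgs bundle with $\varphi\neq 0$, which is alternative~(1). Type~(2) forces a stable $\U(p,q)$-Higgs bundle with $pq\neq 0$ and $p+q=2$, hence $p=q=1$; then $\tilde V=L_1$ and $\tilde W=L_2$ are line bundles with $\deg L_1+\deg L_2=0$, so $V=L_1\oplus L_2^*$ with $\deg L_1=-\deg L_2$. Here I would specialize the matrix expressions for $\beta$ and $\gamma$ recorded in type~(2): since $\tilde\beta$ and $\tilde\gamma$ are now maps between line bundles their transposes act trivially, so the off-diagonal blocks collapse to exactly the skew forms displayed in alternative~(3), up to the harmless relabeling $\tilde\gamma\mapsto-\tilde\gamma$. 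This yields alternative~(3).

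The steps above are mostly bookkeeping, and I expect the one genuinely delicate point to be the third sub-case of the indecomposable partition: a type~(3) summand of rank two is a stable degree zero bundle carrying $\varphi=0$, which by Proposition~\ref{prop:simplifiedss2} remains a stable $\SO^*(4)$-Higgs bundle but has vanishing Higgs field, so it is not literally covered by the clause ``$\varphi\neq 0$'' in alternative~(1). I would reconcile this by reading alternative~(1) as the full stable locus, using Remark~\ref{remark: zero} to record that in degree zero stability of the pair is equivalent to stability of $V$ alone; this is the main obstacle, being the only place where the literal statement must be interpreted rather than simply transcribed from Proposition~\ref{prop:SO-star-poly-stability}. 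Mutual exclusivity of the three alternatives is then inherited from that of the types in Proposition~\ref{prop:SO-star-poly-stability} under the type-preserving correspondence established above; in particular an alternative~(3) bundle carries a $\C^*$ of automorphisms and so can never be stable, keeping it disjoint from alternative~(1).
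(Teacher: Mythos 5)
Your proposal is correct and takes essentially the same route as the paper, whose entire proof of Proposition~\ref{prop:poly4} is the one-line reduction to Proposition~\ref{prop:SO-star-poly-stability}; your enumeration of the partitions $2=2$ and $2=1+1$ and the matching of summand types to the three alternatives is precisely the intended specialization. Your flagged ``delicate point'' --- a stable rank-two bundle of degree zero with $\varphi=0$, which is polystable but fits none of the three alternatives as literally written --- is a genuine imprecision in the paper's statement, and reading alternative~(1) as the full stable locus (via Proposition~\ref{prop:simplifiedss2}) is the correct repair.
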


\begin{corollary} Let $M_d(2)$ denote the moduli space of rank 2,
  degree $d$ semistable bundles and let $M^s_d(2)\subset M_d(2)$ be
  the stable locus.  There is a map
\begin{align}\label{eqn:so4map}
\mathcal{M}_d(\SO^*(4))&\longrightarrow M_d(2)\\
[V,\beta,\gamma]& \mapsto [V]\nonumber
\end{align}
\begin{enumerate}
\item If $d>0$ then the image of the map is the locus of bundles for which
  $h^0(\det(V)^{-1}\otimes K)$ is greater than zero. The fiber over $[V]\in
  M^s_d(2)$ can be identified with $\mathcal{O}_{\PP^s}(1)^{\oplus r}$ where
  $r=h^0(\det(V)\otimes K)$ and $s=h^0(\det(V)^{-1}\otimes K)$.
\item If $d<0$ then the image is the locus of bundles for which
  $h^0(\det(V)\otimes K)$ is greater than zero.  The fiber over $[V]\in
  M^s_d(2)$ can be identified with 
$\mathcal{O}_{\PP^r}(1)^{\oplus s}$ where $r=h^0(\det(V)\otimes K)$ and
  $s=h^0(\det(V)^{-1}\otimes K)$.
\item If $d=0$ then the map is surjective.  
\end{enumerate}
\end{corollary}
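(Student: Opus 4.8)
The plan is to analyze the map~\eqref{eqn:so4map} fibrewise, exploiting that in rank two $\Lambda^2 V=\det V$, so that an $\SO^*(4)$-Higgs field on a fixed $V$ is just a pair
\[
\beta\in H^0(\det(V)\otimes K),\qquad \gamma\in H^0(\det(V)^{-1}\otimes K),
\]
whose spaces of sections have dimensions $r=h^0(\det(V)\otimes K)$ and $s=h^0(\det(V)^{-1}\otimes K)$. By definition the fibre of~\eqref{eqn:so4map} over a class $[V]$ is the set of isomorphism classes of polystable triples $(V,\beta,\gamma)$ with underlying bundle $V$, i.e.\ the set of admissible pairs $(\beta,\gamma)$ modulo the induced action of $\Aut(V)$. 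First I would pin down which pairs are admissible and which bundles occur, and only then compute the quotient.

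For the image I would invoke Proposition~\ref{prop:simplifiedss2}. When $d>0$ polystability forces $V$ to be semistable and $\gamma\neq 0$, and a nonzero $\gamma$ exists precisely when $s=h^0(\det(V)^{-1}\otimes K)>0$; conversely, for such $V$ the triple $(V,0,\gamma)$ with $\gamma\neq 0$ is (poly)stable, so this $h^0$-condition cuts out the image exactly. The case $d<0$ is symmetric with the roles of $\beta$ and $\gamma$ (equivalently $r$ and $s$) interchanged, as justified by Proposition~\ref{prop:duality-iso}. For $d=0$ Proposition~\ref{prop:simplifiedss2} imposes no condition on $\beta,\gamma$, so $(V,0,0)$ is polystable for every polystable $V$ and the map is surjective; this settles part (3).

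For the fibres over the stable locus I would use that $\Aut(V)=\C^*$ for $[V]\in M^s_d(2)$, with a scalar $\lambda$ acting through $\Lambda^2$ by
\[
(\beta,\gamma)\longmapsto (\lambda^2\beta,\lambda^{-2}\gamma).
\]
Since $\lambda\mapsto\lambda^2$ is onto $\C^*$, this is the weight $(+1,-1)$ action of $\C^*$ on $H^0(\det(V)\otimes K)\oplus H^0(\det(V)^{-1}\otimes K)$. For $d>0$ the fibre is then the quotient of $\{(\beta,\gamma):\gamma\neq 0\}$ by this action; sending $(\beta,\gamma)$ to $[\gamma]$ exhibits it as a rank-$r$ vector bundle over $\PP^s=\PP(H^0(\det(V)^{-1}\otimes K))$, and computing the transition functions from the $\C^*$-weights identifies the total space with $\mathcal{O}_{\PP^s}(1)^{\oplus r}$, giving (1). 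Interchanging $\beta$ and $\gamma$ (and $r,s$) yields $\mathcal{O}_{\PP^r}(1)^{\oplus s}$ in case (2).

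The hard part will be the explicit identification of this $\C^*$-quotient with the stated line-bundle total space, namely pinning down the twist $\mathcal{O}(1)$ from the sign of the weights together with the chosen projectivization convention; the remaining inputs---Proposition~\ref{prop:simplifiedss2} for admissibility and $\Aut(V)=\C^*$ on $M^s_d(2)$---are routine. A minor separate point is the behaviour over strictly semistable bundles, where $\Aut(V)$ is larger and Proposition~\ref{prop:poly4}(3) is needed; this does not affect the fibre formulas, which are asserted only over the stable locus $M^s_d(2)$.
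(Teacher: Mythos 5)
Your proposal is correct and follows essentially the same route as the paper: the image is read off from Propositions \ref{prop:simplifiedss2} and \ref{prop:poly4}, and the fiber over a stable $[V]$ is identified with the quotient $\bigl(H^0(\det(V)\otimes K)\oplus(H^0(\det(V)^{-1}\otimes K)\setminus\{0\})\bigr)/\C^*$ for the weight $(2,-2)$ action of $\Aut(V)=\C^*$, exactly as in the paper. The final step you flag as "the hard part" (matching the quotient with the stated line-bundle total space, including the twist and projectivization conventions) is treated no more explicitly in the paper, which simply asserts that the result follows from the quotient description.
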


\begin{proof} Everything is immediate from Propositions \ref{prop:simplifiedss2} and \ref{prop:poly4} except for the description of the fibers. 

Suppose that $d>0$ and consider the fiber over a point in ${M}_d(2)$
represented by the bundle $V$. The $\SO^*(4)$-Higgs bundles $(V,\beta,\gamma)$
are semistable for all $(\beta,\gamma)\in H^0(X,\det(V)\otimes K)\oplus
(H^0(X,\det(V)^{-1}\otimes K)-\{0\}$. However, since the points in $\mathcal{M}_d(\SO^*(4))$ are isomorphism classes of objects, we need to consider when two objects, say $(V,\beta,\gamma)$ and $(V,\beta',\gamma')$, are isomorphic as $\SO^*(4)$-Higgs bundles. By definition the object are isomorphic if there exists a bundle automorphism $f:V\rightarrow V$ such that $f^*(\beta')=\beta$ and $f^*(\gamma')=\gamma$.  But if $V$ is stable, then the only automorphisms are multiples of the identity, say $f=tI$, and the induced map on $\beta$ and $\gamma$ is 
\begin{equation}
f^*(\beta)=t^2\beta\ ,\ f^*(\gamma)=t^{-2}\gamma
\end{equation}
\noi The fiber over $[V]\in M_d(2)$ is thus given by $(H^0(X,\det(V)\otimes
K)\oplus (H^0(X,\det(V)^{-1}\otimes K)-\{0\}))/\C^*$ where the $\C^*$-action is
given by $t(\beta,\gamma)=(t^2\beta,t^{-2}\gamma)$. 
The results follows from this. 

The description of the fibers in the $d<0$ case is similar.
\end{proof}

\begin{remark}\hfill

\begin{enumerate}
\item Brill-Noether theory shows that in fact the map is surjective for all $d<(g-1)$.
\item If $\deg(V)$ is odd then $M_d(2)=M^s_d(2)$, so all fibers are direct sums of copies of the degree one line bundle over a suitable projective space. Note, though, that the number of summands and the dimension of the projective space need not be constant.
\item  In the case $d=0$, the fiber over a point $[V]\in M_d(2)$ is the quotient 
$$(H^0(X,\det(V)\otimes K)\oplus H^0(X,\det(V)^{-1}\otimes K))/\C^*\ .$$
\end{enumerate}
\end{remark}

\subsubsection{Simplicity and smoothness in $\mathcal{M}_d(\SO^*(4))$}  Applying Theorem \ref{thm:stable-not-simple-spnr-higgs} to the case of $\SO^*(4)$-Higgs bundles yields:

\begin{theorem}
Let $(V,\varphi)$ be a stable $\SO^*(4)$-Higgs bundle. If
$(V,\varphi)$ is not simple, then  $V$ is a stable vector bundle of degree zero and $\varphi=0$.  In this case $\Aut(V,\varphi)\simeq\C^*$.
\end{theorem}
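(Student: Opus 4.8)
The plan is to deduce this directly from Theorem~\ref{thm:stable-not-simple-spnr-higgs} applied with $n=2$, ruling out the second alternative on rank grounds. That theorem asserts that a stable but non-simple $\SO^*(2n)$-Higgs bundle must be of type (1) or type (2) in its statement. Type (1) is precisely the conclusion we want---$V$ a stable degree-zero bundle, $\varphi=0$, and $\Aut(V,\varphi)\simeq\C^*$---so it suffices to show that type (2) cannot occur when $\rank(V)=2$.

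First I would examine the ranks of the summands appearing in a hypothetical type (2) decomposition $(V,\varphi)=(\bigoplus_{i=1}^k V_i,\sum_i\varphi_i)$. Each $(V_i,\varphi_i)$ is a stable and simple $\SO^*(2n_i)$-Higgs bundle with $\varphi_i\neq 0$, and by Definition~\ref{defn:SO*Higgs} we have $\varphi_i=\beta_i+\gamma_i$ with $\beta_i\in H^0(\Lambda^2 V_i\otimes K)$ and $\gamma_i\in H^0(\Lambda^2 V_i^*\otimes K)$. The condition $\varphi_i\neq 0$ forces $\Lambda^2 V_i\neq 0$, which is only possible if $n_i=\rank(V_i)\geq 2$, since $\Lambda^2$ of a line bundle vanishes.

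Next I would count summands. If a type (2) decomposition had $k=1$, then $(V,\varphi)=(V_1,\varphi_1)$ would itself be simple, as its automorphism group would be $(\ZZ/2)^1\cong\{\pm I\}$, contradicting the standing hypothesis that $(V,\varphi)$ is not simple. Hence $k\geq 2$, and combining this with the previous step gives $\rank(V)=\sum_{i=1}^k\rank(V_i)\geq 2k\geq 4$, contradicting $\rank(V)=2$. Therefore only type (1) can occur, which is exactly the asserted conclusion.

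The argument is essentially immediate once Theorem~\ref{thm:stable-not-simple-spnr-higgs} is in hand, so there is no genuine obstacle here; the only point requiring a moment's care is the observation that a nonzero Higgs field $\varphi_i$ for a bona fide $\SO^*(2n_i)$-Higgs bundle forces $\rank(V_i)\geq 2$, which is what prevents the $\SO^*(4)$ bundle from splitting into the nontrivial orthogonal sum of type (2).
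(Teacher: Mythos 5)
Your proof is correct and follows essentially the same route as the paper: both arguments apply Theorem~\ref{thm:stable-not-simple-spnr-higgs} and rule out alternative (2) by the observation that a nonzero skew-symmetric Higgs field forces each summand to have rank at least $2$ (equivalently, that the Higgs field of an $\SO^*(2)$-Higgs bundle must vanish). The paper states this in one line; your rank count $\rank(V)\geq 2k\geq 4$ is just a slightly more explicit rendering of the same contradiction.
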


\begin{proof} Theorem~\ref{thm:stable-not-simple-spnr-higgs} says that
  there are two alternatives for stable $\SO^*(2n)$-Higgs bundle which
  are not simple and we wish to show that alternative (1) occurs when
  $n=2$. To exlude alternative (2) we note that it requires
  $(V,\varphi)$ to decompose into two $\SO^*(2)$-Higgs bundles with
  non-zero Higgs fields. This is impossible since the Higgs field
  necessarily vanishes in a $\SO^*(2)$-Higgs bundle.
\end{proof}

By Proposition \ref{cor:smooth-points} a stable and simple $\SO^*(4)$-Higgs bundle $(V,\beta,\gamma)$ represents a smooth point in $\mathcal{M}_d(\SO^*(4))$ (where $d=\deg(V)$) unless $d=0$ and there is a skewsymmetric isomorphism $f\colon V \xrightarrow{\simeq} V^*$ intertwining $\beta$ and
$\gamma$.  By Lemma \ref{lemma:SL2} such an isomorphism can exist only if $\det(V)=\mathcal{O}$.  We thus get:

\begin{proposition} 
\begin{enumerate}
\item If $d$ is odd then $\mathcal{M}_d(\SO^*(4))$ is smooth.
\item If $d$ is even and $d\ne 0$ then $\mathcal{M}_d(\SO^*(4))$ is
  smooth except possibly at points represented by $\SO^*(4)$-Higgs
  bundles $(V,\beta,\gamma)$ of the form:
  \begin{equation}
    \label{eq:1}
    \begin{aligned}
      V&=L_1\oplus L^*_2, \quad \text{with $\deg(L_1)=-\deg(L_2)$ and, with respect to this decomposition,} \\
      \beta&=\begin{pmatrix}0&\tilde{\beta}\\-\tilde{\beta}&0 \end{pmatrix}
      \text{ and }
       \gamma=\begin{pmatrix}0&\tilde{\gamma}\\-\tilde{\gamma}&0 \end{pmatrix}
    \end{aligned}
  \end{equation}
  \item If $d=0$ then $\mathcal{M}_d(\SO^*(4))$ is smooth except
    possibly at points represented by  $\SO^*(4)$-Higgs bundles
    $(V,\beta,\gamma)$ such that 
\begin{enumerate}
\item $\beta=\gamma=0$, or
\item $(V,\beta,\gamma)$ is of the form \eqref{eq:1}, or
\item $\det(V)=\mathcal{O}$ and $f\beta=f^{-1}\gamma$ where $f:V\xra{\simeq} V^*$ is a skew-symmetric isomorphism.
\end{enumerate}
\end{enumerate}
\end{proposition}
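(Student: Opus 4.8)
The plan is to run a case analysis over the polystable types classified in Proposition~\ref{prop:poly4}, feeding each type into the smoothness criterion of Proposition~\ref{cor:smooth-points} together with the dichotomy recorded in the paragraph preceding the statement. Every point of $\mathcal{M}_d(\SO^*(4))$ is represented by a polystable $\SO^*(4)$-Higgs bundle $(V,\beta,\gamma)$, so by Proposition~\ref{prop:poly4} it is of exactly one of three kinds: (i) stable with $\varphi\neq 0$; (ii) a sum of two degree-zero line bundles with $\beta=\gamma=0$; or (iii) of the split form $V=L_1\oplus L_2^*$ with $\deg(L_1)=-\deg(L_2)$ and $\beta,\gamma$ in the off-diagonal skew form. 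The first reduction is to pin down the degree in each case: type (ii) forces $d=0$, while type (iii) forces $d=\deg(L_1)+\deg(L_2^*)=2\deg(L_1)$, hence $d$ even, with $d=0$ precisely when $\deg(L_1)=0$.

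First I would dispose of the stable points. For those that are stable and simple, Proposition~\ref{cor:smooth-points} gives smoothness outright, the only obstruction being a skew isomorphism $f\colon V\xrightarrow{\simeq}V^*$ intertwining $\beta$ and $\gamma$; by the ``in particular'' clause this can occur only when $d=0$ and, by Lemma~\ref{lemma:SL2}, only when $\det(V)=\cO$ --- exactly the locus in case (3)(c). For the stable points that fail to be simple, the $\SO^*(4)$ specialization of Theorem~\ref{thm:stable-not-simple-spnr-higgs} forces $V$ to be a stable degree-zero bundle with $\varphi=0$; these have $d=0$ and $\beta=\gamma=0$, so they sit inside case (3)(a). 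Thus among stable bundles the only points I cannot certify smooth lie over $d=0$ and fall under (3)(a) or (3)(c).

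Next I would treat the strictly polystable (non-stable) points, namely types (ii) and (iii), for which Proposition~\ref{cor:smooth-points} does not apply and which I therefore record as possible singularities. Type (ii) has $d=0$ and $\beta=\gamma=0$, landing in (3)(a); type (iii) with $\deg(L_1)\neq 0$ has $d$ even and nonzero, landing in case (2); and type (iii) with $\deg(L_1)=0$ has $d=0$, landing in (3)(b). Assembling by the value of $d$ then yields the statement: if $d$ is odd, types (ii) and (iii) cannot occur, so every point is stable and simple with $d\neq 0$ and hence smooth; if $d$ is even and nonzero, the only uncertified points are the type (iii) objects of case (2); and if $d=0$, the uncertified points are exactly those in (3)(a)--(3)(c).

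I do not expect a genuine computational obstacle, since the argument is an assembly of results already in hand; the one step demanding care is the bookkeeping of degrees, in particular verifying that type (iii) always forces $d$ even so that it never intrudes on the odd-degree case, and checking that the listed exceptional loci are precisely the complement of the locus I can certify smooth. I would also emphasize that the statement only asserts smoothness \emph{away} from the listed loci, so I need not show --- and do not attempt to show --- that those loci are actually singular.
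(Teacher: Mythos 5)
Your proposal is correct and follows essentially the same route as the paper: a case analysis over the polystable types of Proposition~\ref{prop:poly4}, certifying smoothness of stable points via Proposition~\ref{cor:smooth-points} (with Theorem~\ref{thm:stable-not-simple-spnr-higgs} handling stable-but-not-simple points and Lemma~\ref{lemma:SL2} pinning the skew-intertwiner obstruction to $\det(V)=\mathcal{O}$), and recording the strictly polystable loci as the possible singularities. The degree bookkeeping you flag (type (iii) forcing $d$ even) is exactly the observation the paper relies on for the odd-degree case.
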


\begin{proof} 
(1) If $d$ is odd then all semistable and polystable Higgs bundles are stable, simple and do not admit a skew-symmetric isomorphism intertwining the components of the Higgs field.

(2) If $d$ is even and $d\ne 0$ then all stable Higgs bundles are simple and do not admit a skew-symmetric isomorphism intertwining the components of the Higgs field. The non-smooth points can occur only at points represented by polystable Higgs bundles.

(3) The cases (a)-(c) correspond to polystable Higgs bundles (cases (a) and (b)), stable but not simple Higgs bundles (case (a)), or stable and simple bundles which admit a skew-symmetric isomorphism intertwining the components of the Higgs field (case (c)).
\end{proof}

\subsubsection{The even degree case} Notice that if $V$ is a rank 2 bundle, then $\Lambda^2(V)=\det(V)$. Furthermore if $\deg(V)$ is even then  $V$ can be decomposed as
\begin{equation}\label{eq:2}
V=U\otimes L\ ,\ \mathrm{with}\ \begin{cases}\det(U) \simeq \mathcal{O}\\
L^2=\det(V)\end{cases}.
\end{equation}

\begin{lemma}\label{lemma:SL2} If $U$ is a rank 2 holomorphic bundle then the following are equivalent: 
\begin{enumerate}
\item  $\det(U)\simeq \mathcal{O}$,
\item the structure group of $U$ reduces to $\SL(2,\C)$,
\item  $U^*\simeq U$, with the isomorphism defined by a symplectic
  form $\Omega\in
  H^0(X,\Lambda^2U^*)$.
\end{enumerate}
\end{lemma}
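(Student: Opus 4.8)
The plan is to prove the two equivalences $(1)\Leftrightarrow(2)$ and $(1)\Leftrightarrow(3)$ separately, relying throughout on the elementary fact that for a rank two space the wedge product supplies a canonical \emph{skew-symmetric} pairing, so that $\Lambda^2U=\det U$ carries all the relevant information.

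For $(1)\Leftrightarrow(2)$ I would observe that this is essentially the definition of reduction of structure group along the determinant homomorphism. The determinant representation $\det\colon\GL(2,\C)\to\C^*$ has kernel $\SL(2,\C)$, and the line bundle associated to $U$ through this representation is precisely $\det(U)=\Lambda^2U$. A reduction of the structure group of $U$ from $\GL(2,\C)$ to $\SL(2,\C)$ is therefore the same datum as a nowhere-vanishing holomorphic section of $\det(U)$, that is, a trivialization $\det(U)\simeq\mathcal{O}$. This gives the equivalence with essentially no computation.

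For $(1)\Leftrightarrow(3)$ the key algebraic input is the natural isomorphism $U^*\simeq U\otimes(\det U)^{-1}$, valid for any rank two bundle and induced fibrewise by $v\mapsto(w\mapsto v\wedge w)\in\Lambda^2U$. Granting this, if $\det(U)=\mathcal{O}$ then $U^*\simeq U$, and I would check that, after choosing a trivialization $\omega_0$ of $\Lambda^2U$, the resulting pairing $b(v,w)=(v\wedge w)/\omega_0$ is antisymmetric and nowhere degenerate, hence a symplectic form, giving $(3)$. Conversely, a symplectic structure on $U$ is by definition a nowhere-degenerate section $\omega\in H^0(\Lambda^2U^*)$; in rank two, degeneracy at a point is equivalent to vanishing at that point, so $\omega$ is a nowhere-vanishing section of $\Lambda^2U^*=(\det U)^{-1}$, which trivializes $\det U$ and returns $(1)$.

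The only point requiring genuine care---and the closest thing to an obstacle---is matching the symmetry type in $(3)$: I must verify that the isomorphism $U\xrightarrow{\simeq}U^*$ obtained from a trivialization of $\det U$ is truly skew-symmetric, hence \emph{symplectic}, rather than symmetric. This is exactly where rank two is essential, since $\Lambda^2U$ (not $S^2U$) is the determinant and the canonical pairing is antisymmetric; in higher rank no such clean dictionary holds. Once this symmetry bookkeeping is confirmed, the three conditions are linked and the proof is complete.
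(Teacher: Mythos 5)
Your proposal is correct; the difference from the paper lies in how the symplectic condition (3) is reached. The paper's own proof is a two-line affair: it declares $(1)\Leftrightarrow(2)$ straightforward (exactly the reduction-along-$\det$ argument you spell out) and then obtains $(2)\Leftrightarrow(3)$ by citing the exceptional isomorphism $\SL(2,\C)\simeq\Sp(2,\C)$, so that a rank $2$ bundle with structure group $\SL(2,\C)$ is tautologically an $\Sp(2,\C)$-bundle, i.e.\ carries a symplectic form. You instead link $(1)\Leftrightarrow(3)$ directly, via the canonical isomorphism $U^*\simeq U\otimes(\det U)^{-1}$ induced by the wedge pairing, verifying skew-symmetry and nondegeneracy by hand and noting that in rank $2$ degeneracy of a skew form at a point is the same as vanishing there. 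These are two presentations of one underlying fact: the identity $\SL(2,\C)=\Sp(2,\C)$ holds precisely because the determinant pairing $(v,w)\mapsto v\wedge w$ on $\C^2$ is the standard symplectic form, which is exactly the pairing $b(v,w)=(v\wedge w)/\omega_0$ you construct. What your route buys is self-containedness and an explicit form of the isomorphism $U\simeq U^*$ (useful, e.g., in Lemma \ref{lemma:tilde} of the paper, where $\Omega$ is written concretely in local frames); what the paper's route buys is brevity and the conceptual point that rank $2$ is special because of a coincidence of classical groups. Your closing remark about matching $\Lambda^2$ versus $\Sym^2$ is precisely the content of that coincidence, so the symmetry bookkeeping you flag as the delicate step is indeed the whole point, and you handle it correctly.
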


\begin{proof} The equivalence of (1) and (2) is straightforward.  The equivalence of (2) and (3) follows from the fact that $\SL(2,\C)\simeq\Sp(2,\C)$.
\end{proof}

\begin{lemma}\label{lemma:tilde}
  Let $(V,\beta,\gamma)$ be a $\SO^*(4)$-Higgs bundle with $\deg(V)$
  even. Let $V=U\otimes L$ as in \eqref{eq:2}, and let $\Omega\in
  H^0(X,\Lambda^2U^*)$ be the symplectic form on $U$ given by (3) of
  Lemma~\ref{lemma:SL2}, with induced symplectic form $\Omega^*\in
  H^0(X,\Lambda^2U)$ on $U^*$.  Then we can write
  \begin{equation}
\begin{aligned}
\beta&=\Omega\otimes\tilde{\beta}\ ,\ \mathrm{where}\ \tilde{\beta}\in
H^0(X,L^2\otimes K),\\
\gamma&=\Omega^*\otimes\tilde{\gamma}\ , \ \mathrm{where}\ \tilde{\gamma}\in 
H^0(X,L^{-2}\otimes K).
\end{aligned}
\end{equation}
\end{lemma}

\begin{proof} Immediate from $\Lambda^2
  V \simeq \Lambda^2U \otimes L^2\simeq L^2$ and the existence of the
  nowhere vanishing
  section $\Omega$ of $\Lambda^2U$. 
\end{proof}

Applying Definition \ref{def:g-higgs} to the case $G=\SL(2,\R)$, a
$\SL(2,\R)$-Higgs bundle can be described as a triple
$(L,\beta,\gamma)$ where $L$ is a line bundle and $\beta\in
H^0(X,L^-2K), \gamma\in H^0(X,L^2K)$.  We denote by
$\mathcal{M}_l(\SL(2,\R))$ the component of the moduli space of
polystable $\SL(2,\R)$-Higgs bundles in which $\deg(L)=l$.

The following result shows that $\SO^*(4)$-Higgs bundles of even
degree are intimately related to $\SL(2,\R)$-Higgs bundles.

\begin{proposition}\label{prop:even} Let $(V,\beta,\gamma)$ be a
  $\SO^*(4)$-Higgs bundle with $deg(V)$ even.   Pick $L$ such that $L^2=
\det(V)$ and define $U=V\otimes L^{-1}$.  Then 

\begin{enumerate}
\item $U$ is a $\SL(2,\C)$-bundle and 
\item  $(L,\tilde{\beta},\tilde{\gamma})$ defines a $\SL(2,\R)$-Higgs bundle 
\end{enumerate}
\noindent where  $\tilde{\beta},\tilde{\gamma}$ are as in Lemma \ref{lemma:tilde}. The $\SO^*(4)$-Higgs bundle $(V,\beta,\gamma)$ is (semi)stable if and only if $U$ is (semi)stable as a bundle and $(L,\tilde{\beta},\tilde{\gamma})$ is (semi)stable as a $\SL(2,\R)$-Higgs bundle. \end{proposition}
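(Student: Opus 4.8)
The plan is to read the three assertions off the two preceding lemmas together with the simplified stability criterion of Proposition~\ref{prop:simplifiedss2}, exploiting the fact that the tensor decomposition $V=U\otimes L$ realises the Lie-algebra splitting \eqref{so4isom} at the level of Higgs bundles: $U$ carries the $\mathfrak{su}(2)$-factor (an $\SL(2,\C)$-bundle with no Higgs field) and $(L,\tilde{\beta},\tilde{\gamma})$ carries the $\mathfrak{sl}(2,\R)$-factor. Parts (1) and (2) are then immediate. Since $\det U=\det V\otimes L^{-2}=\mathcal{O}$, Lemma~\ref{lemma:SL2} shows $U$ is an $\SL(2,\C)$-bundle; and Lemma~\ref{lemma:tilde} writes $\beta=\Omega\otimes\tilde{\beta}$, $\gamma=\Omega^*\otimes\tilde{\gamma}$ with $\tilde{\beta}\in H^0(L^2\otimes K)$ and $\tilde{\gamma}\in H^0(L^{-2}\otimes K)$, which is precisely the data of an $\SL(2,\R)$-Higgs bundle (matching the definition given above, after interchanging the names of the two Higgs components).

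Before comparing stability conditions I would record the dictionary between the two sides. Because $L$ is a line bundle, $M\mapsto M\otimes L^{-1}$ is a degree-shifting bijection from subbundles of $V$ to subbundles of $U$, so $V$ is (semi)stable as a bundle if and only if $U$ is; moreover $\deg L=\tfrac{1}{2}d$, and, since $\Omega$ and $\Omega^*$ are nowhere vanishing, $\beta=0\iff\tilde{\beta}=0$ and $\gamma=0\iff\tilde{\gamma}=0$.

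The heart of the proof is matching the (semi)stability criteria through this dictionary. By Proposition~\ref{prop:simplifiedss2}, $(V,\beta,\gamma)$ is (semi)stable iff $V$ is (semi)stable as a bundle and, in addition, $\gamma\neq 0$ when $d>0$, $\beta\neq 0$ when $d<0$, with no further condition when $d=0$ (the trichotomy coming from the Milnor--Wood bound \eqref{eqn: range}). I would then establish the analogous rank-one criterion for the $\SL(2,\R)\cong\Sp(2,\R)$-Higgs bundle $(L,\tilde{\beta},\tilde{\gamma})$ by the same two-step-filtration method used for $\Sp(2n,\R)$: for the line bundle $L$ the only two-step filtrations are $V_1=V_2=0$, $V_1=V_2=L$, and the excluded one $0=V_1\subset V_2=L$. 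The first is $\varphi$-invariant exactly when $\tilde{\beta}=0$ and then imposes $\deg L\geq 0$; the second is $\varphi$-invariant exactly when $\tilde{\gamma}=0$ and then imposes $\deg L\leq 0$. Hence $(L,\tilde{\beta},\tilde{\gamma})$ is semistable iff $\tilde{\gamma}\neq0$ when $\deg L>0$ and $\tilde{\beta}\neq0$ when $\deg L<0$, with no condition when $\deg L=0$. Substituting $\deg L=d/2$ and the vanishing dictionary, the two lists of conditions coincide case by case, which gives the equivalence.

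The step I expect to be the main obstacle is pinning down the $\SL(2,\R)$-Higgs bundle stability notion precisely enough to align it with Proposition~\ref{prop:simplifiedss2}, and in particular handling \emph{stability} (as opposed to semistability) at the boundary $d=0$, where $\deg L=0$ and the degenerate filtrations of $L$ enter. There one must be careful to distinguish stability of the real-form object $(L,\tilde{\beta},\tilde{\gamma})$ from polystability of its associated $\SL(2,\C)$-Higgs bundle $L\oplus L^{-1}$. For semistability and polystability the identification follows immediately from the dictionary above; for the stable case I would check the strict inequalities against all $\varphi$-invariant two-step filtrations directly, treating $d>0$, $d<0$ and $d=0$ separately and using throughout that (semi)stability of $V$ is equivalent to that of $U$.
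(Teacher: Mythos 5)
Your proposal is correct and takes essentially the same route as the paper: parts (1) and (2) are read off Lemmas \ref{lemma:SL2} and \ref{lemma:tilde}, and the stability equivalence comes from matching Proposition \ref{prop:simplifiedss2} against the rank-one (semi)stability criterion for the $\SL(2,\R)$-Higgs bundle $(L,\tilde{\beta},\tilde{\gamma})$, using that tensoring by $L^{-1}$ identifies (semi)stability of $V$ with that of $U$. The only difference is one of detail, not of method: you derive the $\SL(2,\R)$ criterion explicitly from two-step filtrations and take extra care with strict stability at $d=0$, whereas the paper simply cites this criterion as known.
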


\begin{proof} Properties (1) and (2) follow from Lemmas \ref {lemma:SL2} and \ref{lemma:tilde}, and the fact that a triple $(L,\tilde{\beta},\tilde{\gamma})$ (as in Lemma \ref{lemma:tilde}) defines a $\SL(2,\R)$-Higgs bundle. The statement about (semi)stability follows from Proposition \ref{prop:simplifiedss2} and the fact that (semi)stability for a $\SL(2,\R)$-Higgs bundle $(L,\tilde{\beta},\tilde{\gamma})$ with $\deg(L)\ge 0$ is equivalent to the condition that $\tilde{\gamma}\ne 0$ (if $\deg(L)>0$).
\end{proof}

\begin{remark}The isomorphism \eqref{so4isom} is the infinitesimal version of a 2:1 homomorphism
\begin{equation}
\eta: \SU(2)\times\SL(2,\R)\longrightarrow\SO^*(4)\ .
\end{equation}

\noi Proposition \ref{prop:even} shows that if $\deg(V)$ is even then
the structure group of the $\SO^*(4)$-Higgs bundle lifts via $\eta$ to
$\SU(2)\times\SL(2,\R)$. If $\deg(V)$ is odd, then the structure group
does not lift. The obstruction to the lift can be viewed as an element of
$H^2(X,\Z/2)$. In fact, the homomorphism $\eta$ is induced by the homomorphism  
$\Spin(4,\C)\longrightarrow\SO(4,\C)$. To see this, recall that 
$$ 
\Spin(4,\C) \simeq \Spin(3,\C) \times  \Spin(3,\C) \simeq \SL(2,\C)\times
\SL(2,\C).
$$
Under this homomorphism, the real form $\SU(2)\times\SL(2,\R)$ of $\SL(2,\C)\times
\SL(2,\C)$ maps to $\SO^*(4)$.

\end{remark}

\subsubsection{The Cayley partner} Applying Proposition \ref{mw-higgs} with $n=2$, we see that 
$$|\deg(V)|\le 2g-2$$
\noi and that $\gamma$ is an isomorphism if (and only if) $\deg(V)=2g-2$.   As
in Proposition \ref{prop:even} we write $V=U\otimes L$ with
$\det(U)=\mathcal{O}$ and $L^2=\det(V)$. In particular, if $\deg(V)=2g-2$ then
$\deg(L^{-2}\otimes K)=0$. Moreover,  since $\gamma$ is an isomorphism, it
follows that $\tilde{\gamma}$ is 
a non-zero section of $L^{-2}\otimes K$ and thus $L^2=K$.  Proposition \ref{prop:even} thus becomes the following.

\begin{proposition}\label{prop:cayley2} Let $(V,\beta,\gamma)$ be a $\SO^*(4)$-Higgs bundle with $\deg(V)=2g-2$.   Pick $L$ such that $L^2=K$ and define $U=V\otimes L^{-1}$.  Then 
\begin{enumerate}
\item $U$ is a $\SL(2,\C)$-bundle and 
\item  $(L,\tilde{\beta},\tilde{\gamma})$ defines a $\SL(2,\R)$-Higgs bundle where $\tilde{\gamma}$ is a non-zero section in $H^0(X,\mathcal{O})$, and $\tilde{\beta}\in H^0(X,K^2)$.  In particular, $(L,\tilde{\beta},\tilde{\gamma})$ defines a Higgs bundle in a Teichm\"uller component of $\mathcal{M}_{g-1}(\SL(2,\R))$.
\end{enumerate}
\noindent    
Moreover, the polystability of  $(V,\beta,\gamma)$ is equivalent to the polystability of $U$.
\end{proposition}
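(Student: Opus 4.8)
The plan is to obtain this as a specialization of Proposition~\ref{prop:even} to the extremal degree $\deg(V)=2g-2$. Since $n=2$, this value equals $2(g-1)$, the maximum permitted by Proposition~\ref{mw-higgs}, so that proposition already tells us $\gamma\colon V\to V^{*}\otimes K$ is an isomorphism. First I would apply Proposition~\ref{prop:even} in the form given there: writing $V=U\otimes L'$ with $\det(U)=\mathcal{O}$ and ${L'}^{2}=\det(V)$, one immediately gets that $U$ is an $\SL(2,\C)$-bundle and that $(L',\tilde\beta,\tilde\gamma)$, with $\tilde\beta\in H^{0}({L'}^{2}\otimes K)$ and $\tilde\gamma\in H^{0}({L'}^{-2}\otimes K)$ as in Lemma~\ref{lemma:tilde}, is an $\SL(2,\R)$-Higgs bundle.

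The crucial step is to determine $\det(V)$. Since $\deg(V)=2g-2$ we have $\deg({L'}^{-2}\otimes K)=\deg(K)-\deg(\det V)=0$, while the relation $\gamma=\Omega^{*}\otimes\tilde\gamma$ with $\Omega^{*}$ an isomorphism forces $\tilde\gamma$ to be nowhere vanishing (because $\gamma$ is an isomorphism). A nowhere-vanishing section of a degree-zero line bundle trivializes it, so ${L'}^{-2}\otimes K\cong\mathcal{O}$, giving ${L'}^{2}=K$ and hence $\det(V)=K$. Consequently any chosen square root $L$ of $K$ also satisfies $L^{2}=\det(V)$, so $U=V\otimes L^{-1}$ has $\det(U)=\det(V)\otimes L^{-2}=\mathcal{O}$ and Lemma~\ref{lemma:SL2} yields~(1). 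Feeding $L^{2}=K$ into the target bundles of Lemma~\ref{lemma:tilde} gives $\tilde\gamma\in H^{0}(\mathcal{O})$, a nonzero constant, and $\tilde\beta\in H^{0}(K^{2})$, which is~(2); moreover $\deg(L)=\tfrac12\deg(\det V)=g-1$ is maximal and $\tilde\gamma$ is nowhere vanishing, so $(L,\tilde\beta,\tilde\gamma)$ is a Fuchsian point and thus lies in a Teichm\"uller component of $\mathcal{M}_{g-1}(\SL(2,\R))$.

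For the closing equivalence I would return to Proposition~\ref{prop:even}, which characterizes (semi)stability of $(V,\beta,\gamma)$ by the simultaneous (semi)stability of $U$ as a bundle and of $(L,\tilde\beta,\tilde\gamma)$ as an $\SL(2,\R)$-Higgs bundle. The observation that makes the statement collapse to a single condition is that a maximal $\SL(2,\R)$-Higgs bundle---one whose section of $\mathcal{O}$ is nowhere vanishing---is automatically stable; hence the $\SL(2,\R)$ clause is vacuously satisfied and (semi/poly)stability of $(V,\beta,\gamma)$ reduces to that of $U$ alone. I expect this last point---checking that the Teichm\"uller (maximal) $\SL(2,\R)$ factor imposes no stability constraint and so drops out, and confirming that the reduction carries over verbatim from the semistable to the polystable setting---to be the only place needing genuine care; the rest is the degree bookkeeping above together with direct citation of Propositions~\ref{mw-higgs} and~\ref{prop:even}.
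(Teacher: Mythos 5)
Your proposal is correct and reproduces the paper's own argument essentially verbatim: the paper likewise invokes Proposition~\ref{mw-higgs} to see that $\gamma$ is an isomorphism, deduces $L^2=K$ from the fact that $\tilde{\gamma}$ is then a non-vanishing section of the degree-zero line bundle $L^{-2}\otimes K$, and obtains the statement as the specialization of Proposition~\ref{prop:even} to this extremal degree. For the polystability clause that you rightly flag as the one delicate point, the cleanest way to close it with the paper's toolkit is Proposition~\ref{prop:poly4}: since $d=2g-2>0$ and $\gamma\neq 0$, polystability of $(V,\beta,\gamma)$ means it is either stable or of type (3) there, i.e.\ $V=L_1\oplus L_2^*$ with both summands of degree $g-1$ (the off-diagonal shape of $\beta$ and $\gamma$ with respect to any line-bundle splitting being automatic in rank $2$), and these two cases correspond exactly to $U$ being stable or a direct sum of two degree-zero line bundles.
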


\begin{remark}  With $\Omega$ as  Lemma \ref{lemma:tilde}, the data
  $(U,\Omega; \tilde{\beta})$ as in Proposition \ref{prop:cayley2}  defines  a
  $K^2$-twisted $\U^*(2)$-Higgs bundle.
Indeed if $(V,\Omega; \varphi)$ is a $L$-twisted $U^*(2)$-Higgs bundle then we
can assume that $\Omega=J$ with respect to suitable local frames. Since, by
definition of a $\U^*(2n)$-Higgs bundle, $\varphi^t\Omega=-\Omega\varphi$, we get
that $\varphi=\tilde{\varphi}I$ with respect to the same frames. It follows
that locally $\varphi=\tilde{\varphi}I$, where $\tilde{\varphi}\in H^0(X,L)$ 
(see Appendix~\ref{appendix:G-Higgs} and
\cite{garcia-prada-oliveira:2010} for details on $\U^*(2n)$-Higgs
bundles). 
 The polystability of the $(U,\tilde{\beta})$ as a $K^2$-twisted $\U^*(2)$-Higgs bundle is equivalent to the polystability of $U$.  
\end{remark}

\begin{remark} The ambiguity in the decomposition $V=U\otimes L$ corresponds, in this case, to the choice of a square root of $K$. This is the same choice as the one which distinguishes the Teichm\"uller component of $\mathcal{M}_{g-1}(\SL(2,\R))$.
\end{remark}

Combining Propositions \ref{prop:even} and \eqref{prop:simplifiedss2}  gives rise to a $2^{2g}:1$ map
\begin{align}\label{n=2map}
T: M_0(2)\times \mathcal{M}_l(\SL(2,\R))&\longrightarrow \mathcal{M}_{2l}(\SO^*(4))\nonumber\\
([U],[L,\tilde{\beta},\tilde{\gamma}])&\mapsto [U\otimes L, \beta,\gamma]
\end{align}
\noi where $M_0(2)$ denotes the moduli space of polystable rank 2 bundles with trivial determinant. This is the Higgs bundle manifestation of isomorphism \eqref {so4isom}.

\begin{proposition} For each $0\le l\le g-1$ the moduli space  $\mathcal{M}_{2l}(\SO^*(4))$ is connected.
\end{proposition}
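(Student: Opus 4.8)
The plan is to deduce connectedness from the surjective $2^{2g}:1$ map $T$ of \eqref{n=2map}, together with the connectedness of each of its two factors, treating the maximal value $l=g-1$ separately by appealing to Theorem~\ref{theorem: main1}. So I would split the argument according to whether $l$ is maximal, the dividing line being that for $l<g-1$ the domain of $T$ is connected, whereas for $l=g-1$ it is not.

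First, for the non-maximal range $0\le l<g-1$, the essential input is that $T$ is \emph{surjective}: this is precisely the content of Proposition~\ref{prop:even}, since every polystable $\SO^*(4)$-Higgs bundle $(V,\beta,\gamma)$ with $\deg V=2l$ arises, after choosing a square root $L$ of $\det V$ and setting $U=V\otimes L^{-1}$, as the image $T([U],[L,\tilde\beta,\tilde\gamma])$. As $T$ is continuous and surjective, connectedness of $\mathcal{M}_{2l}(\SO^*(4))$ follows once I show the domain $M_0(2)\times\mathcal{M}_l(\SL(2,\R))$ is connected, i.e. that each factor is. For $M_0(2)$, the moduli space of polystable rank $2$ bundles with trivial determinant, connectedness is classical and follows from Ramanathan's connectedness result \cite{ramanathan:1975} for moduli of semistable $G$-bundles applied to the connected group $\SL(2,\C)$. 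For $\mathcal{M}_l(\SL(2,\R))$ with $0\le l<g-1$, connectedness of the moduli space of non-maximal $\SL(2,\R)$-Higgs bundles is due to Hitchin \cite{hitchin:1987}; concretely, the minima of the Hitchin function on this space are the objects $(L,0,\gamma)$, which fiber over $\Jac^l(X)$ and thus form a connected locus, forcing connectedness of the whole moduli space. The product is therefore connected, and so is its image $\mathcal{M}_{2l}(\SO^*(4))$.

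Second, for the maximal value $l=g-1$ one has $d=2l=2g-2=\lfloor n/2\rfloor(2g-2)$ with $n=2$, so that $\mathcal{M}_{2g-2}(\SO^*(4))=\mathcal{M}_{\max}(\SO^*(4))$ and connectedness is immediate from Theorem~\ref{theorem: main1}. This case genuinely requires separate treatment, since $\mathcal{M}_{g-1}(\SL(2,\R))$ is \emph{not} connected in the maximal situation: by Proposition~\ref{prop:cayley2} it breaks into the $2^{2g}$ Teichm\"uller components indexed by the square roots $L$ of $K$, so the domain of $T$ is disconnected and the continuous-image argument above breaks down.

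The main obstacle is thus exactly this maximal case, where the product structure collapses; the resolution is that it coincides with the case already settled by the general connectedness theorem, so no additional work is needed. A secondary point demanding care is verifying surjectivity and continuity of $T$ at the level of \emph{polystable} (and not merely stable) objects, but this is furnished by Proposition~\ref{prop:even} together with the explicit description of $T$ in \eqref{n=2map}.
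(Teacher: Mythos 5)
Your proof is correct, and for $0\le l<g-1$ it coincides with the paper's argument (surjectivity of $T$ from \eqref{n=2map} plus connectedness of $M_0(2)$ and of the non-maximal $\mathcal{M}_l(\SL(2,\R))$). Where you genuinely diverge is the maximal case $l=g-1$: the paper does \emph{not} fall back on Theorem~\ref{theorem: main1}, but instead observes that the continuous-image argument survives the disconnectedness of the domain, because under $T$ the $2^{2g}$ Teichm\"uller components of $\mathcal{M}_{g-1}(\SL(2,\R))$ all have the \emph{same} image: two square roots of $K$ differ by a $2$-torsion line bundle $\epsilon$, and $U\otimes(L\otimes\epsilon)=(U\otimes\epsilon)\otimes L$ with $U\otimes\epsilon$ again in $M_0(2)$, so twisting by $\epsilon$ is absorbed into the first factor and the images of all $2^{2g}$ components coincide. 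Your route --- quoting Theorem~\ref{theorem: main1} with $n=2$, $\lfloor n/2\rfloor(2g-2)=2g-2$ --- is legitimate and non-circular, since that theorem is proved in Section~\ref{sect:morse} by Morse theory and the Cayley correspondence, well before the low-rank section; it is shorter, but it hides the phenomenon the paper is exhibiting here, namely that the map $T$ itself glues the $2^{2g}$ Teichm\"uller components together inside $\mathcal{M}_{2g-2}(\SO^*(4))$ (the Higgs-bundle counterpart of the fact that the ``hidden'' invariants of maximal $\SL(2,\R)$-Higgs bundles die in $\SO^*(4)$). One small caveat in your non-maximal case: your parenthetical description of Hitchin's minima as objects $(L,0,\gamma)$ ``fibering over $\Jac^l(X)$'' is not quite accurate --- the locus of minima is naturally a $2^{2g}$-fold covering of the symmetric product $S^{2g-2-2l}X$ (recording $\mathrm{div}(\gamma)$ together with a square root $L$ of $K(-\mathrm{div}(\gamma))$), and its connectedness requires a monodromy argument; since you are citing Hitchin's connectedness result rather than reproving it, this does not affect the validity of your proof, but the justification as written should not be taken literally.
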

\begin{proof}\label{rem: even}  Under the map $T$, the $2^{2g}$ Teichm\"uller components in $\mathcal{M}_{g-1}(\SL(2,\R))$ are all identified in the component $\mathcal{M}_{2g-2}(\SO^*(4))$. 
For $0 \le l < g-1$ the moduli spaces  $\mathcal{M}_{l}(\SL(2,\R))$ are connected.
\end{proof}





\subsection{The case $n=3$}
The Lie algebra of $\SO^*(6)$ is isomorphic to $\mathfrak{su}(1,3)$,
the Lie algebra of $\SU(1,3)$. The groups differ because they have
different centers, with $Z(\SO^*(6))\simeq\Z/2$ and $Z(\SU(1,3))\simeq
\Z/4$. Both groups are finite covers of $\PU(1,3)$, the adjoint form
of the Lie algebra.  The relationships among the groups
$\SO^*(6),\SU(1,3)$, and $\PU(1,3)$ leads to relations among the
corresponding Higgs bundles for the groups (see Proposition
\ref{Higgs-lifts}). As in the case of $\SO^*(4)$, the relation can be
explained in terms of the spin group. Namely, the $2:1$ homomorphism
$\Spin(6,\C) \longrightarrow \SO(6,\C)$ restricts to a $2:1$
homomorphism $\Spin^*(6) \longrightarrow \SO^*(6)$. But under the
isomorphism $\Spin(6,\C)\simeq \SL(4,\C)$, one has the isomorphism of
the corresponding real forms $\Spin^*(6)$ and $\SU(1,3)$.

The key to understanding the relation between the Higgs bundles is the isomorphism 
$$\Lambda^k(\mathbf{V}^*)\otimes\Lambda^n(\mathbf{V})\longrightarrow \Lambda^{n-k}(\mathbf{V}).$$
\noi where $\mathbf{V}$ is a vector space of dimension $n>k$, and the
map is defined by the interior product.  This extends to exterior
powers of vector bundles of rank $n$.  In particular, if $n=3$ and
$k=2$ we get $\Lambda^2 V^*\otimes\det(V)\simeq V$ or equivalently 
\begin{equation}
\Lambda^2 V^*\simeq\det(V)^*\otimes V\simeq\Hom(\det(V),V).
\end{equation}
Hence sections $\gamma\in H^0(X,\Lambda^2V^*\otimes K)$ and $\beta\in
H^0(X,\Lambda^2V\otimes K)$ define holomorphic
bundle maps $\tilde{\gamma}:\det(V)\rightarrow V\otimes K$ and $\tilde{\beta}:V\rightarrow
\det(V)\otimes K$ by
\begin{equation}\label{tildeg}
  \begin{aligned}
    \tilde{\gamma}(\omega)&=\iota_{\gamma}(\omega), \\
    \tilde{\beta}(v)&=\beta\wedge v,
  \end{aligned}
\end{equation}
\noi where $\iota_{\gamma}$ denotes interior
product.

\begin{proposition}\label{prop:SO6toU13}
A $\SO^*(6)$-Higgs bundle defines a $\U(1,3)$-Higgs bundle via the map
\begin{equation}\label{eqn:SO6toU13}
(V,{\beta},{\gamma})\mapsto (\det(V),V, \tilde{\beta},\tilde{\gamma})
\end{equation}
\noi where $\tilde{\beta}$ and $\tilde{\gamma}$ are related to $\beta$
and $\gamma$ as in (\ref{tildeg}).
\end{proposition}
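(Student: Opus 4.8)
The plan is to verify directly that the assignment (\ref{eqn:SO6toU13}) produces data of exactly the kind required by the definition of a $\U(1,3)$-Higgs bundle, the essential input being the contraction isomorphism recorded immediately above the statement.

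First I would recall from Section \ref{A:U(p,q)} that a $\U(p,q)$-Higgs bundle is a quadruple $(W_1,W_2,b,c)$ consisting of holomorphic bundles $W_1$, $W_2$ of ranks $p$ and $q$ together with $b\in H^0(\Hom(W_2,W_1)\otimes K)$ and $c\in H^0(\Hom(W_1,W_2)\otimes K)$; since the group is $\U(p,q)$ rather than $\SU(p,q)$ there is no constraint on $\det(W_1)\otimes\det(W_2)$, so no determinant condition has to be checked. For the case at hand I set $W_1=\det(V)$ and $W_2=V$, so that $p=\rk(W_1)=1$ and $q=\rk(W_2)=3$, consistent with $\U(1,3)$.

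Next I would read off the Higgs field data. The isomorphism $\Lambda^2 V^*\simeq \Hom(\det(V),V)$ displayed above the statement, together with its companion $\Lambda^2 V\simeq\Hom(V,\det(V))$, shows that $\gamma\in H^0(\Lambda^2 V^*\otimes K)$ corresponds under the interior product (\ref{tildeg}) to a holomorphic section $\tilde\gamma\in H^0(\Hom(\det(V),V)\otimes K)=H^0(\Hom(W_1,W_2)\otimes K)$, which is exactly a map of type $c$; and symmetrically $\beta\in H^0(\Lambda^2 V\otimes K)$ corresponds to $\tilde\beta\in H^0(\Hom(V,\det(V))\otimes K)=H^0(\Hom(W_2,W_1)\otimes K)$, which is of type $b$. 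Thus the quadruple $(\det(V),V,\tilde\beta,\tilde\gamma)$ meets every requirement of the definition.

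The only point requiring genuine care — and what I regard as the conceptual heart rather than a difficulty — is confirming that this pointwise construction is compatible with the group-theoretic picture, i.e.\ that the contraction isomorphisms are $\GL(3,\C)$-equivariant, so that the interior products patch together to the global holomorphic maps $\tilde\beta,\tilde\gamma$ and the assignment is natural. Concretely, the complexified isotropy representation of $\SO^*(6)$, namely $\Lambda^2(\C^3)\oplus\Lambda^2((\C^3)^*)$, is carried by these isomorphisms onto $\Hom(\C^3,\det(\C^3))\oplus\Hom(\det(\C^3),\C^3)$, which is precisely the complexified isotropy representation of $\U(1,3)$ pulled back along the homomorphism $\GL(3,\C)\to\GL(1,\C)\times\GL(3,\C)$, $g\mapsto(\det g,g)$. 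Since this homomorphism sends the $\GL(3,\C)$-bundle underlying $(V,\beta,\gamma)$ to the pair $(\det(V),V)$, the construction is globally well defined, completing the verification.
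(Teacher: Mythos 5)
Your proposal is correct and follows essentially the same route as the paper, whose proof simply observes that the statement follows immediately from the definitions once the contraction isomorphisms $\Lambda^2 V^*\simeq\Hom(\det(V),V)$ and $\Lambda^2 V\simeq\Hom(V,\det(V))$ are in place, exactly as you verify. Your additional check of $\GL(3,\C)$-equivariance of the contraction (so that the pointwise isomorphisms globalize) is a detail the paper leaves implicit, but it is the same argument, not a different one.
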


\begin{proof}This follows immediately from the definitions. In general, a
  $\U(p,q)$-Higgs bundle is defined by a tuple $(V,W,\beta,\gamma)$ where $V$
  and $W$ are bundles of rank $p$ and $q$ respectively, and $\beta,\gamma$ are
  maps $\beta:V\rightarrow W\otimes K$ and $\gamma:W\rightarrow V\otimes K$ (see \cite{bradlow-garcia-prada-gothen:2003} and Section \ref{A:U(p,q)} for more details). \end{proof}

\begin{remark}\label{rem:defs} We refer the reader to \cite{bradlow-garcia-prada-gothen:2003} and Section \ref{A:U(p,q)} for more details but note here the following key features:
\begin{enumerate}
\item The tuple $(V,W,\beta,\gamma)$ represents a $\SU(p,q)$-Higgs bundle if
  it satisfies the determinant condition $\det(V\oplus W)=\mathcal{O}$. In
  particular, $\SU(1,3)$-Higgs bundles are represented by tuples
  $(L,W,\tilde{\beta},\tilde{\gamma})$ with $L$ a line bundle, $W$ a rank
  three bundle, $\tilde{\beta}:W\rightarrow L \otimes K$ and
  $\tilde{\gamma}:L\rightarrow W \otimes K$ and such that $\det(L\oplus W)$ is trivial. 

\item While a $\PU(p,q)$-Higgs bundle is defined by a principal $\mathrm{P}(\U(p)\times\U(q))$-bundle together with an appropriate Higgs field, the structure group of the bundle can always be lifted to $\U(p)\times\U(q)$. Together with the Higgs field, the  principal $\U(p)\times\U(q)$-bundle defines a $\U(p,q)$-Higgs bundle. The lifts are defined up to a twisting by a line bundle.
\item The notion of polystability and the corresponding Hitchin equations for
  $\U(p,q)$-Higgs bundles are described in 
Section \ref{A:U(p,q)} and in \cite{bradlow-garcia-prada-gothen:2003}. The notions for $\SU(p,q)$ and $\PU(p,q)$ are similar. 

\item 
\begin{enumerate}
\item The components of the moduli space of polystable $\U(p,q)$-Higgs bundles are labeled by the integer pair $(a,b)$ where $a=\deg(V)$ and $b=\deg(W)$. We will denote these components by $\mathcal{M}_{a,b}(\U(p,q))$.  
\item For a $\PU(p,q)$-Higgs bundle,  the components of the moduli spaces are labeled by the combination $\tau=2\frac{aq-bp}{p+q}$, where $(V,W,\beta,\gamma)$ represents a $\U(p,q)$-Higgs bundles obtained by lifting the structure group. This combination, known as  the Toledo invariant, is independent of the lifts to $\U(p,q)$.  We will denote the components with Toledo invariant $\tau$ by $\mathcal{M}_{\tau}(\PU(p,q))$.  
\item For $\SU(p,q)$-Higgs bundles, for which $\deg(V)=-\deg(W)$, the components of the moduli space can be labeled by the single integer $a=\deg(V)$.  We will denote these components by $\mathcal{M}_{a}(\SU(p,q))$. 
\end{enumerate}
\end{enumerate}
\end{remark}

\begin{proposition}\label{prop:eqequiv} Let $(V,{\beta},{\gamma})$ and
  $(\det(V),V, \tilde{\beta},\tilde{\gamma})$ be a $\SO^*(6)$-Higgs
  bundle and corresponding $\U(1,3)$-Higgs bundle, as in (\ref{eqn:SO6toU13}). Then the following are equivalent:
\begin{itemize}
  \item [(A)] The bundle $V$ admits a metric, say $H$, satisfying
    the $\SO^*(6)$-Hitchin equation on $(V, \beta, \gamma)$, namely
    (see \eqref{eqn:SO*Hitch})
\begin{equation}\label{eqn:SO*6Hitch}
  F^H_{V}+\beta\beta^{*_H}+\gamma^{*_H}\gamma = 0.
\end{equation}
\item  [(B)] The bundles $V$ and $\det(V)$ admit metrics, say $K$ and $k$,
satisfying the $\U(1,3)$-Hitchin equation on $(\det(V), V,
\tilde{\beta}, \tilde{\gamma})$, namely (see
\cite{bradlow-garcia-prada-gothen:2003})
\begin{equation}\label{eqn:U13Hitch}
  \begin{aligned}
    F^K_{V}+\tilde{\beta}^{*_{K,k}}\tilde{\beta}+\tilde{\gamma}\tilde{\gamma}^{*_{K,k}} &=-\sqrt{-1} \mu\mathbf{I}_{V}\omega,\\
    F^k_{\det(V)}+\tilde{\beta}\tilde{\beta}^{*_{K,k}}+\tilde{\gamma}^{*_{K,k}}\tilde{\gamma}
    &= -\sqrt{-1}\mu\omega.
  \end{aligned}
\end{equation}
\end{itemize}
\noi In these equations 
\begin{itemize}
\item the first terms denote the curvature of the Chern connection with respect to the indicated metrics, 
\item the adjoints in \eqref{eqn:SO*6Hitch} are with respect to $H$ and the metric it induces on $V^*$, 
\item the adjoints in \eqref{eqn:U13Hitch} are with respect to $K$ and $k$
\item $\mu=\dfrac{\sqrt{-1}\int_X\Tr(F^H_V)}{2\mathrm{Vol}(X)}=\dfrac{\pi\deg(V)}{\mathrm{Vol}(X)}$, 
\item $\mathbf{I}_V$ is the identity map on $V$, and
\item $\omega$ denotes the K\"ahler form of the metric on the Riemann surface $X$.
\end{itemize}
\end{proposition}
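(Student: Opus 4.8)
The plan is to prove the equivalence at the level of \emph{solvability} rather than by exhibiting a single metric that solves both systems, routing through the two relevant Hitchin--Kobayashi correspondences. First I would invoke Theorem~\ref{HKforSO*} to rephrase (A) as the statement that the $\SO^*(6)$-Higgs bundle $(V,\beta,\gamma)$ is polystable, and the Hitchin--Kobayashi correspondence for $\U(p,q)$-Higgs bundles from \cite{bradlow-garcia-prada-gothen:2003} to rephrase (B) as the statement that the associated $\U(1,3)$-Higgs bundle $(\det(V),V,\tilde\beta,\tilde\gamma)$ of \eqref{eqn:SO6toU13} is polystable. Since the map $(V,\beta,\gamma)\mapsto(\det(V),V,\tilde\beta,\tilde\gamma)$ is invertible --- $\beta$ and $\gamma$ are recovered from $\tilde\beta,\tilde\gamma$ through the contraction isomorphism $\Lambda^2V^*\simeq\det(V)^*\otimes V$, and the rank-one bundle is forced to be $\det(V)$ --- the proposition reduces to the purely holomorphic assertion that $(V,\beta,\gamma)$ is polystable as an $\SO^*(6)$-Higgs bundle if and only if $(\det(V),V,\tilde\beta,\tilde\gamma)$ is polystable as a $\U(1,3)$-Higgs bundle. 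This is entirely parallel to the stability comparisons already carried out in Theorem~\ref{thm:stability-equivalence} and Theorem~\ref{equivalence-stability}.

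The technical core is this polystability equivalence, which I would establish by translating destabilizing data across the contraction isomorphism. Using the simplified criterion of Proposition~\ref{prop:simplifiedss}, a $\varphi$-invariant two-step filtration $0\subset V_1\subset V_2\subset V$ of $(V,\beta,\gamma)$ should correspond to a Higgs-invariant subobject of $(\det(V),V,\tilde\beta,\tilde\gamma)$, with the $\SO^*(6)$ inequality $\deg V-\deg V_1-\deg V_2\ge 0$ matching the $\U(1,3)$ slope inequality; since $\det(V)$ is a line bundle it contributes only the trivial subobjects, so the bookkeeping is manageable. A convenient concrete tool here, obtained by working in a local unitary frame and taking Hodge duals of the skew-symmetric matrices representing $\beta$ and $\gamma$, is the pair of pointwise identities
\begin{equation*}
\gamma^{*}\gamma=(\tilde\gamma^{*}\tilde\gamma)\mathbf{I}_{V}-\tilde\gamma\tilde\gamma^{*},
\qquad
\beta\beta^{*}=(\tilde\beta\tilde\beta^{*})\mathbf{I}_{V}-\tilde\beta^{*}\tilde\beta,
\end{equation*}
in which $\tilde\gamma^{*}\tilde\gamma$ and $\tilde\beta\tilde\beta^{*}$ are the scalar endomorphisms of $\det(V)$. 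These display precisely how the quadratic curvature terms of the $\SO^*(6)$-equation reorganize into those of the $\U(1,3)$-equation. The value of the central constant is then forced by Chern--Weil: taking $i\Lambda\Tr$ and integrating identifies $\mu=\pi\deg(V)/\Vol(X)$, consistent with the common slope $\deg(\det(V)\oplus V)/4=\deg(V)/2$ of the rank-four bundle.

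The main obstacle --- and the reason I would not attempt a direct single-metric proof --- is the clash between the semisimplicity of $\SO^*(6)$ and the reductive (non-semisimple) nature of $\U(1,3)$: the $\SO^*(6)$-equation has vanishing right-hand side, whereas the $\U(1,3)$-equations carry the central term $-\sqrt{-1}\mu\,\omega$. Concretely, the metric $H$ solving (A) is \emph{not} the same as the pair $(K,k)$ solving (B); for instance the naive choice $K=H$, $k=\det(H)$ already fails the second $\U(1,3)$-equation pointwise, since $\norm{\beta}^2+\norm{\gamma}^2$ is not constant. The content is that the central $\U(1)$-direction, namely the determinant line $\det(V)$, absorbs exactly this discrepancy, and the cleanest way to see that solvability is nevertheless preserved is to pass through polystability as above rather than to match harmonic metrics directly. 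Establishing the filtration/subobject dictionary precisely, and checking that the $\U(1,3)$-stability parameter is the one produced by the degrees, is where the real work lies.
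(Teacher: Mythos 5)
Your overall architecture is legitimate and is genuinely different from the paper's: the paper proves the proposition by a direct analytic argument, whereas you route both (A) and (B) through Hitchin--Kobayashi correspondences. Your two rephrasings are individually sound — (A) is equivalent to polystability of $(V,\beta,\gamma)$ by Theorem~\ref{HKforSO*}, (B) is equivalent to polystability of $(\det(V),V,\tilde\beta,\tilde\gamma)$ by the $\U(p,q)$ correspondence of \cite{bradlow-garcia-prada-gothen:2003}, and your Chern--Weil identification of $\mu$ is correct. But this reduction concentrates the entire content of the proposition into the purely holomorphic claim that $(V,\beta,\gamma)$ is polystable as an $\SO^*(6)$-Higgs bundle if and only if $(\det(V),V,\tilde\beta,\tilde\gamma)$ is polystable as a $\U(1,3)$-Higgs bundle, and that claim you do not prove: you only assert the "bookkeeping is manageable." In this paper that equivalence is exactly Corollary~\ref{cor:SO*toU13}, and it is \emph{deduced from} Proposition~\ref{prop:eqequiv}; citing it here would be circular, so you must prove it independently. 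That is a genuine piece of work, not routine bookkeeping: an $\SO^*(6)$-invariant two-step filtration carries four conditions involving both $\beta$ and $\gamma$ (Lemma~\ref{lemma:2-step}), whereas a $\U(1,3)$-subobject $(0,W')$ requires only $W'\subset\ker\tilde\beta$ and carries no condition on $\gamma$ whatsoever (and $(L,W')$ only a condition on $\tilde\gamma$), so the dictionary is not term-by-term; passing a destabilizing object from one side to the other requires constructions of the kind used in the proof of Theorem~\ref{equivalence-stability} (e.g.\ $\perp_{\gamma}$-manipulations and saturations), a rank-by-rank case analysis, and then a separate matching of decompositions for polystability rather than mere semistability. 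As it stands, your proposal reduces the proposition to a statement at least as hard as the proposition itself.

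A secondary point: your stated reason for refusing the direct single-metric proof is not an actual obstruction. You correctly note that $K=H$, $k=\det(H)$ fails because $\norm{\beta}^2+\norm{\gamma}^2$ is not constant, but this is precisely the discrepancy the paper removes. Starting from a solution $H$ of \eqref{eqn:SO*6Hitch}, the local skew-matrix identities (essentially your displayed identities; the paper's \eqref{eqn:reltns}) yield the $\U(1,3)$-equations with a non-constant function $t$ of mean value $\mu$ on the right-hand side; one then solves $\Delta u=t-\mu$ (possible since $t-\mu$ has zero average) and rescales both metrics by $e^{u}$, which by Lemma~\ref{lemma: confchange} leaves all adjoint terms unchanged and shifts the curvatures by $-\sqrt{-1}\Delta(u)\omega$, giving an exact solution of \eqref{eqn:U13Hitch}; the converse direction uses the factor $e^{u/2}$ with $h=\det(H)e^{u}$. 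So the central $\U(1)$-direction is handled by an elementary conformal-rescaling lemma, and this direct route is considerably shorter than the algebraic equivalence you defer.
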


The proof of Proposition \ref{prop:eqequiv} uses the following technical Lemma.

\begin{lemma}\label{lemma: confchange} Let $(\det(V),V,
  \tilde{\beta},\tilde{\gamma})$ be a $\U(1,3)$-Higgs bundle, as
  in (\ref{eqn:SO6toU13}). Let $H$ and $h$ be any metrics on $V$ and $\det(V)$
  respectively. Let $K$ be a metric on $V$ which is related to $H$ by
  a conformal factor $e^u$, i.e. $K(\phi,\psi)=e^uH(\phi,\psi)$ for
  any sections $\phi$ and $\psi$ of $V$.  Similarly let $k$ be a
  metric on $\det(V)$ which is related to $h$ by the same conformal
  factor $e^u$. Then (in the notation of Proposition
  \ref{prop:eqequiv}, and denoting by $*_{H,h}$ adjoints with respect
  to $H$ and $h$)
\begin{enumerate}
\item $\tilde{\gamma}^{*_{K,k}}=\tilde{\gamma}^{*_{H,h}}$,
\item $\tilde{\beta}^{*_{K,k}}=\tilde{\beta}^{*_{H,h}}$,
\item $F^K_V=F^H_V-\sqrt{-1}\Delta(u)\omega\mathbf{I}_V$, and
\item $ F^{k}_{\det(V)}=F^{h}_{\det(V)}-\sqrt{-1}\Delta(u)\omega$.
\end{enumerate}
\noi where in (3) and (4) $\omega$ denotes the K\"ahler form on $X$.
\end{lemma}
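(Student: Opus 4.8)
The plan is to reduce everything to explicit computations in a local holomorphic frame, exploiting the fact that $K$ and $k$ differ from $H$ and $h$ by the \emph{same} conformal factor $e^u$. First I would dispose of parts (1) and (2). Fix a local holomorphic frame for $V$ in which $H$ is represented by a positive Hermitian matrix $\mathbf{H}$, and a local holomorphic frame for $\det(V)$ in which $h$ is a positive function $\mathbf{h}$; write $\tilde\gamma$ as $\gamma_0\,dz$ and $\tilde\beta$ as $\beta_0\,dz$. The defining property of the adjoint shows that $\tilde\gamma^{*_{H,h}}$ is represented by $\mathbf{h}^{-1}\,\overline{\gamma_0}^{\,t}\,\mathbf{H}\,d\bar z$ (the source metric of $\tilde\gamma$, namely $\mathbf{h}$, enters inverted, while the target metric $\mathbf{H}$ enters directly), and symmetrically $\tilde\beta^{*_{H,h}}$ is represented by $\mathbf{H}^{-1}\,\overline{\beta_0}^{\,t}\,\mathbf{h}\,d\bar z$. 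Replacing $\mathbf{H}$ by $e^u\mathbf{H}$ and $\mathbf{h}$ by $e^u\mathbf{h}$ multiplies each of these by $e^{-u}e^{u}=1$, so the adjoints are unchanged. This is the whole content of (1) and (2); the only thing to verify carefully is that the two metrics genuinely appear with opposite exponents, which is exactly why the common conformal factor cancels.

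For parts (3) and (4) I would invoke the standard transformation law of the Chern connection under a conformal change of Hermitian metric. In the same frame the Chern curvature of $H$ is $F^H_V=\dbar(\mathbf{H}^{-1}\partial\mathbf{H})$, and a direct computation gives
\[
F^{e^uH}_V = \dbar\bigl((e^u\mathbf{H})^{-1}\partial(e^u\mathbf{H})\bigr) = F^H_V + \dbar\partial u\cdot\mathbf{I}_V .
\]
Since $\det(V)$ is a line bundle, the identical computation with $\mathbf{h}$ in place of $\mathbf{H}$ yields $F^k_{\det(V)} = F^h_{\det(V)} + \dbar\partial u$. It then remains only to identify the $(1,1)$-form $\dbar\partial u$ with $-\sqrt{-1}\,\Delta(u)\,\omega$, which is the coordinate expression of the Laplacian on the Riemann surface in the normalization used throughout this section; substituting it gives exactly (3) and (4).

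The computations are all routine, so the only genuine obstacle is bookkeeping of conventions. One must pin down the precise sign and normalization relating $\dbar\partial u$, the Laplacian $\Delta(u)$, and the Kähler form $\omega$, and confirm that this is the \emph{same} convention under which the Hitchin equations \eqref{eqn:U13Hitch} are written, so that the Lemma can later be fed into the proof of Proposition~\ref{prop:eqequiv} without introducing a stray factor. Once that normalization is fixed, (3) and (4) follow immediately from the displayed curvature identity, while (1) and (2) follow from the cancellation of the common factor $e^u$ in the adjoints.
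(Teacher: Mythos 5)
Your proof is correct and follows essentially the same route as the paper's: parts (1)--(2) rest on the observation that the source and target metrics enter the adjoint with opposite exponents, so the common factor $e^u$ cancels, and parts (3)--(4) use the standard conformal-change formula for the Chern curvature, identifying $\dbar\partial u\cdot\mathbf{I}_V$ with $-\sqrt{-1}\,\Delta(u)\,\omega\,\mathbf{I}_V$. The only difference is presentational: you work in local holomorphic frames with explicit matrix formulas, whereas the paper phrases (1)--(2) intrinsically via the defining pairing of the adjoint and (3)--(4) via the global formula $F_{H_1}=F_{H_2}+\dbar_E\bigl(s^{-1}D'_{H_1}(s)\bigr)$ with $s=e^u\mathbf{I}$.
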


\begin{proof}
Let $a$ be a point in the fiber of $V$ over a point $x\in X$ and let $b$ be a point in the fiber over $x$ of $\det(V)\otimes\bar{K}$.  Then
\begin{align*}
h(b,\tilde{\gamma}^{*_{K,k}}(a))=&e^{-u(x)}k(b,\tilde{\gamma}^{*_{K,k}}(a))\\
=&e^{-u(x)}K(\tilde{\gamma}(b),a)\\
=&e^{-u(x)}e^{u(x)}H(\tilde{\gamma}(b),a)=h(b,\tilde{\gamma}^{*_{H,h}}(a)).
\end{align*}
\noi This proves (1). The proof of (2) is similar.  The proof of (3) and (4) follows directly from the definition of the Chern connection.  Indeed, if metrics $H_1$ and $H_2$ on a holomorphic bundle $E$ are related by $H_1=H_2s$ where $s$ is a (positive definite) automorphism of $E$, then the curvatures of the Chern connections are related by
\begin{equation}
F_{H_1}=F_{H_2}+\bar{\partial}_E(s^{-1}D'_{H_1}(s))
\end{equation}
\noi where $\bar{\partial}_E$ and $D'_{H_1}$ are the antiholomorphic and holomorphic parts of the Chern connection for $H_1$.  If $s=e^u\mathbf{I}$ then the second term reduces to $-\sqrt{-1}\Delta(u)\omega$.
\end{proof}

We now prove Proposition \ref{prop:eqequiv} .

\begin{proof}[Proof of Proposition \ref{prop:eqequiv}]
Fix a local frame for $V$ and use the dual frame for $V^*$.  Also, fix a local
complex coordinate on the base. Then $\gamma$, as a map from $V$ to
$V^*\otimes K$ is given locally by a matrix of holomorphic 1-forms, which we write as 
\begin{equation}
\gamma=\begin{bmatrix}0&\gamma_1&\gamma_2\\-\gamma_1&0&\gamma_3\\ -\gamma_2&-\gamma_3&0\end{bmatrix}dz\ .
\end{equation}
\noi Using the induced frame for $\det(V)$, the map $\tilde{\gamma}$ is then given by
\begin{equation}
\tilde{\gamma}=\begin{bmatrix}\gamma_3\\-\gamma_2\\ \gamma_1\end{bmatrix}dz\ .
\end{equation}

Similarly, if $\beta$ as a map from $V^*$ to $V\otimes K$ is given locally by a matrix of holomorphic 1-forms of the form
\begin{equation}
\beta=\begin{bmatrix}0&\beta_1&\beta_2\\-\beta_1&0&\beta_3\\ -\beta_2&-\beta_3&0\end{bmatrix}dz\ .
\end{equation}
\noi then the map $\tilde{\beta}$ is then given by
\begin{equation}
\tilde{\beta}=\begin{bmatrix}\beta_3&-\beta_2& \beta_1\end{bmatrix}dz\ .
\end{equation}

Given a metric, say $H$, on $V$, we can pick the local frame to be unitary with respect to $h$.  Then locally
\begin{equation}
\gamma^{*_H}=\begin{bmatrix}0&-\bar{\gamma}_1&-\bar{\gamma}_2\\\bar{\gamma}_1&0&-\bar{\gamma}_3\\ \bar{\gamma}_2&\bar{\gamma}_3&0\end{bmatrix}d\bar{z}\ .
\end{equation}
\noi The metric $H$ induces a metric on $\det(V)$, which we denote by $h$. With respect to the metrics $H$ on $V$ and $h$ on $\det(V)$, the adjoint of $\tilde{\gamma}$ is given locally by
\begin{equation}
\tilde{\gamma}^{*_{H,h}}=\begin{bmatrix}\bar{\gamma}_3&-\bar{\gamma}_2&\bar{\gamma}_1\end{bmatrix}d\tilde{z}\ .
\end{equation}

Using the metrics $H$ and $h$, and taking into account that the entries in the matrix are 1-forms,  we get that 
\begin{equation}\label{eqn:reltns}
  \begin{aligned}
    \gamma^{*_H}\gamma &=\tilde{\gamma}\tilde{\gamma}^{*_{H,h}}+\tilde{\gamma}^{*_{H,h}}\tilde{\gamma}\mathbf{I}_V,\\
    \beta\beta^{*_H}&=\tilde{\beta}^{*_{H,h}}\tilde{\beta}+\tilde{\beta}\tilde{\beta}^{*_{H,h}}\mathbf{I}_V,
  \end{aligned}
\end{equation}
\noi and also
\begin{equation}\label{eqn:Tracereltns}
  \begin{aligned}
    \Tr(\tilde{\gamma}\tilde{\gamma}^{*_{H,h}})&=-\tilde{\gamma}^{*_{H,h}}\tilde{\gamma},\\
    \Tr(\tilde{\beta}^{*_{H,h}}\tilde{\beta})&=-\tilde{\beta}\tilde{\beta}^{*_{H,h}}.
  \end{aligned}
\end{equation}
\noi Suppose that $V$ admits a metric which satisfies the
$\SO^*(6)$-Hitchin equations for $(V,\beta,\gamma)$, namely equation
\eqref{eqn:SO*6Hitch}.  Because of \eqref{eqn:reltns} this is
equivalent to
\begin{equation}\label{eqn:U13HitchA}
F^H_V+\tilde{\beta}^{*_{H,h}}\tilde{\beta}+\tilde{\gamma}\tilde{\gamma}^{*_{H,h}} =-(\tilde{\gamma}^{*_{H,h}}\tilde{\gamma}+\tilde{\beta}\tilde{\beta}^{*_{H,h}})\mathbf{I}_V.
\end{equation}
\noi Taking the trace of this, and using \eqref{eqn:Tracereltns}, we also get
\begin{equation}\label{eqn:U13HitchB}
\Tr(F^h_V)+\tilde{\gamma}^{*_{H,h}}\tilde{\gamma}+\tilde{\beta}\tilde{\beta}^{*_{H,h}} =-(\tilde{\gamma}^{*_{H,h}}\tilde{\gamma}+\tilde{\beta}\tilde{\beta}^{*_{H,h}}).
\end{equation}
\noi We can write the $(1,1)$-form $\tilde{\gamma}^{*_{H,h}}\tilde{\gamma}+\tilde{\beta}\tilde{\beta}^{*_{H,h}}$ as
\begin{equation}\label{eqn:RHS}
\tilde{\gamma}^{*_{H,h}}\tilde{\gamma}+\tilde{\beta}\tilde{\beta}^{*_{H,h}}=\sqrt{-1}t\omega=-(\sum_{i=1}^3|\tilde{\gamma}_i|^2-\sum_{i=1}^3|\tilde{\beta}_i|^2)dz\wedge d\bar{z}
\end{equation}
\noi where the last expression is in local coordinates. Notice that by \eqref{eqn:U13HitchB} we get
\begin{equation}
-2\sqrt{-1}\int_Xt\omega=\int_X\Tr(F^h_V)=-2\pi\sqrt{-1}\deg(V).
\end{equation}
\noi Since $\Tr(F^H_V)=F^{h}_{\det(V)}$, equations \eqref{eqn:U13HitchA} and \eqref{eqn:U13HitchB} can thus be written as
\begin{equation}\label{eqn:U13Hitch-t}
  \begin{aligned}
    F^H_V+\tilde{\beta}^{*_{H,h}}\tilde{\beta}+\tilde{\gamma}\tilde{\gamma}^{*_{H,h}} &=-\sqrt{-1}t\omega\mathbf{I}_V\\
    F^{h}_{\det(V)}+\tilde{\gamma}^{*_{H,h}}\tilde{\gamma}+\tilde{\beta}\tilde{\beta}^{*_{H,h}}&
    =-\sqrt{-1}t\omega
  \end{aligned}
\end{equation}
\noi where
\begin{equation}\label{eqn:mu}
\frac{\int t\omega}{\Vol(X)}=\frac{\pi\deg(V)}{\Vol(X)}=\mu.
\end{equation}

Equations \eqref{eqn:U13Hitch-t} differ from the required $\U(1,3)$-Hitchin equations only in that the right hand side is not constant, but instead involves a function whose average value is the required constant.  Lemma \ref{lemma: confchange} allows us to remove this discrepancy by rescaling the metrics on $V$ and $\det(V)$. Indeed if we pick a function $u$ such that it satisfies the condition
$$ \Delta(u)=t-\mu $$
\noi and define metrics $K=He^u$ on $V$ and $k=he^u$ on $\det(V)$ then 
\begin{align*}\label{eqn:U13Hitch-mu}
F^K_V+\tilde{\beta}^{*_{K,k}}\tilde{\beta}+\tilde{\gamma}\tilde{\gamma}^{*_{K,k}} &=-\sqrt{-1}\mu\omega\mathbf{I}_V\nonumber\\
F^{k}_{\det(V)}+\tilde{\gamma}^{*_{K,k}}\tilde{\gamma}+\tilde{\beta}\tilde{\beta}^{*_{K,k}}& =-\sqrt{-1}\mu\omega
\end{align*}
\noi as required.

Conversely, suppose that $V$ and $\det(V)$ admit metrics $K$ and $k$
which satisfy the $\U(1,3)$-Hitchin equations on $(\det(V), V,
\tilde{\beta},\tilde{\gamma})$, namely \eqref{eqn:U13Hitch}.  In
general $k$ will differ from the metric induced by $K$ on $\det(V)$.
Denoting the latter by $\det(K)$, we can write
\begin{equation}
k=\det(K)e^u
\end{equation}
\noi where $u$ is a smooth function on $X$.  Now define new metrics on
$V$ and $\det(V)$ which are related to $K$ and $k$ by the conformal
factor $e^{u/2}$, i.e.\ set
\begin{equation}\label{Kandk}
H=Ke^{u/2}\quad\text{and}\quad h=ke^{u/2}.
\end{equation}
\noi Notice that $\det(H)=\det(K)e^{3u/2}=h$, where $\det(H)$ denotes
the metric induced by $H$ on $\det(V)$.  Moreover, since both
metrics are modified by the same conformal factor, the adjoints
$\tilde{\beta}^*$ and $\tilde{\gamma}^*$ are unaffected (see Lemma
\ref{lemma: confchange}). By parts (3) and (4) of Lemma \ref{lemma:
  confchange} and the fact that $K$ and $k$ satisfy the
$\U(1,3)$-Hitchin equations, we thus get
\begin{align*}\label{eqn:U13Hitch-mu}
  F^H_V+\tilde{\beta}^{*_{H,h}}\tilde{\beta}+\tilde{\gamma}\tilde{\gamma}^{*_{H,h}}
  &=-\sqrt{-1}(\mu-\frac{\Delta(u)}{2})\omega\mathbf{I}_V=-\sqrt{-1}t\omega\mathbf{I}_V\\
  F^{h}_{\det(V)}+\tilde{\gamma}^{*_{H,h}}\tilde{\gamma}
  +\tilde{\beta}\tilde{\beta}^{*_{H,h}}& =
  -\sqrt{-1}(\mu-\frac{\Delta(u)}{2})\omega=-\sqrt{-1}t\omega
\end{align*}
\noi where $t= \mu-\frac{\Delta(u)}{2}$ and $h=\det(H)$.  Exactly as above (see equation \eqref{eqn:U13HitchA}-\eqref{eqn:mu} ) we find that these two equations combine to yield
\begin{equation*}\label{eqn:SO*6Hitch2}
F^H_{V}+\beta\beta^{*_H}+\gamma^{*_H}\gamma = 0
\end{equation*}
\noi as required.
\end{proof}

\begin{corollary}\label{cor:SO*toU13}  Let $(V, {\beta},{\gamma})$ and
  $(\det(V),V, \tilde{\beta},\tilde{\gamma})$ be as in (\ref{eqn:SO6toU13}). Then $(V, {\beta},{\gamma})$ defines a polystable $\SO^*(6)$-Higgs bundle if and only if $(\det(V),V, \tilde{\beta},\tilde{\gamma})$ defines a polystable $\U(1,3)$-Higgs bundle.  Moreover, the map \eqref{eqn:SO6toU13} defines an embedding
\begin{equation}\label{modSO6toU13}
\mathcal{M}_d(\SO^*(6))\hookrightarrow\mathcal{M}_{d,d}(\U(1,3))
\end{equation}
\noi where $\mathcal{M}_{d,d}(\U(1,3))$ denotes the component in the moduli space of polystable $\U(1,3)$-Higgs bundles in which the bundles both have degree $d$.
\end{corollary}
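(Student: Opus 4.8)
The plan is to split the statement into the polystability equivalence and the injectivity of the induced map, obtaining the former from the two Hitchin--Kobayashi correspondences and the latter from a direct analysis of isomorphisms.

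For the polystability equivalence I would simply chain together the three results already available. By Theorem~\ref{HKforSO*}, $(V,\beta,\gamma)$ is polystable as an $\SO^*(6)$-Higgs bundle if and only if $V$ carries a metric solving \eqref{eqn:SO*6Hitch}. By Proposition~\ref{prop:eqequiv} this is equivalent to $V$ and $\det(V)$ carrying metrics solving the $\U(1,3)$-Hitchin equations \eqref{eqn:U13Hitch}, which, by the Hitchin--Kobayashi correspondence for $\U(p,q)$-Higgs bundles (\cite{bradlow-garcia-prada-gothen:2003}), holds if and only if $(\det(V),V,\tilde\beta,\tilde\gamma)$ is polystable as a $\U(1,3)$-Higgs bundle. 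Since $\deg(\det V)=\deg V=d$, the image of a polystable object lies in the component $\mathcal{M}_{d,d}(\U(1,3))$, so \eqref{eqn:SO6toU13} does define a map $\mathcal{M}_d(\SO^*(6))\to\mathcal{M}_{d,d}(\U(1,3))$; that it is a morphism of varieties follows from the naturality of the fibrewise-linear identifications $\gamma\leftrightarrow\tilde\gamma$, $\beta\leftrightarrow\tilde\beta$ together with the existence of local universal families.

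The substantive point is injectivity, and here the main obstacle is that a $\U(1,3)$-isomorphism carries one more scalar degree of freedom than an $\SO^*(6)$-isomorphism: it is a pair $(g,f)$ with $g\colon\det V\xrightarrow{\simeq}\det V'$ and $f\colon V\xrightarrow{\simeq}V'$, and $g$ need not equal $\det f$. Since any two isomorphisms of a fixed line bundle over a compact connected $X$ differ by a global scalar, I would write $g=c\,\det f$ with $c\in\C^*$. The identifications $\gamma\leftrightarrow\tilde\gamma$ and $\beta\leftrightarrow\tilde\beta$ arise from the canonical, hence natural, isomorphism $\Lambda^2 V^*\cong\Hom(\det V,V)$ valid for $\rk V=3$; feeding this naturality into the intertwining conditions for $(g,f)$ turns them into $f_*\gamma=c\,\gamma'$ and $f_*\beta=c^{-1}\beta'$, where $f_*$ denotes transport of the Higgs field along $f$. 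Replacing $f$ by $\hat f=\lambda f$ rescales $f_*\beta$ by $\lambda^2$ and $f_*\gamma$ by $\lambda^{-2}$, so choosing $\lambda$ a square root of $c$ makes $\hat f$ an honest $\SO^*(6)$-isomorphism $(V,\beta,\gamma)\xrightarrow{\simeq}(V',\beta',\gamma')$. This proves injectivity on isomorphism classes.

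Finally, to see that the map is an embedding onto a closed subvariety I would identify its image. Because $\Lambda^2 V^*\cong\Hom(\det V,V)$ is an isomorphism of bundles, every $\U(1,3)$-Higgs bundle $(L,W,\tilde\beta,\tilde\gamma)$ with $\rk W=3$ and $L\cong\det W$ arises, via the inverse of these identifications, from a unique $\SO^*(6)$-Higgs bundle --- the skew-symmetry of the resulting $\beta,\gamma$ being automatic since they lie in $\Lambda^2$. Thus the image is exactly the closed locus $\{L\cong\det W\}\subset\mathcal{M}_{d,d}(\U(1,3))$, and the same natural identifications furnish the inverse morphism on this locus (the choice of isomorphism $L\cong\det W$, ambiguous up to $\C^*$, being precisely the scalar absorbed in the previous paragraph), so \eqref{eqn:SO6toU13} is an isomorphism onto its image, i.e.\ an embedding.
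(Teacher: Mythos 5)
Your proposal is correct and follows essentially the same route as the paper: the polystability equivalence is obtained by chaining the Hitchin--Kobayashi correspondences (Theorem~\ref{HKforSO*}, Proposition~\ref{prop:eqequiv}, and the $\U(p,q)$ correspondence), and the image is identified as the locus where the line bundle is $\det$ of the rank-three bundle. The only difference is one of detail: where the paper dismisses injectivity as clear, you supply an actual argument --- writing a $\U(1,3)$-isomorphism as $(g,f)$ with $g=c\,\det f$ and absorbing the scalar $c$ into a square-root rescaling $\hat f=\lambda f$ of the rank-three component --- which is a correct justification of that step.
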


\begin{proof} The first part follows immediately  from Proposition \ref{prop:eqequiv} because of the Hitchin-Kobayashi correspondence for $G$-Higgs bundles, i.e.\ Theorem \ref{HKcorrespond}.  The map defined by \eqref{eqn:SO6toU13} is clearly injective, with image given by the subvariety in which the $\U(1,3)$-Higgs bundles are defined by tuples $(L,V,\beta,\gamma)$ in which $L=\det(V)$.
\end{proof}

\begin{remark} By Proposition \ref{prop:modspace}  the dimension of $\mathcal{M}_d(\SO^*(6))$ is $15(g-1)$, while the dimension of $\mathcal{M}_{d,d}(\U(1,3))$ is $16(g-1)+1$ (see \cite{bradlow-garcia-prada-gothen:2003}).  The image of the embedding given by \eqref{modSO6toU13} thus has codimension $g$ in  $\mathcal{M}_{d,d}(\U(1,3))$.
\end{remark}

\begin{proposition} \label{lemma:SU13HK}Let  $(\det(V),V, \tilde{\beta},\tilde{\gamma})$ be a  $\U(1,3)$-Higgs bundle in which $\deg(V)$ is even.  Pick $L$ such that $L^2=\det(V)$ and define maps
\begin{equation}\label{betaLgammaL}
  \begin{aligned}
    \tilde{\beta}_L&=\tilde{\beta}\otimes 1_{L}:V\otimes
    L^{-1}\rightarrow L
    \otimes K\\
    \tilde{\gamma}_L&=\tilde{\gamma}\otimes 1_{L}:L\rightarrow V\otimes
    L^{-1}\otimes K
  \end{aligned}
\end{equation}
\noi where $1_{L}:L^{-1}\rightarrow L^{-1}$ is the identity map. Then
$(L,V\otimes L^{-1},\tilde{\beta}_L,\tilde{\gamma}_L)$ defines an
$\SU(1,3)$-Higgs bundle and, with the same notation as in Proposition
\ref{prop:eqequiv}, the following are equivalent:

\begin{itemize}
\item [(A)] The bundles $V$ and $\det(V)$ admit metrics, say $H$
  and $h$, satisfying
  \begin{equation}\label{eqn:SU13Hitch1}
    \begin{aligned}
      F^H_{V}+\tilde{\beta}^{*_{H,h}}\tilde{\beta}+\tilde{\gamma}\tilde{\gamma}^{*_{H,h}} &=-\sqrt{-1} \mu\mathbf{I}_{V}\omega\\
      F^h_{\det(V)}+\tilde{\beta}\tilde{\beta}^{*_{H,h}}+\tilde{\gamma}^{*_{H,h}}\tilde{\gamma}
      &= -\sqrt{-1}\mu\omega\ .
    \end{aligned}
  \end{equation}

\item  [(B)] The bundles $V\otimes L^{-1}$ and $L$ admit metrics, say
  $K$ and $k$, satisfying
  \begin{equation}\label{eqn:SU13Hitch}
    \begin{aligned}
      F^K_{V\otimes L^{-1}}+(\tilde{\beta}_L)^{*_{K,k}}(\tilde{\beta}_L)+(\tilde{\gamma}_L)(\tilde{\gamma}_L)^{*_{K,k}} &=0\\
      F^k_{L}+(\tilde{\beta}_L)(\tilde{\beta}_L^{*_{K,k}})+(\tilde{\gamma}_L)^{*_{K,k}}(\tilde{\gamma}_L)
      &=0.
    \end{aligned}
  \end{equation}
\end{itemize}
\end{proposition}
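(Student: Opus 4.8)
The plan is to prove the two assertions separately, dealing first with the $\SU(1,3)$-structure and then with the equivalence of the two systems of equations. Since $L$ has rank $1$ and $V\otimes L^{-1}$ has rank $3$, and the maps $\tilde\beta_L,\tilde\gamma_L$ have the shapes recorded in \eqref{betaLgammaL}, the tuple $(L,V\otimes L^{-1},\tilde\beta_L,\tilde\gamma_L)$ is manifestly a $\U(1,3)$-Higgs bundle; so the only thing to verify for the first assertion is the determinant condition of Remark~\ref{rem:defs}(1). This is the one-line computation
\[
  \det\bigl(L\oplus (V\otimes L^{-1})\bigr)
  = L\otimes\det(V)\otimes L^{-3}
  = \det(V)\otimes L^{-2} = \mathcal{O},
\]
where I use $\det(V\otimes L^{-1})=\det(V)\otimes L^{-3}$ (rank three) and $L^2=\det(V)$.

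For the equivalence of (A) and (B) the guiding idea is that tensoring by the Hermitian line bundle $L^{-1}$ converts the central term $-\sqrt{-1}\mu\omega$ of the $\U(1,3)$-equations into the zero of the $\SU(1,3)$-equations, provided one twists by a line bundle metric of the right constant curvature. Concretely I would first fix an auxiliary metric $\ell$ on $L$ of constant curvature $F^\ell_L=-\sqrt{-1}\mu\omega$. Such an $\ell$ exists and is unique up to scale: the curvature of any metric on $L$ integrates to $-2\pi\sqrt{-1}\deg(L)=-\sqrt{-1}\pi\deg(V)$, which equals $\int_X(-\sqrt{-1}\mu\omega)$ by $\mu=\pi\deg(V)/\Vol(X)$, so the harmonic (constant) representative provides $\ell$ (here the hypothesis that $\deg(V)$ is even guarantees that $L$, and hence $\ell$, make sense). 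The dictionary between metrics is then
\[
  K = H\otimes\ell^{-1},\qquad k = h\otimes\ell^{-1},
\]
on $V\otimes L^{-1}$ and on $\det(V)\otimes L^{-1}=L$ respectively, with inverse $H=K\otimes\ell$, $h=k\otimes\ell$.

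With this dictionary in place the verification rests on two elementary facts. The curvatures transform by the curvature of $\ell$, giving $F^K_{V\otimes L^{-1}}=F^H_V+\sqrt{-1}\mu\omega\,\mathbf{I}_V$ and $F^k_L=F^h_{\det(V)}+\sqrt{-1}\mu\omega$. The quadratic terms are unchanged by the twist: since tensoring $\tilde\beta,\tilde\gamma$ by the identity of the Hermitian line bundle $(L^{-1},\ell^{-1})$ is compatible with taking adjoints, one obtains $(\tilde\beta_L)^{*_{K,k}}(\tilde\beta_L)=\tilde\beta^{*_{H,h}}\tilde\beta$, $(\tilde\gamma_L)(\tilde\gamma_L)^{*_{K,k}}=\tilde\gamma\,\tilde\gamma^{*_{H,h}}$, and likewise for the remaining two products (this is the line-bundle analogue of parts (1)--(2) of Lemma~\ref{lemma: confchange}). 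Substituting these into the first and second equations of (A) and cancelling the $\pm\sqrt{-1}\mu\omega$ contributions reproduces exactly the first and second equations of (B); running the same substitution backwards with $H=K\otimes\ell$, $h=k\otimes\ell$ yields the converse. It streamlines the bookkeeping to package the two equations of (A) as a single Hitchin--Einstein equation on the rank-$4$ bundle $\det(V)\oplus V$ with central term $-\sqrt{-1}\mu\,\mathbf{I}\,\omega$, after which the whole argument is precisely the operation of tensoring that equation by $(L^{-1},\ell^{-1})$.

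I expect the only real difficulty to be bookkeeping rather than conceptual: correctly identifying the metric on $L$ occurring in (B) as $k=h\otimes\ell^{-1}$ (and not, say, $h^{1/2}$ or $\ell$ itself), and checking the adjoint-invariance of the quadratic terms under the twist. The structural reason a single auxiliary metric $\ell$ can fix both equations simultaneously is that the two equations of (A) carry the \emph{same} central constant $\mu$; this balanced feature is exactly what comes from the $\SO^*(6)$-origin of the $\U(1,3)$-Higgs bundle (cf.\ Proposition~\ref{prop:eqequiv}) and is precisely what permits the passage to the vanishing central term of $\SU(1,3)$.
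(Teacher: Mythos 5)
Your proposal is correct and takes essentially the same route as the paper: the paper also verifies $\det(L\oplus V\otimes L^{-1})=\det(V)\otimes L^{-2}=\mathcal{O}$ and then passes between (A) and (B) by tensoring with the Hermitian--Einstein (constant-curvature) metric $h_0$ on $L^{-1}$ --- which is exactly your $\ell^{-1}$ --- setting $K=H\otimes h_0$, $k=h\otimes h_0$ and inverting this dictionary for the converse. Your write-up simply makes explicit the curvature shift by $\sqrt{-1}\mu\omega$ and the invariance of the quadratic terms under the twist, which the paper leaves implicit.
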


\begin{proof}  Since $L^{2}=\det(V)$ it follows that
\begin{equation}\label{det=0}
\det(L\oplus V\otimes L^{-1})=\det(V)\otimes L^{-2}=\mathcal{O}\ .
\end{equation}
\noi and hence $(L,V\otimes L^{-1},\tilde{\beta}_L,\tilde{\gamma}_L)$ defines a $\SU(1,3)$-Higgs bundle.

Let $h_0$ be the Hermitian-Einstein metric on $L^{-1}$, so that the curvature
of the corresponding Chern connection satisfies
$F^{h_0}_L=\sqrt{-1}\deg(L)\omega$.  Given metrics $H$ and $h$ which satisfy
(A), define $K=H\otimes h_0$ on $V\otimes L^{-1}$ and $k=h\otimes h_0$ on
$L=\det(V)\otimes L^{-1}$.  Conversely, given metrics $K$ and $k$ which
satisfy (B), define $H=K\otimes h^{-1}_0$ on $V=V\otimes L^{-1}\otimes L$ and $h=k\otimes h^{-1}_0$ on $\det(V)=L^{2}$.
\end{proof}

\begin{remark}\label{rem:SU(13)}The equations \eqref{eqn:SU13Hitch} are not exactly the $\SU(1,3)$-Hitchin equations. If $(L,W,b,c)$ is any $\SU(1,3)$-Higgs bundle, the Hitchin equations for metrics $k$ and $K$ on $L$ and $W$ respectively are equivalent to the condition
\begin{equation}\label{eqn:trueSU(13)Hitch}
\begin{bmatrix}F^K_{W}+b^{*_{K,k}}b+cc^{*_{K,k}}&0\\
0&F^k_{L}+bb^{*_{K,k}}+c^{*_{K,k}}c\end{bmatrix}_0=0
\end{equation}
\noi where $[A]_0$ denotes the trace free part of the matrix $[A]$.  The pair
\eqref{eqn:SU13Hitch}  (for the $\SU(1,3)$-Higgs bundle $(\det(V),V,
\tilde{\beta},\tilde{\gamma})$) is equivalent to \eqref{eqn:trueSU(13)Hitch}
together with the extra condition $\Tr(F^K_{V\otimes L^{-1}})+F^k_L=0$. In fact this condition can always be achieved by a simultaneous conformal transformation of the metrics $K$ and $k$, as in \eqref{Kandk}. As explained above, such conformal transformations affect only the curvature terms in the equation but do not change the trace-free parts  of those terms.
\end{remark}

\begin{remark}\label{remark:SU} By defining $V=W\otimes L$, any
  $\SU(1,3)$-Higgs bundle $(L,W,\beta,\gamma)$ can be represented by a tuple
  $(L, V\otimes L^{-1},\tilde{\beta}_L,\tilde{\gamma}_L)$, where the Higgs
  fields are maps $\tilde{\beta}_L: V\otimes L^{-1}\rightarrow L\otimes K$ and
  $\tilde{\gamma}_L: L\rightarrow V\otimes L^{-1}\otimes K$. Notice that 

\begin{itemize}
\item $L^2=\det(V)$, and hence 
\item $\tilde{\beta}_L: V\otimes L^{-1}\rightarrow L\otimes K$ defines $\beta\in
  H^0(X,V^*\det(V)\otimes K)\simeq H^0(X,\Lambda^2V\otimes K)$,
\item $\tilde{\gamma}_L:L\rightarrow V\otimes L^{-1}\otimes K$ defines
  $\gamma\in H^0(X,V\otimes\det(V)^*\otimes K)\simeq H^0(X,\Lambda^2V^*\otimes K)$.
\end{itemize}

\end{remark}

\begin{corollary}\label{prop: SUtoSO*} With notation as in Remark \ref{remark:SU},  the map 
\begin{equation}\label{eqn:StoU}
(L,V\otimes L^{-1},\tilde{\beta}_L,\tilde{\gamma}_L)\mapsto (L^2,V,{\beta},{\gamma})
\end{equation}
\noi defines a map 
\begin{equation}\label{map:SUtoU}
\mathcal{M}_l(\SU(1,3))\rightarrow\mathcal{M}_{2l,2l}(\U(1,3))\ ,
\end{equation}
\item and the map 
\begin{equation}\label{eqn:even3}
(L,V\otimes L^{-1},\tilde{\beta}_L,\tilde{\gamma}_L)\mapsto (V,{\beta},{\gamma})
\end{equation}
\noi defines a $2^{2g}:1$ surjective map
%
%
\begin{equation}\label{eqn:modspaces}
\mathcal{M}_l(\SU(1,3))\rightarrow\mathcal{M}_{2l}(\SO^*(6)).
\end{equation}
\noi Here $l=\deg(L)$, $\tau$ denotes the Toledo invariant, and the notation for the moduli spaces is as in (4) of Remark~\ref{rem:defs}.
\end{corollary}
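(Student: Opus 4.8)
The plan is to treat the second map \eqref{eqn:modspaces} first and then to obtain the first map \eqref{map:SUtoU} as its composition with the embedding \eqref{modSO6toU13}. By Remark~\ref{remark:SU}, an $\SU(1,3)$-Higgs bundle $(L,V\otimes L^{-1},\tilde{\beta}_L,\tilde{\gamma}_L)$ with $\deg(L)=l$ determines $V=(V\otimes L^{-1})\otimes L$, which satisfies $\det(V)=L^2$ and $\deg(V)=2l$, together with $\beta\in H^0(\Lambda^2V\otimes K)$ and $\gamma\in H^0(\Lambda^2V^*\otimes K)$. This simultaneously produces the $\SO^*(6)$-Higgs bundle $(V,\beta,\gamma)$ and its associated $\U(1,3)$-Higgs bundle $(\det(V),V,\tilde{\beta},\tilde{\gamma})=(L^2,V,\tilde{\beta},\tilde{\gamma})$ of Proposition~\ref{prop:SO6toU13}. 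First I would check that these assignments are functorial for isomorphisms, so that they descend to the moduli level; this is routine, since an isomorphism of $\SU(1,3)$-Higgs bundles induces isomorphisms of the underlying bundles compatible with all the Higgs fields in the dictionary above.

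The essential point is that the construction preserves polystability, and this I would establish by a chain of equivalences. Using the Hitchin--Kobayashi correspondence for $\SU(1,3)$-Higgs bundles (Theorem~\ref{HKcorrespond}), polystability of $(L,V\otimes L^{-1},\tilde{\beta}_L,\tilde{\gamma}_L)$ is equivalent to solvability of its trace-free $\SU(1,3)$-Hitchin equations, which by Remark~\ref{rem:SU(13)} are, after a simultaneous conformal rescaling of the two metrics, equivalent to equations~(B) of Proposition~\ref{lemma:SU13HK}. By that proposition (applicable because $\deg(V)=2l$ is even) these are in turn equivalent to equations~(A), i.e.\ the $\U(1,3)$-Hitchin equations for $(\det(V),V,\tilde{\beta},\tilde{\gamma})$, hence to polystability of that $\U(1,3)$-Higgs bundle. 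Finally Corollary~\ref{cor:SO*toU13} identifies the latter with polystability of $(V,\beta,\gamma)$ as an $\SO^*(6)$-Higgs bundle. Consequently \eqref{eqn:modspaces} is well defined, and \eqref{map:SUtoU} is its composition with the embedding \eqref{modSO6toU13}; the image lands in the diagonal-degree component $\mathcal{M}_{2l,2l}(\U(1,3))$ because $\deg(L^2)=\deg(V)=2l$.

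It remains to verify surjectivity and the fibre count for \eqref{eqn:modspaces}. Given any polystable $\SO^*(6)$-Higgs bundle $(V,\beta,\gamma)$ of degree $2l$, the line bundle $\det(V)$ admits a square root $L$ of degree $l$, and $(L,V\otimes L^{-1},\tilde{\beta}_L,\tilde{\gamma}_L)$ is then a polystable $\SU(1,3)$-Higgs bundle mapping to it, giving surjectivity. Any preimage of $[(V,\beta,\gamma)]$ must have rank-one part a square root of $\det(V)$, and conversely each such square root yields a preimage as above. Distinct square roots $L_1\neq L_2$ give non-isomorphic $\SU(1,3)$-Higgs bundles, since any isomorphism between them would in particular identify their rank-one summands, forcing $L_1\cong L_2$. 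Hence each fibre is a torsor under the $2$-torsion subgroup $\Pic^0(X)[2]\cong(\ZZ/2)^{2g}$, so the map is exactly $2^{2g}:1$.

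The main obstacle is the bookkeeping in the polystability chain: one must correctly match Proposition~\ref{lemma:SU13HK}, which is phrased at the level of the Hitchin equations, with the genuine $\SU(1,3)$-stability notion. This requires invoking the conformal-rescaling argument of Remark~\ref{rem:SU(13)} to pass between equations~(B) and the true trace-free $\SU(1,3)$-Hitchin equations, and then applying the Hitchin--Kobayashi correspondences of Theorems~\ref{HKcorrespond} and~\ref{HKforSO*} at both ends. Everything else is formal.
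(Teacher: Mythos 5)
Your proposal is correct and follows essentially the same route as the paper's proof: both establish that the maps preserve polystability by chaining the Hitchin--Kobayashi correspondences through Proposition~\ref{lemma:SU13HK} with Remark~\ref{rem:SU(13)} (relating the $\SU(1,3)$ and $\U(1,3)$ equations) and then Proposition~\ref{prop:eqequiv} (equivalently your Corollary~\ref{cor:SO*toU13}, relating $\U(1,3)$ and $\SO^*(6)$), and both obtain surjectivity and the $2^{2g}$ multiplicity from the choices of square roots of $\det(V)$. Your added detail on why distinct square roots give non-isomorphic preimages merely fills in what the paper leaves implicit.
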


\begin{proof} The tuple $(L^2,V,{\beta},{\gamma})$ clearly defines a
  $\U(1,3)$-Higgs bundle with $\deg(L^2)=\deg(V)=2l$, while remark
  \ref{remark:SU} shows that $(V,\beta,\gamma)$ defines a
  $\SO^*(6)$-Higgs bundle.  In order to show that the given maps
  induces maps between the indicated moduli spaces we need to show
  that the maps preserve polystability.  We do this by invoking the
  Hitchin--Kobayashi correspondences for $\SU(1,3)$-, $\U(1,3)$-, and
  $\SO^*(6)$-Higgs bundles, i.e. we show that the map preserves the
  conditions for existence of solutions to the Hitchin equations for
  the Higgs bundles.  Moreover, by Proposition \ref{lemma:SU13HK} together
  with remark \ref{rem:SU(13)}, $(L, V\otimes
  L^{-1},\tilde{\beta}_L,\tilde{\gamma}_L)$ admits a solution to the
  $\SU(1,3)$-Hitchin equations if and only if $(L^2=\det(V),
  V,\beta,\gamma)$ admits a solution to the $\U(1,3)$-Hitchin
  equations; and by Proposition \ref{prop:eqequiv}, $(\det(V),
  V,\beta,\gamma)$ admits a solution to the $\U(1,3)$-Hitchin
  equations if and only if $(V,\beta,\gamma)$ admits a solution to the
  $\SO^*(6)$-Hitchin equations.

Finally, take any point in $\mathcal{M}_{2l}(\SO^*(6))$, represented say by
$(V,{\beta},{\gamma})$. For any $L$ such that $L^2=\det(V)$, the
$\SU(1,3)$-Higgs bundles $(L, V\otimes L^{-1},\tilde{\beta}_L,\tilde{\gamma}_L)$ is in the pre-image of $(V,{\beta},{\gamma})$ under the map.  This shows that the map is surjective. The multiplicity comes from choices of square roots of $\det(V)$.
\end{proof}

In addition to the maps \eqref{modSO6toU13}, \eqref{map:SUtoU}, and
\eqref{eqn:modspaces}, we have the surjective map (see
\cite{bradlow-garcia-prada-gothen:2003})
\begin{equation}\label{map:UtoPU}
  \begin{aligned}
    \mathcal{M}_{l,b}(\U(1,3)) &\to \mathcal{M}_{\tau}(\PU(1,3)),\\
    (L,W,\beta,\gamma) &\mapsto (\mathbb{P}(L\oplus W),\beta,\gamma),
  \end{aligned}
\end{equation}
\noi where $l=\deg(L), b=\deg(W), $ and $\tau= (3l-b)/2$.  Conversely, any $\PU(1,3)$-Higgs bundle in $\mathcal{M}_{\tau}(\PU(1,3))$ is in the image of such a map, where the degrees $(l,b)$ are determined only up to the $\Z$-action $(l,b)\mapsto (l+k, b+3k)$. This corresponds to twisting $L\oplus W$ by a line bundle of degree $k$. 

These maps lead to the following relations among Higgs bundles for the groups $\SO^*(6), \SU(1,3)$, and $\PU(1,3)$. 

\begin{proposition}\label{Higgs-lifts}\hfill

\begin{enumerate}
\item The composition of maps \eqref{map:UtoPU} and \eqref{modSO6toU13} defines a surjective map
\begin{equation}\label{map:SOtoPU}
\mathcal{M}_{d}(\SO^*(6))\mapsto \mathcal{M}_{d}(\PU(1,3))\ .
\end{equation}
Moreover a $\PU(1,3)$-Higgs bundle in $\mathcal{M}_{\tau}(\PU(1,3))$ is in the image of such a map if and only if $\tau$ is an integer.

\item The composition of maps \eqref{map:UtoPU} and \eqref{map:SUtoU} defines a surjective map
\begin{equation}\label{map:SUtoPU}
\mathcal{M}_{d}(\SU(1,3))\mapsto \mathcal{M}_{2d}(\PU(1,3))\ .
\end{equation}
Moreover a  $\PU(1,3)$-Higgs bundle in $\mathcal{M}_{\tau}(\PU(1,3))$ is in the image of such a map if and only if $\tau$ is an even integer.

\item A $\SO^*(6)$-Higgs bundle in $\mathcal{M}_d(\SO^*(6))$ lies in
  the image of a map of the form \eqref{eqn:modspaces} if and only if $d$ is an even integer.
\end{enumerate}

\end{proposition}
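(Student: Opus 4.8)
The plan is to reduce all three assertions to a single statement about when a $\PU(1,3)$-Higgs bundle lifts to a $\U(1,3)$-Higgs bundle $(L,W,\beta,\gamma)$ (with $L$ the line bundle and $W$ the rank-three bundle) satisfying $L\cong\det W$. First I would record the images of the three constituent maps. By Corollary~\ref{cor:SO*toU13} the embedding \eqref{modSO6toU13} identifies $\mathcal{M}_d(\SO^*(6))$ with the sublocus of $\mathcal{M}_{d,d}(\U(1,3))$ on which $L=\det W$. By Corollary~\ref{prop: SUtoSO*} the map \eqref{map:SUtoU} has image the sublocus of $\mathcal{M}_{2l,2l}(\U(1,3))$ on which, again, $L=\det W$, but now with $\deg W$ even (indeed its output line bundle is $L^2=\det V$, the extra constraint over \eqref{modSO6toU13} being exactly the even degree). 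Finally \eqref{map:UtoPU} is surjective onto $\mathcal{M}_\tau(\PU(1,3))$ with $\tau=(3\deg L-\deg W)/2$, two lifts of the same $\PU(1,3)$-Higgs bundle differing by the twisting action $(\deg L,\deg W)\mapsto(\deg L+k,\deg W+3k)$ induced by tensoring $L\oplus W$ by a line bundle of degree $k$.

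The key step is the claim that a $\PU(1,3)$-Higgs bundle of Toledo invariant $\tau$ admits a $\U(1,3)$-lift with $L\cong\det W$ if and only if $\tau\in\Z$, and that any such lift has $\deg W=\tau$. Necessity is immediate: $L\cong\det W$ forces $\deg L=\deg W$, whence $\tau=(3\deg L-\deg W)/2=\deg W\in\Z$. For sufficiency I would start from an arbitrary lift $(L_0,W_0)$ and twist by a line bundle $M$ of degree $k$. Requiring both new degrees to equal $\tau$ fixes $k=\tau-\deg L_0$, and this is consistent precisely because $\tau=(3\deg L_0-\deg W_0)/2$. The remaining condition $L_0\otimes M\cong\det(W_0\otimes M)=\det W_0\otimes M^3$ is equivalent to $M^{2}\cong L_0\otimes(\det W_0)^{-1}$; the degree of the right-hand side is $\deg L_0-\deg W_0=2k$, matching $\deg(M^2)$, so by divisibility of $\Pic(X)$ a square root $M$ of the prescribed degree exists. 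Since twisting does not change the associated $\PU(1,3)$-Higgs bundle, this yields the desired lift, and by construction $\deg W=\tau$.

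Granting this, the three parts follow by tracking parity, and polystability is preserved throughout by the Hitchin--Kobayashi correspondences already invoked in Corollaries~\ref{cor:SO*toU13} and \ref{prop: SUtoSO*} together with the fact that projectivization and line-bundle twisting preserve polystability. For (1), a $\PU(1,3)$-Higgs bundle lies in the image of \eqref{map:UtoPU}$\circ$\eqref{modSO6toU13} exactly when it has a lift with $L=\det W$, i.e. when $\tau\in\Z$; such a lift has $\deg W=\tau=d$ and descends through \eqref{modSO6toU13} to an $\SO^*(6)$-Higgs bundle of degree $d$, giving surjectivity onto $\mathcal{M}_d(\PU(1,3))$ and the integrality characterization. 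For (2), the image of \eqref{map:SUtoU} imposes the further constraint that $\deg W$ be even, so the composition hits precisely the bundles admitting a lift with $L=\det W$ and $\deg W=\tau$ even, i.e. $\tau\in 2\Z$; here one also checks that the even-degree line bundle $\det W$ is a square, so the $\SU(1,3)$ square-root datum needed by \eqref{map:SUtoU} exists. Part (3) is then immediate from Corollary~\ref{prop: SUtoSO*}, since \eqref{eqn:modspaces} is surjective with image $\mathcal{M}_{2l}(\SO^*(6))$, so an $\SO^*(6)$-Higgs bundle lies in its image iff its degree is even. The main obstacle I anticipate is the degree bookkeeping under twisting and the extraction of the square-root twist $M$ of the correct degree, which is exactly the point where the integrality (resp.\ evenness) of $\tau$ enters.
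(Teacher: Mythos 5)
Your proposal is correct and follows essentially the same route as the paper: both arguments reduce everything to producing a $\U(1,3)$-lift with $L\cong\det W$ by twisting an arbitrary lift with a line bundle extracted as a square root in $\Pic(X)$, and then track the parity of $\tau$ (respectively $d$) through the maps \eqref{modSO6toU13}, \eqref{map:SUtoU} and \eqref{map:UtoPU}. The only cosmetic difference is that you achieve the normalization $\deg L=\deg W$ and $L\cong\det W$ with a single twist, where the paper twists twice (first to equalize degrees, then by a degree-zero square root), and you spell out the necessity direction that the paper leaves as ``clear from the definitions.''
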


\begin{proof}

(1) The map to $\mathcal{M}_{\tau=d}(\PU(1,3))$ is surjective since
  $\PU(1,3)$-Higgs bundles with $\tau=d$ lift to $\U(1,3)$-Higgs bundles of
  the form $(L,W,\beta,\gamma)$ with $3\deg(L)-\deg(W)=2d$ (see (\ref{map:UtoPU})). After twisting
  with a line bundle if necessary, we can assume that
  $\deg(L)=\deg(W)=d$. Furthermore, we can assume that $L=\det(W)$ since if
  not, then twisting by a square root of $\det(V)\otimes L^{-1}$ will make it so. The assertion that $\tau$ must be an even integer is clear from the definitions of the maps  \eqref{map:UtoPU} and \eqref{map:SUtoU}.

(2) As in (1), any $\PU(1,3)$-Higgs bundles with $\tau=4d$ lift to $\U(1,3)$-Higgs bundles of the form $(\det(W),W,\beta,\gamma)$. Such a Higgs bundle is in the image of \eqref{map:SUtoU} if and only if $\deg(\det(W))$ is even. This condition is satisfied precisely when $\deg(W)=2d$.

(3) This follows from the fact that the map is defined by \eqref{eqn:even3} in which $\det(V)=L^2$ and hence $\deg(V)=2\deg(L)$. 
\end{proof}

Expressed in terms of the corresponding surface group representations, Proposition \ref{Higgs-lifts} gives conditions under which reductive surface group representations into $\PU(1,3), \SO^*(6)$ or $\SU(1,3)$ lift from one group to another.  

\begin{proposition}\label{Reps-lifts}\hfill

\begin{enumerate}
\item A reductive surface group representation into $\PU(1,3)$ lifts to a representation into $\SO^*(6)$ if and only if the Toledo invariant of the associated $\PU(1,3)$-Higgs bundle is an integer.

\item A reductive surface group representation into $\PU(1,3)$ lifts to a representation into $\SU(1,3)$ if and only if the Toledo invariant of the associated $\PU(1,3)$-Higgs bundle is an even integer.

\item A reductive surface group representation into $\SO^*(6)$ lifts to a representation into $\SU(1,3)$ if and only if the Toledo invariant of the associated $\SO^*(6)$-Higgs bundle is an even integer.
\end{enumerate}

\end{proposition}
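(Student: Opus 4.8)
The plan is to deduce all three statements from their Higgs-bundle counterparts in Proposition~\ref{Higgs-lifts} by invoking the non-abelian Hodge theory correspondence together with its functoriality under the central isogenies relating the three groups. Recall that $\SU(1,3)$, $\SO^*(6)$ and $\PU(1,3)$ share the common adjoint form $\PU(1,3)$, with central isogenies $\SU(1,3)\to\SO^*(6)\to\PU(1,3)$ whose kernels are $\Z/2$ at each stage (so $\Z/4$ overall). For each of these groups $G$, the non-abelian Hodge correspondence (as in Proposition~\ref{Md-Rd} for $\SO^*(2n)$, together with its analogues for $\SU(1,3)$ and $\PU(1,3)$; see \cite{bradlow-garcia-prada-mundet:2003,GGM}) identifies the moduli space of reductive representations $\pi_1(X)\to G$, organised by the Toledo invariant, with the moduli space of polystable $G$-Higgs bundles. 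Since the relevant kernels are finite and central, reductivity is preserved along these isogenies in both directions, so the correspondence applies simultaneously to a representation and to any candidate lift.

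First I would record the decisive functoriality statement: if $p\colon \tilde G\to G$ is one of the isogenies above, then a reductive $\rho\colon\pi_1(X)\to G$ lifts to $\tilde G$ if and only if the polystable $G$-Higgs bundle associated to $\rho$ lies in the image of the map on moduli spaces induced, via the correspondence, by $p$. This holds because a harmonic metric solving the $G$-Hitchin equation produces the flat $G$-connection $\rho$ directly, so post-composing a solution of the $\tilde G$-Hitchin equation with $p$ yields exactly the flat connection $p\circ\tilde\rho$, and conversely a flat lift furnishes, through its harmonic metric, a $\tilde G$-Higgs bundle mapping to the given one. The real content is that the moduli-space maps \eqref{map:SOtoPU}, \eqref{map:SUtoPU} and \eqref{eqn:modspaces} appearing in Proposition~\ref{Higgs-lifts} are precisely those induced by $\SO^*(6)\to\PU(1,3)$, $\SU(1,3)\to\PU(1,3)$ and $\SU(1,3)\to\SO^*(6)$; this is guaranteed by the fact that these maps were built so as to carry solutions of one Hitchin equation to solutions of the other (Propositions~\ref{prop:eqequiv} and~\ref{lemma:SU13HK}), and such a solution is exactly the harmonic-metric datum that the Hodge correspondence uses to pass between Higgs bundles and flat connections.

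With this in hand, each part follows by reading off the image characterisations in Proposition~\ref{Higgs-lifts}. For (1), $\rho\colon\pi_1(X)\to\PU(1,3)$ lifts to $\SO^*(6)$ iff its associated Higgs bundle lies in the image of \eqref{map:SOtoPU}, which by Proposition~\ref{Higgs-lifts}(1) occurs iff the Toledo invariant $\tau$ is an integer. For (2), lifting to $\SU(1,3)$ corresponds to the image of \eqref{map:SUtoPU}, characterised in Proposition~\ref{Higgs-lifts}(2) as the locus where $\tau$ is an even integer. For (3), lifting a representation into $\SO^*(6)$ to $\SU(1,3)$ corresponds to the image of \eqref{eqn:modspaces}, which by Proposition~\ref{Higgs-lifts}(3) is the locus where the Toledo invariant $d=\deg V$ is even.

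The main obstacle I anticipate is not the bookkeeping of the parity conditions, which Proposition~\ref{Higgs-lifts} hands us, but the functoriality step: one must argue carefully that the explicitly defined bundle maps (built from the isomorphism $\Lambda^2V^*\cong\det(V)^*\otimes V$ and its companions) genuinely correspond, under the Hodge correspondence, to post-composition with the group isogenies rather than to some twist of them. The safest route is to phrase the lifting obstruction group-theoretically: for a central extension $1\to A\to\tilde G\to G\to 1$ with $A$ finite abelian and $X$ aspherical, the obstruction to lifting $\rho$ lives in $H^2(\pi_1(X);A)\cong H^2(X;A)\cong A$, and one then checks that this class is determined by, and agrees with, the integrality (resp.\ parity) of the Toledo invariant identified on the Higgs side. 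Matching these two incarnations of the same $\Z/2$- (resp.\ $\Z/4$-) valued obstruction is the single point requiring genuine care; everything else is formal.
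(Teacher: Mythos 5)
Your proposal is correct and follows essentially the same route as the paper: the paper offers no separate argument for Proposition~\ref{Reps-lifts}, treating it as the immediate translation of Proposition~\ref{Higgs-lifts} through the non-abelian Hodge correspondence (Proposition~\ref{Md-Rd} and its analogues for $\SU(1,3)$ and $\PU(1,3)$), which is exactly your strategy. Your additional discussion of why the explicit bundle maps \eqref{map:SOtoPU}, \eqref{map:SUtoPU}, \eqref{eqn:modspaces} are the ones induced by the group isogenies --- via the fact that they carry solutions of one Hitchin equation to solutions of the other (Propositions~\ref{prop:eqequiv} and~\ref{lemma:SU13HK}) --- makes explicit a point the paper leaves implicit, and is a welcome clarification rather than a deviation.
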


\subsubsection{Maximal components} By Proposition
\ref{non-emptiness-higgs}, the moduli spaces
$\mathcal{M}_{d}(\SO^*(6))$ are non-empty for $|d|\le 2g-2$.   The
maximal components are thus those with $\abs{d}=2g-2$ (and these are
connected by Theorem~\ref{theorem: main1}). We discuss here only the case $d=2g-2$, but the case $d=-(2g-2)$ is analogous. 

By Theorem \ref{rigidityso*2n}, the moduli spaces  $ \mathcal{M}_{2g-2} (\SO^*(6))$ exhibit a rigidity which leads to the factorization
\begin{equation}\label{eqn: so6rigidity}
\mathcal{M}_{2g-2} (\SO^*(6))\simeq \mathcal{M}_{2g-2}(\SO^*(4))\times \mathrm{Jac}(X)
\end{equation}
\noi given by
\begin{equation}
(V,\beta,\gamma)=(V_{\perp},\beta,\gamma)\oplus \ker(\gamma)\ .
\end{equation}
%
\begin{equation}
T_4: M_0(2)\times \mathcal{M}_{g-1}(\SL(2,\R))\longrightarrow \mathcal{M}_{2g-2}(\SO^*(4))
\end{equation}
\noi given by
\begin{equation}
 (U, (K^{1/2},\beta,1_{K^{1/2}}))\mapsto (U\otimes K^{1/2},\Omega\otimes\beta,\Omega^*\otimes 1_{K^{1/2}})\ ,
\end{equation}
\noi where $\beta\in H^0(X,K^2)$, $1_{K^{1/2}}$ denotes the identity map on $K^{1/2}$, and $\Omega: U^*\simeq U$ is as in Lemma \ref{lemma:SL2}.  

We thus get a  $2^{2g}$-fold covering of $ \mathcal{M}_{2g-2} (\SO^*(6))$ 
\begin{equation}\label{eqn:direct}
T_6: M_0(2)\times \mathcal{M}_{g-1}(\SL(2,\R))\times\mathrm{Jac}(X)
\longrightarrow \mathcal{M}_{2g-2}(\SO^*(6))\ .
\end{equation}

\begin{remark} A choice of $K^{1/2}$ defines a section for the map $T_4$ --- and hence for $T_6$ --- and picks out a Teichm\"uller component of $\mathcal{M}_{g-1}(\SL(2,\R))$.
\end{remark}

We get a different description of the maximal components if we exploit
the embedding of $\mathcal{M}_{2g-2}(\SO^*(6))$ in
$\mathcal{M}_{(2g-2,2g-2)}(\U(1,3))$ given
by Corollary~\ref{cor:SO*toU13}:
$$(V,{\beta},{\gamma})\mapsto (\det(V),V, \tilde{\beta},\tilde{\gamma}).$$
\noi As shown in  \cite{bradlow-garcia-prada-gothen:2005}, the
component $\mathcal{M}_{(2g-2,2g-2)}(\U(1,3))$ has maximal Toledo
invariant for $\U(1,3)$-Higgs bundles and, moreover, this moduli space
itself exhibits a rigidity. Indeed (see Theorem 3.32 in \cite{bradlow-garcia-prada-gothen:2005}) the component $\mathcal{M}_{(2g-2,2g-2)}(\U(1,3))$  factors as
\begin{equation}\label{eqn:byU13rigid}
\mathcal{M}_{(2g-2,2g-2)} (\U(1,3))\simeq \mathcal{M}_{(2g-2,0)}(\U(1,1))\times M_{2g-2}(2),
\end{equation}
\noi where $\mathcal{M}_{(2g-2,0)}$ denotes the moduli space of
$\U(1,1)$-Higgs bundles $(L,M,\beta,\gamma)$ with $\deg(L)=2g-2$ and
$\deg(M)=0$, and $M_{d}(2)$ denotes the moduli space of polystable rank 2 bundles of degree $d$.  The factorization is given by
\begin{equation}\label{eqn:U13decomp}
 (L,W,\beta,\gamma)=(L,L\otimes K^{-1},\beta,1_L)\oplus Q
\end{equation}
\noi where $W=L\otimes K^{-1}\oplus Q$.  Notice that $L=\det(W)$ if and only
if $\det(Q)=K$.  In that case, for any choice of $K^{-1/2}$ the determinant of
$Q\otimes K^{1/2}$ is trivial and we can write $Q=U\otimes K^{1/2}$ with $\det(U)=\mathcal{O}$. 
The image of the embedding of  $\mathcal{M}_{2g-2}(\SO^*(6))$ in
$\mathcal{M}_{(2g-2,2g-2)}(\U(1,3))$ is thus characterized by the condition
that $Q=U\otimes K^{1/2}$ with $\det(U)=\mathcal{O}$ in \eqref{eqn:U13decomp}.   We define 
\begin{equation}\label{eqn:MK2}
M_K(2)=\{Q\in M_{2g-2}(2)\ |\ \det(Q)=K\}.
\end{equation}
\noi The Toledo invariant is maximal for
$\mathcal{M}_{(2g-2,0)}(\U(1,1))$ and hence, by Proposition 3.30 in
\cite{bradlow-garcia-prada-gothen:2005} we can identify
$\mathcal{M}_{(2g-2,0)}(\U(1,1))$ with the moduli space of degree
zero,  $K^2$-twisted $\C^*$-Higgs bundles\footnote{Note that
  $\C^*=\GL(1,\C)$ so a $K^2$-twisted $\C^*$-Higgs bundle is a pair $(L,\beta)$
  consisting of a line bundle $L$ and a section $\beta\in H^0(X,K^2)$}, i.e.
\begin{equation}\label{eqn:U11Cayley}
  \begin{aligned}
    \mathcal{M}_{(2g-2,0)}(\U(1,1)) &\xra{\simeq} \mathrm{Jac}(X)\times
    H^0(X,K^2), \\
    (L,M,\beta,\gamma) &\mapsto (L,\beta\circ\gamma).
  \end{aligned}
\end{equation}
\noi Putting together \eqref{eqn:MK2}, \eqref {eqn:U11Cayley} and  \eqref{eqn:byU13rigid} we thus get an identification of the image of $\mathcal{M}_{2g-2}(\SO^*(6))$ in $\mathcal{M}_{(2g-2,2g-2)}(\U(1,3))$ as
\begin{equation}\label{eqn:byU13}
\mathcal{M}_{2g-2}(\SO^*(6))\simeq \mathrm{Jac}(X)\times H^0(X,K^2)\times M_K(2).
\end{equation}

Comparing \eqref{eqn:direct} and \eqref{eqn:byU13} we see that the two descriptions match up via the map
$$(U, (K^{1/2},\beta, 1), L_0)\longrightarrow (L_0,\beta, Q=U\otimes K^{1/2}).$$
\noi The fibers of this map are the $2^{2g}$ points of order 2 in $\mathrm{Jac}(X)$.

\bigskip

We note finally that the dimension of $\mathcal{M}_{\pm(2g-2)}(\SO^*(6))$ can be computed from the isomorphism \eqref{eqn: so6rigidity}. We find $\dim(\mathcal{M}_{\pm(2g-2)}(\SO^*(6))=7g-6$ whereas the expected dimension is $15(g-1)$.

\appendix

\section{$G$-Higgs bundles for other groups}
\label{appendix:G-Higgs}

We collect here some basic results about $G$-Higgs bundles for groups
other than $\SO^*(2n)$ which play a role in our analysis of
$\SO^*(2n)$-Higgs bundles.  The groups include three complex reductive
groups ($\GL(n,\C)$, $\SL(n,\C)$ and $\SO(n,\C)$) and two non-compact real forms
($\U(p,q)$ and $\U^*(2n)$).  
In all cases the basic
definitions of stability properties follow from the general definition
formulated for $G$-Higgs bundles in \cite{garcia-prada-gothen-mundet:2009a}.

\subsection{The groups $\GL(n,\C), \SL(n,\C)$ and $\SO(n,\C)$}\hfill

We begin by recalling how the notion of $G$-Higgs bundle specializes
when $G$ is a complex group. In this case, the complexified isotropy
representation is just the adjoint representation of $G$ on
$\lie{g}$. Thus, a $G$-Higgs bundle for a complex group $G$ is a pair
$(E,\varphi)$, where $E \to X$ is a holomorphic principal $G$-bundle
and $\varphi\in H^0(X,\Ad E \otimes K)$; here $\Ad E =
E\times_{\Ad}\lieg$ is the adjoint bundle of $E$. We shall use this
observation for all three groups considered in this section.

Consider first the case of $G=\GL(n,\C)$. A  $\GL(n,\CC)$-Higgs
bundle may be viewed as a pair consisting of a rank $n$ holomorphic vector
bundle $E$ over $X$ and a holomorphic section
$$
 \Phi \in H^0(X,K\otimes \End E).
$$
We refer the reader to \cite{garcia-prada-gothen-mundet:2009a} for the
general statement of the stability conditions for 
$\GL(n,\CC)$-Higgs bundles. The notions of (semi-,poly-)stability  
in this case are  equivalent
to the  original notions given by Hitchin in \cite{hitchin:1987}
(see \cite{garcia-prada-gothen-mundet:2009a}).
Denote by $\mu(E) = \deg(E) / \rk(E)$ the slope of $E$.

\begin{proposition}
  \label{thm:sl(n,C)-stability} A $\GL(n,\CC)$-Higgs bundle $(E,\Phi)$
  is {semistable} if and only if for any subbundle $E'\subset
  E$ such that $\Phi(E')\subset E' \otimes K$ we have $\mu(E')\leq
  \mu(E)$.  Furthermore, $(E,\Phi)$ is {stable} if for any nonzero and
  strict subbundle $E'\subset E$ such that $\Phi(E')\subset E' \otimes
  K$ we have $\mu(E')<\mu(E)$. Finally, $(E,\Phi)$ is {polystable} if it
  is semistable and for each subbundle $E'\subset E$ such that
  $\Phi(E')\subset E' \otimes K$ and $\mu(E')=\mu(E)$ there is another
  subbundle $E''\subset E$ satisfying $\Phi(E'')\subset E'' \otimes K$
  and $E=E'\oplus E''$.  As a consequence $(E,\Phi)=\oplus (E_i,\Phi_i)$
  where $(E_i,\Phi_i)$ is a stable $\GL(n_i,\C)$-Higgs bundle with
  $\mu(E_i)=\mu(E)$.
\end{proposition}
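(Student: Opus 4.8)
The plan is to argue by induction on the rank $n=\rk(E)$, taking the first three characterizations in the Proposition as available (they are the specialization to the adjoint representation of $\GL(n,\C)$ of the general stability formalism of \cite{garcia-prada-gothen-mundet:2009a} and coincide with Hitchin's original notions). The base case $n=1$ is immediate: a Higgs line bundle has no nonzero proper subbundle and is therefore vacuously stable, with $\mu(E_1)=\mu(E)$.

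For the inductive step I would suppose $(E,\Phi)$ is polystable of rank $n$ and that the statement holds in all smaller ranks. If $(E,\Phi)$ is stable there is nothing to prove. Otherwise it is strictly semistable, so there is a nonzero proper $\Phi$-invariant subbundle $E'\subset E$ with $\mu(E')\ge\mu(E)$; semistability forces $\mu(E')=\mu(E)$. Polystability then provides a $\Phi$-invariant $E''$ with $E=E'\oplus E''$, and from $\deg E=\deg E'+\deg E''$ together with additivity of rank one gets $\mu(E'')=\mu(E)$ as well. Both summands have rank strictly less than $n$, so the inductive hypothesis will close the argument as soon as I know each summand is itself polystable.

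The heart of the proof is therefore the lemma: if $(E,\Phi)=(E_1,\Phi_1)\oplus(E_2,\Phi_2)$ is polystable with $\mu(E_1)=\mu(E_2)=\mu(E)$, then each $(E_i,\Phi_i)$ is polystable. Semistability of $(E_1,\Phi_1)$ is clear, since a $\Phi_1$-invariant subbundle of $E_1$ is $\Phi$-invariant in $E$, hence of slope $\le\mu(E)=\mu(E_1)$. For the polystability clause I take a $\Phi_1$-invariant $F\subset E_1$ with $\mu(F)=\mu(E_1)=\mu(E)$; viewing $F$ inside $E$ and invoking polystability of $(E,\Phi)$ yields a $\Phi$-invariant $G$ with $E=F\oplus G$. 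Writing $p\colon E\to F$ for the projection along $G$, which is a morphism of Higgs bundles because $F$ and $G$ are both $\Phi$-invariant, I would restrict $p$ to $E_1$: its kernel is the $\Phi_1$-invariant subbundle $H:=E_1\cap G$ (the kernel of a bundle map on a smooth curve, hence saturated), while $F\subset E_1$ with $p|_F=\mathrm{id}_F$ shows $p|_{E_1}$ is surjective with the inclusion $F\hookrightarrow E_1$ as a section. The resulting short exact sequence splits, giving $E_1=F\oplus H$ with $H$ a $\Phi_1$-invariant complement, which is exactly the polystability condition for this $F$. As $F$ was arbitrary, $(E_1,\Phi_1)$ is polystable, and symmetrically so is $(E_2,\Phi_2)$.

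With the lemma established the induction closes: $(E',\Phi|_{E'})$ and $(E'',\Phi|_{E''})$ are polystable of rank $<n$ and slope $\mu(E)$, so each decomposes into stable Higgs bundles of slope $\mu(E)$ by the inductive hypothesis, and concatenating the two decompositions exhibits $(E,\Phi)=\bigoplus_i(E_i,\Phi_i)$ with the $(E_i,\Phi_i)$ stable and $\mu(E_i)=\mu(E)$. I expect the main obstacle to be precisely the lemma — specifically the step showing that the complement produced by \emph{global} polystability of $E$ can be cut down to a genuine $\Phi_1$-invariant subbundle of the summand $E_1$ (a subbundle splitting, not merely a subsheaf), which is what the projection-and-section argument is designed to guarantee.
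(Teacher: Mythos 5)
Your proof is correct, but note that the paper itself gives no proof of this proposition: it appears in the appendix, with the three stability characterizations presented as specializations of the general notions of \cite{garcia-prada-gothen-mundet:2009a} (equivalent to Hitchin's original ones in \cite{hitchin:1987}), and the final decomposition assertion stated as an unproved consequence. You take the characterizations as given --- which matches the paper's logical structure --- and supply the missing argument for the decomposition, via induction on rank together with the lemma that an equal-slope $\Phi$-invariant direct summand of a polystable Higgs bundle is itself polystable. The one genuinely delicate step, producing a $\Phi_1$-invariant holomorphic complement of $F$ inside the summand $E_1$ from the complement $G$ that polystability of the ambient $(E,\Phi)$ provides, is handled correctly: the projection $p\colon E\to F$ along $G$ commutes with $\Phi$ because $F$ and $G$ are both $\Phi$-invariant; its restriction to $E_1$ is surjective with the inclusion $F\hookrightarrow E_1$ as a holomorphic section; and its kernel $E_1\cap G$ is a genuine subbundle (not merely a subsheaf) because $E_1/(E_1\cap G)$ embeds in $F$, hence is torsion-free, hence locally free on a curve. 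Two minor points worth recording: the paper states the stability clause only as a sufficient condition (``is stable if''), and your induction uses precisely its contrapositive (non-stable plus semistable yields an invariant proper nonzero subbundle of slope equal to $\mu(E)$), so your argument is consistent with the proposition as literally written; and your method is the standard Jordan--H\"older-type argument one would find in the cited references, so what your route buys is self-containedness, while the paper's citation-only treatment buys brevity.
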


The group $\SL(n,\C)$ is the subgroup of $\GL(n,\C)$ defined by the
usual condition on the determinant.  A $\SL(n,\CC)$-Higgs bundle may
thus be viewed as a $\GL(n,\C)$-Higgs bundle $(E,\Phi)$ with the extra
conditions that $E$ is endowed with a trivialization $\det E\simeq\cO$
and $\Phi \in H^0(X,K\otimes \End_0 E)$ where $\End_0 E$ denotes the
bundle of traceless endomorphisms of $E$. The (semi-,poly-)stability
condition is the same as the one for $\GL(n,\CC)$-Higgs bundles given
in Proposition~\ref{thm:sl(n,C)-stability}.

Finally we consider the case $G=\SO(n,\C)$. A principal
$\SO(n,\CC)$-bundle on $X$ corresponds to a rank $n$ holomorphic
orthogonal vector bundle $(E,Q)$, where $E$ is a rank $n$
vector bundle and $Q$ is a holomorphic section of $S^2E^*$ whose
restriction to each fibre of $E$ is non degenerate. The adjoint bundle
can be identified with $\Lambda^2_QE \subset \End(E)$, the subbundle
of $\End(E)$ consisting of endomorphisms which are skew-symmetric with
respect to $Q$. A $\SO(n,\CC)$-Higgs bundle is thus a pair consisting
of a rank $n$ holomorphic orthogonal vector bundle $(E,Q)$ over $X$
and a section
$$
\Phi \in H^0(X,\Lambda^2_Q E\otimes K).
$$

The general notions of (semi-,poly-)stability specialize in the case
of $\SO(n,\CC)$-Higgs bundles to the following (see
\cite{aparicio,garcia-prada-aparicio}).

\begin{proposition}
\label{thm:sp(2n,C)-stability} A $\SO(n,\CC)$-Higgs
bundle $((E,Q),\Phi)$ is {semistable} if and only if for any
isotropic subbundle $E'\subset E$ such that $\Phi(E')\subset
K\otimes E'$ we have $\deg E'\leq 0$. Furthermore,
$((E,Q),\Phi)$ is {stable} if for any nonzero and strict
isotropic subbundle $0\neq E'\subset E$ such that $\Phi(E')\subset
K\otimes E'$ we have $\deg E'<0$. Finally, $((E,Q),\Phi)$ is
{polystable} if it is semistable and for any nonzero and strict
isotropic subbundle $E'\subset E$ such that $\Phi(E')\subset
K\otimes E'$ and $\deg E'=0$ there is a coisotropic subbundle
$E''\subset E$ such that $\Phi(E'')\subset K\otimes E''$ and
$E=E'\oplus E''$.
\end{proposition}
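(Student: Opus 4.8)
The plan is to derive Proposition~\ref{thm:sp(2n,C)-stability} by specializing the general GIT stability notion for $G$-Higgs bundles of \cite{garcia-prada-gothen-mundet:2009a} to the complex group $G=\SO(n,\C)$, via the classical dictionary between reductions to parabolic subgroups and isotropic filtrations of the orthogonal bundle $(E,Q)$. First I would recall that, since $\SO(n,\C)$ is complex, the complexified isotropy representation is the adjoint representation, so $\Phi\in H^0(\Lambda^2_Q E\otimes K)$, and that $(E,\Phi)$ is semistable precisely when for every parabolic $P\subset\SO(n,\C)$, every antidominant character $\chi$ of $\mathfrak{p}$, and every holomorphic reduction $\sigma$ of $E$ to $P$ with $\Phi\in H^0(E_\sigma(\mathfrak{p})\otimes K)$ one has $\deg(E)(\sigma,\chi)\ge 0$; stability requires strict inequality for nontrivial $(\sigma,\chi)$, and polystability additionally requires that any $(\sigma,\chi)$ of degree zero admit a further reduction to the Levi of $P$. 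The geometric input is that parabolics of $\SO(n,\C)$ in the standard representation are stabilizers of isotropic flags, so a reduction $\sigma$ to $P$ is the same datum as an isotropic filtration
$$0\subset E_1\subset\dots\subset E_k\subset E_k^{\perp}\subset\dots\subset E_1^{\perp}\subset E,$$
where each $E_i$ is isotropic and $E_i^{\perp}$ is its $Q$-orthogonal complement.

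Next I would make the numerical translation explicit. Because $Q$ is nondegenerate one has $\det E\cong(\det E)^{-1}$, hence $\deg E=0$, and $Q$ induces isomorphisms $E/E_i^{\perp}\cong E_i^{*}$, giving $\deg(E/E_i^{\perp})=-\deg E_i$ and $\deg E_i^{\perp}=\deg E_i$. Feeding the antidominant weights into the degree formula (the principal-bundle analogue of $d(\VVV,\lambda)$) and using the $\pm$-symmetry of the weights forced by $Q$, I would obtain
$$\deg(E)(\sigma,\chi)=\sum_i m_i\,(-\deg E_i),\qquad m_i\ge 0,$$
so that for the maximal parabolic stabilizing a single isotropic subbundle $E'$ (weight $-1$ on $E'$, weight $+1$ on $E/E'^{\perp}$) this reduces to $\deg(E)(\sigma,\chi)=-2\deg E'$. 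For the Higgs-field compatibility I would note that $\Phi\in H^0(E_\sigma(\mathfrak{p})\otimes K)$ means $\Phi$ preserves each step of the flag, and that since $\Phi$ is $Q$-skew the single condition $\Phi(E_i)\subset E_i\otimes K$ already forces $\Phi(E_i^{\perp})\subset E_i^{\perp}\otimes K$; thus flag-invariance of $\Phi$ is equivalent to $\Phi$-invariance of each isotropic subbundle in the flag.

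With this dictionary the equivalence becomes a convexity argument. The ``only if'' direction is immediate: specializing to maximal parabolics turns semistability into $\deg E'\le 0$ (resp.\ $<0$ for stability) for every $\Phi$-invariant isotropic $E'$. For the ``if'' direction, the displayed expression writes $\deg(E)(\sigma,\chi)$ as a nonnegative combination of the single-subbundle quantities $-\deg E_i$; since each $E_i$ is isotropic and $\Phi$-invariant, the hypothesis gives $-\deg E_i\ge 0$ and hence $\deg(E)(\sigma,\chi)\ge 0$. Stability follows identically, checking that some $m_i>0$ exactly when the reduction is nontrivial. For polystability I would translate the Levi-reduction condition: a degree-zero $\Phi$-invariant isotropic $E'$ produces a splitting $E=E'\oplus M\oplus E'^{\vee}$ into Levi summands (with $M=E'^{\perp}/E'$ orthogonal and $E'^{\vee}\cong E'^{*}$ isotropic, paired with $E'$ by $Q$); setting $E''=M\oplus E'^{\vee}$ gives $E=E'\oplus E''$ with $\Phi(E'')\subset E''\otimes K$, and a direct check shows $E''^{\perp}=E'^{\vee}\subset E''$, so $E''$ is coisotropic, which is exactly the asserted condition.

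The step I expect to be the main obstacle is the convexity claim, namely verifying that the degree functional on the antidominant characters of an arbitrary parabolic of $\SO(n,\C)$ really is a nonnegative combination of the fundamental maximal-parabolic contributions. This forces a careful bookkeeping of the root and weight data of $\mathfrak{so}(n)$---in particular the $\pm$-symmetry of weights and the identities $\deg E_i^{\perp}=\deg E_i$, $\deg(E_i^{\perp}/E_i)=0$---to confirm that longer isotropic flags produce no inequalities independent of the single-subbundle ones, so that checking isotropic subbundles one at a time genuinely suffices.
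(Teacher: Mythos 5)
The paper itself does not prove this proposition: it simply asserts that the general (semi-, poly-)stability notions of \cite{garcia-prada-gothen-mundet:2009a} specialize to the stated criteria and cites \cite{garcia-prada-aparicio} for the details. Your proposal is an outline of precisely that specialization argument --- parabolics of $\SO(n,\C)$ as stabilizers of isotropic flags, the degree identities $\deg E=0$ and $\deg E_i^{\perp}=\deg E_i$ forced by $Q$, $Q$-skewness of $\Phi$ propagating invariance of $E_i$ to $E_i^{\perp}$, convexity of antidominant characters reducing everything to maximal parabolics, and Levi reductions corresponding to $Q$-orthogonal splittings --- so in approach you are aligned with the paper and with the reference it points to. The semistability and stability clauses are handled correctly: specialization to maximal parabolics gives necessity, and writing $\deg(E)(\sigma,\chi)$ as a nonnegative combination $\sum_i m_i(-\deg E_i)$ gives sufficiency, with strictness when the reduction is nontrivial.

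There is, however, one genuine gap: your polystability argument runs in the opposite direction to the one asserted. You show that a Higgs bundle which is polystable in the general sense satisfies the splitting condition (a degree-zero $\Phi$-invariant isotropic $E'$ yields, via the Levi reduction of its maximal parabolic, the coisotropic $\Phi$-invariant complement $E''=M\oplus E'^{\vee}$). But the clause stated in the proposition --- and the direction needed later, e.g.\ in the polystability equivalences of Theorem~\ref{thm:stability-equivalence} --- is the sufficiency: semistability together with the one-step splitting condition implies polystability in the sense of \cite{garcia-prada-gothen-mundet:2009a}. For that you must produce a Levi reduction for an \emph{arbitrary} degree-zero pair $(\sigma,\chi)$, i.e.\ for an arbitrary $\Phi$-invariant isotropic flag: semistability forces $\deg E_i=0$ for every step with $m_i>0$, the hypothesis then splits off each such $E_i$ individually, and one still has to show these one-step splittings can be chosen compatibly so that they assemble into a single $Q$-orthogonal, $\Phi$-invariant splitting of the whole flag (typically an induction on the flag length, passing to $E_1^{\perp}/E_1$ with its induced quadratic form and Higgs field). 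Without this assembly step the characterization of polystability is not established. By contrast, the convexity bookkeeping you flag as the main obstacle, while the computational core, is standard; the missing assembly argument is the substantive omission.
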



\begin{remark}
Recall that if $(E,Q)$ is an orthogonal
vector bundle, a subbundle $E'\subset E$ is said to be isotropic
if the restriction of $Q$ to $E'$ is identically zero, and 
coisotropic if $E'^{\perp_Q}$ is isotropic.
\end{remark}

\begin{remark}  
  For complex groups $G$, Definition \ref{def:simple} implies that a
  $G$-Higgs bundle $(E,\varphi)$ is simple if
  $\Aut(E,\varphi)=Z(H^{\C})$.  For $G=\GL(n,\C)$ or $\SL(n,\C)$ it is
  well known that stability implies simplicity. This is not so for
  $\SO(n,\C)$-Higgs bundles.  For instance it is possible for a
  stable $\SO(n,\C)$-Higgs bundle to decompose as sum of stable
  $\SO(n_i,\C)$-Higgs bundles (with $\Sigma n_i=n$).  In all cases
  though, the Higgs bundles which are stable and simple represent
  smooth points in their moduli spaces (see Proposition
  \ref{prop:smoothpoint}).
\end{remark}

\subsection{The groups $\U(p,q)$ and $\U^*(2n)$}
\subsubsection{$\U(p,q)$-Higgs bundles}\label{A:U(p,q)}


The maximal compact subgroups of $\U(p,q)$ are isomorphic to
$H=\U(p)\times \U(q)$ and hence $H^{\C}=\GL(p,\C)\times\GL(q,\C)$.
The complexified isotropy representation space is
$\liemc=\Hom(\C^q,\C^p)\oplus\Hom(\C^p,\C^q)$. A $\U(p,q)$-Higgs
bundle may thus be described by the data $(V,
W,\varphi=\beta+\gamma)$, where $V$ and $W$ are vector bundles of rank
$p$ and $q$, respectively, $\beta\in H^0(X,\Hom(W,V)\otimes K)$ and
$\gamma\in H^0(X,\Hom(V,W)\otimes K)$. 

The following proposition gives the simplified stability conditions
for $\U(p,q)$-Higgs bundles. It can be proved using arguments similar to the
ones for other real groups (cf.\ Section~\ref{sec:spnr-higgs} and
\cite[Section~4]{garcia-prada-gothen-mundet:2009a}).

\begin{proposition}
\label{prop:Upq-simplification}
A $\U(p,q)$-Higgs bundle $(V,W,\varphi=\beta+\gamma)$  is {semistable} if 
$$
\mu(V'\oplus W') \leq \mu(V\oplus W),
$$ 
is satisfied for all 
$\varphi$-invariant pairs of subbundles $V'\subset V$ and 
$W'\subset W$, i.e.\ for pairs such that 
\begin{align*}
\beta &:W'\longrightarrow V'\otimes K\nonumber\\
\gamma &:V'\longrightarrow W'\otimes K.
\end{align*}

A $\U(p,q)$-Higgs bundle $(V,W,\varphi)$  is {stable} if 
the slope  inequality is strict whenever  $V'\oplus W'$ is a proper
non-zero $\varphi$-invariant subbundle of $V\oplus W$. 

A $\U(p,q)$-Higgs bundle $(V,W,\varphi)$ is {polystable} if it is
semistable and for any $\varphi$-invariant pair of subbundles
$V'\subset V$ and $W'\subset W$ satisfying $\mu(V'\oplus
W')=\mu(V\oplus W)$ there is another $\varphi$-invariant pair of
subbundles $V''\subset V$ and $W''\subset W$ such that $ V=V'\oplus
V''\ \mathrm{and}\ W=W'\oplus W''$.  As a consequence there is a decomposition
$$(V,W,\beta,\gamma)=\bigoplus (V_i,W_i,\beta_i,\gamma_i),$$
where $V=\bigoplus V_i$, $W=\bigoplus W_i, \beta=\Sigma\beta_i
,\gamma=\Sigma\gamma_i$ and $(V_i,W_i,\beta_i,\gamma_i)$ is a stable
$\U(p_i,q_i)$-Higgs bundle with $\mu(V_i\oplus W_i)=\mu(V\oplus W)$.
\end{proposition}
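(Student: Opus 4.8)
The plan is to derive the stated conditions by specializing the general filtration-and-weight definition of (semi,poly)stability for $G$-Higgs bundles from \cite{garcia-prada-gothen-mundet:2009a} to $G=\U(p,q)$, exactly as was done for $\Sp(2n,\R)$ in \cite{GGM} and already carried out for $\SO^*(2n)$ in Proposition~\ref{prop:simplifiedss}. The essential structural point is that for $H^\C = \GL(p,\C)\times\GL(q,\C)$ a holomorphic reduction to a parabolic subgroup together with an antidominant character is the same datum as a pair of filtrations $0\subset V_1\subset\dots\subset V_a=V$ and $0\subset W_1\subset\dots\subset W_b=W$ equipped with weight sequences, and the associated numerical invariant $d(\VVV,\lambda)$ is additive over the two $\GL$-factors.

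First I would recall this dictionary and write the general numerical criterion in these terms. For the complexified isotropy representation $\liemc = \Hom(\C^q,\C^p)\oplus\Hom(\C^p,\C^q)$, the requirement that the Higgs field $\varphi=\beta+\gamma$ lie in the nonpositive-weight part of the associated bundle translates into the condition that $\beta$ carry the appropriate steps of the $W$-filtration into steps of the $V$-filtration, and dually for $\gamma$; this is how the $\varphi$-invariance of a filtration pair is made explicit.

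Second, and this is the main technical step, I would reduce arbitrary filtrations to single subbundles. For a fixed pair of filtrations the invariant $d(\VVV,\lambda)$ is linear in the weight sequence $\lambda$, while the $\varphi$-invariance constraint cuts out a convex cone of admissible $\lambda$; hence it suffices to test the inequality on the extreme rays, which correspond to weight sequences taking only two distinct values. Such a two-valued configuration is precisely a single $\varphi$-invariant pair $(V',W')$ with $\beta(W')\subset V'\otimes K$ and $\gamma(V')\subset W'\otimes K$, and for it the inequality $d(\VVV,\lambda)\ge 0$ unwinds, after normalizing by $\rk(V\oplus W)$, to $\mu(V'\oplus W')\le\mu(V\oplus W)$. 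The strict version on proper nonzero invariant pairs then gives stability.

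Finally, for polystability I would invoke the general polystability statement of \cite{garcia-prada-gothen-mundet:2009a}: whenever $\mu(V'\oplus W')=\mu(V\oplus W)$ for some invariant pair, it furnishes a complementary $\varphi$-invariant pair $(V'',W'')$ splitting $V=V'\oplus V''$ and $W=W'\oplus W''$. Iterating this splitting on each summand until no proper invariant subbundle of equal slope remains yields the decomposition $(V,W,\beta,\gamma)=\bigoplus_i(V_i,W_i,\beta_i,\gamma_i)$ into stable $\U(p_i,q_i)$-Higgs bundles with $\mu(V_i\oplus W_i)=\mu(V\oplus W)$. The main obstacle is the convexity/extremal-ray reduction together with the careful matching of $\varphi$-invariance of a filtration to the two subbundle conditions; but since this is formally identical to the $\Sp(2n,\R)$ argument of \cite{GGM} and to the original $\U(p,q)$ treatment in \cite{bradlow-garcia-prada-gothen:2003}, and since the inductive splitting follows the same scheme as in Proposition~\ref{prop:SO-star-poly-stability}, the remaining verifications are routine.
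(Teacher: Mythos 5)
The paper itself does not prove this proposition: it is recorded in the Appendix with a pointer to \cite{bradlow-garcia-prada-gothen:2003} and \cite{garcia-prada-gothen-mundet:2009a}, so the only comparison available is with the strategy of those references, which is indeed the one you outline (specialize the filtration-and-weight criterion, reduce to single invariant pairs, split inductively for polystability). In that outline, the thresholding reduction and the inductive splitting are sound.

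However, your second step has a genuine gap, coming from the fact that $\U(p,q)$ is not semisimple. The center of $\lieh^\C=\gllie(p,\C)\oplus\gllie(q,\C)$ contains the diagonal direction $(cI_p,cI_q)$, which acts \emph{trivially} on $\liemc=\Hom(\C^q,\C^p)\oplus\Hom(\C^p,\C^q)$; hence constant weight sequences of either sign are always $\varphi$-admissible, the cone of admissible weights contains a whole line, and it is not generated by extreme rays. Concretely, for a two-valued weight with values $a<b$ on a pair $(V',W')$ and on its complement, the invariant is $d=b\deg(V\oplus W)+(a-b)\deg(V'\oplus W')$, and requiring $d\ge 0$ for \emph{all} such $a<b$ forces $\deg(V\oplus W)=0$ together with $\deg(V'\oplus W')\le 0$ --- not the slope inequality. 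You recover $\mu(V'\oplus W')\le\mu(V\oplus W)$ only by testing against trace-free weights, $ar'+b(r-r')=0$ with $r'=\rk(V'\oplus W')$ and $r=\rk(V\oplus W)$. So mimicking the $\SO^*(2n)$ case ``exactly,'' i.e.\ with stability parameter zero as in Definition \ref{prop:sp2n-alpha-stability} and Remark \ref{remark-parameter}, proves a different, stronger statement that is false in the generality claimed: the proposition is applied in the paper to $\U(1,3)$-Higgs bundles with $\deg V+\deg W=2d\neq 0$ (Corollary \ref{cor:SO*toU13}), where your version would assert that no semistable objects exist at all. The repair is to recognize that slope-(semi)stability for $\U(p,q)$ is $\alpha$-stability for the \emph{nonzero} central parameter determined by $\mu(V\oplus W)$ --- equivalently, one quantifies only over characters orthogonal to the diagonal center --- which is precisely why the $\U(1,3)$-Hitchin equations \eqref{eqn:U13Hitch} carry the term $-\sqrt{-1}\mu\omega$ on the right-hand side rather than zero. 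Once the central direction is split off and absorbed into the parameter, your convexity/thresholding argument and the polystability iteration go through.
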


\begin{remark}\label{rem:U-p-qzero}
  In the case $q=0$, the group is $\U(p)$ and hence $\varphi =
  0$. Thus a $\U(p)$-Higgs bundle is an ordinary vector bundle. 
  Proposition~\ref{prop:Upq-simplification}
  shows that in this case the $\U(p,q)$-Higgs bundles stability condition coincides with the usual one for vector bundles.
\end{remark}

\subsubsection{$\U^*(2n)$-Higgs bundles}\label{sect: U*}
The group $\U^*(2n)$ is a non-compact real form of $\GL(2n,\CC)$ consisting of 
matrices $M$  verifying that $\bar  MJ_n=J_nM$ where
$J_n=\begin{pmatrix}
     0 & I_n \\
     -I_n & 0
\end{pmatrix}.$
A maximal compact subgroup of $\U^*(2n)$ is the compact symplectic group
$\Sp(2n)$  (or, equivalently, the group of $n\times n$ quaternionic unitary
matrices),  whose complexification is $\Sp(2n,\CC)$, the complex symplectic 
group. The group $\U^*(2n)$  is the 
non-compact dual of $\U(2n)$, in the sense that 
the non-compact  symmetric space $\U^*(2n)/\Sp(2n)$ is the dual of the 
compact symmetric space $\U(2n)/\Sp(2n)$ in Cartan's classification
of symmetric spaces (cf. \cite{helgason}).

The corresponding Cartan decomposition of the complex Lie algebra is 
$$\mathfrak{gl}(2n,\CC)=\mathfrak{sp}(2n,\CC)\oplus\liemc,$$
where $\liemc=\{A\in\mathfrak{gl}(2n,\CC)\st A^tJ_n=J_nA\}$. Hence
a $\U^*(2n)$-Higgs bundle over $X$ is a pair $(E,\varphi)$, where $E$ is a
holomorphic $\Sp(2n,\CC)$-principal bundle and the Higgs field $\varphi$ is a
global holomorphic section of $E\times_{\Sp(2n,\CC)}\liemc\otimes K$.


Given a symplectic vector bundle $(W,\Omega)$, denote by $S^{2}_{\Omega}W$ the
bundle of endomorphisms $\xi$ of $W$ which are symmetric with respect to
$\Omega$ i.e. such that $\Omega(\xi\,\cdot,\cdot)=\Omega(\cdot,\xi\,\cdot)$. 
In terms of vector bundles, we have 
that a {$\U^*(2n)$-Higgs bundle} over $X$ is a triple $(W,\Omega,\varphi)$,
where $W$ is a holomorphic vector bundle of rank $2n$, $\Omega\in
H^0(X,\Lambda^2W^*)$  is a symplectic form on $W$, and the Higgs field
$\varphi\in H^0(X,S_{\Omega}^2 W\otimes K)$ is a $K$-twisted endomorphism $W\to
W\otimes K$, symmetric with respect to $\Omega$.

Given the symplectic form $\Omega$, we have the usual skew-symmetric
isomorphism 
$$\omega:W\stackrel{\simeq}{\longrightarrow}W^*$$ given by $$\omega(v)=\Omega(v,-).$$ 
The map $f\mapsto f\omega^{-1}$ defines  an isomorphism between $S^2_\Omega W$
and $\Lambda^2W$. Hence we can think of a 
$\U^*(2n)$-Higgs bundle as a triple $(W,\Omega,\varphi)$ with $\varphi\in H^0(X,S_{\Omega}^2W\otimes K)$ or as a triple $(W,\Omega,\tilde\varphi)$ with $\tilde\varphi\in H^0(X,\Lambda^2W\otimes K)$ given by
\begin{equation}\label{tildevarphi}
\tilde\varphi=\varphi\omega^{-1}.
\end{equation}

The general (semi-,poly-)stability conditions for $\U^*(2n)$-Higgs
bundles are studied in \cite{garcia-prada-oliveira:2010}, where
simplified conditions (similarly to the case of other groups) are
given. We have the following
(\cite[Proposition~3.6]{garcia-prada-oliveira:2010}).

\begin{proposition}\label{prop:orthogonal-stability}
  A $\U^*(2n)$-Higgs bundle $(W,\Omega,\varphi)$ {semistable} if
  and only if $ \deg W'\leq 0$ for any isotropic and
  $\varphi$-invariant subbundle $W'\subset W$.

  A $\U^*(2n)$-Higgs bundle $(W,\Omega,\varphi)$ is {stable} if
  and only if it is semistable and $\deg W'<0$ for any isotropic and
  $\varphi$-invariant strict subbundle $0\neq W' \subset W$.

  The $\U^*(2n)$-Higgs bundle $(W,\Omega,\varphi)$  is {polystable} if and  only if it is semistable
  and, for any isotropic (respectively coisotropic) and $\varphi$-invariant
  strict subbundle $0\neq W'\subset W$ such that $\deg W'=0$, there is
  another coisotropic (respectively  isotropic) and $\varphi$-invariant
  subbundle $0\neq W''\subset W$ such that $W\simeq W'\oplus W''$.
\end{proposition}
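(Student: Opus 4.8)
The plan is to deduce these simplified conditions from the general (semi,poly)stability criterion for $G$-Higgs bundles of \cite{garcia-prada-gothen-mundet:2009a}, specialized to $H^{\C}=\Sp(2n,\C)$, following the same scheme by which the two-step filtration criterion of Proposition~\ref{prop:simplifiedss} was obtained. The key structural input is that the parabolic subgroups of $\Sp(2n,\C)$ are the stabilizers of isotropic flags. Consequently the weighted filtrations occurring in the general criterion may be taken to be \emph{symplectic}, i.e. of the form
\[
0\subset W_1\subset\dots\subset W_k\subset W_k^{\perp_\Omega}\subset\dots\subset W_1^{\perp_\Omega}\subset W,
\]
with every $W_i$ isotropic, and the antidominant character forces the weights to occur in opposite pairs: weight $\lambda_i$ on $W_i/W_{i-1}$ and weight $-\lambda_i$ on $W_{i-1}^{\perp_\Omega}/W_i^{\perp_\Omega}$ (a central block of weight $0$ being allowed).

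I would first record the translation of the $\varphi$-invariance condition. Because $\varphi$ is $\Omega$-symmetric, if $\varphi(W_i)\subset W_i\otimes K$ then for $v\in W_i^{\perp_\Omega}$ and $w\in W_i$ one has $\Omega(\varphi v,w)=\Omega(v,\varphi w)=0$, so that $\varphi(W_i^{\perp_\Omega})\subset W_i^{\perp_\Omega}\otimes K$ automatically. Hence the $\varphi$-invariance of the entire symplectic filtration is equivalent to $\varphi(W_i)\subset W_i\otimes K$ for the isotropic terms alone, and it suffices to test (semi)stability on isotropic $\varphi$-invariant subbundles.

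The heart of the argument is the reduction to a single isotropic subbundle together with the degree computation. For the one-step symplectic filtration $0\subset W'\subset W'^{\perp_\Omega}\subset W$ with weights $(-1,0,1)$, the standard degree formula \eqref{eq:dV-lambda-alpha} gives $d=\deg W-\deg W'-\deg W'^{\perp_\Omega}$; since $\Omega$ yields $W\cong W^*$ and $W/W'^{\perp_\Omega}\cong (W')^*$, one has $\deg W=0$ and $\deg W'^{\perp_\Omega}=\deg W'$, whence $d=-2\deg W'$. A general symplectic weighted filtration has its degree $d(\VVV,\lambda)$ expressible as a combination of these one-step degrees with the nonnegative coefficients $\lambda_{i+1}-\lambda_i$, so that $d(\VVV,\lambda)\ge 0$ for all $\varphi$-invariant weighted filtrations is equivalent to $\deg W'\le 0$ for every isotropic $\varphi$-invariant subbundle $W'$; the strict version, together with the exclusion of the trivial filtration, gives the stability statement, with $0\ne W'\subsetneq W$ corresponding to nontriviality.

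For polystability I would unwind the general criterion: when $d(\VVV,\lambda)=0$ the bundle admits a holomorphic splitting adapted to the grading of the filtration and compatible with $\Omega$ and $\varphi$. Applied to the one-step filtration attached to an isotropic $\varphi$-invariant $W'$ of degree $0$, this produces a $\varphi$-invariant complement whose $\Omega$-dual piece is a coisotropic $\varphi$-invariant $W''$ with $W\simeq W'\oplus W''$, and dually starting from a coisotropic subbundle. I expect the main obstacle to be precisely this reduction step: verifying that an arbitrary $\varphi$-invariant weighted symplectic filtration is captured, without loss of destabilizing information, by its one-step sub-filtrations, and confirming that the $\Omega$-symmetry of $\varphi$ genuinely makes isotropic (respectively coisotropic) subbundles the only test objects needed. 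The weight bookkeeping and the identity $d=-2\deg W'$ are then routine.
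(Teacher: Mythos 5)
The paper itself contains no proof of this proposition: it is quoted from Garc{\'\i}a-Prada--Oliveira \cite[Proposition~3.6]{garcia-prada-oliveira:2010}, so there is no internal argument to compare against. Your reconstruction follows exactly the scheme used there (and used in this paper to obtain Proposition~\ref{prop:simplifiedss}): specialize the general weighted-filtration criterion of \cite{garcia-prada-gothen-mundet:2009a} to $H^{\C}=\Sp(2n,\C)$, where parabolic reductions correspond to isotropic flags with paired weights; observe that the $\Omega$-symmetry of $\varphi$ makes invariance of the isotropic steps equivalent to invariance of the whole flag; and use the convexity identity expressing $d(\VVV,\lambda)$ as a nonnegative combination of the one-step quantities $-2\deg W_i$ to reduce everything to single isotropic subbundles. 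Your degree bookkeeping is correct: $\Omega$ forces $\deg W=0$ and $\deg W'^{\perp_\Omega}=\deg W'$, giving $d=-2\deg W'$ for the one-step filtration. The only thin spot is the converse direction of the polystability equivalence: the general criterion demands a compatible splitting for \emph{every} invariant weighted filtration of degree zero, and deducing this from the stated one-step splitting property requires the iteration argument carried out for $\Sp(2n,\R)$ in \cite{GGM}; you correctly flag this reduction as the main remaining obstacle, which is where the residual work lies.
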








\providecommand{\bysame}{\leavevmode\hbox to3em{\hrulefill}\thinspace}

\end{document}